\tikzstyle{circ}=[circle,draw=black,fill=white,inner sep=0,minimum size=0.708cm,text=black,font=\footnotesize]
\tikzstyle{squa}=[rectangle,draw=black,fill=white,inner sep=0,minimum size=0.5cm,text=black,font=\footnotesize]
\tikzstyle{diam}=[diamond,draw=black,fill=white,inner sep=0,minimum size=0.708cm,text=black,font=\footnotesize]
\tikzstyle{semi}=[semicircle,draw=black,fill=white,inner sep=0,minimum size=0.36cm,text=black,rotate = 90,font=\footnotesize]
\tikzstyle{pent}=[regular polygon,draw=black,fill=white,inner sep=0,minimum size=0.708cm,text=black,font=\footnotesize]
\tikzstyle{connection}=[inner sep=0,outer sep=0]
\newtheorem{theorem}{Theorem}
\newtheorem{question}{Question}
\newtheorem{lemma}{Lemma}
\newtheorem{remark}{Remark}
\newtheorem{proposition}{Proposition}
\newtheorem{definition}{Definition}
\newtheorem{conjecture}{Conjecture}
\numberwithin{equation}{section}
\numberwithin{proposition}{section}
\numberwithin{lemma}{section}
\numberwithin{definition}{section}
\numberwithin{conjecture}{section}
\newcommand{\black}[1]{\color{black}}
\newcommand*{\rom}[1]{\expandafter\@slowromancap\romannumeral #1@}
\newcommand{\footremember}[2]{%
	\footnote{#2}
	\newcounter{#1}
	\setcounter{#1}{\value{footnote}}%
}
\setlist[enumerate]{font=\bfseries}
\author{Illya Koval \footremember{email}{Illya.Koval@ist.ac.at}}
\title{Billiard tables with analytic Birkhoff normal form are generically Gevrey divergent}
\date{%
	Institute of Science and Technology Austria\\ %
	\today
}
\begin{document}
	\maketitle
	\abstract{The problem of the existence of an analytic normal form near an equilibrium point of an area-preserving map and analyticity of the associated coordinate change is a classical problem in dynamical systems going back to Poincar\'e and Siegel. One important class of examples of area-preserving maps consists of the collision maps for planar billiards. Recently, Treschev discovered a formal $\mathbb{Z}_2 \times \mathbb{Z}_2$ symmetric billiard with locally linearizable dynamics and conjectured its convergence. Since then, a Gevrey regularity for such a billiard was proven in \cite{zhang}, but the original problem about analyticity still remains open. 
		
		We extend the class of billiards by relaxing the symmetry condition and allowing conjugacies to non-linear analytic integrable normal forms. To keep the formal solution unique, odd table derivatives and the normal form are treated as parameters of the problem. We show that for the new problem, the series of the billiard table diverge for general parameters by proving the optimality of Gevrey bounds. The general parameter set is prevalent (in a certain sense has full measure) and it contains an open set.
		
		Instead of considering the problem in a functional sense and iterating approximation procedures, we employ formal power series methods and one-by-one directly reconstruct all the Taylor coefficients of the table. In order to prove that on an open set Taylor series diverges we define a Taylor recurrence operator and prove that it has a cone property. All solutions in that cone are only Gevrey regular and not analytic.}
	\section{Introduction}
	\label{sec1}
	
	The question that we are going to consider in this paper appears in various classes of dynamical systems, from standard maps to geodesic flows. The methods of tackling this question are usually formal and don't use the geometry of the system. So, results in several of those classes will share the common essence, but they can be slightly different in technical details. So, we shall focus on one type of dynamical systems and then try to extend the result to other types. We will comment on those possible extensions below.     
	
	A mathematical billiard is a dynamical system, first proposed by G. D. Birkhoff in \cite{birk} as a playground, where the “the formal side, usually so formidable in dynamics, almost completely disappears and only the interesting qualitative questions need to be considered”. This makes it great for our purposes, so we will focus on billiards. However, we stress that in the proof we don't use any special properties of the billiard dynamics, as opposed to, for example, a standard map.   
	
	Let $\Omega$ be a strictly convex $C^r$ domain in $\mathbb{R}^2$ with $r>3$. Let $x$ be a point in 
	the boundary $\partial \Omega$ and $\varphi$ is angle of a direction $V$ with 
	the clockwise tangent to $\partial \Omega$ at $x$. Let 
	$M := \left\lbrace \right (x, \varphi): x\in \partial \Omega, \varphi \in (0, \pi)\rbrace $. Then, one can consider 
	a billiard map $f : M \rightarrow M$, where $M$ consists of unit vectors with foot $x$ 
	on $\partial \Omega$ and with inward direction $v$. The map reflects the ray from the boundary of 
	the domain elastically, i.e. the angle of the incidence equals the angle of reflection. $f$ is a $2$-dimensional symplectic map. 
	
	One can study the properties of the billiard in a specific table, investigating things like integrability, ergodicity or the number of periodic points. Alternatively, once can look for unique tables with fascinating qualities within the whole space of billiard domains. Since this space is so large, one can expect there to be plenty of such interesting domains. However, their equations are hard to guess, provided they even exist. Hence, when there is a procedure to potentially reconstruct one of such domains, one should check whether it really converges. 
	
	Among those qualities, it is reasonable to consider the local behavior of the billiard around an elliptic periodic point. The simplest $2$-dimensional symplectic map with an elliptic fixed point is a rotation. So, one can look for domains whose dynamics around a periodic point are conjugate to a rotation. We note that we are interested in the conjugacy of the whole neighborhood: billiards often have invariant curves with restricted dynamics conjugate to a circle rotation. Our condition is much stronger. 
	
	If one finds a domain with a conjugacy to a rational rotation, this will give an open set of periodic points for the billiard and hence disprove Ivrii's conjecture \cite{ivrii}. However, since this condition seems too strong (see \cite{keagan} for some progress), we will only consider irrational rotations.
	
	Nevertheless, even a conjugacy to an irrational rotation will make billiard dynamics integrable in the neighborhood of the periodic point. If one finds such a table, it would provide a surprising counterexample to a version of the Birkhoff conjecture. This famous conjecture, posed in \cite{pori}, states that the only integrable domains are ellipses (in non-circular ellipses, there is no conjugacy to a rotation). Several questions about this version of the Birkhoff conjecture we recently stated in \cite{kaloshin2023birkhoff}.  
	
	Still, it is not clear how to navigate within the space of all tables to find such domains. It turns out that if we add more restrictions on the table, this will make the problem more approachable, since we would know exactly what are we looking for. This leads us to the question, studied by Treschev in \cite{tresh}, \cite{tresh2}, \cite{tresh3}: only analytic domains with $2$ orthogonal axes of symmetry are considered. This conditions guarantees the existence of a periodic orbit of period $2$ along both of the axes. 
	
	\begin{figure}
		\includegraphics[width=8cm]{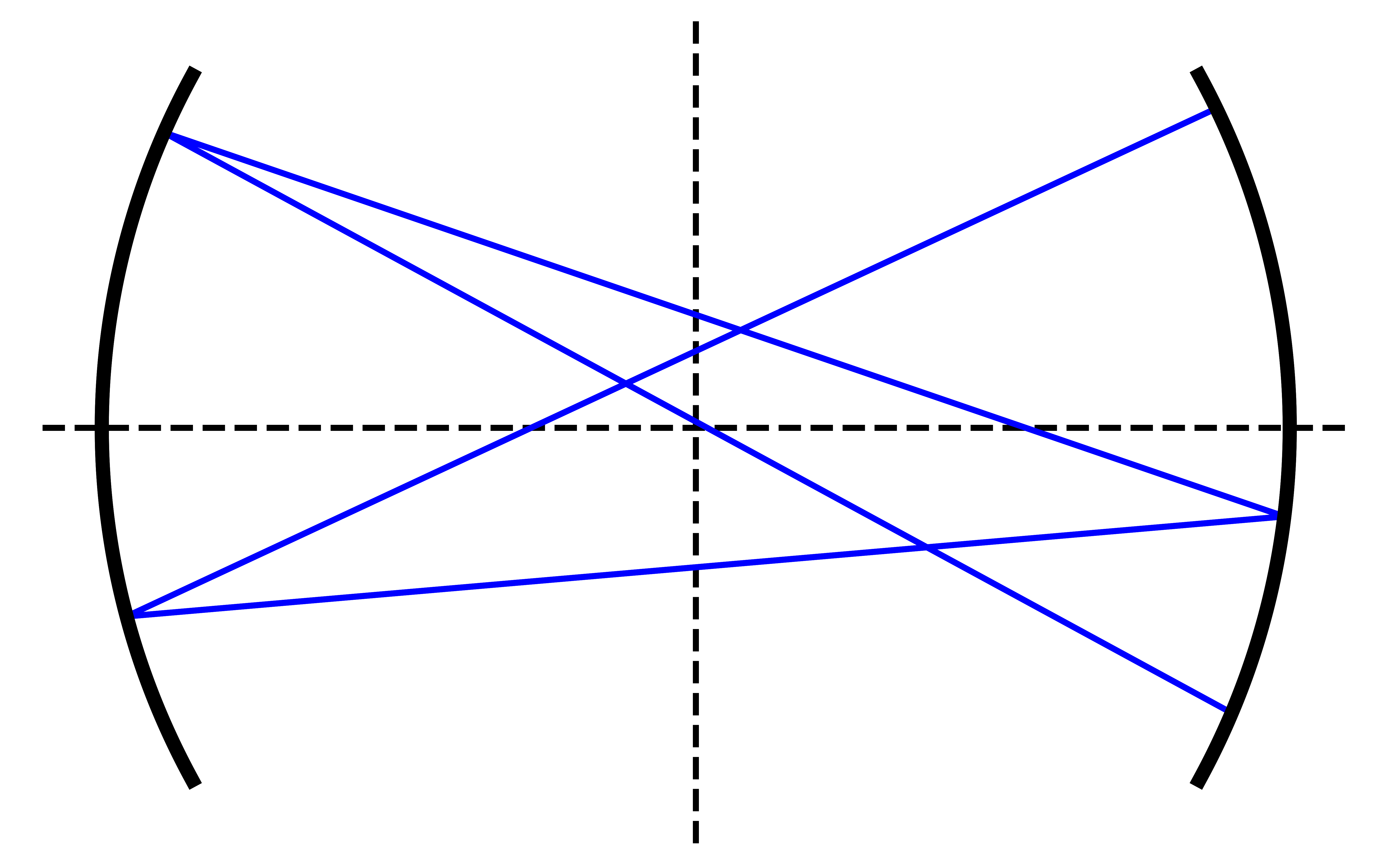}
		\centering
		\caption{Treschev's domain, plotted using its first $25$ power series terms. An orbit coming from the formal conjugacy is also plotted, its error is at most $10^{-37}$. Despite the shape looking like an ellipse, the fourth derivative of the pictured $q(t)$ at $0$ differs from that of the closest ellipse by more than $1$.}
	\end{figure}
	
	\begin{question}[\cite{tresh}]
		Under these assumptions, do there exist tables, whose billiard dynamics are locally analytically conjugate to the irrational rotation around one of those orbits?
		\label{q1} 
	\end{question}

	These restrictions are added to make the problem solvable in formal power series (so, we need analytic conditions). One axis (more accurately, the central symmetry) creates the symmetry between reflection points, so we may only consider one power series for the domain. By \cite{cdv}, the conjugacy condition ensures (more on that later) that for any $n > 1$ the expression that depends linearly on the $2n$-th derivative and polynomially on the lower derivatives (all the odd derivatives are $0$ due to the other axis of symmetry) is zero. This creates the upper-triangular system in even derivatives that can be uniquely solved (\cite{cdv}, \cite{tresh}). Hence, for every rotation number there is exactly one formal power series for the billiard table. So, instead of dealing with the whole domain space, we can check the convergence of some implicit series.   
	
	The first major result on Question \ref{q1} was recently done by \cite{zhang}. In that project, the authors used KAM-theory to investigate the regularity of the formal power series for the table. Under mild assumptions on the rotation number, they have managed to prove $(1+\alpha)$-Gevrey growth of these series. So, if the domain is given by $q(t) = \sum_{n = 0}^{+\infty} q_{2n}t^{2n}$, they have obtained that
	
	\begin{equation}
		|q_{2n}| = O\left(e^{\alpha (2n) \log (2n)}\right), \; n\rightarrow +\infty
	\end{equation}
	
	for $\alpha > 5/4$. To get convergence, one needs at most exponential growth. It was unclear whether this bound could be improved or was it optimal.
	
	Before stating the main theorems, we need to generalize the problem a fair bit. Firstly, being conjugate to a rotation is impressive, since it shows that the local dynamics is integrable, connecting the problem to the Birkhoff conjecture. However, integrable maps are richer than just rotations and they can be conjugated to the maps of type $\phi \rightarrow \phi + b(r^2)$ and $r\rightarrow r$ in polar coordinates, where $b(r^2)$ is a Birkhoff normal form (if $b(r^2) = \sum_{n = 0}^{\infty} b_{2n} r^{2n}$ is constant, we get a rotation by angle $b_0$). Any symplectic map with an irrational rotation number can be formally conjugated to its normal form. In fact, the aforementioned expression is the $n-1$-st derivative of $b(r^2)$, also called the twist coefficient. Thus, we may extend Question \ref{q1} from irrational rotations to being conjugate to dynamics of a given analytic $b(r^2)$.
	
	Secondly, we may consider just centrally symmetric domains instead of considering two axes. This preserves the correspondence between reflection points, but allows for odd derivatives of the table. Then, that coefficient of $b$ will also linearly depend on the $2n-1$-st table derivative. Sadly, this breaks the upper-triangular method, since we doubled the number of parameters, so there is no more uniqueness. To combat this, we assume that the odd power series of the table are given, and the goal is to find the even series. We demand that the odd series are convergent.
	
	Lastly, we don't have to consider only orbits of period $2$, so we make the period $\tilde{q}$ arbitrary. Since we still need the symmetry between reflection points, we will demand that the domain (and the orbit) is rotationally symmetric with angle $2\pi/\tilde{q}$ around the origin. Then, we may consider elliptic orbits of rotation number $\omega = \tilde{p}/\tilde{q}$ for some $\tilde{p}$. We include this regime, since recently there was some interest in rotationally symmetric billiards, see \cite{bialytsodi} and \cite{Ferreira_2024}. We also note that due to the symmetry we identify all the reflection points and consider the normal form of one-iterate, instead of $\tilde{q}$-iterate of the map. The usual normal form can be obtained by scaling all the coefficients of $b(r^2)$ by $\tilde{q}$.  
	
	\subsection{Main results}
	We will refer to the formal even power series of $q(t)$ ($q_{even}(t)$) as the generalized Treschev problem solution (GTPS). The parameters of the problem are the odd series of $q(t)$ ($q_{odd}(t)$), series of $b(r^2)$, and the period $\tilde{q}$. In these statements, $q(t)$ is the support function of the domain, defined later. Also, we remind that we say that $x \in \mathbb{R}\setminus\mathbb{Q}$ has restricted partial quotients, if there exists $C>0$, such that:
	
	\begin{equation}
		|x - m/n| > Cn^{-2}, \; \; \; \forall m/n \in \mathbb{Q}.
	\end{equation}
	
	 We first state the upper bound result for GTP:
	
	\begin{theorem}
		If $q_{odd}(t)$ and $b(r^2)$ are real analytic functions and $b_0/\pi$ has restricted partial quotients, then GTPS is $1+1$-Gevrey:
		\begin{equation}
			|q_{2n}| \le e^{(2n)\log(2n)} e^{Cn}
		\end{equation}
		for some $C \in \mathbb{R}$.  
		\label{th1}
	\end{theorem}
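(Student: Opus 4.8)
The plan is to turn the conjugacy condition into an explicit scalar recurrence $q_{2n}=c_n^{-1}\bigl(b_{2(n-1)}+P_n\bigr)$, to bound the small divisor $c_n$ from below using the arithmetic of $b_0/\pi$, and then to close a majorant induction against the $(1+1)$-Gevrey ansatz. For the recurrence I would redo Treschev's derivation together with the twist-coefficient computation of \cite{cdv}, but now with a non-constant Birkhoff normal form $b(r^2)$, with the odd part $q_{odd}$ present, and after reducing by the $2\pi/\tilde q$ rotational symmetry so that a single collision map is conjugated. Matching the conjugacy to $(\phi,r)\mapsto(\phi+b(r^2),r)$ order by order produces, at order $2n$, one scalar identity whose leading term is linear in $q_{2n}$ with an explicit coefficient $c_n$ (a trigonometric function of $nb_0$ and of $\tilde p/\tilde q$), the remaining terms being a polynomial $P_n$ in $q_0,\dots,q_{2n-1}$ and in the given analytic coefficients of $q_{odd}$ and $b$, with $b_{2(n-1)}$ entering linearly. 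As in the classical case the resulting system is upper-triangular: once $q_0$ is fixed the recurrence determines every $q_{2n}$, and any conjugacy coefficients that appear are functions of strictly lower data, carried along as auxiliary unknowns in the estimates.

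Next I would prove $|c_n|\ge\gamma/n$ for all $n$ and some $\gamma>0$. The factor $c_n$ vanishes only at resonances, with $|c_n|\gtrsim\operatorname{dist}(nb_0/\pi,\mathbb{Z})$, and the restricted-partial-quotients hypothesis for $b_0/\pi$ says exactly that $\operatorname{dist}(nb_0/\pi,\mathbb{Z})\ge C/n$; this one-sided bound is all the upper estimate needs (the finer distribution of the small $c_n$ is what the companion divergence results exploit).

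The core is the majorant induction. Fix $\beta_j=A\rho^j j^j$ (with $0^0:=1$), take $\rho$ large enough to dominate the analytic coefficients of $q_{odd}$ and $b$ and all cross-term radii, and $A$ large enough to dominate $|q_0|$ and any finite initial block of the $q_{2n}$, and prove $|q_{2n}|\le\beta_{2n}$ by induction on $n$. Since every operation entering the derivation of the identity is a convergent power-series operation with uniform radii, $P_n$ is majorized by the $t^{2n}$-coefficient of a fixed analytic function applied to $\sum_k\beta_k t^k$ (with the odd and normal-form coefficients as further arguments); in particular its coefficients have at most geometric growth. Crucially $q_{2n}$ does not occur in $P_n$, so among the even unknowns $P_n$ reaches only $q_0,\dots,q_{2n-2}$ — the recurrence has a gap of two. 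Hence in the convolution controlling $|P_n|$ one even index is at most $2n-2$ and the rest are bounded, so $j^j$ is essentially maximal at a factor of size $O\bigl((2n-2)^{2n-2}\bigr)$ and $\beta_{2n-2}/\beta_{2n}=O(n^{-2})$, the remaining freedom contributing only a convergent series; this gives $|P_n|\le K\beta_{2n}/n^2$. Combining with $|c_n|^{-1}\le n/\gamma$ and with the (super-exponentially dominated) term $b_{2(n-1)}$ yields $|q_{2n}|\le(n/\gamma)\bigl(|b_{2(n-1)}|+K\beta_{2n}/n^2\bigr)\le\beta_{2n}$ for all large $n$, the finitely many remaining $n$ being covered by the choice of $A$. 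Thus $|q_{2n}|\le A\rho^{2n}(2n)^{2n}=e^{(2n)\log(2n)}e^{Cn}$.

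The main obstacle is the first step: carrying Treschev's derivation through the three generalizations at once and verifying that the resulting nonlinearity really has the ``analytic composition'' shape needed for a majorant with $n$-independent constants, together with extracting $c_n$ explicitly enough to see that the arithmetic hypothesis makes it a bounded-type small divisor costing only a linear factor in $n$. Once that normal-form bookkeeping is in place the Gevrey bound is essentially forced: the gap of two in the recurrence produces an $n^{-2}$ that more than absorbs the $O(n)$ small-divisor loss, so the $(1+1)$-Gevrey sequence is a stable fixed point of the Taylor recurrence operator.
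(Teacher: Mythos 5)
The central gap is the claim that $P_n$ is majorized by the $t^{2n}$-coefficient of a fixed analytic function applied to $\sum_k\beta_k t^k$, so that $|P_n|\le K\beta_{2n}/n^2$. This is false, and the failure is structurally decisive. After dividing by $c_n$, the coefficient multiplying $q_{2n-2}$ in the recurrence grows like $n^2$, not $O(1)$. The source is combinatorial: substituting the conjugacy series $\varphi(z,w)$ into $q'$ and matching the diagonal $z^n w^{n-1}$ monomial, the terms containing $q_{2n-2}$ also carry two quadratic factors $\varphi_{2,0}$, $\varphi_{1,1}$, $\varphi_{0,2}$, and the number of ways to interleave those with the $\approx 2n$ linear factors $\varphi_{1,0}z$, $\varphi_{0,1}w$ is $\binom{2n-3}{2}\sim 2n^2$. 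This is exactly the factor $\binom{C_q+s_q}{s_q}$ in Definition \ref{combmult}; after normalization the effective coefficient is $\hat R_{1,n}=O(n^2)$ (cf.\ \eqref{eq413}). Consequently $|P_n|$ is of order $\beta_{2n}$, not $\beta_{2n}/n^2$: the $n^{-2}$ you extract from $\beta_{2n-2}/\beta_{2n}$ is exactly cancelled by the $n^{2}$ in the coefficient. This balance is the structural reason the solution is $(1+1)$-Gevrey in the first place; there is no leftover $n^{-2}$ to hand to a small divisor, and the induction step $(n/\gamma)(|b_{2(n-1)}|+K\beta_{2n}/n^2)\le\beta_{2n}$ collapses.

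Relatedly, $c_n$ itself is not the small divisor. From \eqref{qden} it is $\pm 4n\binom{2n-1}{n}\cos^{2n}(b_0/2)\cos\alpha$, with no resonance structure in $n$. The Diophantine denominators live in the determination of the conjugacy coefficients $\varphi_{j,k}$ --- the factor $\cos^2((j-k)b_0/2)-\cos^2(b_0/2)$ in \eqref{fden} --- and are scattered through $P_n$, attached to the $\varphi$-leaves that appear in trees of arbitrary depth. So the bookkeeping ``lose $n/\gamma$ from $c_n$, gain $n^{-2}$ from the gap of two'' does not correspond to the actual mechanism.

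With the coefficient estimate corrected, a majorant $\beta_j=A\rho^j j!$ does close for the principal part: the $\sigma$-th term of the recurrence contributes $\approx\beta_{2n}\,(|Q|/\rho)^{2\sigma}/(2\sigma)!$ for a fixed $Q$ (depending on $b_0$ and the order-$2$ data), summing to $\beta_{2n}(\cosh(|Q|/\rho)-1)<\beta_{2n}$ once $\rho$ is large. But that only handles the principal recurrence. Showing that the infinitely many non-principal contributions --- trees with large diamond or semicircle leaves, trees with a large circle child whose label $|j-k|$ carries a genuine small divisor, nested subtrees at all scales --- are subdominant to this is what Section \ref{sec5} and Lemma \ref{lema51} do, and it occupies essentially the entire proof. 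The ``fixed analytic function with geometric coefficients'' picture cannot see those terms, nor the $n$-dependent coefficients that drive the growth, so as written the proposal does not establish the theorem.
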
  
	
	In specific cases a better bound can be achieved, see \eqref{eq14}. By paying with Gevrey order, one can also relax the Diophantine condition.
	
	However, it turns out that for a large set of parameters, this bound cannot be improved. This follows from the next statement, concerning the lower bound.
	
	\begin{theorem}
		Assume that $b_0/\pi$ is fixed and has restricted partial quotients. Then, for almost every real analytic functions $q_{odd}(t)$ and $b(r^2)$ with given $b_0$ there is a lower $1+1$-Gevrey bound of GTPS. 
		\label{th3} 
	\end{theorem}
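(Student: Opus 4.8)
\emph{Proof proposal.} The plan is to combine the explicit triangular recurrence that underlies Theorem~\ref{th1} with a forward-invariant cone in the space of normalized Taylor coefficients, and then to handle the exceptional parameters by an analyticity argument.

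\emph{Step 1 (the normalized recurrence).} Once the parameters $P=(q_{odd},b)$ are fixed, with $b_0$ and $\tilde{q}$ frozen as in the statement, the conjugacy equation behind the triangular system of Theorem~\ref{th1} determines the even coefficients one at a time through a recurrence $q_{2n}=A_n(P)\,q_{2n-2}+S_n(q_2,\dots,q_{2n-4};P)$, where $A_n(P)$ is the leading multiplier (collecting the extremal convolution contributions and every linear term proportional to $q_{2n-2}$) and $S_n$ is a convolution-type remainder. The proof of Theorem~\ref{th1} already shows that, on the $1{+}1$-Gevrey scale, $A_n\asymp n^2$ with an explicit top coefficient and $S_n$ is lower order. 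Writing $q_{2n}=(2n)!\,\rho^{n}a_n$ with $\rho=\rho(b_0,\tilde{q})\ne 0$ chosen so that $(2n)(2n-1)\rho$ matches the top part of $A_n$, the recurrence takes the form
\begin{equation}
a_n \;=\; L_n(P)\,a_{n-1} \;+\; N_n(a_1,\dots,a_{n-2};P), \qquad L_n(P) = 1 + O(1/n),
\label{eq:rec-prop}
\end{equation}
with $|N_n|\le \eta_n\max_{k<n}|a_k|$ and $\sum_n\eta_n<\infty$, the implied constants depending only on the radii of analyticity of $q_{odd}$ and $b$; this is where the estimates of Theorem~\ref{th1} are reused, now two-sidedly. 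In these coordinates the assertion ``GTPS has a lower $1{+}1$-Gevrey bound'', i.e. $|q_{2n}|\ge c\,(2n)!\,R^{2n}$ for all large $n$ and some $c,R>0$, is equivalent to the statement that $|a_n|$ does not decay faster than polynomially.

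\emph{Step 2 (the cone).} For each $P$ let $\mathcal{C}_P$ consist of the tail sequences obeying the two-sided envelope
\begin{equation}
\tfrac12\,\delta\,\Bigl|\,\textstyle\prod_{k=n_\ast+1}^{n} L_k(P)\,\Bigr| \;\le\; |a_n| \;\le\; 2\delta, \qquad n\ge n_\ast,
\label{eq:cone-prop}
\end{equation}
together with a mild control on $\arg a_n$ ensuring that the two terms on the right of~\eqref{eq:rec-prop} cannot cancel. The key lemma is the cone property $T_P(\mathcal{C}_P)\subseteq\mathcal{C}_P$ for the Taylor recurrence operator $T_P$ attached to~\eqref{eq:rec-prop}: it follows from $|L_n|\ge 1-\varepsilon$, the summability $\sum_n\eta_n<\infty$, and the choice of a large threshold $n_\ast=n_\ast(P)$, so that the per-step loss $\eta_n\cdot 2\delta$ is absorbed by the envelope — the ratio of the two sides of~\eqref{eq:cone-prop} being a fixed constant is precisely what makes this absorption possible. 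Since the GTPS is the unique sequence generated by~\eqref{eq:rec-prop} from the prescribed finitely many initial coefficients, and $\mathcal{C}_P$ is forward invariant, the GTPS lies in $\mathcal{C}_P$ — and therefore satisfies the lower bound — provided it enters the cone at level $n_\ast$, which after rescaling $\delta$ by $|a_{n_\ast}(P)|$ reduces to the single condition $a_{n_\ast}(P)\ne 0$. Finally, membership in $\mathcal{C}_P$ forces $|a_n|\gtrsim n^{-C'}$ because $\prod_k L_k(P)$ decays at worst polynomially, which is exactly Step~1's requirement.

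\emph{Step 3 (genericity).} Fixing a common domain of analyticity for the parameters — the general case follows by a routine exhaustion — the threshold $n_\ast$ of Step~2 becomes a constant on the resulting Banach space $\mathcal{A}$ of admissible pairs $(q_{odd},b)$, and $\Phi(P):=a_{n_\ast}(P)$ is an explicit analytic function of finitely many Taylor coefficients of $q_{odd}$ and $b$ (since $b_0\ne 0$, no division by zero occurs), hence a real-analytic functional on $\mathcal{A}$; the parameters for which the lower bound might fail are contained in $\{\Phi=0\}$. It remains to verify $\Phi\not\equiv 0$, for which it suffices to exhibit a single admissible $P_\ast$ with $\Phi(P_\ast)\ne 0$: rather than Treschev's original table, whose convergence is exactly the open problem, I would switch on one odd coefficient of $q$ or one higher twist coefficient $b_2$ and compute the first few $a_n$ directly, or — absent a closed form — differentiate $\Phi$ in such a coefficient at $P=0$ and check that the derivative is nonzero. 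Since the zero locus of a nontrivial real-analytic functional on $\mathcal{A}$ is shy (Haar-null) and has open dense complement, this gives both the ``almost every'' statement of Theorem~\ref{th3} and the open set of parameters announced in the introduction.

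\emph{Main obstacle.} The crux is Step~2: obtaining genuinely two-sided control in~\eqref{eq:rec-prop} — in particular a lower bound on the leading multiplier $A_n(P)$, not merely the upper bound needed for Theorem~\ref{th1}, together with a summable bound on the size of $N_n$ relative to $a_{n-1}$ — so as to rule out any near-cancellation of the main term at large $n$ produced by a conspiracy among the lower coefficients. A secondary but non-automatic point is the verification in Step~3 that $\Phi\not\equiv 0$, precisely because the most natural test case, Treschev's, is the one whose convergence is unresolved.
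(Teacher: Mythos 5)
Your Step~1 cannot be carried out as written, and the gap propagates directly into Step~2. After the paper's normalization $\hat q_{2j}=S_{2j}\breve q_{2j}$ (see \eqref{coorch}), the principal recurrence becomes $\breve q_{2n}=-\sum_{\sigma\ge 1}\breve q_{2n-2\sigma}/(2\sigma)!$, i.e.\ $\breve q(t)=\mathrm{sech}\,t$. This is a genuinely \emph{infinite-order} linear recurrence with constant coefficients, and the tail $\sigma\ge 2$ contributes a fraction of the answer bounded away from zero, not a summable perturbation. On the leading eigenvector $\breve q_{2n}\propto(-4/\pi^2)^n$ one checks that $-\breve q_{2n-2}/2!=(\pi^2/8)\breve q_{2n}$, so the tail $\sigma\ge 2$ supplies $(1-\pi^2/8)\breve q_{2n}\approx -0.23\,\breve q_{2n}$; in your notation $\eta_n$ is bounded below by a constant, so $\sum_n\eta_n=\infty$, and the absorption argument in Step~2 — letting the per-step loss $\eta_n\cdot 2\delta$ be eaten by the fixed ratio of the envelope — does not close. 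Worse, a sup-norm deviation of size $\varepsilon$ from the eigenvector in the scalars $a_k$ is magnified at the next step by up to $\cosh(\pi/2)-1\approx 1.51$, so a two-sided envelope on the scalar sequence is not forward invariant. This is precisely the obstruction the paper circumvents by the additional weighting $\mathbf q_{n,j}=\breve q_{2n-2j}/j!$, which makes the Taylor recurrence operator $T$ compact on $\ell^2$ with simple leading eigenvalue $-4/\pi^2$ (Lemma \ref{lema56}); the invariant cone $\mathcal C_\theta$ is taken around the leading eigenvector \emph{in $\ell^2$}, i.e.\ it constrains the whole history vector, not just the last scalar value.

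For Step~3, a single scalar condition $a_{n_\ast}(P)\ne 0$ is too weak to guarantee entry into the cone — in the paper that entry is a condition on the full vector $\mathbf q_{n_\ast}$ and is engineered by shifting $q_{2n_0+1}$ by a constant $C_8$ inside a calibrated window, eqs.~\eqref{eq624}--\eqref{eq627}. Your genericity-via-analyticity route (shyness of the zero set of a nontrivial real-analytic functional on the Banach space of parameters) is reasonable in spirit, but it is not the paper's mechanism: the paper deduces Theorem~\ref{th3} from Theorem~\ref{th2}, which is an application of P\'erez-Marco's dichotomy for families with polynomial parameter dependence \cite{perez} to the two-parameter family $(b_2,q_{2n_0+1})$, seeded by the open condition of Theorem~\ref{th4}. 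Both routes require that seed, so either way you must first fix Step~2 — and the fix is to move the cone into the weighted $\ell^2$ space.
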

	
	\begin{remark}
		Here, the notion of “almost every” is expanded upon in the next theorem. However, one can justify it by considering various measures on the space of analytic functions, induced by the shifts of the Hilbert brick. The details of this construction are presented in \cite{kaloshinhunt}.
	\end{remark}
	
	Since almost every analytic function is a rather implicit condition, we specify this set more accurately in the next theorem. Particularly, we may only look at $2$-dimensional parameter space and assume that $q_3 \ne 0$.
	
	\begin{theorem}
		Assume that $q_{odd}(t)$ and $b(r^2)$ are analytic functions, that $b_0/\pi$ has restricted partial quotients and that $q_3 \ne 0$. Let $n_0$ be large enough. Then, if we consider a family of GTP-s obtained by changing $b_2$ and $q_{2n_0+1}$, than for a full measure set in $\mathbb{R}^2$ of these parameters there is a lower $1+1$-Gevrey bound of GTPS.
		\label{th2}
	\end{theorem}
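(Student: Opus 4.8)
The plan is to turn the upper-triangular system for the even Taylor coefficients into a single nonlinear recurrence --- the \emph{Taylor recurrence operator} --- and to produce an invariant cone in sequence space on which every solution already saturates the bound of Theorem \ref{th1} from below, so that divergence is forced for every parameter whose solution lands in the cone. From the Colin de Verdi\`ere--Treschev computation, matching the $(n-1)$-st twist coefficient with the one prescribed by $b$ has the form
\[
\Lambda_n\,q_{2n}\;=\;A_n\,q_{2n-2}\;+\;P_n\bigl(q_2,\dots,q_{2n-4};\,q_3,\dots,q_{2n-1};\,b_0,b_2,b_4,\dots;\,\tilde q\bigr),
\]
where $\Lambda_n$ is the order-$2n$ small divisor, $A_n$ is the (universal, $b_{2k}$-dependent) coefficient of the top linear term, and $P_n$ collects the genuinely lower-order contributions: the bilinear couplings such as the one through $q_3$ and the top odd coefficient $q_{2n-1}$, the convolution sums $\sum_j q_{2j}q_{2(n-j)}$ and their mixed even/odd analogues, and the $b_{2k}$-terms. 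Solving for $q_{2n}$ defines the recurrence operator; consistently with Theorem \ref{th1}, $A_n/\Lambda_n$ has size comparable to $n^2$.

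I would then rescale by the running product $R_n:=R_{n_0}\prod_{k=n_0+1}^{n}A_k/\Lambda_k$ and set $u_n:=q_{2n}/R_n$, so the system becomes the perturbed identity $u_n=u_{n-1}+E_n(u_2,\dots,u_{n-1})$ with $E_n=P_n/(\Lambda_n R_n)$. The restricted partial quotients hypothesis on $b_0/\pi$ enters twice: it gives $|\Lambda_n|\ge c/n$ (so that no single step of the original recurrence blows up), and, since $b_0/\pi$ being of bounded type makes $\#\{k\le n:|\Lambda_k|<\delta\}=O(n\delta+1)$, it gives $\sum_{k\le n}\log(1/|\Lambda_k|)=O(n)$; with $A_k\sim k^2$ this yields $R_n=e^{2n\log 2n+O(n)}$. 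Define the cone $\mathcal C$ as the set of sequences for which $\delta_0\le|u_n|\le M_0$ for all $n$ past some level $n_1$, with fixed constants $\delta_0,M_0$ to be chosen.

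The core is a \emph{cone-invariance lemma}: for $n_0$ large and $\delta_0,M_0$ suitably related, if $\delta_0\le|u_k|\le M_0$ for $n_1\le k\le n-1$ then $|E_n|\le\delta_0\,2^{-(n-n_1)}$, so the partial sums of the $E_k$ stay below $\delta_0/2$ and $|u_n|$ remains in $[\delta_0,M_0]$. The substance --- and the step I expect to be the main obstacle --- is the estimate on $E_n$, whose delicate part is the convolution sums: with $R_m\sim e^{2m\log 2m}$ one has $R_jR_{n-j}/R_n=e^{-2nH(j/n)+o(n)}$ for the binary entropy $H$, so the terms with $j/n$ away from $0$ and $1$ are exponentially small, while those with $j$ or $n-j$ bounded carry a factor $\sim n^{-2j}$ that beats the polynomial growth of the corresponding coefficient in $P_n$; the mixed and $b_{2k}$-terms are controlled the same way using the geometric decay of the analytic data $q_{odd}$ and $b$. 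Granting the lemma, every sequence in $\mathcal C$ satisfies $|q_{2n}|=|u_n|R_n\ge(\delta_0/2)\,e^{2n\log 2n-Cn}$ for all large $n$: a lower $1+1$-Gevrey bound. Applying the same scheme to the formal derivative of the recurrence shows the linearized operator preserves an analogous cone.

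It remains to see that eventual membership in $\mathcal C$ holds for a full-measure set of $(b_2,q_{2n_0+1})$. Upper-triangularity makes $q_{2n_0+1}$ first affect $q_{2n_0+2}$, entering there linearly with coefficient proportional to $q_3/\Lambda_{n_0+1}\ne0$ --- this is where $q_3\ne0$ is used --- and throughout the window $n_0<n\le 2n_0$ the coefficient $q_{2n}$ stays affine in $q_{2n_0+1}$ with nonzero slope (propagated by the linearized operator) and polynomial of bounded degree in $b_2$, so the seed $(u_{n_0+1},u_{n_0+2})$ is a nonconstant function of $(b_2,q_{2n_0+1})$. Since $E_n$ is small whenever the orbit is bounded, the set of parameters whose orbit never enters $\mathcal C$ forces $|u_n|<\delta_0$ for all large $n$, hence $u_n\to0$ and GTPS close to analytic, so this set lies in the stable set of the fixed point $u\equiv0$ of the Taylor recurrence operator; using the cone property of the linearized operator, this stable set meets the $(b_2,q_{2n_0+1})$-plane in a countable union of graphs --- equivalently, in the zero set of a nonzero real-analytic function --- hence in a set of two-dimensional Lebesgue measure zero, whose complement contains an open set because $\mathcal C$ is open. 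On that complement one is eventually in $\mathcal C$ (or has $|u_n|>M_0$ infinitely often, which gives the bound outright), so the previous step applies; Theorem \ref{th3} then follows from Theorem \ref{th2} by the Fubini-type argument over the Hilbert brick indicated in the Remark.
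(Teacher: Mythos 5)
Your proposal has two genuine gaps, one structural and one at the very step that is supposed to deliver the full-measure conclusion.

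\textbf{The reduction to a perturbed first-order recurrence does not hold.} You write $\Lambda_n q_{2n}=A_n q_{2n-2}+P_n$, divide by the running product $R_n=R_{n_0}\prod A_k/\Lambda_k$, and assert that $E_n=P_n/(\Lambda_n R_n)$ is geometrically small so that the band $\delta_0\le|u_n|\le M_0$ is invariant. But $P_n$ also contains the \emph{linear} terms from the lower even coefficients, the ones the paper denotes $R_{\sigma,n}q_{2n-2\sigma}$ for $\sigma\ge 2$. After your rescaling, the $\sigma$-th such term is of size $\hat{R}_{\sigma,n}R_{n-\sigma}/R_n\cdot u_{n-\sigma}$, and since $\hat{R}_{\sigma,n}\sim\frac{(2n)!}{(2n-2\sigma)!}\frac{c^{\sigma}}{(2\sigma)!}$ while $R_{n-\sigma}/R_n\sim [(2n)!/(2n-2\sigma)!]^{-1}c^{-\sigma}$, the ratio is $\sim 1/(2\sigma)!$ with the $n$-dependence and the $\varphi$-dependent constant cancelling. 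Thus $E_n$ carries $O(1)$ contributions from $u_{n-2},u_{n-3},\dots$; the true normalized recurrence is the infinite-order $\breve q_{2j}=-\sum_{\sigma\ge1}\breve q_{2j-2\sigma}/(2\sigma)!$ (Euler numbers, $\operatorname{sech}$), not a perturbed identity. In particular your band $\mathcal C$ is not invariant: the solution oscillates and decays at rate $4/\pi^2$ per step, which is exactly the $e^{O(n)}$ discrepancy between $R_n$ and the actual growth. The paper deals with this by renormalizing once more to $\mathbf q_{n,j}=\breve q_{2n-2j}/j!$, obtaining a compact operator $T$ on $\ell^2$ with top eigenvalue $-4/\pi^2$, and building an invariant cone around the corresponding one-dimensional eigenspace $L_1$. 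A cone in $\ell^2$ with a spectral gap is what makes the error propagation controllable; a scalar band does not capture this.

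\textbf{The full-measure step is asserted, not proved.} You claim the stable set of $u\equiv0$ meets the $(b_2,q_{2n_0+1})$-plane in the zero set of a nonzero real-analytic function (equivalently a countable union of graphs), whence measure zero. Neither equivalence nor the analyticity is justified, and there is no reason to expect the stable set of a compact operator's fixed point, traced out by an infinite, parameter-dependent, nonlinear recursion, to be an analytic variety. The paper takes an entirely different route here: each $q_{2n}$ is a polynomial of degree $O(n)$ in the two parameters $(b_2,q_{2n_0+1})$, so the family of rescaled series fits the hypotheses of P\'erez-Marco's potential-theoretic theorem (cited as \cite{perez}), which says that for such a polynomial family the set of parameters where the radius of convergence is infinite (i.e., where the lower Gevrey bound fails) is either everything or a polar set. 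Since Theorem \ref{th4} supplies an explicit nonempty open set of parameters where the lower Gevrey bound holds, the first alternative is ruled out; hence the bad set is polar and in particular of Lebesgue measure zero. This is the content of the remark following Theorem \ref{th2}. Your argument would need either a proof that the stable set is an analytic variety (which you would have to supply and which I do not think is true in this generality) or an invocation of the polynomial-dependence/potential-theory mechanism, which is what the paper actually uses.

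The parts of your outline that are on target are the identification of the Taylor recurrence operator, the role of $q_3\ne0$ (it is exactly the condition that $\varphi_{2,0}\cos b_0+\varphi_{1,1}+\varphi_{0,2}\cos b_0\ne0$, enabling the $\breve q$-rescaling), the observation that $q_{2n_0+1}$ enters $q_{2n_0+2}$ linearly and is used to push the orbit into the cone, and the role of the restricted-partial-quotients hypothesis in controlling the small divisors. These are all present in the paper's proof. What is missing is the correct functional-analytic framework for the cone (the operator $T$ on $\ell^2$ and a genuine sector around its leading eigenvector) and the P\'erez-Marco argument to pass from an open set to a full-measure set.
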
 
	
	\begin{remark}
		The properties of this full measure set come from \cite{perez}. Particularly, the generic point on the plane is in this set.
	\end{remark}

	\begin{remark}
		In Theorem \ref{th3} condition on $b_0/\pi$ can be improved from partial quotients to all Diophantine numbers. Similar improvements can also be done in Theorem \ref{th2} under specific conditions. See \eqref{eq15} for details. 
		\label{rema2}
	\end{remark}
	Theorem \ref{th3} and \ref{th2} show that for a large set of parameters GTPS is Gevrey. However, they give no regularity of this set and don't even provide a single explicitly given element of it. We give open and uniform conditions for it in the next theorem:
	
	\begin{theorem}
		For any $C, D$, for any large enough $n_0$ there exists $C_8$, such that the following holds. Assume we are given $q_{odd}(t)$ and $b(r^2)$ that satisfy the following:
		\begin{itemize}
			\item $|q_{2n+1}| < e^{Dn}$ for any $n > 0$.
			\item $|b_{2n}| < e^{Dn}$ for any $n > 0$.
			\item $e^{C-0.01} < |q_3| < e^{C}$.
			\item $|b_0/\pi - m/n| > \frac{1}{Dn^2}$ for any $m/n \in \mathbb{Q}$.
			\item $\omega > e^{-D}$.
			\item $|b_2| < e^{2C}\Phi(b_0)$ with $\Phi$ explicitly defined later, but independent of $C$ and $D$. 
		\end{itemize}
		Then, if $q_{2n_0+1}$ gets increased by $C_8$, there is a $1+1$-Gevrey lower bound on GTPS. Particularly, we have:
		\begin{equation}
			|q_{2n}| \ge c^n e^{2n\log(2n)} 
		\end{equation}
		for some $c(C, D)$ and large enough $n$. 
		\label{th4}
	\end{theorem}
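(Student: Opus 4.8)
The plan is to turn the strategy announced in the abstract into a precise argument: construct an explicit \emph{Taylor recurrence operator}, isolate its dominant part, and run a cone argument in the space of rescaled Taylor coefficients. First I would derive the recurrence. Writing the conjugacy equation between the collision map and the integrable normal form with Birkhoff series $b(r^2)$, and extracting its coefficient of order $2n$, one obtains (following \cite{cdv}) an identity that is affine in $q_{2n}$ with nonvanishing leading coefficient; solving for $q_{2n}$ yields $q_{2n}=\mathcal{R}_n(q_2,\dots,q_{2n-2};\,q_{odd},b,\tilde q)$. Then I would pass to the rescaled unknowns $\xi_n := |q_{2n}|\,e^{-2n\log 2n}$, so that Theorem~\ref{th1} reads simply $\xi_n \le e^{Cn}$. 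The purpose of this normalization is that the leading, linear, near-top-order part of $\mathcal{R}_n$ becomes almost autonomous — a finite-range linear recurrence in the $\xi_k$ whose coefficients converge as $n\to\infty$ to constants determined by $b_0$, $b_2$, $\omega$ and the low-order data — plus a correction $\mathcal{E}_n$ assembled from the genuinely lower-order contributions.

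The heart of the proof is the \textbf{cone property}. I would define a cone $K$ by a two-sided sandwich on the rescaled coefficients: the a~priori upper bound $\xi_k \le e^{Ck}$ of Theorem~\ref{th1}, together with a lower constraint $\xi_k \ge c^{\,k}$ for $k \ge n_0$, where $c=c(C,D)$ is to be fixed. (Note $c$ need not exceed $1$: any $c>0$ already gives $|q_{2n}|\ge c^{\,n}(2n)^{2n}$, hence zero radius of convergence, so the solution is Gevrey but not analytic.) The claim to establish is $\mathcal{R}_n(K\text{-admissible data}) \in K$. Its upper half is Theorem~\ref{th1}. For the lower half, the almost-autonomous leading part reproduces a lower bound $\asymp c\cdot\xi_{n-1} \asymp c^{\,n}$, provided the limiting leading coefficient is nonzero and is not cancelled — and this is exactly what the explicit hypotheses of Theorem~\ref{th4} are for. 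The Diophantine bound $|b_0/\pi - m/n| > 1/(Dn^2)$ keeps the small divisors appearing in $\mathcal{R}_n$ under polynomial control; $\omega > e^{-D}$ keeps the geometry non-degenerate; $e^{C-0.01} < |q_3| < e^{C}$ forces the relevant leading coefficient to be nonzero and of controlled size — this is the genericity that the original doubly-symmetric Treschev problem, where all odd coefficients vanish, does not enjoy — and $|b_2| < e^{2C}\Phi(b_0)$ excludes the one remaining way the first twist coefficient could cancel that term. With these, the correction must be shown to satisfy $|\mathcal{E}_n| \le \tfrac12 c^{\,n}$ using the upper bound $\xi_k \le e^{Ck}$ for all $k$, together with the fact that $\mathcal{E}_n$ involves only coefficients strictly farther from the top, so that the $e^{2k\log 2k}$ weights overwhelm the $e^{Ck}$ factors once $n_0$ is large.

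Given the cone property, I would \emph{seed} the cone. The perturbed parameter $q_{2n_0+1}$ enters $\mathcal{R}_{n_0+1}$ linearly with a coefficient bounded below (again via $q_3\ne 0$ and the other hypotheses), so replacing $q_{2n_0+1}$ by $q_{2n_0+1}+C_8$ changes $q_{2n_0+2}$ by an amount comparable to $C_8$. Since the lower threshold at level $n_0+1$ is the explicit number $c^{\,n_0+1}e^{2(n_0+1)\log(2n_0+2)}$, choosing $C_8=C_8(C,D,n_0)$ large enough — which is why $C_8$ may be named only after $n_0$ — puts $\xi_{n_0+1}$ above threshold, and the cone property then propagates $\xi_n \ge c^{\,n}$ to all $n \ge n_0+1$, giving $|q_{2n}| \ge c^{\,n}e^{2n\log 2n}$ as claimed. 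Openness is automatic: every hypothesis in the list, and every inequality used in the cone estimates, is strict and involves only finitely many parameters at a time (the tails enter solely through the uniform bounds $e^{Dn}$), so the conclusion persists under small perturbations of $q_{odd}$ and $b$.

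The step I expect to be the main obstacle is the cone property, and inside it the control of the correction $\mathcal{E}_n$. The danger is that $\mathcal{R}_n$ contains terms in which a \emph{low-order} factor $q_{2k}$ with $k$ bounded multiplies a factor of near-top order; estimating such a term naively through Theorem~\ref{th1} gives something of size $\sim e^{Cn}$ times a fixed power of $n$, which is \emph{larger}, not smaller, than the target $c^{\,n}$. Hence one cannot bound $\mathcal{E}_n$ term by term against the upper estimate. One must instead either absorb these dangerous terms into the leading almost-autonomous operator — so that they are part of the linear recurrence one analyzes, not of the remainder — or exhibit compensating small factors for them coming from the Diophantine and geometric structure. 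Producing such a clean decomposition, ``leading finite-range near-constant-coefficient linear recurrence plus a truly negligible remainder'', uniformly for $n\ge n_0$, and then reading off the lower bound for the solutions of that linear recurrence, is the technical core. A secondary, essentially bookkeeping, difficulty is tracking the dependence of $n_0$, $C_8$ and $c$ on $C$, $D$ and $\Phi(b_0)$ so that the statement is genuinely open and uniform.
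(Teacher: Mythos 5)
Your high-level outline correctly identifies the main ingredients (extract a near-top-order recurrence, rescale, argue the low-order corrections are negligible once $n_0$ is large, seed the lower bound by perturbing $q_{2n_0+1}$, show openness), and you correctly locate the hard step. But the object you call a ``cone'' is a scalar box $c^k \le \xi_k \le e^{Ck}$ on the \emph{absolute values} of the rescaled coefficients, and this is not the cone the argument needs. The leading recurrence that the paper isolates (after the change of variables \eqref{coorch}) is the constant-coefficient, infinite-range relation
\begin{equation}
\breve q_{2j}=-\sum_{\sigma\ge 1}\frac{\breve q_{2j-2\sigma}}{(2\sigma)!},
\end{equation}
whose solution is $\breve q_{2j}=E_{2j}/(2j)!$ — Euler numbers over factorials, which alternate in sign. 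Because of this alternation, a pointwise lower bound $|\breve q_{2k}|\ge c^k$ for $k<j$ gives no lower bound on $|\breve q_{2j}|$: the contributions in the sum can partially cancel, and the box constraint carries no information that prevents this. So the step ``the almost-autonomous leading part reproduces a lower bound $\asymp c\cdot\xi_{n-1}$'' is not justified, and this is exactly the cancellation problem the theorem is designed to overcome. The absolute-value normalization $\xi_n=|q_{2n}|e^{-2n\log 2n}$ throws away the sign (more precisely, the phase) information that is needed.

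What the paper actually does is work with the full vector $\mathbf q_n\in\ell^2$ with entries $\mathbf q_{n,j}=\breve q_{2n-2j}/j!$, and with a compact operator $T$ on $\ell^2$ implementing the model recurrence. It computes the spectrum of $T$ (Lemma~\ref{lema56}: $\{-4/((2m+1)^2\pi^2)\}$), splits $\ell^2=L_1\oplus L_2$ into the top eigendirection and the complementary invariant subspace, and defines the cone $\mathcal C_\theta$ as the set where the $L_1$-component dominates the $L_2$-component. That cone \emph{is} invariant under $T$ (up to the $n^{-4/5}$ error, Lemma~\ref{lema64}), and being in it guarantees that iteration multiplies the vector — and in particular its first component $\breve q_{2n}$ — by roughly the dominant eigenvalue $-4/\pi^2$ each step; this is where the geometric lower bound comes from. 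Seeding the cone also requires more than ``the perturbation enters linearly'': one must show that the unperturbed solution together with the bump $C_8$ drives the iterates into $\mathcal C_\theta$, which the paper does by computing the generating function $\hat{\mathbf Q}(z)=-S_{2n_0+1}^{-1}C_8\tanh z$ of the perturbed model system and reading off that its first component grows at the dominant rate, hence its $L_1$-part eventually dominates. None of this can be reconstructed from a two-sided box on $|q_{2n}|$.

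Two smaller inaccuracies: the leading recurrence is infinite-range with factorially decaying coefficients, not finite-range, which is precisely why a compact operator on $\ell^2$ (rather than a finite matrix) is the right model; and the low-vs-near-top cross terms you flag as ``dangerous'' are handled in the paper not by absorbing them into the linear operator but by a separate (long) tree-by-tree estimate in Section~\ref{sec5}--\ref{sec6} that shows they contribute only $O(n^{-4/5})$ relative error, which is then fed into the operator cone argument as the perturbation in Lemma~\ref{lema64}.
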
 

	\subsubsection{Cone property for Taylor recurrence operator}
	Our upper-triangular system has many terms with various signs. So, one of the main difficulties of proving Theorem \ref{th4} is dealing with potential cancellations that may arise. We tackle this problem in the following way. 
	
	In our case, the upper-triangular system can be well approximated by a linear recurrence with constant coefficients. To study the properties of the solution, we consider the corresponding linear operator (as one considers $\begin{pmatrix}
		1 & 1 \\ 1 & 0
	\end{pmatrix}$ for Fibonacci sequence). Since the operator tracks the behavior of Taylor coefficients of $q$, we call it the Taylor recurrence operator. Unfortunately, the order of the recurrence is infinite, so we have to deal with the Hilbert space $\ell^2$, where the operator becomes compact after rescaling.

	One would assume that the successive iterates of a vector under this operator will come closer and closer to the largest eigenvalue eigenvector and this will control the cancellations. Unfortunately, this won't always happen (for example if the starting vector was another eigenvector). To deal with this, we introduce an invariant cone around the largest eigenvector. We roughly define the cone as:
	
	\begin{equation}
		\mathcal{C}_\theta=\left\{\mathbf{x} \in \ell^2 | \mathbf{x} = \mathbf{x}_1 + \mathbf{x}_2, \mathbf{x}_1 \in L_1, \mathbf{x}_2 \in L_2, ||\mathbf{x}_1|| > \theta ||\mathbf{x}_2|| \right\},
	\end{equation}

	where $L_1$ is the span of the eigenvector and $L_2$ is the invariant hyperplane, such that $L_1\oplus L_2 = \ell^2$. Then, as long as the solution is inside the cone, it remains there and behaves regularly, allowing us to control the errors and to prove Theorem \ref{th4}. To get the solution into the cone to begin with, we need to change $q_{2n_0+1}$ by $C_8$.  
	\usetikzlibrary {patterns} 
	\begin{figure}
		\centering
		\begin{tikzpicture}[scale=1.0]
			\node [connection] (0) at (-2, -0.3) {};
			\node [connection] (1) at (6, 0.9) {};
			\node [connection] (2) at (-0.2, -0.7) {};
			\node [connection] (3) at (0.4, 1.4) {};
			\node [connection] (4) at (0.3, -0.9) {};
			\node [connection] (5) at (-0.6, 1.8) {};
			\node [connection] (6) at (0.0, -0.6) {};
			\node [connection] (7) at (0.0, 1.2) {};
			\node [connection] (8) at (-1.8, 0.6) {};
			\node [connection] (9) at (3.6, -1.2) {};
			\node [connection] (10) at (-1.8, -1.0) {};
			\node [connection] (11) at (3.6, 2.0) {};
			\node [connection] (-1) at (0, 0) {};
			\node [connection] (30) at (2.4, -0.8) {};
			\node [connection] (31) at (2.7, 1.5) {};
			\node [connection] (40) at (-2, 0) {};
			\node [connection] (41) at (6, 0) {};
			\node [connection] (42) at (-1.8, -0.6) {};
			\node [connection] (43) at (5.85, 1.95) {};
			\fill[color = red!20] (-1.2, 0.4) -- (0, 0) -- (-0.9, -0.5) to[out=120,in=-120] (-1.2, 0.4);
			\fill[color = purple!30] (-1.5, 0) -- (0, 0) -- (-1.5, -0.5) to[out=120,in=-120] (-1.5, 0);
			\draw [red, thick] (8) -- (-1);
			\draw [red, thick] (10) -- (-1);
			\draw [purple, thick](40)--(-1);
			\draw [purple, thick](42)--(-1);
			\draw [<<-, thick] (0) -- (-1);
			\filldraw[blue!30, thick,  rotate=100] (0.5,0) ellipse (2 and 0.75);
			\fill[color = red!20] (2.4, -0.8) -- (0, 0) -- (2.7, 1.5) to[out=-60,in=60] (2.4, -0.8);
			
			\fill[color = purple!30] (4.5, 0) -- (0, 0) -- (4.5, 1.5) to[out=-60,in=60] (4.5, 0);
			
			\draw [red, thick] (-1) -- (9);
			\draw [red, thick] (-1) -- (11);
			\draw [purple, thick](-1)--(41);
			\draw [purple, thick](-1)--(43);
			
			\draw [->>, thick] (-1) -- (1);
			\draw [<->, thick] (2) -- (3);
			\draw [<->, thick] (4) -- (5);
			\draw [<->, thick] (6) -- (7);
			\node [circle, fill, inner sep = 1] (50) at (1.5, -0.3) {};
			\node [circle, fill, inner sep = 1] (51) at (2.5, 0.15) {};
			\node [circle, fill, inner sep = 1] (52) at (2.6, -0.2) {};
			\node [circle, fill, inner sep = 1] (53) at (3.5, 0.3) {};
			\node [circle, fill, inner sep = 1] (54) at (3.6, 0) {};
			\node [circle, fill, inner sep = 1] (55) at (5, 0.5) {};
			\draw [->, thick, darkgray] (50)--(51);
			\draw [->, thick, darkgray] (52)--(53);
			\draw [->, thick, darkgray] (54)--(55);
			\draw [->, thick, violet] (51)--(52);
			\draw [->, thick, violet] (53)--(54);
			\node [connection, text = red] at (4.0, 1.8) {$\mathcal C_{\theta}$};
			\node [connection, text = purple] at (5.5, 1.3) {$T \mathcal C_{\theta}$};
			\node [connection] at (6, 0.5) {$L_1$};
			\node [connection, text = blue] at (0.5, 2.3) {$L_2$};
		\end{tikzpicture}
		
		\caption{Cone property of operator $T$. The dynamics of the point under $T$ is shown with gray arrows and purple arrows denote the errors of approximation.}
	\end{figure}
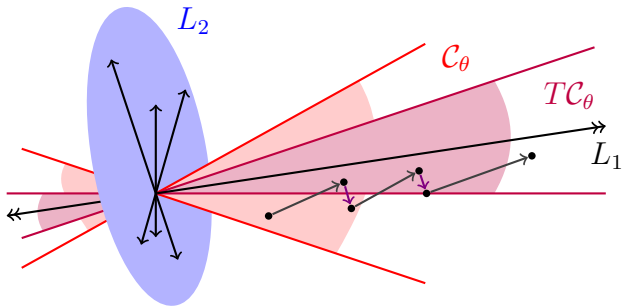

	\subsubsection{Formal conjugacy problem}
	
	Formal series methods were used in dynamics for a long time. One of the most famous results in this field is that by Siegel \cite{siegel}, where the author have proven that the dynamics around a periodic point of wide variety of $1$-dimensional complex dynamical systems is conjugate to the irrational rotation. In the original paper, the formal power series and their convergence are investigated, although since then several other proofs were developed.
	
	Without the complex structure, however, the problem becomes more complicated. In the symplectic case the formal conjugacy between the map and its normal form was shown by Birkhoff in \cite{birk}. Still, the question of convergence was mostly open. It was recently answered by Krikorian in \cite{krikorian}, where it was shown that for a given map the normal form is generally divergent (the divergence of the conjugacy was known before). The proof uses sizes of invariant circles around the stationary point.
	
	A surprising result, that was used by \cite{krikorian}, is that of P\'erez-Marco \cite{perez}. It states that if the convergence of the type of recurrent relation we are studying is investigated, and the dependence of the recurrent elements on the parameters is given by some polynomials, whose degree is bounded, then either we get a convergence for all parameters, or only for a small set of them. The proof is an application of potential theory.
	
	\subsubsection{Relation to other billiard problems}
	
	So, in some sense, our result deals with the opposite problem to \cite{krikorian}. Instead of studying the convergence of the normal form with given $q(t)$, we study the convergence of $q(t)$, given the normal form. Despite this difference, our question offers applications to the inverse spectral problem and integrability questions. The question of determining the domain by its spectrum was asked by Kac in \cite{kac} and has since had many results published about it. Usually, the normal form can be deduced from the spectrum using asymptotic analysis, see, for example, \cite{hezzel}, \cite{leguil}, \cite{osterman} and \cite{z}. In \cite{leguil} and \cite{osterman} the authors manage to recover the normal form, but they cannot recover odd derivatives, so they assume symmetry and use \cite{cdv}. Potentially, our method could help to find the odd derivatives, since other choices would lead to the divergence of even series (although they deal with hyperbolic periodic points). Hence, our inverse problem can have many applications in this direction.    
	
	We have to mention that the periodic points' neighborhoods are not the only local regimes in billiards. One can ask the conjugacy question near the boundary, as well as near the invariant curves, including KAM curves. The boundary question has seen some progress in recent years. Mather's beta function and its formal expansion at $0$ play the role of the Birkhoff normal form in this regime. Instead of being polynomials of the Taylor expansion of $q$ at reflection points, these coefficients can be determined by integrating the differential polynomials of curvature over the boundary. The expressions for some of them were found by Sorrentino in \cite{sorrent}. Instead of linear, they have a quadratic main term, and this was exploited by Melrose \cite{melrose} and Vig \cite{vig} to prove the compactness of the isospectral set for Laplace and length spectrum respectively. We also note that Gevrey growth is conjectured to arise in this regime in a related problem, see \cite{ramirezros}.  
	
	\subsubsection{Standard maps and beyond}
	However, the methods of this paper can be applied not only to billiards. The core of the proof works for other two-dimensional area preserving maps. These maps should have a one-dimensional potential $q(t)$ for us to determine. One of the most studied maps of this type is a standard map, given by:
	\begin{equation}
		\begin{cases}
			x' = x + y' \\
			y' = y + q(x)
		\end{cases}.
	\label{stmap}
	\end{equation}

	If $q(\pi) = 0$ and $-4 < q'(\pi) < 0$, the map will have an elliptic point at $(\pi, 0)$. One can ask the same question for this map:
	\begin{question}
		Assume $q_{even}(x)$ and $b(r^2)$ are analytic parameters of the problem. For which parameters does there exist an analytic $q_{odd}(x)$, realizing the given normal form at $(\pi, 0)$?
	\end{question}

	Here, we count even and odd w.r.t $\pi$, since this is where we are expanding $q(x)$ into series. Note that the role of odd and even are switched in this problem, since the generating function has an antiderivative of $q(x)$, whereas for billiards it has $q(t)$ itself, and parity changes when we differentiate. 
	
	We expect the same results with Gevrey growth to hold in this problem (accounting for parity switch). The plan of the proof should be similar as well -- one difference is that the main equation will have a new form, so one would have to estimate different error terms. But the problem should be formally uniquely solvable and the principal terms should also remain the same, as well as the cones and the recurrence formula.
	
	There are other classes of maps, where it may be harder to apply these methods. An example of such a class would be some types of geodesic flow. The main difficulty of that class is that it is not a discrete map, and so one would have to consider a first return map and try to find a function to expand at one point. Alternatively, one can consider symplectic maps in more dimensions, even including billiards. 
	
	\subsubsection{Trees and formal methods}
	
	One of the main difficulties of studying the Treschev problem lies in the number of terms one has to estimate. To better categorize all those terms, we use the concept of trees. To get a needed coefficient, one just has to sum the weights of all the trees that follow some specific rule. The vertices will represent the expansion coefficients that are featured in the term.
	
	This type of Feynman tree method is widely used in similar problems. For example, the authors of \cite{corsi1} use trees to study quasi-periodic solutions of some Hamiltonian flow around an elliptic point. In their case, they also have a complicated formal equation and they introduce trees to control the terms, add labels and rules to the trees and sum their weights to get the contribution, thus solving the equation formally. Trees also appear in studies of the NLS, see \cite{corsi2} and \cite{denghani}.
	
	\subsubsection{Comments on the theorem assumptions}

	In our main results we have made several assumptions on the parameters of GTP. Now, we are going to explain their importance and what will happen, if we remove them. 
	
	Perhaps the greatest restriction of Theorem \ref{th2} is the demand that $q_3 \ne 0$. This sadly means that we are unable to state the lower bound result for the original problem, and that the question of its convergence remains open. The reason for this is discussed in Appendix \ref{apb} and \eqref{eq521} in detail, along with some numerical data. In short, some recurrent relation degenerates in that case. Everything revolves around $q_3$, since it the first odd derivative ($q_1$ is zero, otherwise there is no orbit). It seems that the odd derivatives in general give growth to the system, and without them there are large cancellations. Our numerical analysis leads us to the following:
	
	\begin{conjecture}
		In the original Treschev problem, if the rotation number is Diophantine, then
		\begin{equation}
			|q_{2n}| \le e^{(n \log n) / k} e^{Cn}
		\end{equation}
		for any given $k > 0$ for some $C(k)$. 
		\label{conj1}
	\end{conjecture}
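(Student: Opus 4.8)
The plan is to return to the Taylor recurrence operator of the previous subsection and analyse it in the degenerate regime $q_{odd}(t)\equiv 0$ imposed by the two axes of symmetry, showing that the mechanism producing the full $1+1$-Gevrey growth is switched off to arbitrarily high order. Recall that in the general problem the $2n$-th coefficient is recovered from a relation of the schematic form $a_n q_{2n}=P_n(q_2,\dots,q_{2n-2};q_{odd})+(\text{twist term})$, and the factorial loss in Theorem \ref{th1} comes from iterating the division by the principal coefficient $a_n$, which carries a small divisor of size $\asymp 1/n$ coming from the Diophantine condition on $b_0/\pi$. The first step is to make explicit, via the tree expansion sketched above, \emph{where} in each tree small divisors can accumulate, and to establish the principle behind the paper's heuristic that ``the odd derivatives give growth'': I expect the sub-structures of a tree along which small divisors pile up to be weighted, in the final summation, by products of odd Taylor coefficients $q_3,q_5,\dots$. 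Setting all of these to zero then forces every tree with small-divisor weight exceeding $e^{(n\log n)/k}$ to vanish identically, for each fixed $k$, leaving only trees with weight $O(e^{(n\log n)/k})$ — which, summed with the triangle inequality, would already give the conjectured bound \emph{if} no further cancellation were needed.

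Of course further cancellation \emph{is} needed, and that is the genuine difficulty. Even after imposing $q_{odd}\equiv 0$ the surviving trees have varying signs, and a crude estimate recovers only the weaker $1+1$-Gevrey bound of Theorem \ref{th1}; Conjecture \ref{conj1} requires resumming. I would group the surviving trees into families differing by insertions of resonant subtrees along a path and show that each family sums to something exponentially smaller than its worst member, the gain being exactly one factor of $n$ per insertion — the renormalization mechanism familiar from tree expansions for quasi-periodic problems (cf. \cite{corsi1}, \cite{corsi2}). In operator language: with $q_{odd}\equiv 0$ the iterates of the relevant vector should stay, up to an error lying in yet another eigendirection, inside a subspace on which the Taylor recurrence operator acts with a spectral gap, so that peeling off $k$ such layers one at a time cuts the effective small-divisor content by a factor $k$. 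This is what would yield $|q_{2n}|\le e^{(n\log n)/k}e^{C(k)n}$, with $C(k)\to\infty$ as $k\to\infty$; the degeneracy responsible for all of this is the one discussed around \eqref{eq521} and in Appendix \ref{apb}.

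The main obstacle is controlling these signed cancellations uniformly in $n$: the structural reduction is soft, but the resummation demands a sharp, non-perturbative description of which sub-structures actually carry small divisors once the odd vertices are removed, together with a proof that the associated sums telescope rather than merely being bounded term by term. A secondary difficulty is arithmetic bookkeeping — the parameter $k$ ought to be tied to the Diophantine exponent of $b_0/\pi$ and the growth of its continued-fraction denominators, and making ``for any $k$'' uniform over the arithmetic of $\omega$ is delicate. I do not expect this scheme to reach $k=\infty$: that is Question \ref{q1} itself, and the blow-up of $C(k)$ is consistent with analyticity either failing or lying strictly deeper than any finite-order cancellation can reach.
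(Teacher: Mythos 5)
You should first be aware that the statement you were asked to prove is stated in the paper as a \emph{conjecture}: the paper offers no proof, only heuristics and numerical evidence in Appendix \ref{apb}, and explicitly concedes that the iterative cancellation scheme needed to push the Gevrey order down ``to arbitrarily high order'' could not be carried out rigorously because the number of estimates grows too fast. Your proposal is likewise a plan rather than a proof, and the step on which everything hinges --- the claim that each family of trees obtained by inserting resonant subtrees resums to something exponentially smaller than its worst member, gaining one factor of $n$ per insertion --- is exactly the step the paper could not do; you supply no mechanism for it beyond analogy with \cite{corsi1}. So there is no completed argument here, which is consistent with the statement being open.

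More importantly, the mechanism you identify as the source of the $1+1$-Gevrey growth is not the one operating in this problem, and this misdirects the whole scheme. You attribute the factorial loss to a small divisor of size $\asymp 1/n$ in the principal coefficient $a_n$ and posit that the odd Taylor coefficients weight the tree sub-structures along which small divisors accumulate. In fact the linear coefficient of $q_{2n}$ is $\cos(b_0/2)^{2n}\,2n\cos\alpha\binom{2n-1}{n}$, which contains no small divisor; small divisors enter only through circle vertices $\varphi_{j,k}$ with $|j-k|$ large, and Section \ref{sec4} argues they do \emph{not} drive the growth when $\lambda$ is Diophantine. The factorial growth comes instead from the combinatorial multiplier $\binom{C_q+s_q}{s_q}$ accumulated along chains of principal trees whose circle children have order $2$ and carry the weights $\varphi_{2,0},\varphi_{1,1},\varphi_{0,2}$, each proportional to $q_3$ (Appendix \ref{ap1}). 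Setting $q_{odd}\equiv 0$ kills these order-$2$ circles and forces order-$3$ ones, which replaces the model recurrence \eqref{newrec} (generating function $\mathrm{sech}\,t$, radius $\pi/2$) by \eqref{eq521} (generating function $e^{-t^2}$, entire); that is the cancellation behind Conjecture \ref{conj1}, and it is combinatorial, not arithmetic. Consequently your secondary expectation that $k$ should be tied to the Diophantine exponent of $b_0/\pi$ points the wrong way: the paper expects the Diophantine condition to be \emph{relaxable} in the symmetric case precisely because the growth is not fueled by small divisors there. A viable attack would have to iterate the reduction to \eqref{eq521} through successive orders of the error expansion (e.g.\ refining \eqref{eq49} and \eqref{twosigma}), showing the same cancellation at each level --- which is what the paper sketches and cannot close.
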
   
	
	In general, even if only $q_3$ degenerates, and assuming some conditions on $b$ (see below), it should be possible to get a better upper bound on $q_{2n}$:
	
	\begin{equation}
		|q_{2n}| \le e^{n \log n} e^{Cn}.
		\label{eq14}
	\end{equation}
	
	Next, we discuss the conditions on $b(r^2)$, including the restricted partial quotients and the choice of $b_2$ in Theorem \ref{th2}. These deal with the small denominator problem, that usually arises in this type of questions. Specifically, while we are solving the equations, denominators of type $\frac{1}{\lambda^k - 1}$ (where $\lambda = e^{ib_0}$ is a map eigenvalue) may appear and contribute to the large growth. To restrict and bound it, $b_0/\pi$ must be Diophantine:
	
	\begin{equation}
		\left|\frac{b_0}{\pi} - \frac{m}{n}\right| > C_{dio} n^{-\tau},
		\label{eq15}
	\end{equation} 

	where $m/n$ is rational and $\tau \ge 2$.
	
	This property gives heavy restrictions on the rotation number, when analytic convergence is studied. However, we are investigating Gevrey growth, so it can be relaxed in the original problem. In fact, there is a correspondence between growth rate and regularity of $\lambda$, and for Gevrey it is enough have an exponential function $e^{-Cn}$ instead of $n^{-\tau}$. This was also pointed in \cite{zhang}. They, however, use KAM methods, where it can be seen more clearly. We use formal direct computations, so this property manifests itself in various cancellations in the method and is not that clear. We discuss it in Appendix \ref{apc}. 
	
	In the proof, we deal with restricted partial quotients numbers, meaning $\tau = 2$. It is a rather strong condition, and many numbers don't satisfy it, but we don't want to complicate the proof by dealing with cancellations. Moreover, the situation changes when we include non-trivial normal form. Once we do this, the cancellations no longer exist, so large $\tau$ may contribute to the faster than $1+1$-Gevrey growth. In fact, the new procedure for growth appears, fueled by $\tau$ and $b_2$. While $\tau = 2$ and $b_2$ is small, it can be controlled, otherwise it will overpower the usual growth procedure. Since we use \cite{perez}, it is enough to prove the bounds for small $b_2$. 
	
	So, if $b(r^2)$ is trivial, then the condition on $b_0$ can be relaxed. Otherwise, if $\tau$ needs to be increased, we should demand that $b_2 = 0$ (and the same for $b_4$, etc., if we need to increase $\tau$ even more). Particularly, by setting several first twist coefficients to zero and to suit the Diophantine condition of $b_0/\pi$ and then applying the argument from \cite{perez} for all of them lets us justify Remark \ref{rema2} for Theorem \ref{th3}. 
	
	\subsubsection{Borel transform of Gevrey series}
	
	Now, we comment on how one can try to make sense of the obtained Gevrey series. First of all, every formal power series for $q(t)$ can be turned into a smooth function around $0$ by Borel's theorem. However, this function is not determined by the series uniquely, so one has to introduce errors and hence is likely to destroy the conjugacy. 
	
	Since we have proven $1+1$-Gevrey regularity, we can go a different path. In some cases, one can do a Borel transform of the series and obtain an analytic function in a sector at the origin, determined by the series. In \cite{sauzin} this procedure is described in detail. In our case, the leading term of the procedure allows for Borel transform, but we don't know how to deal with the errors.
	
	\subsection{Setting and plan of the proof}
	
	We assume that there exists some domain $\Omega$, in which there is a periodic orbit of rotation number $\omega = \frac{\tilde{p}}{\tilde{q}} \le 1/2$. We assume that $\Omega$ is invariant under rotation by $2\omega \pi$ around the origin and that $\Omega$ is analytic in the neighborhood of the reflection points. We introduce the support function of $\Omega$, using the notations in \cite{zhang}:

	\begin{equation}
		q(t) = \sup \left\{\left<z, e^{it}\right>: z \in \Omega \right\}.
	\end{equation}	
	
	Here, we associated the plane with $\mathbb{C}$ and put the symmetry center at the origin. This way, $q(t)$ is analytic and $2\pi/\tilde{q}$ periodic. Specifically, we assume that the support function of $\Omega$ is locally given by
	
	\begin{equation}
		q(t) = \sum_{j = 0}^{+\infty}q_j t^j,
	\end{equation}
	
	where $t = 0$ corresponds to the first reflection point. We choose the scaling constant $q_0 = 1$ and we demand $q_1 = 0$ for the periodic orbit to exist. Next, we assume that in the neighborhood of the orbit, the $\tilde{q}$-iterate of billiard dynamics is conjugated to an elliptic convergent Birkhoff normal form. 
	
	Specifically, there exist functions $\varphi(z, w)$ and $b(r)$, analytic around $0$, such that the map 
	
	\begin{equation}
		B(z, w) = \left(e^{ib(zw)} z, e^{-ib(zw)} w\right), \; \; B^{-1}(z, w) = \left(e^{-ib(zw)} z, e^{ib(zw)} w\right)
	\end{equation}

	generates a sequence $t_k = \varphi(B^k(z, w)) + (2k+1) \pi \omega$ for $k \in \mathbb{Z}$, when $w = \bar{z}$. We use the following notation:
	
	\begin{equation}
		\varphi(z, w) = \sum_{j, k = 0}^{+\infty} \varphi_{j, k} z^j w^k, \; \; b(r) = \sum_{j = 0}^{+\infty}b_{2j} r^j,
	\end{equation}

	where $\varphi_{0, 0} = 0$ and $b_{2j} \in \mathbb{R}, \; j \ge 0$. We also introduce an eigenvalue of the billiard map
	
	\begin{equation}
		\lambda = e^{ib_0}, \; \; |\lambda| = 1.
	\end{equation}
	
	As usual, we will require $\lambda$ to be a Diophantine number, since quantities like $\lambda^k - 1$ will arise in the denominators.

	However, not any choice of $b(r), \varphi(z, w)$ and $q(t)$ will lead to a conjugacy. Particularly we need that the sequence $\left\{t_k\right\}$ creates a billiard orbit. It should form a critical sequence of a generating function. The generating function, introduced in \cite{BIALY2017102} and studied in \cite{bialy}, is given by
	
	\begin{equation}
		S(t, t') = q\left(\frac{t + t'}{2}\right) \sin\left(\frac{t' - t}{2}\right)
	\end{equation}

	Since it is sufficient to check criticality only for $t_0$ for any $z$ and $w = \bar{z}$, we impose the following condition:
	
	\begin{equation}
		S_2(t_{-1}, t_0) + S_1(t_0, t_1) = 0.
	\end{equation}
	
	 First, we want to shift our objects a bit to avoid unnecessary constants. Since we consider the case $q = 2$ as the primary one, we will introduce new notation $\alpha = \frac{\pi}{2} -  \pi\omega$,  $\alpha \in \left[0, \pi / 2\right)$ so that it will be  $0$ for that case.	Next, we make $t_i$ be near $0$:
	 \begin{equation}
	 	t_- = t_{-1} + \pi \omega = \varphi(B^{-1}(z, w)), \; \; t = t_0 - \pi \omega = \varphi(z, w), \; \; t_+ = t_1 - 3\pi \omega = \varphi(B(z, w)).
	 \end{equation}
	 This mostly follows the ideas from \cite{zhang}. After substituting $S$ in the formula and using the symmetry of $q$, we get:
	
	\begin{align}
		q'\left(\frac{t_- + t}{2}\right)\cos\left(\frac{t - t_-}{2} - \alpha\right) + q\left(\frac{t_- + t}{2}\right)\cos'\left(\frac{t - t_-}{2} - \alpha\right) + \\ + q'\left(\frac{t_+ + t}{2}\right)\cos\left(\frac{t_+ - t}{2} - \alpha\right) - q\left(\frac{t_+ + t}{2}\right)\cos'\left(\frac{t_+ - t}{2} - \alpha\right) = 0.
		\label{maineq}
	\end{align} 
	
	This will be one of the main equations of this project (we modify it a bit in \eqref{newmain}). We will expand the left side into formal power series in $z$ and $w$ to find the expansion coefficients of $q_{even}(t)$ and $\varphi(z, w)$ step by step. For example, since $\varphi_{0, 0} = 0$, all $t$-s lack $z^0w^0$ term. So, we can substitute $0$ instead of them and verify that there is no error in $z^0w^0$ term.
	
	We should note that we can consider \eqref{maineq} as an equation in formal power series, since the constant term in series we are substituting is always $0$ (except when we are taking an exponent of $b$, but it isn't a problem). Otherwise, the composition wouldn't have to exist. When we are substituting into a cosine, we use formal power series of cosine at $-\alpha$.
	
	\begin{equation}
		\cos(x - \alpha) = \sum_{j = 0}^{+\infty} c_{j} x^j.
	\end{equation}

	The idea of the proof is to study this step-by-step process. On every step, after expanding \eqref{maineq} and considering the error in $z^jw^k$ term, we will be able to express the specific coefficient $q_{2n}$ or $\varphi_{j, k}$ as a polynomial in terms of lower order coefficients of $q_{even}(t)$ and $\varphi(z, w)$, as well as of parameters $c_j$, coefficients of $b(r^2)$ and $q_{odd}(t)$. So, essentially we get an upper-triangular system as we mentioned earlier, but now it also includes $\varphi(z, w)$. 
	
	The main difficulty with studying this system is that the number of terms is big, so it is hard to control all of them. To deal with this problem, we introduce the concept of trees, so that there is exactly one tree for every term of the system, as stated previously.
	
	It turns out that one doesn't have to focus on all the terms (or trees) for our goals. In that system only a few terms with a rather simple structure can give us $1$-Gevrey growth. All the other terms are at least by an order smaller. So, we consider and solve the system, consisting of only the big (or principal) terms. Then we claim that by adding all the other terms back in, we may treat them as errors that won't greatly affect the growth rate.  
	
	Sadly, this distinction between principal and other terms is asymptotic, so it only works for high order terms of $q$ and $\varphi$ and it is challenging to get control of lower order terms. This creates an issue for the lower bound case and Theorem \ref{th2}: there may be some cancellations in lower orders, so that once we study principal terms for high orders we won't get the needed growth. To avoid this, we have the demand about changing the $q_{2n_0+1}$ coefficient. If we make it relatively large (it would be of order $e^{2n_0 \log n_0}$), then it will overwhelm the contributions of low order terms, letting us to freely consider the reduced system for high orders. This is not a problem for an upper bound, since the cancellations are good for it. 
	
	Finally, let us consider the simple example of formal power series equation to show how Gevrey regularity may arise. Assume we want to invert a function $z+z^2$. There are multiple better ways of doing it, but we consider the following:
	\begin{equation}
		q(z+z^2) = z \Rightarrow \sum_{j = 1}^{+\infty}q_j (z+z^2)^j = z.
	\end{equation}
	The left hand side of this equation is a formal power series in $z$, so we may equate its coefficients to the right side. Looking at $z$ error, we get $q_1 = 1$. Next, looking at $z^2$ error we see that $q_2$ contributes to it and is the highest order that does, so we may express $q_2$ in terms of $q_1$. Looking at $z^3$, we can express $q_3$ in $q_1$ and $q_2$, and so forth. This way, we can find all the coefficients.
	
	In this case we also get an upper triangular system of equations with many terms. We want to focus on the term of dependence of $z^n$ error on $q_{n-1}$ for some $n$. We have to expand $(z+z^2)^{n-1}$, and find the $z^n$ coefficient of this expansion. It will be $n-1$, since we can choose where to put $z^2$. So, we would have that
	
	\begin{equation}
		q_n = -(n-1) q_{n-1} + \ldots.
	\end{equation}

	One can see that this can potentially lead to the factorial growth of $q_n$. Factorial and Gevrey growth are the same, because of the Stirling formula. One also has to consider the dependence of $q_n$ on $q_{n-2}$ and so on, but the Gevrey upper bound will persist. However, in this specific example, there would be many cancellations in the system, so the actual growth rate will be just exponential (since the inverse function is analytic). These Gevrey cancellations are similar in nature to the ones we will see in the original Treschev problem.
	
	\subsection{Plan of the proof}
	
	In Section \ref{sec2} we describe the formal power method to solve \eqref{maineq}. We also modify this equation a bit, giving us \eqref{newmain}. We discuss some inherent symmetries in the equation, allowing us to apply the method. 
	
	Next, in Section \ref{sec3} we define the notion of a tree. It lets us control the terms in formal power expansion easier. We place some restrictions on the structure of the tree and we associate every term of expansion to its tree. Moreover, we define the weight of a tree (that corresponds to the coefficient in front of the term) and we prove several lemmas. For example, we estimate the total number of trees.
	
	In Section \ref{sec4} we study principal trees, ignoring everything else. That reduces our system to a recurrent relation. After a change of coordinates, it turns out that the recurrent has an explicit solution, independent of the parameters of GTP (of $b(r^2)$, including $b_0$, $q_{odd}(t)$ and even of the structure of the equation \eqref{maineq} to some extent). The asymptotic behavior of the solution is studied.
	
	In Section \ref{sec5} we focus on proving Theorem \ref{th1}. In fact, we prove a stronger inductive statement: Lemma \ref{lema51}. The main part of the proof is to estimate the influence of aforementioned error terms on the system. These error terms are split into two classes: ones that feature a high-order element and the ones that don't. The former class is more structured and there are less elements in this class, however they demand better bounds. The latter class is more messy, but the bounds can be worse. So, we do the proof for these two classes separately, by slowly reducing the structure of the tree. The proof is rather technical, but we get that the principal recurrence (Taylor recurrence operator) dominates the system. To justify this, some operator theory is used.
	
	Finally, in Section \ref{sec6} we prove Theorems \ref{th3}, \ref{th2} and  \ref{th4} and a stronger Lemma \ref{lema61}. Once again, the proof involves estimating error terms and the two classes. To prove that there are no cancellations, a stable cone of an operator is introduced.  
	
	\subsection{Acknowledgments}
	The author acknowledges the partial support of the ERC Grant \#885707. He also thanks Vadim Kaloshin for greatly aiding the implementation. He thanks Michael Drmota for helping to study Question \ref{qu2}. The author is also grateful to Ke Zhang, Abed Bounemoura and Matthew Kwan for useful discussions. 
	
	\section{Main equation and symmetries}
	\label{sec2}
	We continue our study of \eqref{maineq}.
	
	First, we claim that $q_2$ can be found by looking at the error in the $z^1w^0$ term. Since we are looking at the linear error term, we may only consider the linear approximations of $\varphi$ and $b$. 
	\begin{remark}
		More generally, since we only take compositions, sums and products of formal power series, we should not consider terms in expansions of $q(t)$, $q'(t)$, $\cos(t)$, $\cos'(t)$, $\varphi(z, w)$ or $b(r)$ of higher order in $z$, $w$ or total than the error we are studying.
		\label{rem1}
	\end{remark}
	 In this case, we can use
	
	\begin{equation}
		q(t) \approx 1, \; q'(t) \approx 2q_2 t, \; \varphi \approx z,\; b(r) \approx b_0.
	\end{equation}

	After substituting it, we get:
	
		\begin{align}
		q_2\left(\lambda^{-1}z + z\right)\cos\left(\frac{z - \lambda^{-1}z}{2} - \alpha\right) + \cos'\left(\frac{z - \lambda^{-1}z}{2} - \alpha\right) + \\ + q_2\left(\lambda z + z\right)\cos\left(\frac{\lambda z - z}{2} - \alpha\right) -\cos'\left(\frac{\lambda z - z}{2} - \alpha\right) \approx 0.
	\end{align} 

	After expanding $\cos$:
	
	\begin{align}
		q_2\cos \alpha\left(\lambda^{-1} + 2 + \lambda \right)  z + \cos \alpha \left(\frac{\lambda - 2 + \lambda^{-1}}{2}\right)z  \approx 0.
	\end{align} 

	Hence, 
	
	\begin{equation}
		q_2 = -\frac{\lambda - 2 + \lambda^{-1}}{2(\lambda^{-1} + 2 + \lambda)}.
		\label{q2}
	\end{equation}
	
	As we will see later, studying $z^0w^1$ error term results in the same condition. Next, we look at the error terms of order $2$ to find $\varphi_{2, 0}$, $\varphi_{1, 1}$ and $\varphi_{0, 2}$. Since the calculation is technical, we put it into Appendix \ref{ap1}.
	
	After computing several expansion coefficients and hence verifying we have no error up to order $3$, we can start to generalize the method for higher coefficients. We will consider \eqref{maineq} as an equation in formal power series, forgetting about geometry of the system. Specifically, given a formal power series for Birkhoff normal form and  $q_{odd}(t)$, the method will determine even coefficients of $q_{even}(t)$, as well as a series of $\varphi(z, w)$ (the latter is not unique, more on that later).

	The method for finding all the coefficients of $q$ will be the following. There will be infinitely many steps: on the $m$-th step ($m \ge 1$), we will consider all the possible monomials $z^jw^k$ for $2m \le j + k \le 2m+1$ on the left side of \eqref{maineq}. Before doing the step, we assume that we know all the $q_j$ for $j \le 2m$ and all the $\varphi_{j, k}$ for $j + k < 2m$. We assume we don't know other coefficients of $\varphi(z, w)$ or $q_{even}(t)$.  Each step consists of three parts:
	
	\begin{itemize}
		\item Consider the error in $z^jw^k$ for $j + k = 2m$. By Remark \ref{rem1}, the only unknown coefficient, potentially influencing this term is $\varphi_{j, k}$. We will show that the dependence is linear and non-trivial, hence we will find this coefficient after solving a linear equation.
		\item Consider the error in $z^{m+1}w^m$. By Remark \ref{rem1}, it potentially can be influenced by $q_{2m+2}$ (as the $2m+1$-st term of the derivative), as well as by $\varphi_{m+1, m}$. It will turn out that the dependency on $\varphi_{m+1, m}$ will be trivial, thus we will find $q_{2m+2}$ after solving a linear equation.
		\item Consider the error in $z^jw^k$ for $j + k = 2m+1$ and $|j - k| > 1$. Among unknowns, only $\varphi_{j, k}$ can influence it, so we will find $\varphi_{j, k}$.
	\end{itemize}
	
	\begin{remark}
		Due to the symmetries of the system, described later, the error in $z^m w^{m+1}$ term will vanish once we nullify the error in $z^{m+1}w^m$. Thus, we are essentially solving a diagonal system of equations, modulo $\varphi_{j, k}$ with $|j - k| = 1$. This motivates the setting of our problem: for example, if we had tried to determine all of $q_i$-s, knowing just $b_i$-s, we would have had a plethora of solutions in formal power series.
		\label{rem2}
	\end{remark}

	\begin{remark}
		Since the choice of $\varphi_{j, k}$ for $|j - k| = 1$ and $j + k > 1$ doesn't affect $q_i$-s, we will set them all to $0$ in a further discussion.
	\end{remark}
	 
	 The described method requires several auxiliary lemmas to work, so we will prove them now.
	 
	 \begin{lemma}
	 	For $j + k \ge 2$, the dependence of the $z^jw^k$ coefficient of the left part of \eqref{maineq} on $\varphi_{j, k}$ is given by
	 	\begin{align}
	 		\varphi_{j, k}\cos \alpha \left(q_2\left(\lambda^{j - k} + 2 + \lambda^{k - j} \right) + \frac{\lambda^{j - k} - 2 + \lambda^{k - j}}{2}\right).
	 		\label{fden}
	 	\end{align} 
 		Particularly, it vanishes for $|j - k| = 1$ due to \eqref{q2}.
 		
 		For $j \ge 2$, the dependence of $z^jw^{j - 1}$ coefficient of the left part of \eqref{maineq} on $q_{2j}$ is given by
 		\begin{equation}
 			2^{-2j + 2}jq_{2j}\binom{2j-1}{j} (\lambda^{-1} + 1)^j (\lambda + 1)^j \cos \alpha.
 			\label{qden}
 		\end{equation}
 	\label{lema21}
	 \end{lemma}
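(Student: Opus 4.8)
The plan is to isolate, in the formal power series expansion of the left-hand side of \eqref{maineq}, the contributions that are linear in the target coefficient ($\varphi_{j,k}$ in the first statement, $q_{2j}$ in the second), discarding everything else by Remark \ref{rem1}. First I would treat $\varphi_{j,k}$. The coefficient $\varphi_{j,k}$ enters $t = \varphi(z,w)$ directly through the monomial $\varphi_{j,k}z^jw^k$; it enters $t_\pm = \varphi(B^{\pm 1}(z,w))$ through the same monomial but with $z,w$ replaced by $\lambda^{\pm 1}z,\lambda^{\mp 1}w$ (since $B(z,w)=(e^{ib(zw)}z,e^{-ib(zw)}w)$ and to this order $b(zw)\approx b_0$, so the $zw$-dependence of the exponent contributes only higher-order terms), producing $\varphi_{j,k}\lambda^{\pm(j-k)}z^jw^k$. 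Now I would substitute these into \eqref{maineq} and keep only the terms linear in $\varphi_{j,k}$. In the arguments $\tfrac{t_-+t}{2}$ and $\tfrac{t_++t}{2}$ of $q$ and $q'$, replacing these arguments by their $\varphi_{j,k}$-linear parts, and noting that to extract the $z^jw^k$ coefficient the rest of each factor must be evaluated at $z=w=0$, one finds $q(\cdot)\to 1$, $q'(\cdot)\to 0$ (since $q_1=0$), $\cos(\cdot)\to\cos\alpha$, $\cos'(\cdot)\to -\sin\alpha$ but that last one multiplies $q'\to 0$; similarly in the $\tfrac{t-t_-}{2}$ and $\tfrac{t_+-t}{2}$ arguments of the cosine factors the $\varphi_{j,k}$-linear part contributes through $\cos'$, but again multiplied by $q\to 1$. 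Carefully bookkeeping the four terms of \eqref{maineq} with the signs as written, the $\cos'$ contributions carry a coefficient $\tfrac12(\lambda^{j-k}-2+\lambda^{k-j})$ and the $q'\cdot\cos$ contributions (through $q'(\cdot)\approx 2q_2\cdot\varphi_{j,k}(\cdots)$) carry $q_2(\lambda^{j-k}+2+\lambda^{k-j})$, reproducing \eqref{fden}. The vanishing for $|j-k|=1$ is then immediate: plug $\lambda^{j-k}=\lambda^{\pm1}$ into the bracket and compare with the defining relation \eqref{q2} for $q_2$, which says exactly $q_2(\lambda+2+\lambda^{-1})+\tfrac12(\lambda-2+\lambda^{-1})=0$.

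For the dependence on $q_{2j}$ I would argue similarly but track the monomial $q_{2j}t^{2j}$ inside $q(t)$ and $q_{2j}\cdot 2j\, t^{2j-1}$ inside $q'(t)$. Here the relevant arguments are $\tfrac{t_-+t}{2}$ and $\tfrac{t_++t}{2}$; since we want the $z^jw^{j-1}$ coefficient and all other factors must be taken at lowest order, I replace $t,t_\pm$ by their \emph{linear} parts $z, \lambda^{\pm1}z$ (respectively $w$-free at this order) — wait, more precisely $\varphi\approx z$ gives $t\approx z$, $t_-\approx \lambda^{-1}z$, $t_+\approx\lambda z$, but to get a $w^{j-1}$ factor we actually need the $w$-linear piece $\varphi_{0,1}w$ of $\varphi$ as well; so $t\approx z+\varphi_{0,1}w$ etc., and the argument $\tfrac{t_-+t}{2}\approx \tfrac{(\lambda^{-1}+1)z + (\text{something})w}{2}$. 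Raising such a binomial to the $2j$-th (resp.\ $(2j-1)$-st) power and extracting the $z^jw^{j-1}$ coefficient produces the binomial coefficient $\binom{2j-1}{j}$ (or a shift of it) together with the factors $(\lambda^{-1}+1)^j(\lambda+1)^j$ and a power of $2$; the $q'$ term contributes the extra factor $2j$, and the $\cos$ factors evaluate to $\cos\alpha$ at lowest order while the $\cos'$ factors multiply $q\to$ higher order and drop out or combine. Collecting the powers of $2$ from the $2^{-2j}$ of the binomial expansion against the $2j$ and the residual combinatorics gives the stated $2^{-2j+2}j\,q_{2j}\binom{2j-1}{j}(\lambda^{-1}+1)^j(\lambda+1)^j\cos\alpha$.

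The routine but error-prone part is the bookkeeping of the four signed terms of \eqref{maineq} and the precise combinatorics of which binomial coefficient survives after the $z^jw^{j-1}$ extraction — in particular confirming that the $\varphi$-linear corrections to the cosine arguments and the $\varphi_{m+1,m}$-type contributions indeed cancel or fall to higher order (this is what Remark \ref{rem2} and the third bullet of the method rely on), and that the $w$-side contributions assemble into the same binomial as the $z$-side. I expect the main obstacle to be keeping careful track of the symmetry $z\leftrightarrow w$, $\lambda\leftrightarrow\lambda^{-1}$ that makes the $z^jw^{j-1}$ and $z^{j-1}w^j$ coefficients match and that forces the clean factorization $(\lambda^{-1}+1)^j(\lambda+1)^j$; once the substitution is organized so that this symmetry is manifest, both \eqref{fden} and \eqref{qden} fall out by direct computation, and the $|j-k|=1$ vanishing is a one-line consequence of \eqref{q2}.
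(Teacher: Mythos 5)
Your proposal follows essentially the same direct computation as the paper: linearize $q$, $q'$, $\cos$, $\cos'$ (keeping only the factor that carries $\varphi_{j,k}$ or $q_{2j}$ to the required order, as Remark~\ref{rem1} licenses), substitute $t\approx\varphi_{j,k}z^jw^k$, $t_\pm\approx\lambda^{\pm(j-k)}\varphi_{j,k}z^jw^k$ for the first claim and $t\approx z+w$, $t_\pm\approx\lambda^{\pm1}z+\lambda^{\mp1}w$ for the second, then extract the relevant coefficient and symmetrize via $(1+\lambda)(1+\lambda^{-1})=\lambda+2+\lambda^{-1}$. One small bookkeeping slip that does not affect the outcome: in \eqref{maineq} the $\cos'$ factors are paired with $q$, not $q'$, so the contribution that drops out is the $\cos$-\emph{linear} piece against $q'(0)=q_1=0$ (not the $\cos'$ constant against $q'$), and that constant term of $\cos'(\cdot-\alpha)$ is $\sin\alpha$, not $-\sin\alpha$ --- you do recover the correct pairing in the following clause, and the final formulas come out right.
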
 
	\begin{proof}
		We should note that similarly to Remark \ref{rem1}, that if we study a term of order $j$ of a composition and we already take a term of order $j$ in one of the functions in a composition, then we can approximate all other functions of the composition linearly.
		
		Particularly, in the first part we approximate $b(r)$ with $b_0$, and $q(t)$ and $\cos(t)$ linearly. We get: 
		\begin{align}
			q_2\left(t_- + t\right)\cos \alpha - \cos \alpha \frac{t - t_-}{2} + q_2\left(t_+ + t\right)\cos \alpha +\cos \alpha \frac{t_+ - t}{2}.
		\end{align} 
		Now, we substitute $\varphi_{j, k}$:
		\begin{align}
			\varphi_{j, k}z^jw^kq_2\left(\lambda^{j - k} + 2 + \lambda^{k - j} \right)\cos \alpha + \varphi_{j, k} z^jw^k\frac{\lambda^{j - k} - 2 + \lambda^{k - j}}{2}\cos \alpha.
		\end{align} 
	Next, we come to $q_{2j}$. We may only use it in $q'$, and we once again approximate $\varphi$ and $b$ linearly:
	\begin{align}
		2jq_{2j}\left(\frac{\lambda^{-1}z + z + \lambda w + w}{2}\right)^{2j-1}\cos \alpha  + 2jq_{2j}\left(\frac{\lambda z + z + \lambda^{-1}w + w}{2}\right)^{2j - 1}\cos\alpha.
	\end{align} 

	We expand the power:
	\begin{align}
		2^{-2j + 2}jq_{2j}z^jw^{j-1}\binom{2j-1}{j} (\lambda^{-1} + 1)^{j-1} (\lambda + 1)^{j - 1} (\lambda + 2 + \lambda^{-1}) \cos \alpha,
	\end{align}
	and use $(1 + \lambda)(1 + \lambda^{-1}) = \lambda + 2 + \lambda^{-1}$ to finish the proof.
	\end{proof}
	
	We should note that the dependence on $q_{2j}$ in \eqref{qden} is always non-trivial (we will not consider $\lambda = -1$). The dependence of \eqref{fden} on $\varphi_{j, k}$ is also non-trivial, unless
	\begin{equation}
		\lambda + \lambda^{-1} = \lambda^{j - k} + \lambda^{k - j}.
	\end{equation}
	This reduces to $\lambda^{j - k - 1} = 1$ or $\lambda^{j - k + 1} = 1$. As stated, if $\lambda$ is irrational, this will only happen for $|j - k| = 1$. However, we can technically consider our method when $\lambda$ is rational. In that case, we will at some step run into a problem that the there will be no dependence on $\varphi_{j, k}$. Then, if the error at $z^j w^k$ term was non-zero, we will have to end the method and we will have conjugacy up to finite order. Otherwise, we will be able to pick any value for $\varphi_{j, k}$ we desire and continue.
	
	So, when $\lambda$ is irrational, we have verified that the system is indeed upper triangular. Now, we will do some simplifications, involving elements of \eqref{maineq}. We notice that
	
	\begin{align}
		\frac{t_\pm + t}{2} = \sum_{j, k}\frac{\varphi_{j, k}}{2}\left(\left(e^{\pm i b(zw)}z\right)^j \left(e^{\mp i b(zw)}w\right)^k + z^j w^k \right) = \\ = \sum_{j, k}\frac{\varphi_{j, k}z^jw^k}{2} \left(e^{\pm(j - k)ib(zw)} + 1\right) = \sum_{j, k}\varphi_{j, k}z^jw^k e^{\pm \frac{j - k}{2}i b(z w)} \cos \left( \frac{j - k}{2} b(z w)\right).
	\end{align}
	
	Similarly, 
	
	\begin{align}
		\frac{t_\pm - t}{2} = \sum_{j, k}\frac{\varphi_{j, k}}{2}\left(\left(e^{\pm i b(zw)}z\right)^j \left(e^{\mp i b(zw)}w\right)^k - z^j w^k \right) = \\ = \sum_{j, k}\frac{\varphi_{j, k}z^jw^k}{2} \left(e^{\pm(j - k)ib(z w)} - 1\right) = \pm \sum_{j, k}\varphi_{j, k}z^jw^k e^{\pm \frac{j - k}{2}i b(z w)} i \sin \left(\frac{j - k}{2} b(z w)\right).
	\end{align}
	
	Now we note that if we look at an error term of $z^{j_0}w^{k_0}$ and we expand $q$ and $\cos$ into power series and expand $t$ using above formulas, then each term contributing $z^{j_0}w^{k_0}$ will have to choose $j$-s and $k$-s in this sum such that the total sum of $j - k$ picked will be equal to $j_0 - k_0$. Hence, we are able to get $e^{\pm \frac{j-k}{2}ib(zw)}$ out of the composition as $e^{\pm \frac{j_0-k_0}{2}ib(zw)}$ while preserving the error in $z^{j_0}w^{k_0}$ term. Then, terms involving $t_+$ and $t_-$ would group together. If we denote $d_0 = j_0 - k_0$, \eqref{maineq} will turn into the following:
	
	\begin{align}
	\cos\left(\frac{d_0}{2}b(zw)\right) q'\left(\sum_{j, k}\varphi_{j, k}z^jw^k \cos \left( \frac{j - k}{2} b(z w)\right)\right)\cos\left(\sum_{j, k}\varphi_{j, k}z^jw^k i \sin \left(\frac{j - k}{2} b(z w)\right) - \alpha\right) + \\ + i\sin\left(\frac{d_0}{2}b(zw)\right)q\left(\sum_{j, k}\varphi_{j, k}z^jw^k  \cos \left( \frac{j - k}{2} b(z w)\right)\right)\sin\left(\sum_{j, k}\varphi_{j, k}z^jw^k  i \sin \left(\frac{j - k}{2} b(z w)\right) - \alpha\right) =_{d_0} 0.
		\label{newmain}
	\end{align}

	We will use this equation from now on, since it provides some symmetries.
	
	Our next step would be to observe several symmetric properties of determined objects:
	
	\begin{lemma}
		If we follow the method with irrational $\lambda$, we will satisfy the following:
		\begin{itemize}
			\item $\varphi_{j, k} = \overline{\varphi_{k, j}}$ for any $j$, $k$.
			\item $q_{2j}$ are real for any $j$.
		\end{itemize}
	\end{lemma}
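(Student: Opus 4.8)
The natural strategy is induction on the order $m$, following the step-by-step reconstruction described above, and showing that each of the two claimed symmetries is preserved when we pass from step $m-1$ to step $m$. The base case is already in hand: $q_2$ is real by the explicit formula \eqref{q2} (since $\lambda^{-1} = \bar\lambda$, both numerator $\lambda - 2 + \lambda^{-1}$ and denominator $\lambda^{-1}+2+\lambda$ are real), and the second-order coefficients $\varphi_{2,0}, \varphi_{1,1}, \varphi_{0,2}$ computed in Appendix \ref{ap1} can be checked to satisfy $\varphi_{j,k} = \overline{\varphi_{k,j}}$ directly; in particular $\varphi_{1,1}$ is real. So assume $q_{2j}$ is real for $j \le m$ and $\varphi_{j,k} = \overline{\varphi_{k,j}}$ for $j+k < 2m$.

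The key observation is that complex conjugation acts on \eqref{newmain} as the exchange $z \leftrightarrow w$, and this exchange sends the $z^{j_0}w^{k_0}$-coefficient equation to the $z^{k_0}w^{j_0}$-coefficient equation. Concretely: $b(zw)$ has real coefficients $b_{2j}$, so $\overline{b(zw)}$ is again $b(zw)$ with $z,w$ swapped; the eigenvalue satisfies $\bar\lambda = \lambda^{-1}$; $\cos$ is expanded at the real point $-\alpha$ so $\overline{c_j} = c_j$; and $q_{odd}(t)$ has real coefficients by hypothesis. Under $z \leftrightarrow w$ the summand $\varphi_{j,k}z^jw^k\cos\bigl(\tfrac{j-k}{2}b(zw)\bigr)$ becomes $\varphi_{j,k}z^kw^j\cos\bigl(\tfrac{j-k}{2}b(zw)\bigr)$, whose conjugate (using the inductive hypothesis $\overline{\varphi_{j,k}} = \varphi_{k,j}$ and reality of everything else) is exactly the $(k,j)$ summand; similarly the $i\sin$ term picks up the sign flip $\tfrac{j-k}{2} \to \tfrac{k-j}{2}$ coming from the explicit $\pm$ in the formula for $(t_\pm - t)/2$, and $i\sin$ is odd, so that term is also invariant under the combined operation. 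Likewise $\cos\bigl(\tfrac{d_0}{2}b(zw)\bigr)$ is even in $d_0$ hence unchanged, while $i\sin\bigl(\tfrac{d_0}{2}b(zw)\bigr)$ is odd in $d_0 \to -d_0$, matching the fact that swapping $j_0,k_0$ flips $d_0$. Hence the conjugate of the $z^{j_0}w^{k_0}$ equation is the $z^{k_0}w^{j_0}$ equation, with all previously-determined coefficients conjugated.

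From this the induction closes in the three sub-steps of step $m$. For the diagonal coefficient $q_{2m+2}$, we look at $z^{m+1}w^m$; the conjugate equation is the $z^mw^{m+1}$ equation, whose unique new unknown is again $q_{2m+2}$ (the same real-index coefficient) entering through the coefficient \eqref{qden} with $j=m+1$ — and $\overline{(\lambda^{-1}+1)^j(\lambda+1)^j} = (\lambda+1)^j(\lambda^{-1}+1)^j$, so that coefficient is real. Since $q_{2m+2}$ solves a linear equation with real coefficient and real (by induction and the conjugation symmetry) inhomogeneous term, $q_{2m+2}\in\mathbb R$; simultaneously this shows the $z^mw^{m+1}$ error vanishes once the $z^{m+1}w^m$ error does, which is Remark \ref{rem2}. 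For $\varphi_{j,k}$ with $j+k=2m$ or ($j+k=2m+1$, $|j-k|>1$), the defining linear equation from the $z^jw^k$-coefficient has, by \eqref{fden}, the multiplier $\cos\alpha\bigl(q_2(\lambda^{j-k}+2+\lambda^{k-j}) + \tfrac{\lambda^{j-k}-2+\lambda^{k-j}}{2}\bigr)$, which is manifestly invariant under $j-k \to k-j$ (it depends only on $\lambda^{j-k}+\lambda^{k-j}$ and $q_2\in\mathbb R$); conjugating the $z^jw^k$ equation gives the $z^kw^j$ equation with the same multiplier and conjugated right-hand side, which forces $\overline{\varphi_{j,k}} = \varphi_{k,j}$.

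The main obstacle is purely bookkeeping: one must verify carefully that \emph{every} factor appearing in \eqref{newmain} — the two $b$-dependent prefactors, the arguments of the inner $\cos$ and $\sin$, the outer $\cos(\cdot-\alpha)$ and $\sin(\cdot-\alpha)$, and the series $q$, $q'$ — transforms correctly under the simultaneous operation "complex-conjugate coefficients, swap $z\leftrightarrow w$, flip $d_0\to-d_0$", and that no stray $i$ or sign is unaccounted for (the $i$'s in the $i\sin$ factors are exactly compensated because $\sin$ is odd and picks up the sign from $e^{\pm\cdots}$). Once the transformation law of \eqref{newmain} is pinned down, the reality/conjugacy of the newly determined coefficients is automatic from uniqueness of the solution of each scalar linear equation.
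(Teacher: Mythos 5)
Your overall strategy (induction on order, using the involution "conjugate coefficients and swap $z \leftrightarrow w$", which fixes the left side of \eqref{newmain} once $d_0 \to -d_0$ is added) is exactly the right framework, and the two conjugacy-type claims you establish — that the linear multiplier $A$ in front of the newly determined unknown is real, and that the inhomogeneous parts of the $(j,k)$ and $(k,j)$ equations are complex conjugate — are both correct. For the circle vertices this is already enough: $\varphi_{j,k}$ and $\varphi_{k,j}$ with $j\neq k$ are determined by two \emph{independent} scalar equations whose coefficients are conjugate, and the diagonal case $\varphi_{m,m}$ is forced to be real because the $z^m w^m$ coefficient is self-conjugate under the symmetry. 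That part of the argument is sound.

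The gap is in the box case. For $q_{2m+2}$, the $z^{m+1}w^m$ and $z^m w^{m+1}$ equations have the \emph{same} real-indexed unknown $q_{2m+2}$ and inhomogeneous terms $B$ and $B'$ with $\overline{B} = B'$. Solving the first gives $q_{2m+2} = -B/A$; this is real iff $B\in\mathbb R$, which is the same as $B = B'$, which is exactly the assertion of Remark \ref{rem2}. The conjugation symmetry alone only gives $\overline{B} = B'$; it does not give $B = B'$. So your sentence "Since $q_{2m+2}$ solves a linear equation with real coefficient and real (by induction and the conjugation symmetry) inhomogeneous term" asserts precisely the nontrivial fact, and "simultaneously this shows the $z^m w^{m+1}$ error vanishes" then derives Remark \ref{rem2} from a statement equivalent to it. In other words, the realness of $q_{2m+2}$ and the automatic vanishing of the $z^m w^{m+1}$ error are two faces of the same overdetermination (at odd order $N = 2m+1$ there are $N+1$ coefficient equations but only $N$ genuine unknowns after discarding $\varphi_{j,k}$ with $|j-k|=1$), and neither can be used to prove the other without an independent input. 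To close the gap one needs an additional argument that $B$ is genuinely real — for instance, by identifying the $z^{m+1}w^m$ error with the real-valued twist coefficient discrepancy coming from the Birkhoff normal form, or by exhibiting a second symmetry of the upper-triangular system beyond the antiholomorphic one you use; the conjugation symmetry by itself does not suffice.
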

	Particularly, we have justified Remark \ref{rem2}.
	\section{Tree method}
	
	\label{sec3}
		
	\subsection{Motivation}
	
	Now, we have presented the method and have shown we can iterate it. Now, we want to obtain some bounds on the resulting sequence of coefficients. Since explicitly substituting series in one another as we did earlier can get rather messy, we introduce a combinatorial tree method to describe all the contributions to the error term.
	
	We start with the following motivation. Let us assume we are trying to find $\varphi_{j, k}$ or some even coefficient of $q$ and hence are studying the $z^jw^k$ coefficient of the left side of \eqref{newmain}. We can substitute our power series into the equation and then expand everything and only leave terms with $z^jw^k$. We do not substitute known series coefficients, except $b_0$ and trigonometric functions that don't involve $\varphi(z, w)$. Then we will get a finite sum. 
	
	\begin{lemma}
	Each of the terms will be of form 
		\begin{equation}
			f(\lambda) q_\beta c_\gamma  \prod_{m = 1}^{m_{\max}} \varphi_{\eta_{m}, \theta_{m}}\prod_{n = 1}^{n_{\max}} b_{\zeta_n}, 
			\label{lemaeq}
		\end{equation}  
	where $f(\lambda)$ is some rational function of $\lambda$ (depending on the term), $\zeta_n$ are positive even and $\eta_m + \theta_m > 0$ for any $m$. Moreover, the following conditions are satisfied:
	\begin{itemize}
		\item $\sum_{m = 1}^{m_{\max}} \eta_{m} + \sum_{n = 1}^{n_{\max}} \zeta_n / 2 = j$ (Order in $z$ must coincide).
		\item $\sum_{m = 1}^{m_{\max}} \theta_{m} + \sum_{n = 1}^{n_{\max}} \zeta_n / 2 = k$ (Order in $w$ must coincide).
		\item $\beta + \gamma - 1 = m_{\max}$ (Substitution into $q$ must be valid).
	\end{itemize}
	\label{lema31}
\end{lemma}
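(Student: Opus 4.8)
The plan is to track what happens to a single monomial when we substitute the power series of $q$, $\varphi$, and $b$ (with $\alpha$, $b_0$, and the trigonometric coefficients $c_j$ already absorbed as scalars) into equation \eqref{newmain} and fully expand. First I would fix which of the two summands of \eqref{newmain} we are in; both are structurally identical after noting that $\cos(\tfrac{d_0}{2}b(zw))$, $\sin(\tfrac{d_0}{2}b(zw))$, and the factor $i$ contribute only scalars and powers of $b_{2j}$ and $\lambda$ — more precisely, $b(zw)$ is a series in $r=zw$, so every power of it contributes equal numbers of $z$'s and $w$'s, which is why the $\zeta_n/2$ term appears symmetrically in the first two bullets. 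The key observation is the third bullet: expanding $q'$ at its argument $\sum_{j,k}\varphi_{j,k}z^jw^k\cos(\tfrac{j-k}{2}b(zw))$ means picking a coefficient $q_\beta$ of $q'$ (so the coefficient of $t^{\beta-1}$ in $q$, i.e.\ index $\beta$ in my normalization) and then distributing $\beta-1$ factors drawn from the argument; similarly expanding the outer cosine picks a coefficient $c_\gamma$ and distributes $\gamma$ factors from $\sum_{j,k}\varphi_{j,k}z^jw^k\, i\sin(\tfrac{j-k}{2}b(zw))$. Each such factor is itself one monomial $\varphi_{\eta_m,\theta_m}z^{\eta_m}w^{\theta_m}$ times a trigonometric series in $b(zw)$, and the total number of $\varphi$-factors collected is exactly $(\beta-1)+\gamma=m_{\max}$, giving the third identity.

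Next I would carry out the bookkeeping of $z$- and $w$-degrees. Every $\varphi_{\eta_m,\theta_m}$ contributes $z^{\eta_m}w^{\theta_m}$; every application of $\cos(\tfrac{j-k}{2}b(zw))$, $\sin(\tfrac{j-k}{2}b(zw))$, $\cos(\tfrac{d_0}{2}b(zw))$ or $\sin(\tfrac{d_0}{2}b(zw))$, as well as $q_0=1$ and the constant pieces, contributes only terms in $r=zw$, hence equal $z$- and $w$-degree; and the prefactor $\exp(\pm\tfrac{d_0}{2}ib(zw))$ that was pulled out in deriving \eqref{newmain} likewise contributes only powers of $r$. So if we let $\sum_n \zeta_n/2$ denote the total $r$-degree accumulated from all these $b$-dependent trigonometric factors, matching the $z$-degree of the whole term to $j$ gives $\sum_m \eta_m + \sum_n \zeta_n/2 = j$, and matching the $w$-degree to $k$ gives $\sum_m \theta_m + \sum_n \zeta_n/2 = k$. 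That $\zeta_n$ is positive and even is immediate because $b(r)=\sum b_{2j}r^j$ has only even-indexed coefficients and each $b$-factor that actually appears carries index $\ge 2$; the ones of index $0$, i.e.\ $b_0$, are constants and get swept into $f(\lambda)$. Finally, $f(\lambda)$ is rational in $\lambda$: the only sources of $\lambda$ are $\exp(\pm\tfrac{d_0}{2}ib(zw))$ evaluated with $b_0$ in the exponent and the constant parts of the trigonometric factors (their values at multiples of $b_0/2$), all of which are Laurent monomials in $\lambda=e^{ib_0}$; products and the stray factor $i$ stay within the rationals in $\lambda$, and no division by a $\lambda$-dependent quantity has occurred at this stage (the small-denominator factors from Lemma \ref{lema21} enter only when we later solve for $\varphi_{j,k}$ or $q_{2j}$, not in the raw expansion).

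To assemble this cleanly I would argue by induction on the nesting depth of the formal composition, or equivalently just invoke the multinomial expansion of each of the finitely many analytic functions involved ($q$, $q'$, $\cos(\cdot-\alpha)$, $\cos$, $\sin$, and the various $\cos$/$\sin$ of $b$) and observe that substitution of a series with zero constant term into another analytic function produces, after full expansion, precisely a finite sum of products of the shape \eqref{lemaeq} — this is the same mechanism already used informally in Section \ref{sec2}. The degree identities then follow term-by-term from additivity of $z$- and $w$-exponents under multiplication, and the constraint $\beta+\gamma-1=m_{\max}$ follows from counting argument-slots in the two outermost expansions as above. The main obstacle, and the only place care is genuinely needed, is handling the pulled-out exponential prefactor and making sure the regrouping of the $t_+$ and $t_-$ contributions performed in passing from \eqref{maineq} to \eqref{newmain} does not secretly introduce $\lambda$-denominators or odd powers of $b$; once one checks that $\exp(\pm\tfrac{j-k}{2}ib(zw))$ splits into the scalar $\lambda^{\pm(j-k)/2}$ (absorbed into $f(\lambda)$, and note $j-k=d_0$ so the half-integer exponent times $d_0$ is consistent across all contributing terms) times $\exp(\pm\tfrac{j-k}{2}i(b(zw)-b_0))$ whose expansion is again a series in $r$ with even-index $b$-coefficients, the three bulleted identities and the claimed form of $f$ all drop out.
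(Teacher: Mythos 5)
The paper does not actually provide a proof of this lemma — it is stated as an observation after a short narrative motivation and immediately followed by the tree visualization. Your proposal is the natural mechanical verification: identify the outer two series ($q$ or $q'$, and $\cos$ or $\sin$) whose expansion yields $q_\beta$ and $c_\gamma$, note that the factor slots drawn from the argument are $(\beta-1)+\gamma$ in the $q'\cos$ term and $\beta+(\gamma-1)$ in the $q\sin$ term, so $m_{\max}=\beta+\gamma-1$ in either case, and track $z$- and $w$-degrees by observing that everything $b$-dependent lives in $r=zw$ and so contributes $\zeta_n/2$ to each of $j$ and $k$. This is consistent with what the paper presumes the reader to check. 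One small imprecision worth cleaning up: $q_\beta$ is a coefficient of $q$, not of $q'$ — when $q'$ contributes, the $t^{\beta-1}$ term is $\beta q_\beta t^{\beta-1}$, with the scalar $\beta$ absorbed into $f(\lambda)$; the same happens with $\gamma c_\gamma$ in the $\sin(\cdot-\alpha)$ branch. Stating it this way makes the third bullet's two cases manifestly symmetric.

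One thing you could sharpen, since it is the only genuinely subtle point: the claim that $f(\lambda)$ is a rational function of $\lambda$ (as opposed to $\sqrt{\lambda}$). Evaluating $\cos(\tfrac{l}{2}b_0)$ and $\sin(\tfrac{l}{2}b_0)$ at a collection of labels $l$, together with $\cos(\tfrac{d_0}{2}b_0)$ or $\sin(\tfrac{d_0}{2}b_0)$ at the front, produces Laurent monomials in $\lambda^{1/2}$ whose total exponent is $\tfrac12(\pm d_0 + \sum_m \pm l_m)$. What saves integrality is the parity bookkeeping: the labels $l_m$ of the $\varphi$-factors (including hidden $\varphi_{1,0}$ and $\varphi_{0,1}$) satisfy $\sum_m l_m = d_0$ for a term contributing to the $z^{j_0}w^{k_0}$ error, so $\pm d_0 + \sum_m \pm l_m$ is always even and the exponent is an integer. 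Your proposal gestures at "consistency across all contributing terms" but does not quite pin this down; spelling out the parity argument would close the one real gap.
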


We will visualize this with a following construction. We will place the name of the coefficient we are trying to find as a root of the tree. Then, we will connect this root of the tree with vertices labeled $q_\beta$, $c_\gamma$, $\varphi_{\eta_m, \theta_{m}}$ for any $m$ and $b_{\zeta_n}$ for any $n$. Thus, we will obtain a rooted tree of depth $1$, and $f(\lambda)$ will be proportional to the weight of this tree. We interpret this as saying that the leaves of the tree combine together and influence the value of the root. If we normalize $f(\lambda)$ by dividing it by the minus linear coefficient of Lemma \ref{lema21}, thus obtaining the true weight, we get the following formula for $\varphi_{j, k}$ (or $q$ if $|j-k| = 1$):

\begin{equation}
	\varphi_{j, k} = \sum_T w(T) q_\beta c_\gamma  \prod_{m = 1}^{m_{\max}} \varphi_{\eta_{m}, \theta_{m}}\prod_{n = 1}^{n_{\max}} b_{\zeta_n},
\end{equation}

where we sum over all possible trees. 

\begin{remark}
	While summing, we shall not consider trees that already involve the coefficient we are trying to find. For example, a term $q_2\varphi_{j, k}$ will appear in $z^jw^k$ order. We should not sum over this tree, since it already involves $\varphi_{j, k}$.
	\label{rem4} 
\end{remark}

We can further generalize this construction by introducing trees of arbitrary depth. Since the values of some of the leaves of the trees were found using \eqref{newmain}, they have their own trees, so we will consider trees of depth $2$ by turning the leaves of the main tree into these subtrees.

 We will continue to expand the depth until we have no more leaves to expand. Then, in order to compute the value at the root, we would have to sum over all possible trees of a given order. 
 
 \subsection{Formal tree definition}

The tree definition would involve the notion of an ordered rooted tree -- a tree  with all of the edges oriented away from a specific vertex (the root), with a specified ordering of the children for every vertex. A vertex is a child (descendant) of another vertex, if there is an edge (path) going from the latter to the former. When we draw a tree, we orient the edges from left to right (so the leftmost vertex is a root) and we order the children by their height: the first child is the top one. A subtree of a vertex is subgraph with the vertex and all of its descendants. A leaf is a vertex with no children.  

\begin{definition}
	A potential tree is an ordered rooted tree that can have vertices of $5$ types: circles (corresponding to $\varphi_{j, k}$), boxes ($q_{2j}$), diamonds ($q_{2j-1}$), semicircles ($b_{2j}$) and pentagons ($c_j$) together with integer labels for every non-box vertex, such that the following holds:
	\begin{itemize}
		\item  All the diamonds, semicircles and pentagons are leaves and their labels are positive integers ($i$ for $c_i$, etc). The labels of diamonds are odd and at least $3$, while semicircles' labels are even.
		\item Squares and circles cannot be leaves.
		\item  Circles' labels are integers with absolute value distinct from $1$ ($j - k$ for $\varphi_{j, k}$).
		\item The root of the tree is either a circle or a box.
		\item Every non-leaf vertex has at most one square(box or diamond) and at most one pentagon child. The first non-semicircle child is either a square or a pentagon (denotes whether we differentiate $q$ or $\cos$).
	\end{itemize}

\end{definition}

\begin{figure}
	\centering
	\begin{tikzpicture}[scale=0.6]
		\node [squa] (0) at (-4, 0) {};
		\node [semi,label=center:$4$] (1) at (-2, 2) {};
		\node [pent,label=center:$8$] (2) at (-2, 0) {};
		\node [circ,label=center:$3$] (3) at (-2, -2) {};
		\node [squa] (4) at (0, 0) {};
		\node [squa] (5) at (2, 2) {};
		\node [semi,label=center:$2$] (6) at (2, 0) {};
		\node [circ,label=center:$-3$] (7) at (2, -2) {};
		\node [pent,label=center:$4$] (8) at (4, 2) {};
		\node [diam,label=center:$3$] (9) at (4, 0) {};
		\node [pent,label=center:$1$] (10) at (4, -2) {};
		\draw (0) -- (1);
		\draw (0) -- (2);
		\draw (0) -- (3) -- (4) -- (7) -- (9);
		\draw (4) -- (5) -- (8);
		\draw (4) -- (6);
		\draw (7) -- (10);
	\end{tikzpicture}
	
	\caption{\label{fig1}An example of a formal tree.}
\end{figure}
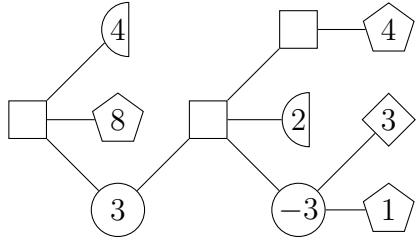 

	The first point tells that we know $q_{2j+1}$, $b_{2j}$ and $c_j$ at the start of the procedure, we don't need \eqref{newmain} to find them. We don't allow $q_0$, $c_0$, $\varphi_{1, 0}$, $\varphi_{0, 1}$ or $b_0$ into the tree to make the tree less complicated. The existence of them in the true term can always be determined from the context. For example, by Lemma \ref{lema31}, if we don't have a pentagon child it means we use $c_0$ (similarly for $q_0$). Since we cannot differentiate constant term, there is no competition for the first non-semicircle child in that case. 
	
	Similarly, we don't indicate the order of box vertices or $j+k$ for the circle vertices, since they can be determined from the context. One can inductively determine them all, together with the number of hidden $\varphi_{1, 0}$ and $\varphi_{0, 1}$ vertices by going from leaves to the root. We know order of leaves and for non-leaf vertices we can find these quantities from their subtree in the following way.
	
	We essentially treat equations in Lemma \ref{lema31} as a system of equations. Since we also know $j - k$ (if the subtree root is circle, then it is the label, and it is $1$ if the root is a box), we have $4$ variables (number of $\varphi_{1, 0}$ and $\varphi_{0, 1}$ we have to add, $j$ and $k$) and $4$ equations. It's easy to see that there is a unique solution, so we can determine the number of hidden vertices and all the orders.
	
	The next three points of definition just say that we are determining all the $q_{2j}$ and $\varphi_{j, k}$ from \eqref{newmain} and that we ignore $\varphi_{j, k}$ with $|j - k| = 1$. The fifth point states that there can only be one coefficient of $q$ and one of $\cos$ in \eqref{lemaeq}.

	This explains the definition of the potential tree, but not every potential tree makes sense and will give a nonzero contribution. For example, no one guarantees that after solving the aforementioned systems one finds that the number of needed $\varphi_{1, 0}$ or $\varphi_{0, 1}$ is a non-negative integer. Alternatively, one can have a tree with $q_3$ and $c_1$ as children while having $10$ circle children. This will not contribute, since the last point of Lemma \ref{lema31} will be violated. Another possible problem is that our tree will try to compute $\varphi_{j, k}$ using $\varphi_{j, k}$, for example. Hence, we introduce some definitions to determine contributing trees.
	
	\begin{definition}
		A non-polygonal child of some vertex in the potential tree is called of square type, if its previous polygonal sibling is a square. If that sibling is a pentagon, we call the vertex pentagonal.
	\end{definition}

	When we are expanding \eqref{newmain}, there will be coefficients of $\varphi$ coming from the $q$ (square) side of the product and ones coming from the $\cos$ (pentagonal) side. We indicate the part where each circle vertex belongs using their position relative to the polygonal vertices. In both groups, we demand that the order of vertices coincides with the order in the expansion, meaning that if in $q_3 \varphi(z, w)^3$ we expanded the first $\varphi$ using $\varphi_{3, 0}$ and the third with $\varphi_{1, 1}$, then $\varphi_{3, 0}$ circle vertex should come before $\varphi_{1, 1}$ in the graph.
	
	We should also comment on the placement of semicircular vertices of our tree. They represent $b_{2j}$, and since there are many parts of \eqref{newmain} where they arise, we should use the order of semicircle children with respect to other children to indicate where they are used in \eqref{newmain}.
	
	 The first usage of $b$-s in the formula is in the cosine or sine in front of $q$. Thus, the respective semicircles should be placed in front of all the other children -- before the polygonal child we differentiate. Once we expand the trigonometric function into a series, we once again demand that an order of semicircles must correspond to the order we placed $b_{2j}$-s in the expansion. We also note that we expand this function not at $0$, but at $d_0b_0/2$ and that we don't need to indicate the order of expansion, since we can just sum up the labels on these semicircles.
	 
	 The second type of $b$-s we use, are those arising in the factor of $\varphi_{j, k}$ we use, when $j+k >1$. We take a similar approach: we place the semicircles right after the respective circle vertex, and we order them as they are in the expansion of the trigonometric function, that we expand at $(j - k)b_0/2$. 
	 
	 The last type of semicircles are the ones, attached to $\varphi_{1, 0}$ and $\varphi_{0, 1}$. Since these vertices are hidden, we cannot place semicircles as we did in the previous group. Hence, we will take unconventional means. We expand:
	 
	 \begin{equation}
	 	\cos \left(\frac{b(zw)}{2}\right) / \cos \frac{b_0}{2} = \sum_{j  = 0}^\infty b_{2j}^c(zw)^j, \; \;  \sin \left(\frac{b(zw)}{2}\right) / \sin \frac{b_0}{2} = \sum_{j  = 0}^\infty b_{2j}^s(zw)^j.
	 \end{equation} 
	
	Since $b$ was analytic, $b^c$ and $b^s$ also are. Now, this type of semicircles will represent coefficients of $b^c$ or of $b^s$. Since we are not allowed to place semicircles with label $0$ in the tree, we normalized, so that $b_0^c = b_0^s = 1$. Now, we place semicircles of $b^c$ between the square child and the next circle child and semicircles of $b^s$ between the pentagon child and the next circle child. Once again, the order of semicircles should coincide with the order we picked in the expansion.
	
	Note that just by looking at the structure of the tree, one can determine what type every semicircle is. If the previous non-semicircle child was a circle -- then it is of second type. If it is a square -- $b^c$, a pentagon -- $b^s$. If all the previous are semicircles -- first type. We note that in Lemma \ref{lema31} $b_j^c$ and $b_j^s$ play the same role as $b_j$.
	
	\begin{remark}
		Now, if we formally expand \eqref{newmain}, then every term of expansion has exactly one tree of depth $1$, representing it. Many terms may have the same trees (we are not indicating the order of hidden vertices), but we are avoiding double-counting. 
	\end{remark}
	
	\subsection{Contributing trees}
	
	As we previously mentioned, the order of the vertex ($j + k$ for $\varphi_{j, k}$ and $2j$ for $q_{2j}$) is not indicated, but can be determined from the tree. Now, we make this determination explicit by defining order using tree combinatorics. 
	
	\begin{definition}
		The order of the vertex is the sum of the labels of leaves in the respective subtree minus twice the number of circle vertices in that subtree, where its root counts as half a vertex.
		\label{orderdef}
	\end{definition}
 
 	\begin{lemma}
 		This definition agrees with an inductive method of computing order, described above.
 		\label{lema32}
 	\end{lemma}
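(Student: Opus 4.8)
The plan is to prove the equivalence by induction on the depth of the subtree rooted at the vertex $v$ whose order we are computing, comparing the combinatorial quantity in Definition \ref{orderdef} against the quantity produced by the inductive bookkeeping described before Lemma \ref{lema31} (solving the three-equation system of Lemma \ref{lema31} together with the known value of $j-k$). First I would fix notation: for a subtree $S$ rooted at $v$, write $\Lambda(S)$ for the sum of all leaf-labels in $S$ (where boxes contribute their hidden order, but since boxes are never leaves this is really the sum over diamonds, semicircles and pentagons of their integer labels), and write $\#_\circ(S)$ for the number of circle vertices in $S$. Definition \ref{orderdef} asserts that the order of $v$ equals $\Lambda(S) - 2\#_\circ(S) + \chi(v)$, where $\chi(v) = 1$ if $v$ is a circle (counted as half a vertex, i.e. we add back one to compensate the $-2$ applied to a full circle) and $\chi(v)=0$ if $v$ is a box; equivalently, the order is $\Lambda(S) - 2\#_\circ(S)$ for a box root and $\Lambda(S) - 2\#_\circ(S) + 1$ for a circle root. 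Hmm—let me instead phrase it cleanly: for a circle root the half-vertex convention means we subtract $2 \cdot (\#_\circ(S) - \tfrac12) = 2\#_\circ(S) - 1$, and for a box root we subtract $2\#_\circ(S)$.

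The base case is a subtree of depth $1$: its root is a box or circle, and all its children are leaves. Here the order attached by the inductive method is read directly off Lemma \ref{lema31}. Concretely, from the third bullet of Lemma \ref{lema31}, $\beta + \gamma - 1 = m_{\max}$, so the box/circle order $2\beta$ (for a box) or $j+k$ (for a circle) is recovered from the labels $\gamma$ (pentagon, contributing to $c_\gamma$), the diamond labels (odd $q$-coefficients), the semicircle labels (the $b$, $b^c$, $b^s$ coefficients), and the circle children's $j,k$. Combining the first two bullets of Lemma \ref{lema31}: $\sum_m(\eta_m+\theta_m) + \sum_n \zeta_n = j+k$. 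Each circle child of label $\eta_m - \theta_m$ has its own order $\eta_m + \theta_m$, and here it is a leaf so its order is its label's... no: a circle leaf is forbidden, so in the base case every circle child is itself a minimal non-leaf — contradiction with depth $1$. So in the true base case the root has no circle children at all, and then $j+k = \gamma + \sum(\text{diamond labels}) + \sum \zeta_n$ plus the hidden $\varphi_{1,0},\varphi_{0,1}$ count, and one checks this matches $\Lambda(S) - 2\#_\circ(S) + \chi$ after accounting for the hidden linear circles (which carry label $\pm1$ and contribute $0$ to order since $1 - 2\cdot\tfrac12 \cdot$... wait, hidden circles count with the same half/full rule). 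I would therefore first establish the clean identity that a hidden $\varphi_{1,0}$ or $\varphi_{0,1}$ vertex, being a circle leaf conceptually, contributes label $1$ and, as a full circle vertex inside a larger subtree, contributes $1 - 2 = -1$ to the running total — i.e. net zero order, exactly as it should since $\varphi_{1,0},\varphi_{0,1}$ have order $1$ and... actually order $1$, not $0$. I'll need to be careful: I would recompute so that the half-vertex convention is applied only to the root, and every internal circle (hidden or not) is a full vertex, and verify on $\varphi_{1,0}$ that this is consistent.

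For the inductive step, suppose $v$ has children $v_1,\dots,v_\ell$ (in order), some polygonal leaves and some non-polygonal subtree-roots (boxes, diamonds=leaves, circles). By the induction hypothesis, each subtree-root child $v_i$ that is a box has order $o_i = \Lambda(S_i) - 2\#_\circ(S_i)$ and each circle child has order $o_i = \Lambda(S_i) - 2\#_\circ(S_i) + 1$. Now the inductive bookkeeping at $v$ uses Lemma \ref{lema31} with the $v_i$ playing the role of the $\varphi_{\eta_m,\theta_m}$ (for boxes/circles — here the box child is a $q_{2j}$ factor appearing inside the composition, treated via $\beta$) and $q_\beta,c_\gamma$: the order of $v$ comes out as $\sum_i o_i^{(\varphi)} + \sum(\text{semicircle labels}) + (\text{diamond/pentagon labels})$ suitably combined, plus hidden linear circles. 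Summing the induction hypotheses, $\sum_i o_i = \sum_i \Lambda(S_i) - 2\sum_i \#_\circ(S_i) + \#\{\text{circle children}\}$. Adding the polygonal-leaf labels of $v$ gives $\Lambda(S) - 2(\#_\circ(S) - [v\text{ is circle}]) + \#\{\text{circle children}\} - \#\{\text{circle children}\}$; the two circle-children counts cancel, leaving exactly $\Lambda(S) - 2\#_\circ(S) + [v\text{ circle}]$ after the half-vertex correction, which is Definition \ref{orderdef}. The only real content — and the step I expect to be fiddly rather than hard — is checking that the arithmetic of Lemma \ref{lema31} genuinely adds the children's orders with coefficient $+1$ each (no doubling), that semicircle labels $\zeta_n$ enter with the right coefficient ($\zeta_n$, not $\zeta_n/2$, once $z$- and $w$-orders are summed), and that the hidden $\varphi_{1,0},\varphi_{0,1}$ contribute zero net to the order sum. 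I would close by remarking that this is purely a linear-algebra check on the $4\times 4$ system (the three equations of Lemma \ref{lema31} plus the known $j-k$), whose unique solvability was already noted, so the order is well-defined and both computation schemes agree.
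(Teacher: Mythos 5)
Your overall plan—induction from the leaves to the root, using the three equations of Lemma \ref{lema31} together with the known label $j-k$—is exactly the paper's route. But the argument has two genuine gaps, and the final arithmetic as written does not close.

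First, your treatment of the hidden $\varphi_{1,0}$, $\varphi_{0,1}$ vertices is confused. They are not vertices of the tree, so they contribute nothing to $\Lambda(S)$ nor to $\#_\circ(S)$ in Definition \ref{orderdef}; you cannot reason about them as ``conceptual circle leaves'' contributing $1-2\cdot\tfrac12$. You repeatedly announce that you would need to recompute or be careful here, which signals the gap rather than filling it. The way these vertices actually disappear is through the third equation $\beta+\gamma-1=m_{\max}$: the hidden vertices contribute $x+y$ both to $\sum_m(\eta_m+\theta_m)$ (hence to $j+k$) and to $m_{\max}$, and the cancellation happens when you combine $j+k=\sum_m(\eta_m+\theta_m)+\sum_n\zeta_n$ with $\beta+\gamma-1=m_{\max}$. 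Your proof never performs this combination; you instead declare it ``purely a linear-algebra check,'' but that check is precisely where the content of the lemma lives. Second, the arithmetic in your inductive step does not parse. You write that ``adding the polygonal-leaf labels of $v$'' to $\sum_i o_i$ yields $\Lambda(S)-2(\#_\circ(S)-[v\text{ circle}])+\#\{\text{circle children}\}-\#\{\text{circle children}\}$, but $\sum_i o_i$ already includes the polygonal leaf labels (every child, including a leaf, has an order); adding them again would double-count. Moreover after the cancellation you claim, what remains is $\Lambda(S)-2\#_\circ(S)+2[v\text{ circle}]$, which you then silently reduce to $\Lambda(S)-2\#_\circ(S)+[v\text{ circle}]$ ``after the half-vertex correction''—but the half-vertex correction is already encoded in the $\chi(v)$ term you defined, so there is nothing left to correct. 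The clean recursion is: combinatorial order of $v$ equals the sum of the children's orders minus the number of circles among $v$ and its children; substituting $j+k-x-y+\beta+\gamma$ for the sum of children's orders and $m_{\max}-x-y+\delta_\circ$ for the circle count, then applying $\beta+\gamma-1=m_{\max}$, gives $j+k+1-\delta_\circ$, which is the inductive-method order in both the circle and box cases. You should carry out that substitution explicitly rather than gesture at it.
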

 
 	\begin{proof}
 		We prove by induction going from the leaves to the root. For the leaves, this definition agrees with the sub-index ($j$ for $c_j$, etc). Hence, we only need to prove the statement for non-leaf vertices. Let us assume that there are $x$ of $\varphi_{1, 0}$ and $y$ of $\varphi_{0, 1}$ hidden leaves attached to our vertex. By the first $2$ equations of Lemma \ref{lema31} and by induction, we know that $j + k$ is given by the sum of orders of non-polygonal children and of hidden children. Hence,
 		\begin{equation}
 			j+k-x-y
 		\end{equation}
 		is the total order of non-polygonal children. To compute the combinatorial order of our vertex, we should add all the orders of the children and then subtract the number of circle vertices among our vertex and its children. Since the total order of polygonal vertices is $\beta + \gamma$ in terms of Lemma \ref{lema31}, and since the number of circle children is $m_{\max} - x - y$, the total order is
 		\begin{equation}
 			j + k + \beta + \gamma - m_{\max} - \delta_{\circ} = j + k + 1 - \delta_{\circ},
 		\end{equation} 
 		where $\delta_{\circ}$ indicates if our vertex is a circle. Hence we get $j+k$ for $\varphi_{j, k}$ and $j + (j-1) + 1$ for $q_{2j}$, thus proving the lemma.
 	\end{proof}
 
 	Next, we introduce square and pentagonal sizes 
 	
 	\begin{definition}
 		For a non-leaf vertex, its square and pentagonal size ($s_q$ and $s_c$ respectively) is the number of its circle children of square type and pentagonal circle children respectively.
 	\end{definition}
 
 	\begin{definition}
 		For a non-leaf vertex, its square/pentagonal capacity ($C_q$ and $C_c$ respectively) is the order of its square/pentagon child minus $s_q$ or $s_c$ respectively. If the respective child is the first one, the capacity is reduced by $1$. If the respective child doesn't exist, the capacity is $0$.
 	\end{definition}
 
 	The capacity indicates how many $\varphi_{1, 0}$ or $\varphi_{0, 1}$ children are hidden in square or pentagon part. Finally, we introduce $\delta$, a quantity that indicates the balance between $z$ and $w$.
 	
 	\begin{definition}
 		For a non-leaf vertex, its delta ($\delta$) is the label on the vertex ($1$ for boxes) minus the sum of labels of the circle children.
 	\end{definition} 
 
 	The point of $\delta$ can be seen from subtracting the second equation of Lemma \ref{lema31} from the first one:
	\begin{equation}
		\sum_{m = 1}^{m_{\max}} (\eta_{m} - \theta_{m}) = j - k.
	\end{equation}
	On the right we have a label of the root, and on the left there are the sum of circle children labels, including the hidden children. Since the hidden children are not included in the definition of $\delta$, we have
	\begin{equation}
		\delta = x - y
	\end{equation}
	in the notation of Lemma \ref{lema32}. 
	
	Now we can introduce contributing trees:
	
	\begin{definition}
		A potential tree is called a contributing tree, if the following is satisfied for every non-leaf vertex:
		\begin{itemize}
			\item A box vertex cannot have a box vertex of the same order or a circle vertex of order one less as a child.
			\item A circle vertex cannot have a circle vertex of the same order as a child.
			\item Orders of boxes and circles should be at least $2$.
			\item Square and pentagonal capacities must me non-negative.
			\item $|\delta| \le C_q + C_c$, and $\delta$ must share parity with $C_q+C_c$. 
			\item The number of semicircle children directly after square child should not exceed $C_q$ and the number of semicircle children directly after pentagon child should not exceed $C_c$.
		\end{itemize}
	\label{contrtree}
	\end{definition} 

	\begin{figure}
		\centering
		\begin{tikzpicture}[scale=0.6]
			\node [squa,label=below:$q_{18}$] (0) at (-4, 0) {};
			\node [semi,label=center:$4$,label=left:$b_4$] (1) at (-2, 3) {};
			\node [pent,label=center:$8$,label=below:$c_8$] (2) at (-2, 0) {};
			\node [circ,label=center:$3$,label=below:$\varphi_{5, 2}$] (3) at (-2, -3) {};
			\node [squa,label=below:$q_8$] (4) at (0, 0) {};
			\node [squa,label=below:$q_4$] (5) at (2, 3) {};
			\node [semi,label=center:$2$,label=left:$b_2^c$] (6) at (2, 0) {};
			\node [circ,label=center:$-3$,label=below:$\varphi_{0, 3}$] (7) at (2, -3) {};
			\node [pent,label=center:$4$,label=below:$c_4$] (8) at (4, 3) {};
			\node [diam,label=center:$3$,label=below:$q_3$] (9) at (4, 0) {};
			\node [pent,label=center:$1$,label=below:$c_1$] (10) at (4, -3) {};
			\draw (0) -- (1);
			\draw (0) -- (2);
			\draw (0) -- (3) -- (4) -- (7) -- (9);
			\draw (4) -- (5) -- (8);
			\draw (4) -- (6);
			\draw (7) -- (10);
			\node[draw,align=left] (11) at (-9,0) {$C_q = 0$, $C_c = 6$, $\delta = -2$,\\ $s_q = 0$, $s_c = 1$};
			\node[draw,align=left] (12) at (9,0) {$C_q = 2$, $C_c = 1$, $\delta = -3$,\\ $s_q = 0$, $s_c = 0$};
			\draw[dashed] (0)--(11);
			\draw[dashed] (7)--(12);
		\end{tikzpicture}
		
		\caption{\label{fig2}A unique reconstruction of elements of a contributing tree from Figure \ref{fig1}. For $2$ vertices, their parameters are stated.}
	\end{figure}
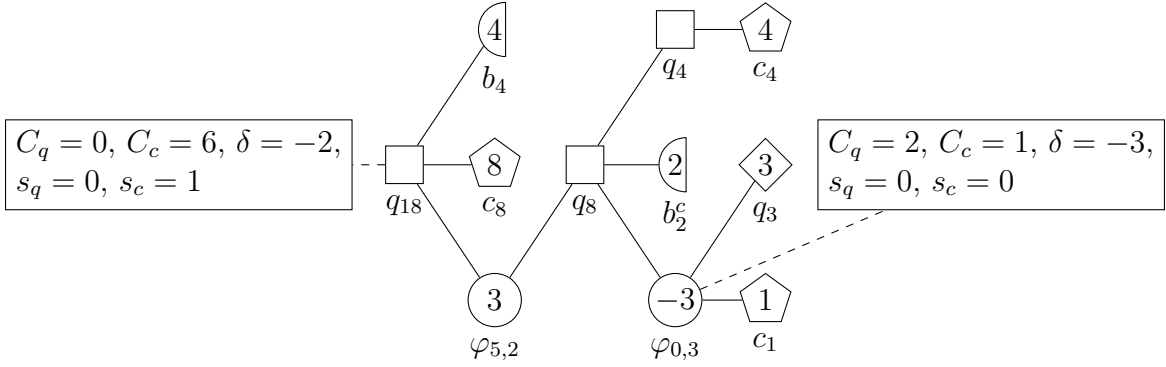
	First two points of the definition reflect our triangular system of equations and address Remark \ref{rem4}: we shouldn't try to find $q_{2j}$ using $q_{2j}$ itself or $\varphi$ of the same step, nor should we find $\varphi_{j, k}$ while using $\varphi_{j, k}$. The third points states that we start our method at order $2$. The next point tells that we shouldn't substitute more $\varphi_{j, k}$ into $q$ or $\cos$, than the order of it. The fourth point guarantees that the solution to a system of equations $x+y = C_q+C_c$, $x - y = \delta$ allows for a solution in non-negative integers. The last point states since every $b^c$ or $b^s$ semicircle corresponds to a hidden vertex, the capacity shouldn't be less than this number.
	
	Now, once we have defined the trees that are contributing, we can finally define a contribution or weight of every such a tree.

	\begin{definition}
		The weight of a leaf the value of the respective coefficient with order given by the label (for semicircles it can be either $b$, $b^c$ or $b^s$ depending on a position within a tree). For non-leaf vertices, the weight is the product of children's weight times the weight multiplier. 
	\end{definition}
	\begin{definition}
		The weight multiplier is the product of base multiplier, combinatorial multiplier and of linear multiplier.
	\end{definition}

	The weight multiplier essentially corresponds to $f(\lambda)$ in Lemma \ref{lema31}. We will define these $3$ types of multiplier in a moment, but we remark that since multiple terms can correspond to the tree, base multiplier consists of factors that are common for every such term. Combinatorial multiplier deals with term-dependent factors, and linear multiplier corresponds to the denominator, similar to Lemma \ref{lema21}.
	
	\begin{definition}
		Base multiplier is the product of the following factors:
		\begin{itemize}
			\item $-1$, if the first non-semicircle child is a pentagon (B1).
			\item The order of the first non-semicircle child (B2).
			\item $i^p$, where $p$ is the order of a pentagon child (B3).
			\item The $m$-th coefficient of an expansion of $\cos$ ($\sin$) at $l b_0 / 2$ times $(l/2)^m$, if the first non-semicircle child is a square (pentagon). Here, $l$ is a label on our vertex ($1$ for boxes) and $m \ge 0$ is the total order of semicircles at the front (B4).
			\item For every circle child of square (pentagon) type, the $n$-th coefficient of an expansion of $\cos$ ($\sin$) at $l b_0 / 2$ times $(l/2)^m$. Here, $l$ is a label on the circle child and $m \ge 0$ is the total order of semicircles right after this child (B5). 
			\item $\cos \alpha$, if there is no pentagon child (B6). 
		\end{itemize}
	\label{basemult}
	\end{definition}
	Next, we define the combinatorial multiplier. Since it will be helpful to know how many hidden vertices of both types have square or pentagonal type, we introduce $x$ as a number of hidden $\varphi_{1, 0}$ of the square. Note that we cannot determine $x$ from the tree itself, so we will have to sum over all values of it. Then, we know that there are $C_q - x$ of square $\varphi_{0, 1}$, and 
	
	\begin{equation}
		\frac{C_c + C_q + \delta}{2} - x \; \; \text{and} \; \; \frac{C_c - C_q - \delta}{2} + x
	\end{equation} 
	of pentagonal $\varphi_{1, 0}$ and $\varphi_{0, 1}$ hidden vertices respectively. Then,
	
	\begin{definition}
		The combinatorial multiplier is the sum over all $x$ of the product of the following factors:
		\begin{itemize}
			\item $\binom{C_q + s_q}{s_q}$ (the number of ways to choose places for hidden square type vertices among all square type vertices) (C1).
			\item $\binom{C_c + s_c}{s_c}$ (the same for pentagonal) (C2).
			\item $\binom{C_q}{x}\binom{C_c}{\frac{C_c + C_q + \delta}{2} - x}$ (the number of ways to choose a place for $\varphi_{1, 0}$ vertices among hidden vertices) (C3).
			\item $\cos(b_0/2)^{C_q} \sin(b_0/2)^{C_c}$ (This comes from the trigonometric factor after $\varphi$ in \eqref{newmain}. For real circle children we have included it into the base multiplier. We are taking zero-order expansion, because we have already considered $b^c$ and $b^s$) (C4).
			\item $(-1)^{\frac{C_c - C_q - \delta}{2} + x}$ (This is also related to this trigonometric factor. When we have pentagonal $\varphi_{0, 1}$, we should expand the sine in $-b_0/2$. Since unlike cosine, sine is odd, we accumulate $-1$ factor) (C5).
			\item $\binom{C_q}{y_q} \binom{C_c}{y_c}$, where $y_q$ ($y_c$) is the number of semicircles directly after a square (pentagon) child. (We can choose which hidden children correspond to these semicircles) (C6).
		\end{itemize}
	\label{combmult}
	\end{definition}
	Finally, to define a definition of linear multiplier, we present an analogue of Lemma \ref{lema21} for \eqref{newmain}:
	\begin{lemma}
		For $j_0 + k_0 \ge 2$, if $j_0 = k_0+1$, then $q_{2j_0}$ arises in \eqref{newmain} with coefficient
		\begin{equation}
			\cos(b_0/2)^{2j_0} 2j_0 \cos \alpha \binom{2j_0 - 1}{j_0}.
		\end{equation}
		When $|j_0 - k_0| \ne 1$, $\varphi_{j_0, k_0}$ comes with coefficient
		\begin{equation}
			2\cos(b_0d_0/2)^2q_2\cos \alpha - \sin (b_0 d_0 / 2)^2 \cos \alpha = \frac{\cos \alpha}{\cos^2(b_0/2)}(\cos^2 (b_0d_0/2) - \cos^2 (b_0/2)).
		\end{equation}
	\end{lemma}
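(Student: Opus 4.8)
The plan is to compute directly the coefficient with which $q_{2j_0}$ (in the case $j_0 = k_0+1$) and $\varphi_{j_0,k_0}$ (in the case $|j_0-k_0|\neq 1$) appear in the left-hand side of \eqref{newmain}, exactly as was done for \eqref{maineq} in Lemma \ref{lema21}, but now tracking the extra trigonometric factors $\cos(d_0 b(zw)/2)$ and $\sin(d_0 b(zw)/2)$ that were pulled out front when passing to \eqref{newmain}. First I would invoke Remark \ref{rem1}: when isolating the dependence on a single coefficient of top order, all other series ($b$, $\varphi$, and the remaining factors of $q$ and $\cos$) may be replaced by their linear/constant approximations, so $b(zw)\approx b_0$, each $\varphi\approx z$ or $\approx w$, $\varphi(z,w)\approx z$ in the argument of $q'$, and $\cos(\cdots-\alpha)\approx\cos\alpha$. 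In particular $\cos(d_0 b(zw)/2)\approx\cos(b_0 d_0/2)$ and $i\sin(d_0 b(zw)/2)\approx i\sin(b_0 d_0/2)$.

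For the $q_{2j_0}$ statement: the term in $q'$ that is linear in $q_{2j_0}$ is $2j_0 q_{2j_0}\big(\sum_{j,k}\varphi_{j,k}z^jw^k\cos(\tfrac{j-k}{2}b(zw))\big)^{2j_0-1}$, and keeping only $\varphi_{1,0}z$ and $\varphi_{0,1}w$ this is $2j_0 q_{2j_0}\big(\cos(b_0/2)\big)^{2j_0-1}(z+w)^{2j_0-1}$; extracting the $z^{j_0}w^{j_0-1}$ coefficient gives $\binom{2j_0-1}{j_0}$. This whole expression sits under the front factor $\cos(d_0 b_0/2)$ with $d_0 = j_0-k_0 = 1$, i.e. $\cos(b_0/2)$, and is multiplied by $\cos\alpha$ from the cosine factor, contributing one more power of $\cos(b_0/2)$; together that yields $\cos(b_0/2)^{2j_0}\,2j_0\cos\alpha\binom{2j_0-1}{j_0}$, which is the claimed coefficient. (The $\sin(d_0 b_0/2)$ branch contributes only through $q$, not $q'$, so it cannot produce $q_{2j_0}$; this must be checked.)

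For the $\varphi_{j_0,k_0}$ statement: following the same reduction as in Lemma \ref{lema21}, $\varphi_{j_0,k_0}$ enters both through the argument of $q'$ (picking up $q_2\cdot 2$ from the two $t_\pm$ terms, up to the trigonometric bookkeeping) and through the argument of $\cos(\cdots-\alpha)$ (picking up the quadratic-in-$\varphi$ part, i.e. the $-\tfrac12(\text{that argument})^2$ of the cosine). In \eqref{newmain} the argument of $q'$ carries $\cos(\tfrac{j_0-k_0}{2}b(zw))\approx\cos(b_0 d_0/2)$ and the whole first line carries the front factor $\cos(d_0 b_0/2)$, so the $q_2$-contribution comes with $\cos(b_0 d_0/2)^2$; meanwhile the argument of the outer cosine carries $i\sin(\tfrac{j_0-k_0}{2}b(zw))\approx i\sin(b_0 d_0/2)$, and since that argument enters quadratically we get $-\tfrac12\cdot(i\sin(b_0 d_0/2))^2\cdot 2 = \sin(b_0 d_0/2)^2$ with the appropriate sign, i.e. the $\varphi_{j_0,k_0}$-term from the cosine expansion contributes $-\sin(b_0 d_0/2)^2$. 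Collecting, and using $\cos\alpha$ from the (second-order-expanded) cosine and the factor $2$ from $q_2\cdot(\lambda^{j_0-k_0}+2+\lambda^{k_0-j_0})$ evaluated at $\varphi\approx z,w$ — which at the linear level is just $q_2\cdot 4/(2) $... more carefully, one reproduces $2q_2$ — gives $2\cos(b_0 d_0/2)^2 q_2\cos\alpha - \sin(b_0 d_0/2)^2\cos\alpha$. Finally, substitute the explicit value of $q_2$ from \eqref{q2}; rewriting $\lambda = e^{ib_0}$, one has $q_2 = -\frac{\lambda-2+\lambda^{-1}}{2(\lambda^{-1}+2+\lambda)} = -\frac{-\sin^2(b_0/2)}{2\cos^2(b_0/2)}\cdot\frac{(\text{cancel})}{1}$; concretely $\lambda-2+\lambda^{-1} = -4\sin^2(b_0/2)$ and $\lambda^{-1}+2+\lambda = 4\cos^2(b_0/2)$, so $2q_2 = \tan^2(b_0/2) = \sin^2(b_0/2)/\cos^2(b_0/2)$, and then
\[
2\cos^2(b_0 d_0/2)q_2\cos\alpha - \sin^2(b_0 d_0/2)\cos\alpha = \frac{\cos\alpha}{\cos^2(b_0/2)}\Big(\sin^2(b_0/2)\cos^2(b_0 d_0/2) - \cos^2(b_0/2)\sin^2(b_0 d_0/2)\Big),
\]
and the bracket simplifies, via $\sin^2 a\cos^2 b - \cos^2 a\sin^2 b = \cos^2 b - \cos^2 a$ applied with $a = b_0/2$, $b = b_0 d_0/2$ (using $\sin^2 = 1-\cos^2$), to $\cos^2(b_0 d_0/2) - \cos^2(b_0/2)$, which is exactly the claimed form.

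The main obstacle I anticipate is the careful bookkeeping of the trigonometric factors in \eqref{newmain}: keeping straight which occurrences of $\cos$/$\sin$ come from the front factor $\cos(d_0 b/2)$, $i\sin(d_0 b/2)$, which from the $\cos(\tfrac{j-k}{2}b)$ inside the argument of $q$, which from the $i\sin(\tfrac{j-k}{2}b)$ inside the argument of the outer cosine, and which from the outer cosine/sine expansion at $-\alpha$ itself — and then checking that no additional contribution to $q_{2j_0}$ or $\varphi_{j_0,k_0}$ hides in the $i\sin(d_0 b/2)$-branch (the second line of \eqref{newmain}), which requires noting that that branch multiplies $q$ rather than $q'$ and $\sin(\cdots-\alpha)$ rather than $\cos(\cdots-\alpha)$, hence at linear order in the unknown produces no such term when $d_0 = \pm 1$ but must be reconsidered for general $d_0$. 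Everything else is the routine trigonometric identity manipulation sketched above.
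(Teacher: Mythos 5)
Your overall strategy --- read off the linear coefficient by replacing all other series with their linear/constant approximations, exactly as in Lemma~\ref{lema21} --- is the correct one, and both your final answer and your verification of the trigonometric identity $\sin^2 a\cos^2 b - \cos^2 a\sin^2 b = \cos^2 b - \cos^2 a$ are right. But the mechanism you give for the $-\sin^2(b_0 d_0/2)\cos\alpha$ term is wrong, and the gap is not cosmetic. You attribute it to the \emph{quadratic} part of the outer $\cos(\cdots-\alpha)$ in the first line of \eqref{newmain}; that cannot produce anything linear in $\varphi_{j_0,k_0}$ at order $z^{j_0}w^{k_0}$, since the argument $\sum\varphi_{j,k}z^jw^k\,i\sin(\cdot)$ has no degree-zero piece, so any quadratic pairing of $\varphi_{j_0,k_0}$ with another factor pushes the total degree past $j_0+k_0$. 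Moreover the \emph{linear} term of $\cos(\cdots-\alpha)$ in $\varphi_{j_0,k_0}$ is multiplied by $q'(\cdot)$ at zeroth order, i.e.\ by $q'(0)=q_1=0$, so the first line contributes nothing through its outer cosine at all. The $-\sin^2(b_0 d_0/2)\cos\alpha$ actually comes from the \emph{second} line of \eqref{newmain}: $q(A)$ is taken at order zero ($q_0=1$), $\sin(B-\alpha)$ is expanded to linear order (derivative $\cos\alpha$) with $B\ni \varphi_{j_0,k_0}z^{j_0}w^{k_0}\,i\sin(b_0 d_0/2)$, and this is multiplied by the front factor $i\sin(d_0 b(zw)/2)\approx i\sin(b_0 d_0/2)$; the two factors of $i$ supply the sign. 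The ad hoc $-\tfrac12(\cdot)^2\cdot 2$ in your computation does not correspond to anything in the expansion. Your closing remark correctly flags the $i\sin(d_0 b/2)$-branch as needing to be checked for general $d_0$, but if you then computed it you would double-count, since you've already extracted the $-\sin^2$ term from the wrong line. (Also, for the $q_{2j_0}$ case the reason the second line contributes nothing is a degree count --- $q$ rather than $q'$ gives $2j_0$ argument factors, hence degree $\geq 2j_0 > j_0+k_0$ --- not that it literally lacks $q_{2j_0}$.)
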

	\begin{definition}
		If our vertex is a box of order $2j_0$, the linear multiplier (L) is
		\begin{equation}
			-\frac{1}{\cos(b_0/2)^{2j_0} 2j_0 \cos \alpha \binom{2j_0 - 1}{j_0}}.
		\end{equation}
	If it a circle with label $d_0$, then this multiplier is
		\begin{equation}
			-\frac{\cos^2(b_0/2)}{ \cos \alpha(\cos^2 (b_0d_0/2) - \cos^2 (b_0/2))}.
		\end{equation}
	\end{definition}
	We also want to prove several lemmas, dealing with structural properties of a tree:
	
	\begin{lemma}
		The order of circle vertices shares parity with their label and the orders of box vertices are even in contributing trees.
		\label{lema34}
	\end{lemma}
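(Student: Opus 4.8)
\textbf{Proof proposal for Lemma \ref{lema34}.}

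The plan is to prove both statements simultaneously by induction going from the leaves of a contributing tree toward its root, exactly as in the proof of Lemma \ref{lema32}. The base case is immediate: a diamond is a leaf labeled by an odd integer and a semicircle by an even integer, so for leaves the claim about ``order shares parity with label'' holds vacuously or by definition (leaves have no order in the relevant sense, or their order equals their label). The real content is the inductive step for a non-leaf vertex, so fix such a vertex and assume the statement for all of its children. I will use the order formula from Definition \ref{orderdef} together with the relation $\delta = x - y$ established after Lemma \ref{lema32}, and crucially the parity condition ``$\delta$ must share parity with $C_q + C_c$'' from Definition \ref{contrtree}.

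First I would compute the parity of the order of the vertex. By Lemma \ref{lema32}, the order of a box vertex is $j + (j-1) + 1 = 2j$, which is manifestly even, giving the second claim immediately once we know the subtree is consistent. For a circle vertex with label $d_0$, Lemma \ref{lema32} gives order $= j + k$, and I want to show $j + k \equiv d_0 \pmod 2$. Since $j - k = d_0$, we have $j + k \equiv j - k = d_0 \pmod 2$ trivially — so in fact this direction is automatic from $j+k$ and $j-k$ having the same parity, and the only thing that needs checking is that the orders $j,k$ produced by the reconstruction are genuine non-negative integers, which is guaranteed for contributing trees by the non-negativity of capacities and the parity condition $|\delta| \le C_q + C_c$ with $\delta \equiv C_q + C_c \pmod 2$ (this is exactly what makes $x = (C_c + C_q + \delta)/2 - (\text{something})$ and its siblings integral). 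So the heart of the argument is packaging the claim as: the reconstruction system of Lemma \ref{lema31} has an \emph{integer} solution precisely because of Definition \ref{contrtree}, and then parities of $j+k$ and $d_0$ match automatically.

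The cleaner way to carry this out, which I would adopt, is to feed Definition \ref{orderdef} directly: the order equals (sum of leaf labels) $-\,2 \cdot (\#\text{circle vertices in the subtree, root counting as }\tfrac12)$. All diamond labels are odd, all semicircle labels are even, all pentagon labels are arbitrary integers, and each interior circle contributes $-2$ (even) while the root contributes $-1$. Thus modulo $2$ the order of the subtree rooted at our vertex equals (number of diamonds in the subtree) $+$ (sum of pentagon labels in the subtree) $-\,\delta_\circ$, where $\delta_\circ \in \{0,1\}$ records whether the root is a circle. For a box root, $\delta_\circ = 0$ and one must show the count ``$\#\text{diamonds} + \sum c\text{-labels}$'' is even; for a circle root with label $d_0$, one must show it is $\equiv d_0 + 1 \pmod 2$, i.e. $\#\text{diamonds} + \sum c\text{-labels} \equiv d_0 - 1$. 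I would prove this by a second, nested induction on depth using the last structural bullet of Definition \ref{contrtree} and Lemma \ref{lema31}'s relation $\beta + \gamma - 1 = m_{\max}$: tracking parity of $\beta$ (a box contributes even order, a diamond odd) against $\gamma$ (the pentagon's label) and the number of circle children, the constraints force the congruence to propagate.

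The step I expect to be the main obstacle is the bookkeeping of the hidden $\varphi_{1,0}$ and $\varphi_{0,1}$ vertices: they carry order $1$ each (odd!), so $x + y$ of them shift the naive parity count, and one must verify that $x + y = C_q + C_c$ has the right parity relative to $\delta = x - y$ — which is precisely the parity clause in Definition \ref{contrtree}, but wiring it correctly into the leaf-label sum of Definition \ref{orderdef} requires care, since hidden vertices are \emph{not} drawn and hence not literally ``leaves in the subtree.'' I would handle this by reintroducing the hidden vertices notionally (each a circle-labeled-$\pm 1$ leaf of order $1$), checking that Definition \ref{orderdef} with them reinstated still yields the same order via Lemma \ref{lema32}, and only then reading off parities. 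Once that reconciliation is done, the two assertions fall out of the parity clause and the oddness/evenness of diamond/semicircle labels with no further computation.
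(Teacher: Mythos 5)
Your plan identifies the right engine for the proof (a leaf-to-root induction together with the parity clause $\delta \equiv C_q + C_c \pmod 2$ and $\delta = x - y$), but the route you say you would adopt has a step that would fail. You propose to feed Definition \ref{orderdef} directly and handle the hidden $\varphi_{1,0},\varphi_{0,1}$ by ``reintroducing'' them as circle leaves and ``checking that Definition \ref{orderdef} with them reinstated still yields the same order.'' That check does not go through: under Definition \ref{orderdef} a circle vertex is never a leaf type, so a reinserted hidden circle contributes nothing to the leaf-label sum while adding one to the circle count, and the combinatorial order drops by $2$ for every reinserted hidden vertex. There is no consistent labeling of a bare $\varphi_{1,0}$ leaf that makes Definition \ref{orderdef} return order $1$ for it. The paper sidesteps this exactly by never touching the raw leaf-label sum: it applies the recursive form of Definition \ref{orderdef}, namely $\mathrm{ord}(v) = \sum_{c \text{ child}} \mathrm{ord}(c) - (\#\text{circle children}) - \mathbf{1}_{\{v\ \text{circle}\}}$. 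With this recursion the semicircle children drop out mod $2$, the polygonal children contribute $\omega_q+\omega_c \equiv C_q+C_c+s_q+s_c+1 \pmod 2$ from the capacity definitions, the inductive hypothesis converts the circle children's orders into the sum of their labels, and $\delta = l_r - \sum l_i$ plus the parity clause closes the argument. (Also, a small slip: the parity clause is the second-to-last bullet of Definition \ref{contrtree}, not the last one, which is the semicircle-count condition.)

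Your first sketch --- that the reconstruction system has an integer solution, whence $j,k \in \mathbb{Z}$ and $j+k \equiv j-k \pmod 2$ automatically --- is a legitimate reformulation, and it is closer to a complete proof than the route you chose, but as written it is not yet closed either. Integrality of $x,y$ follows from the parity clause, but integrality of the reconstructed $j,k$ also requires integrality of the accumulated $z$- and $w$-degrees of the visible children, which in turn needs the inductive hypothesis: each visible circle child has $\eta_c = (\mathrm{ord}_c + d_c)/2$, and this is an integer precisely because $\mathrm{ord}_c \equiv d_c \pmod 2$. So the ``shortcut'' still rests on the same parity induction the paper carries out directly; if you make that step explicit, approach one becomes a correct, if slightly more circuitous, rendering of the paper's argument.
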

	\begin{proof}
		We will prove it using induction. Assume that the root vertex is a circle, the proof is similar if it is a box. The order of the root is the total order of its children minus the number of circles among the root and its children, so minus $s_q + s_c + 1$. Semicircle children have even orders, and the total order of polygonal children is $s_q + C_q + s_c+ C_c - 1$. By induction, the parity of order of the root coincides with the parity of $C_q+C_c$ plus the sum of labels of circle children, itself having the same parity as $C_q+C_c+\delta+l_r$, where $l_r$ is the label of the root. Since $C_q+C_c+\delta$ is even, we have proven the lemma.
	\end{proof}
	
	\begin{lemma}
		In every contributing tree, the sum of the leaves' labels is at least $3$ times the number of circle vertices.
	\end{lemma}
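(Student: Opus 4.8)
The plan is a strong induction on the size of the subtree, proving the slightly stronger statement that for every vertex $v$ the sum $L(v)$ of the leaf labels in the subtree rooted at $v$ satisfies $L(v)\ge 3N(v)$, where $N(v)$ is the number of circle vertices in that subtree, \emph{and} that $L(v)\ge 3N(v)+3$ whenever $v$ is a box. Set $\psi(v):=L(v)-3N(v)$. Then $\psi$ of a leaf is its label, which is $\ge 1$ for a pentagon, $\ge 2$ for a semicircle, and $\ge 3$ for a diamond; and since leaf labels add over children, since the circle vertices of a subtree split as $v$ itself (when a circle) together with those lying in the subtrees of the children, and since the hidden $\varphi_{1,0},\varphi_{0,1}$ vertices carry no leaves and are not circle vertices, one gets the recursion $\psi(v)=\sum_c\psi(c)-3\,\delta_\circ(v)$, the sum running over the visible children of $v$, with $\delta_\circ(v)=1$ iff $v$ is a circle. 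The goal is thus $\psi(r)\ge 0$ for the root $r$; note that by Definition \ref{orderdef} this is equivalent to $\mathrm{ord}(r)\ge N(r)+\delta_\circ(r)$.

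For the box case the target is $\sum_c\psi(c)\ge 3$. The first non-semicircle child $H$ of $v$ (the head, which exists for any box or circle vertex and is a box, a diamond, or a pentagon) already contributes $\psi(H)\ge 3$ if it is a box (by induction), a diamond, or a pentagon $c_p$ with $p\ge 3$. If $H=c_1$ or $H=c_2$, then $v$ differentiated the cosine; a front semicircle, if present, contributes $\ge 2$ and finishes it, and otherwise the capacity constraint $C_c\ge 0$ of Definition \ref{contrtree}, together with $\mathrm{ord}(c_1)=1$, $\mathrm{ord}(c_2)=2$, forces $v$ to have no ($c_1$) or at most one ($c_2$) pentagonal-type circle child. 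One then splits on the coefficient $q_\beta$ coming from the undifferentiated $q$: if $\beta\ge 2$ this is a box or diamond child with $\psi\ge 3$, done; and if $\beta=0$ (the value $\beta=1$ cannot occur since $q_1=0$) then $v$ has no square-type children either, so every remaining configuration either gives a vertex of order $<4$ — impossible, boxes being the coefficients $q_{2j}$ with $j\ge 2$ — or is the configuration $v=\{c_2,C_1\}$ with a single circle child $C_1$, in which case $\mathrm{ord}(C_1)=\mathrm{ord}(v)-1$, forbidden by the first item of Definition \ref{contrtree}, or else contains a semicircle of $\psi\ge 2$ adjacent to $c_2$, giving $\sum_c\psi(c)\ge 4$. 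Hence $\psi(v)\ge 3$ for boxes.

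For a circle $v$ the target is again $\sum_c\psi(c)\ge 3$. If the head is a box, a diamond, or a pentagon $c_p$ with $p\ge 3$, the head alone supplies $\ge 3$. The remaining cases $H=c_1$ and $H=c_2$ run exactly parallel to the box argument: a front semicircle settles it; otherwise $C_c\ge 0$ bounds the number of pentagonal-type circle children; if the undifferentiated $q$ brings $q_\beta$ with $\beta\ge 2$ we obtain a box or diamond child with $\psi\ge 3$; and if $\beta=0$ the surviving small configurations are excluded by $\mathrm{ord}(v)\ge 2$ or, in the case $v=\{c_2,C_1\}$ with one circle child, by the equality $\mathrm{ord}(v)=\mathrm{ord}(C_1)$, which is forbidden by the second item of Definition \ref{contrtree}. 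In all cases $\sum_c\psi(c)\ge 3$, so $\psi(v)\ge 0$.

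The part I expect to require the most care is the box strengthening $\psi(\text{box})\ge 3$ together with the bookkeeping in the $H\in\{c_1,c_2\}$ subcases: one must track precisely, via Lemmas \ref{lema31} and \ref{lema32}, how many hidden $\varphi_{1,0},\varphi_{0,1}$ vertices are forced and where (they affect orders and capacities but not $L$ or $N$), on which side of the product in \eqref{newmain} each semicircle sits, and that $q_1=0$ so $\beta=1$ never contributes — and then to make sure that no exceptional small tree escapes the case analysis. Once the strengthened hypothesis is in place, everything else is an immediate consequence of the recursion $\psi(v)=\sum_c\psi(c)-3\delta_\circ(v)$ and the elementary leaf bounds.
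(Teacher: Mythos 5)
Your central device — the strengthened inductive hypothesis $\psi(v):=L(v)-3N(v)\ge 3$ for every box $v$ — is false, and the failure traces to the line ``boxes being the coefficients $q_{2j}$ with $j\ge 2$.'' Definition~\ref{contrtree} only requires boxes to have order \emph{at least} $2$, so the box $q_2$ is a legitimate internal vertex of a contributing tree. Its subtree is forced (from $j_0=1$, $k_0=0$ and $\beta+\gamma-1=m_{\max}$ in Lemma~\ref{lema31}) to consist of a single pentagon $c_2$ together with one hidden $\varphi_{1,0}$; hence $L=2$, $N=0$, and $\psi=2<3$. In fact, since $\psi=\mathrm{ord}-N$ for a box, any box of order $2$ has $\psi\le 2$ regardless of how one tries to arrange its children. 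Note that the paper's own proof of this lemma invokes ``order $2$ for a box'' only as a contradiction to having already chosen the vertex $V$ in the chain with order $\ge 3$; it is \emph{not} a structural impossibility, and you have no license to discard order-$2$ boxes from the induction.

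This is not cosmetic. The very first step of your inductive case reads ``$\psi(H)\ge 3$ if it is a box (by induction)''; with $H$ a $q_2$ box the inequality is simply wrong, and you would need the missing unit to come from the other children of $v$, combined with the forbidden-configuration rules of Definition~\ref{contrtree} (a circle cannot have a lone $q_2$ child, since it would then have order $1$; a box cannot, since it would then be $q_2$ and violate the same-order rule; a single square-type circle sibling forces label $\pm1$ and is excluded; etc.). Your proof neither proves the claimed strengthening for $q_2$ boxes nor treats $H=q_2$ as a separate case, so the induction does not close. This is exactly the kind of exceptional small tree you warned yourself about in the last paragraph, and it did escape. A repair would have to replace the uniform $\psi(\text{box})\ge 3$ with a bound that degrades to $\psi\ge 2$ at order~$2$ and then argue that whenever a $q_2$ box is the head the parent necessarily has further children worth at least one more unit. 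By contrast, the paper sidesteps the issue with a non-inductive argument: to each circle vertex $A$ it assigns the disjoint set of leaf descendants of which $A$ is the youngest circle ancestor, follows the chain of highest-order polygonal children from $A$ down to a leaf, and rules out the order-$1$ and order-$2$ terminal configurations directly; there is no strengthened box bound to maintain, so the order-$2$ box causes no trouble.
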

	\begin{proof}
		To every circle vertex, we will associate a set of leaves with a total label of at least $3$. This set will consist of all leaf descendants of the circle vertex $A$, for whom $A$ is the youngest circle ancestor. These sets are disjoint. If we start from $A$ and we will travel in the tree by going into the highest order polygonal child of our vertex until we get into a leaf, this leaf will lie in $A$'s  set. If the order of this leaf is at least $3$, we have proven the lemma. If the order was $1$, then some vertex has polygonal children of only order $1$. Since there is no square of order $1$ by Lemma \ref{lema34}, this vertex's only polygonal child was a pentagon with label $1$. We immediately obtain $s_q = s_c = C_q = C_c = 0$, so it didn't have circle children. If it had a semicircle child, then the order of that child was at least $2$ and we already have total order at least $3$ (pentagon plus semicircle). If it didn't have such a child, its only child would have been a pentagon of order $1$, so the vertex would have order $1$, leading to contradiction.
		
		If the order of that leaf was $2$, then consider the last vertex in our chain of order at least $3$, or the starting vertex itself, if there was no such vertex. It has a polygonal child of order $2$. If it has any other non-circle child, we have finished the proof. Otherwise, $s_q+s_c+C_q+C_c = 1$, so it either had $0$ or $1$ circle children. If it had $1$, then the orders of our vertex and its circle child would've contradicted Definition \ref{contrtree}. If there is no such child, then the order of our vertex is $1$, if it is a circle or $2$ if it is a box, leading to a contradiction.
	\end{proof}
	
	The next statement follows immediately:
	
	\begin{proposition}
		In the tree of order $j$, the total order of leaves doesn't exceed $3j$. Particularly, there can be at most $3j$ leaves. 
		\label{prop31}
	\end{proposition}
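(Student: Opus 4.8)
The plan is to obtain this as an immediate corollary of the lemma just proved together with Definition~\ref{orderdef}. Let $T$ be a contributing tree of order $j$, i.e.\ one whose root $v$ has order $j$. Write $L$ for the sum of the labels of the leaves of $T$ --- equivalently, since for a leaf the order equals its label, $L$ is the total order of the leaves --- and $N_{\circ}$ for the number of circle vertices of $T$. First I would unwind Definition~\ref{orderdef} at the root: the prescription that the root counts ``as half a circle vertex'' means the definition subtracts $2\bigl(N_{\circ}-\tfrac12\delta_{\circ}\bigr)$ rather than $2N_{\circ}$, so that
\[
j \;=\; L \;-\; 2N_{\circ} \;+\; \delta_{\circ},
\]
where $\delta_{\circ}\in\{0,1\}$ equals $1$ precisely when $v$ is a circle (and $0$ when $v$ is a box). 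In particular $j \ge L - 2N_{\circ}$. This is exactly the bookkeeping already pinned down in the proof of Lemma~\ref{lema32}, where the equivalent inductive rule ``sum of the children's orders minus the number of circle vertices among the vertex and its children'' was checked, so no new computation is needed to justify the displayed identity.

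Next I would invoke the preceding lemma, which states that the sum of the leaves' labels is at least three times the number of circle vertices, i.e.\ $L \ge 3N_{\circ}$, equivalently $N_{\circ}\le L/3$. Substituting into the bound above gives $j \ge L - 2N_{\circ} \ge L - \tfrac23 L = \tfrac13 L$, hence $L \le 3j$, which is the first assertion. For the ``particularly'' part I would then note that every leaf of a potential tree carries a positive integer label (pentagons carry labels $\ge 1$, semicircles even labels $\ge 2$, diamonds odd labels $\ge 3$, and $q_0$, $c_0$, $b_0$ are excluded from trees), so the number of leaves is at most the sum of the leaf labels, which is $\le 3j$.

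I do not expect any genuine obstacle here: the statement is essentially a one-line substitution. The only point that needs a moment's care is the sign of the correction term in Definition~\ref{orderdef} --- that the root counting as half a vertex produces $+\delta_{\circ}$ rather than $-\delta_{\circ}$ in the identity $j = L - 2N_{\circ} + \delta_{\circ}$ --- but, as noted, this is already forced by the consistency with the inductive order rule established in Lemma~\ref{lema32}, so it can simply be cited.
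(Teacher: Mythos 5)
Your argument is correct and is precisely the ``immediate'' derivation the paper has in mind (the paper gives no explicit proof, only the remark that the proposition follows immediately from the preceding lemma). The identity $j = L - 2N_\circ + \delta_\circ$ with $\delta_\circ\in\{0,1\}$ is the right unwinding of Definition~\ref{orderdef}, the substitution $N_\circ\le L/3$ from the lemma gives $L\le 3j$, and the observation that every leaf carries a label $\ge 1$ yields the leaf-count bound.
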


	We bound the total size of the tree, using its order:
	
	\begin{lemma}
		The tree of order $j$ can have at most $15j$ vertices.
		\label{lema36}
	\end{lemma}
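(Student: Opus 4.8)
The plan is to bound the number of leaves, the number of circle vertices, and then the number of internal vertices (boxes and circles together) of a contributing tree of order $j$ separately; the last of these will be the delicate point. First, Proposition~\ref{prop31} bounds the total label of the leaves by $3j$, which — leaf labels being positive integers — already gives at most $3j$ leaves, and combined with the earlier lemma stating that the total leaf label is at least three times the number of circle vertices, it also gives at most $j$ circle vertices. Since diamonds, semicircles and pentagons are always leaves while boxes and circles never are, every internal (non-leaf) vertex is a box or a circle, so the total equals $\#\{\text{leaves}\}+\#\{\text{internal}\}$ and it suffices to bound the number of internal vertices.

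Next I would split the internal vertices by their number of children. The identity $\#\{\text{leaves}\}=1+\sum_{v}(d(v)-1)$, summed over internal vertices with $d(v)$ the number of children, shows that at most $\#\{\text{leaves}\}-1\le 3j-1$ internal vertices have at least two children, so the real task is to control the internal vertices with exactly one child (the ``chain'' vertices). For such a $v$, the unique child is non-semicircular (an internal vertex always has a non-semicircle child by the definition of a potential tree) and, being the first non-semicircle child, it is a box, a diamond, or a pentagon. The crucial observation is that a box cannot be the unique child of a box: by the order recursion of Lemma~\ref{lema32} (equivalently Definition~\ref{orderdef}), if a box $v$ has a single child $c$ which is a box, then $v$ has no circle children, so $s_q(v)=s_c(v)=0$ and $\operatorname{order}(v)=\operatorname{order}(c)$, contradicting the first clause of Definition~\ref{contrtree}. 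Hence each chain vertex falls into exactly one of three cases according to its unique child $c$: if $c$ is a box, then $v$ is forced to be a circle, giving at most $\#\{\text{circles}\}\le j$ such vertices; if $c$ is a diamond or a pentagon, the map $v\mapsto c$ is injective into the diamonds, resp.\ the pentagons, giving at most $\#\{\text{diamonds}\}+\#\{\text{pentagons}\}\le\#\{\text{leaves}\}\le 3j$ such vertices. So there are at most $4j$ chain vertices, hence at most $(3j-1)+4j=7j-1$ internal vertices, and at most $3j+(7j-1)=10j-1\le 15j$ vertices in total.

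The step I anticipate to be the real obstacle is the chain vertices: without the observation above one has to worry about long descending chains of boxes, and bounding such a chain merely by its length gives $O(j)$ vertices per chain, which is useless unless one also controls the number of chains. Using the order recursion together with the ``no box child of equal order'' rule to forbid a box$\to$box chaining step whenever the parent box has no other child collapses this: every chaining step then either passes through a circle (of which there are at most $j$) or terminates at a leaf, and the count closes with room to spare.
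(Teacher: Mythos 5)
Your argument is correct and follows the paper's own strategy: split vertices into leaves (at most $3j$ by Proposition~\ref{prop31}), non-leaf vertices of degree at least two (at most $3j-1$, via the leaf--degree identity), and degree-one ``chain'' vertices, and observe that a chain vertex's unique child must be polygonal (box, diamond, or pentagon) and cannot be a box when the parent is a box by the equal-order clause of Definition~\ref{contrtree}. The one place you deviate is the final tally for chain vertices with a box child: the paper bounds these by the total number of boxes (which it asserts, and one can derive, to be at most $6j$), whereas you bound them by the number of circle vertices, which the unnamed lemma preceding Proposition~\ref{prop31} caps at $j$ since the total leaf label is at least three times the circle count; this gives the sharper total $10j-1$, and both routes land well under $15j$.
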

	\begin{proof}
		The number of leaves in a tree is equal to $1$ plus the sum of degrees minus $1$ over every non-leaf vertex. By the degree of the vertex here we mean its number of children. We claim that many non-leaf vertices should have a degree at least $2$. By Definition \ref{contrtree}, every box vertex of degree $1$ has a diamond child of the same order. Similarly, every circle of degree $1$ has a polygonal child (either a box or a leaf). Since the number of leaves is at most $3j$, the number of vertices with a leaf child should also no exceed $3j$. The number of non-leaf vertices of degree at least $2$ doesn't exceed $3j-1$.
		
		The only vertices yet unaccounted for are the circle vertices of degree $1$ with a box child. Since the number of boxes doesn't exceed $6j$, the are at most $6j$ of them.  
	\end{proof}
	
	\begin{definition}
		We call two contributing trees structurally equivalent, if the only difference between them is the labeling of vertices.
	\end{definition}
	
	\begin{lemma}
		There are at most $e^{75j}$ structural equivalence classes of trees of order $j$.
		\label{lema37}
	\end{lemma}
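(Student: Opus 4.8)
The plan is to bound the number of structural equivalence classes by combining two ingredients: a bound on the underlying unlabeled rooted tree shape, and a bound on the number of ways to decorate that shape with vertex types and labels, consistent with the structural constraints of Definition \ref{contrtree}. By Lemma \ref{lema36} a contributing tree of order $j$ has at most $15j$ vertices, so I first estimate the number of ordered rooted trees (plane trees) on at most $15j$ vertices. The number of plane trees on exactly $N$ vertices is the Catalan number $C_{N-1} = \frac{1}{N}\binom{2N-2}{N-1} \le 4^{N-1}$, hence summing over $N \le 15j$ gives at most $\sum_{N=1}^{15j} 4^{N-1} \le 4^{15j} = e^{15j \log 4}$ plane tree shapes, which is comfortably below $e^{21j}$.

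Next, I would bound the number of admissible labelings of a fixed shape on $N \le 15j$ vertices, where "labeling" here means the assignment of a vertex type (circle, box, diamond, semicircle, pentagon) to each vertex together with the integer label on each non-box vertex — this is precisely the data that distinguishes structurally inequivalent trees. The type assignment is one of at most $5^N$ choices. For the labels: by Proposition \ref{prop31} the total order of the leaves is at most $3j$, and each leaf carries a label which is a positive integer (for diamonds, semicircles, pentagons) equal to its order, so the leaf labels form a composition-like decoration whose entries sum to at most $3j$; the number of such assignments over the (at most $3j$) leaves is at most $2^{3j}$-ish — more carefully, the number of ways to write a number $\le 3j$ as an ordered sum of positive parts over a bounded number of slots is at most $2^{3j}$. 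For the circle vertices, the relevant label is $d = j-k$ with $|d|\ne 1$; by Lemma \ref{lema34} each circle label shares parity with the circle's order, and the order (and hence a bound on $|d|$) is determined by the subtree, so the number of circle-label choices is again at most exponential in $j$ — each circle label is bounded in absolute value by the order of its subtree, and the orders of all subtrees sum to something linear in $j$ (since orders are roughly additive up the tree and the root has order $j$), so a product bound of the form $\prod (2\,\mathrm{ord}+1)$ is at most $e^{Cj}$ by the AM–GM / convexity estimate $\sum \log(2a_i+1) \le (\#\text{terms})\log(2\bar a+1)$ with $\#\text{terms}\le 15j$ and $\sum a_i = O(j)$.

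Assembling: at most $e^{15j\log 4}$ shapes, times at most $5^{15j} = e^{15j\log 5}$ type assignments, times at most $e^{Cj}$ label assignments for some explicit constant. Since $15\log 4 < 21$, $15\log 5 < 25$, and the label bound can be taken below $e^{29j}$ with a little care, the product is below $e^{75j}$, as claimed. The main obstacle I anticipate is the bookkeeping for the circle and box labels: unlike leaf labels, these are not free parameters but are constrained to be consistent with orders computed from subtrees via Definition \ref{orderdef}, and I need the crude observation that "consistent choices are still at most exponentially many" — concretely, that the sum over all vertices of $\log(1+2\cdot\mathrm{order})$ is $O(j)$. This follows because each unit of order is "used" by a bounded number of ancestor vertices (the order of a vertex is dominated by the order of the root, which is $j$, and there are at most $15j$ vertices), so the convexity bound $\sum_v \log(1 + 2\,\mathrm{ord}(v)) \le 15j \cdot \log\!\big(1 + 2\cdot\tfrac{\sum_v \mathrm{ord}(v)}{15j}\big)$ together with $\sum_v \mathrm{ord}(v) = O(j^2)$ would actually give $O(j\log j)$, which is too weak; instead I would use that most vertices have small order and only $O(1)$ can have order $\Omega(j)$, splitting the product into large-order vertices (few, each contributing $\log j$, total $O(\log j)$) and small-order vertices (many, each contributing $O(1)$, total $O(j)$). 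Getting this split clean enough to land under the stated constant $75$ is the one genuinely fiddly point; everything else is a direct application of the Catalan bound and the linear size bound from Lemma \ref{lema36}.
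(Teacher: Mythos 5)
You have misread what a structural equivalence class is, and this misreading is the source of the one difficulty you flag. The paper's definition says two contributing trees are structurally equivalent ``if the only difference between them is the labeling of vertices'', where \emph{labeling} means the integer labels attached to non-box vertices (the $i$ in $c_i$, the $j-k$ in circles, etc.), \emph{not} the vertex types. A structural class is therefore determined by the plane-tree shape together with the type assignment (circle/box/diamond/semicircle/pentagon) to each vertex, and nothing more --- the integer labels are explicitly \emph{not} part of the data being counted. You can confirm this from the way Lemma~\ref{lema37} is used inside the proof of Lemma~\ref{lema54}: there the structural classes are counted first via Lemma~\ref{lema37}, and then \emph{separately} the number of labelings is bounded by $(2m+1)$ per vertex, $(2m+1)^{15m}$ in total. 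The ``genuinely fiddly point'' you identify --- controlling admissible integer labels consistent with the order constraints --- is a real issue, but it belongs to that later step, not to Lemma~\ref{lema37}.

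Once you drop your third ingredient (the label count), your first two ingredients already close the argument, and in fact more tightly than the paper does: at most $\sum_{N\le 15j} C_{N-1} \le 4^{15j}$ plane-tree shapes by the Catalan bound, times $5^{15j}$ type assignments, gives $20^{15j} = e^{15j\log 20} < e^{45j} < e^{75j}$. The paper instead bounds the number of plane trees on at most $15j$ vertices by the cruder $2^{4\cdot 15j}$ and multiplies by $5^{15j}$ to get $2^{60j}5^{15j} \le e^{75j}$. So your structure is correct and your bound is actually better; the concern about an $O(j\log j)$ term in the label count never needs to be resolved, because labels are out of scope for this lemma.
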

	\begin{proof}
		First, we forget about the shapes of the vertices. There are at most $2^{4\times15j}$ ordered rooted trees with at most $15j$ vertices. Then, we should pick a shape for every vertex. There are $5$ shapes in total, so the number of classes is bounded by
		\begin{equation}
			2^{60j}5^{15j} \le e^{75j}.
		\end{equation}
	\end{proof}

	\section{Primary trees and their recurrence}
	\label{sec4}
		Before considering all trees and bounding their contributions, we would want to focus on a specific family of trees that we claim give the most important contribution. This means that the weight of such trees would be much greater than of others. We first give some ideas why we are considering these specific trees. 
		
		 Lemma \ref{lema37} indicates that the number of trees is at most exponential, so to get faster growth there should be some heavy trees with large weight. Since the weight of every tree is a product of the weights of leaves and of multipliers at non-leaf vertices, we note that it is impossible to create a large contribution from leaf weights. Since the leaf weights are expansions of analytic functions and because of Proposition \ref{prop31}, the product of leaf weights can be bounded by $C^{3j}$ for some $C > 0$.
		
		Hence we will try to increase the weight multiplier, so that it grows more than exponentially. In the base multiplier one thing that can make it big is the order of the first non-semicircle child, coming from differentiation. Theoretically, we can have a lot of boxes in our tree, so we take a product of all these orders (it doesn't make sense to put many other polygons of high order, since they are leaves and will reduce the size of the tree greatly. One can see this from the proof of Lemma \ref{lema34}: having high-order leaves will improve the coefficient $3$ to a larger one). However, these boxes all come with their linear multipliers, that have the orders of boxes in the denominator (this also comes from differentiation), so B2 in base multipliers wouldn't give the strong growth. We should be careful about $(l/2)^m$ in B4 and B5, since this can generate growth (see Section \ref{sec5} and Appendix \ref{apc}). That is why we placed the bound on $b_2$ in Theorem \ref{th4}.  
		
		Another possibility we can use is the trigonometric factors in the denominator of linear multiplier, making them close to $0$. This will not give strong contributions in the long run, since $\lambda$ is Diophantine. One can also notice that $\cos(b_0/2)$ and $\sin(b_0/2)$ also wouldn't provide real growth. 
		
		Hence strong growth can only come out of binomial coefficients in the combinatorial and linear multipliers. As stated, putting leaves of high order is not helpful, so we may assume that $s_c$ and $C_c$ are small. Next, if we make $x \approx C_q/2$ (and hence $\delta$ near $0$), then the binomial coefficient $\binom{C_q}{x}$ will be of the same order as $\binom{2j_0-1}{j_0}$ in the linear multiplier for small $s_q$ (this follows from the fact that diagonal binomials grow just exponentially).
		
		Then, only $\binom{C_q + s_q}{s_q}$ is left to give growth. And it can give growth, since if for example we will always have $s_q = 1$, then we will essentially multiply the weight by the order of the square. If we put many boxes in a long chain and do this for all of them, then we will get the weight multiplied by the product of their orders. If we wouldn't waste order on other vertices, we can make the orders of boxes to decay linearly, and this will give us the factorial bound. 
		
		In this case it wouldn't make sense to include any other vertices in our graph, since it would speed up the decay. However, since $s_q$ is created by circle children, they should be in our tree, but once again, with the lowest order possible (so $\varphi_{2, 0}$, $\varphi_{1, 1}$ and $\varphi_{0, 2}$).  This motivates the following definition:
		
		\begin{definition}
			A contributing tree is principal at a box vertex $A$, if $A$ has $2\sigma+1$ children: one box child and $2\sigma$ circle children of order $2$, and $2\sigma$ is called a recurrent order.
		\end{definition}
	
		\begin{remark}
			Since the order of $A$ is $2\sigma$ plus the order of its box child, we note that $2\sigma$ is even and positive, since box vertices have even orders by Lemma \ref{lema34}.
		\end{remark}

	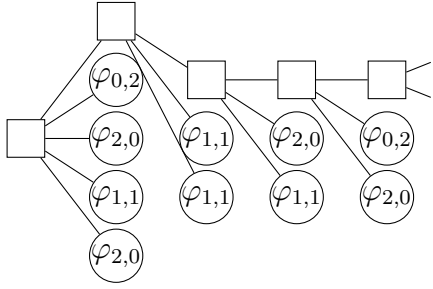
\begin{figure}
		\centering
		\begin{tikzpicture}[scale=0.6]
			\node [squa] (0) at (-4, 0) {};
			\node [squa] (1) at (-2, 2.6) {};
			\node [circ,label=center:$\varphi_{0, 2}$] (21) at (-2, 1.3) {};
			\node [circ,label=center:$\varphi_{2, 0}$] (2) at (-2, 0) {};
			\node [circ,label=center:$\varphi_{1, 1}$] (3) at (-2, -1.3) {};
			\node [circ,label=center:$\varphi_{2, 0}$] (31) at (-2, -2.6) {};
			\node [squa] (4) at (0, 1.3) {};
			\node [circ,label=center:$\varphi_{1, 1}$] (32) at (0, 0) {};
			\node [circ,label=center:$\varphi_{1, 1}$] (33) at (0, -1.3) {};
			\node [squa] (5) at (2, 1.3) {};
			\node [circ,label=center:$\varphi_{2, 0}$] (6) at (2, 0) {};
			\node [circ,label=center:$\varphi_{1, 1}$] (7) at (2, -1.3) {};
			\node [squa] (8) at (4, 1.3) {};
			\node [circ,label=center:$\varphi_{0, 2}$] (9) at (4, 0) {};
			\node [circ,label=center:$\varphi_{2, 0}$] (10) at (4, -1.3) {};
			\node [connection] (41) at (5, 1.7) {};
			\node [connection] (42) at (5, 0.9) {};
			\draw (0) -- (1) -- (4) -- (5) -- (8) -- (41);
			\draw (0) -- (21);
			\draw (0) -- (2);
			\draw (0) -- (3);
			\draw (0) -- (31);
			\draw (1) -- (32);
			\draw (1) -- (33);
			\draw (4) -- (6);
			\draw (4) -- (7);
			\draw (5) -- (9);
			\draw (5) -- (10);
			\draw (8) -- (42);
		\end{tikzpicture}
		
		\caption{\label{fig3}An example of a tree, principal at several vertices.}
	\end{figure} 
	
	Since principal vertices multiply the weight by their orders, the difference between principal and other vertices is greater if the order is larger. Hence, for low orders trees principal at many vertices may not dominate. In this section, we will only study trees that are principal at their box vertices, when the order of the vertex exceeds certain number $j_0$ (we also require the root to be a box).
	
	These trees consist of a long chain of box vertices, going down to order $j_0$, with some circle children of order $2$. We don't care what happens in the subtrees of these circle vertices: we can just consider them as leaves with weights $\varphi_{2, 0}$, $\varphi_{1, 1}$ and $\varphi_{0, 2}$ respectively. Of course, this doesn't work with a tree definition and there are multiple trees with circle root and order $2$, but if identify them and add up these trees' weights, we will get $\varphi$ of order $2$. 
	
	Instead of computing weights of these chain principal trees explicitly, we can compute the total weight of these trees of order $2j > j_0$ (we denote this total weight $\hat{q}_{2j}$) recursively: every root of such a tree has a child, whose subtree can be any chain principal tree of order $2j - 2\sigma$, so if sum over all possible subtrees, we will get $\hat{q}_{2j - 2\sigma}$. The weight of the rest of the tree is explicitly computable and is independent of the choice of aforementioned subtree.
	
	This gives rise to the following linear recurrent relation for $2j > j_0$:
	
	\begin{equation}
		\hat{q}_{2j} = \sum_{\sigma = 1}^{\infty} R_{\sigma, j} \hat{q}_{2j - 2\sigma}.
		\label{recrel}
	\end{equation}
	
	 Our next goal would be to compute $R_{\sigma, j}$. We should consider all the possible configurations of $2\sigma$ circle children of the root, and and up the resulting weights. Assume that the root has $u$, $v$ and $w$ circle children, with labels $2$, $0$ and $-2$ respectively (corresponding to $\varphi_{2, 0}$, $\varphi_{1, 1}$ and $\varphi_{0, 2}$) with $u + v + w = 2\sigma$. Then, we have $\varphi_{2, 0}^u\varphi_{1, 1}^v \varphi_{0, 2}^w$ factor coming from the weights of the circle leaves, and there are $\frac{(2\sigma)!}{u!v!w!}$ ways to shuffle these leaves around, preserving the weight of the tree.
	 
	 Next, we are only left to consider the weight multiplier at the root. We find the related quantities:
	 
	 \begin{equation}
	 	s_q = 2\sigma, \; \; s_c = 0, \; \; C_q = 2j - 4\sigma - 1, \; \; C_c = 0, \; \; \delta = 1 - 2u + 2w. 
	 \end{equation}
 
 	This allows us to find the value of $x$:
 	
 	\begin{equation}
 		x = \frac{C_c+C_q +\delta}{2} = j-2\sigma - u +w,
 	\end{equation}
 
 	otherwise the contribution is automatically zero from combinatorial multiplier. After substituting this into the multiplier definition, we get:
 	
 	\begin{align}
 		R_{\sigma, j} = \sum_{u+v+w = 2\sigma} \varphi_{2, 0}^u\varphi_{1, 1}^v \varphi_{0, 2}^w\frac{(2\sigma)!}{u!v!w!} (2j - 2\sigma) \cos(b_0/2) \cos^{u+w}(b_0) \cos (\alpha) \times \\ \times \binom{2j - 2\sigma - 1}{2\sigma}\binom{2j - 4\sigma - 1}{ j-2\sigma - u +w}\cos(b_0/2)^{2j - 4\sigma - 1} \frac{-1}{\cos(b_0/2)^{2j} 2j \cos \alpha \binom{2j - 1}{j}}.
 	\end{align}
 
 	After simplification, this reduces to
 	
 	\begin{align}
 		R_{\sigma, j} = - \frac{(2\sigma)!(j - \sigma)\binom{2j - 2\sigma - 1}{2\sigma}}{j\binom{2j - 1}{j}}\cos(b_0/2)^{- 4\sigma}\times \\ \times \sum_{u+v+w = 2\sigma} ( \varphi_{2, 0}\cos b_0)^u\varphi_{1, 1}^v (\varphi_{0, 2}\cos b_0)^w\frac{1}{u!v!w!}  \binom{2j - 4\sigma - 1}{ j-2\sigma - u +w} .
 		\label{eq46}
 	\end{align}
	
	Right now, the last binomial coefficient prevents us from evaluating the sum in $u$, $v$ and $w$. As we will see later, we are primarily interested in the case, when $\sigma$ is much lower than $j$. Under these circumstances, we note that this binomial coefficient is very close to the diagonal one. Namely,
	
	\begin{align}
		\binom{2j-4\sigma- 1}{j - 2\sigma - u + w} = \binom{2j - 4\sigma - 1}{j - 2\sigma}\frac{(j-2\sigma)!(j - 2\sigma - 1)!}{(j - 2\sigma - u + w)!(j - 2\sigma + u - w-1)!} = \\ = \binom{2j - 4\sigma - 1}{j - 2\sigma} \prod_{\tau = 0}^{|u - w - 1/2| - 1/2} \frac{j - 2\sigma - \tau}{j - 2\sigma + \tau} =  \binom{2j - 4\sigma - 1}{j - 2\sigma} \left(1 + O\left(\frac{2|u-w|^2}{j - 2\sigma}\right)\right),
		\label{eq49}
	\end{align}

	where the bound of $O$ is uniform for all parameters, provided $j - 2\sigma > 0$. If we ignore the error term and just use the main term, we have:
	
	\begin{align}
	 - \frac{(2j - 2\sigma - 1)!(j-1)!^2(j - \sigma)}{(2j-1)!(j - 2\sigma)!(j - 2\sigma - 1)!}\cos(b_0/2)^{- 4\sigma} \sum_{u+v+w = 2\sigma} ( \varphi_{2, 0}\cos b_0)^u\varphi_{1, 1}^v (\varphi_{0, 2}\cos b_0)^w\frac{1}{u!v!w!}   .
	\end{align}

	The quantity inside the sum is generated by
	
	\begin{equation}
		\exp\left(\varphi_{2, 0} \cos b_0 + \varphi_{1, 1} + \varphi_{0, 2}\cos b_0\right).
	\end{equation}

	Since we are computing terms of total order $2\sigma$, our expression reduces to
	
	\begin{align}
		- \frac{(2j - 2\sigma - 1)!(j-1)!^2(j - \sigma)}{(2j-1)!(j - 2\sigma)!(j - 2\sigma - 1)!}\cos(b_0/2)^{- 4\sigma}(\varphi_{2, 0} \cos b_0 + \varphi_{1, 1} + \varphi_{0, 2}\cos b_0)^{2\sigma}\frac{1}{(2\sigma)!}.
		\label{twosigma}
	\end{align}
	
	The first fraction is equivalent to $\frac{(2j)!}{(2j - 2\sigma)!2^{4\sigma}}$ as $j \rightarrow \infty$ while $\sigma$ is fixed. So, for now we substitute it instead of the fraction. We will deal with the error later:
		
	\begin{align}
		\hat{R}_{\sigma, j} = - \frac{(2j)!}{(2j - 2\sigma)!2^{4\sigma}}\cos(b_0/2)^{- 4\sigma}(\varphi_{2, 0} \cos b_0 + \varphi_{1, 1} + \varphi_{0, 2}\cos b_0)^{2\sigma}\frac{1}{(2\sigma)!}.
		\label{eq413}
	\end{align}		

	Now we will study \eqref{recrel}, where we substitute $\hat{R}$ for $R$. It turns out that the resulting relation can be solved using the generating function. First of all, we perform a linear change of coordinates:
	
	\begin{equation}
		\hat{q}_{2j} = \frac{(2j)!(\varphi_{2, 0} \cos b_0 + \varphi_{1, 1} + \varphi_{0, 2}\cos b_0)^{2j}}{\cos^{4j} (b_0/2) 2^{4j}} \breve{q}_{2j}.
		\label{coorch}
	\end{equation}

	To check if the coordinate change is valid, we have to show that $\varphi_{2, 0} \cos b_0 + \varphi_{1, 1} + \varphi_{0, 2}\cos b_0 \ne 0$. We do that in Appendix \ref{ap1}. It turns out that this value is nonzero, if and only if $q_3$ is nonzero. Hence our method only works if $q_3$ is non-trivial. We denote the scaling factor:
	
	\begin{equation}
		S_{j} =  \frac{j!(\varphi_{2, 0} \cos b_0 + \varphi_{1, 1} + \varphi_{0, 2}\cos b_0)^{j}}{\cos^{2j} (b_0/2) 2^{2j}}
	\end{equation}
	
	The coordinate change reduces the recurrent to
	
	\begin{equation}
		\breve{q}_{2j} = -\sum_{\sigma = 1}^{\infty}\frac{\breve{q}_{2j - 2\sigma}}{(2\sigma)!}.
		\label{newrec}
	\end{equation}

	This equation is linear and has constant coefficients. Hence to solve this equation in general, we can just solve the equation:
	
	\begin{equation}
		\breve{q}_0 = 1; \; \; \; \sum_{\sigma = 0}^{j}\frac{\breve{q}_{2j - 2\sigma}}{(2\sigma)!} = 0, \; \; j > 0.
		\label{recur}
	\end{equation}

	If we go back to formal power series with $\breve{q}(t) = \sum_{j = 0}^{\infty} \breve{q}_{2j}t^{2j}$, then \eqref{recur} states that
	
	\begin{equation}
		\breve{q}(t) \cosh t = 1 \Rightarrow \breve{q}(t) = \frac{1}{\cosh t} = \text{sech} \; t.
		\label{eq418}
		\end{equation}
		
	By the definition of Euler numbers, we have $\breve{q}_{2j} = \frac{E_{2j}}{(2j)!}$. Particularly, since $\cosh t$ has zeros on the complex plane, and its radius of convergence around zero is $\frac{\pi}{2}$, so we get:	
	
	\begin{equation}
		\limsup_{j \rightarrow \infty} \sqrt[2j]{|\breve{q}_{2j}|} = \frac{2}{\pi}.
	\end{equation}

	We are yet to deal with the starting conditions for $\hat{q}$, but if there are good enough, this will correspond to $\breve{q}_{2j}$ depending exponentially on $j$. Then, when we will substitute this into \eqref{coorch}, we will get that the absolute value of some subsequence of $\hat{q}_{2j}$ grows faster than
	
	\begin{equation}
		 \frac{(2j)!(\varphi_{2, 0} \cos b_0 + \varphi_{1, 1} + \varphi_{0, 2}\cos b_0)^{2j}(1 - \varepsilon)^j}{\cos^{4j} (b_0/2) 2^{2j} \pi^{2j}}.
	\end{equation}

	So, this reduced system will produce growth of speed similar to $(2j)!$ with zero radius of convergence.

	\subsection{Problems without the third derivative}
	Here, we will explain why the condition that the third derivative is non-zero is so crucial for us. This explanation will be done for the case $\tilde{p}/\tilde{q} = 1/2$, and when $q_{odd}$ is zero. In that case, we will have $\varphi_{2, 0} = \varphi_{1, 1} = \varphi_{0, 2} = 0$, as can be seen from Appendix \ref{ap1}. 
	
	The same intuition about principal trees will apply in that case, but instead of circle vertices of order $2$, one would have to use circle vertices of order $3$. Particularly, we will have $u$ of $\varphi_{3, 0}$ vertices and $w$ of $\varphi_{0, 3}$ with $2u + 2w = 2\sigma$. 
	
	This will lead to two major changes. First, the factor $\frac{1}{(2\sigma)!}$ in \eqref{twosigma} will be replaced by $\frac{1}{\sigma !}$, since this corresponds to $(u+w)!$. Secondly, instead of having $(2j)!$ in \eqref{coorch}, we will just have $j!$, so the expected growth rate will be much slower compared to our case.
	
		\begin{figure}
		\centering
		\begin{tikzpicture}[scale=0.6]
			\node [squa] (0) at (-4, 0) {};
			\node [squa] (1) at (-2, 1.95) {};
			\node [circ,label=center:$\varphi_{0, 3}$] (21) at (-2, 0.65) {};
			\node [circ,label=center:$\varphi_{3, 0}$] (2) at (-2, -0.65) {};
			\node [circ,label=center:$\varphi_{3, 0}$] (3) at (-2, -1.95) {};
			\node [squa] (4) at (0, 1.3) {};
			\node [circ,label=center:$\varphi_{0, 3}$] (32) at (0, 0) {};
			\node [circ,label=center:$\varphi_{0, 3}$] (33) at (0, -1.3) {};
			\node [squa] (5) at (2, 0.65) {};
			\node [circ,label=center:$\varphi_{3, 0}$] (6) at (2, -0.65) {};
			\node [squa] (8) at (4, 1.3) {};
			\node [circ,label=center:$\varphi_{3, 0}$] (9) at (4, 0) {};
			\node [circ,label=center:$\varphi_{0, 3}$] (10) at (4, -1.3) {};
			\node [connection] (41) at (5, 1.7) {};
			\node [connection] (42) at (5, 0.9) {};
			\draw (0) -- (1) -- (4) -- (5) -- (8) -- (41);
			\draw (0) -- (21);
			\draw (0) -- (2);
			\draw (0) -- (31);
			\draw (1) -- (32);
			\draw (1) -- (33);
			\draw (4) -- (6);
			\draw (5) -- (9);
			\draw (5) -- (10);
			\draw (8) -- (42);
		\end{tikzpicture}
		
		\caption{An analogue of a principal tree, when $q_{odd}$ is trivial.}
	\end{figure} 
	
	The first change, however, will make proving a lower bound much more difficult. Indeed, \eqref{newrec} in this degenerate case will take the form of
	
	 \begin{equation}
	 	\breve{q}_{2j} = -\sum_{\sigma = 1}^{\infty}\frac{\breve{q}_{2j - 2\sigma}}{\sigma!}.
	 	\label{eq521}
	 \end{equation}
	
	This differs from \eqref{newrec} by a little, but the behavior of the solution is really different. We have:
	
	\begin{equation}
		\breve{q}(t)e^{t^2} = 1 \Rightarrow \breve{q} (t) = e^{-t^2}.
	\end{equation} 

	Since $\breve{q}(t)$ has an infinite radius of convergence, $\breve{q}_{2j}$ will decay as $\frac{1}{j!}$, so $\hat{q}_{2j}$ won't have any factorial powers, so they will have a positive radius of convergence. This doesn't give us any information about the convergence of the original problem, since the error terms we removed may kill the convergence once we add them back.

	\section{Tree contribution bounds}
	\label{sec5}
	
	To justify the study of principal trees we should show that the contributions of other trees can be bounded. This will also give us an upper bound for the problem. We note that the bounds in the following statement are motivated by the study of principal trees in the previous section.
	
	\begin{lemma}
		For any $C$, $D$ and $\delta > 0$ there exists $C_1 > 0$ and $C_7>0$, such that the following holds.
		Lets assume we are given $\left\{q_{2n+1}\right\}$ and $\left\{b_{2n}\right\}$ that satisfy the following conditions:
		\begin{itemize}
			\item $|q_{2n + 1}| < e^{Dn}$ for any $n>0$.
			\item $|b_{2n}| < e^{Dn}$ for any $n>0$.
			\item $|q_3| \le e^{C}$.
			\item $|\lambda - e^{2i\pi p /q}| >  \frac{1}{Dq^2}$ for any $p/q \in \mathbb{Q}$.
			\item $|\cos(b_0/2)| > e^{-D}$.
			\item $|\cos \alpha| > e^{-D}$.
			\item $\left| b_2 e^{-2(C+F(\lambda))} C_{dio}^{-1} \right| < 1$, with $C_{dio}$ defined in \eqref{eq15} with $\tau = 2$.
		\end{itemize}
		
		Then, the solution satisfies the following upper bounds:
		
		\begin{equation}
			|q_{2n}| \le e^{W(2n-2, C_1)} (2n)! e^{2n(C + F(\lambda) + \delta)}, \; n > 1,
			\label{qbound}
		\end{equation}
	
		and 
		
		\begin{equation}
			|\varphi_{j, k}| \le |G_{j, k}|  e^{W(j+k-1, C_1)}  (j + k + 1)! e^{(j+k + 1)(C + F(\lambda) + \delta )}, \; j+k>1.
			\label{phibound}
		\end{equation}
		
	Here, $W$ is a function, constant for large $n$,
	
	\begin{equation}
		F(\lambda) = \log \left| \frac{\varphi_{2, 0} \cos b_0 + \varphi_{1, 1} + \varphi_{0, 2} \cos b_0}{q_3}\right| - 2 \log |\cos b_0/2| -  \log (2\pi),
		\label{eq53}
	\end{equation}
		
	and 
	
	\begin{equation}
		G_{j, k} = C_7(j + k + 1)  \binom{j +k}{j} \frac{\cos^{j+k+2}(b_0/2)}{ \cos^2 ((j-k)b_0/2) - \cos^2 (b_0/2)}.
	\end{equation}
	\label{lema51}
	\end{lemma}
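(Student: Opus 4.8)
The plan is to prove the two bounds \eqref{qbound} and \eqref{phibound} simultaneously by strong induction on the order $n = j+k$ (or $2n$ for $q$), using the tree expansion developed in Section~\ref{sec3}. The key point is that every coefficient $q_{2n}$ or $\varphi_{j,k}$ is a sum of weights $w(T)$ over contributing trees $T$ of the appropriate order, with leaf weights given by the known data $q_{2m+1}$, $b_{2m}$, $c_m$, and $\varphi$'s of strictly smaller order. By Lemma~\ref{lema37} there are at most $e^{75j}$ structural classes, and by Proposition~\ref{prop31} the product of leaf weights that are \emph{not} recursively expanded is bounded by $C^{3j}$; so the whole game is to bound the weight multipliers at the non-leaf vertices and to show that the factorial growth is controlled by the principal recurrence of Section~\ref{sec4}. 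Concretely, I would set up an inductive hypothesis that $|q_{2n}| \le e^{W(2n-2,C_1)}(2n)! e^{2n(C+F(\lambda)+\delta)}$ and the analogous bound for $\varphi_{j,k}$, plug these bounds into the tree sum for the next order, and verify the same bound comes back out.

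The main steps, in order, are: (1) Reduce the tree sum to a sum over \emph{structural} classes times a bound on the maximal weight multiplier in each class; here the base multiplier factors (B1)--(B6), the combinatorial multiplier (C1)--(C6), and the linear multiplier L all get estimated crudely, using the Diophantine hypothesis $|\lambda - e^{2\pi i p/q}| > 1/(Dq^2)$ to bound the trigonometric denominators in L, and the hypothesis $|b_2 e^{-2(C+F(\lambda))}C_{dio}^{-1}|<1$ to keep the $(l/2)^m$ factors in B4/B5 from blowing up. (2) Isolate the chain-of-boxes ``principal'' subtrees: their total contribution is exactly the solution $\hat q_{2j}$ of \eqref{recrel}, which after the coordinate change \eqref{coorch} is $\breve q_{2j} = E_{2j}/(2j)!$, giving precisely the $(2j)!\,e^{2j F(\lambda)}$ part of the claimed bound (this is where $F(\lambda)$ and the $1/(2\pi)$ come from, via $\limsup|\breve q_{2j}|^{1/2j} = 2/\pi$). (3) Bound the ``error'' trees — those that deviate from principal at some high-order box vertex — by showing each deviation costs a factor that beats the at-most-exponential gain from the extra combinatorial freedom; this is done by splitting into the two classes mentioned in the plan of the proof (trees containing a high-order element vs. trees that do not) and peeling off structure from the leaves toward the root. (4) Collect: the geometric series in the ``defect'' converges because each unit of defect pays more than it earns, which is exactly what the function $W(\cdot,C_1)$ — constant for large $n$ — records; the constant $C_1$ is chosen large enough to absorb the finitely many low-order anomalies and the $e^{75j}$ from Lemma~\ref{lema37}, while the $\delta$ slack absorbs the $O(|u-w|^2/(j-2\sigma))$ error in \eqref{eq49} and the replacement of $R$ by $\hat R$.

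The hardest step is (3): controlling the error trees. The difficulty is genuine cancellation-blindness — the bound is a sum of absolute values, so one cannot rely on sign cancellations, and a naive estimate loses a factor per box vertex that would destroy the $(2n)!$ bound. The resolution is the observation (already flagged in Section~\ref{sec4}) that every non-principal feature at a box vertex of large order $2j_0$ — a box child of smaller order with an extra polygonal child, a circle child of order $>2$, a semicircle, a pentagon — either reduces the order available to the rest of the chain (shortening the factorial) or fails to generate the $\binom{C_q+s_q}{s_q}$-type growth, and in every case the ratio of the would-be weight to the principal weight is summably small once $j_0$ is taken large. Making this precise requires the careful bookkeeping of capacities $C_q, C_c$, sizes $s_q, s_c$, and $\delta$ from Definition~\ref{contrtree}, and a monotonicity/telescoping argument along the box chain; I expect this to be the bulk of Section~\ref{sec5}. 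The dependence of $W$ on $C_1$ and the claim that $W$ is eventually constant both come out of this telescoping: beyond some order $n_1(C,D,\delta)$ every additional order contributes a factor $\le 1$ to the defect sum, so $W$ stabilizes.
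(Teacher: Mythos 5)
Your skeleton is right — induction on order, tree expansion, isolating the principal box chain to get the $\operatorname{sech}$ recurrence \eqref{recrel} after the change of variables \eqref{coorch}, a function $W$ absorbing low-order anomalies, and $\delta$-slack absorbing the replacement of $R$ by $\hat R$. But there is a genuine gap in step (2), and it is precisely the step that produces the constant $F(\lambda)$ (with its $\log(2\pi)$, equivalently the $(2/\pi)^{2n}$ rate).

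You say the principal contribution is ``exactly the solution $\hat q_{2j}$'' and point to $\limsup |\breve q_{2j}|^{1/2j} = 2/\pi$. This is not enough to close the induction. What the principal trees actually contribute to $q_{2n}$ is $\sum_{\sigma} R_{\sigma,n}\, q_{2n-2\sigma}$, where the $q_{2n-2\sigma}$ are the true coefficients you are inductively bounding, not the model $\hat q$; and after the change of variables the recurrence you must control is $\breve q_{2n} = -\sum_{\sigma\geq 1}\breve q_{2n-2\sigma}/(2\sigma)! + \text{error}$. If you simply take absolute values in this recurrence (as the rest of your tree estimates do), you replace $1/\cosh t$ by $1/(2-\cosh t)$, whose radius of convergence is $\operatorname{arccosh}(2)\approx 1.317 < \pi/2 \approx 1.571$, and you end up with a strictly worse constant than $F(\lambda)$ as stated. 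So the signed cancellations intrinsic to the Euler-number recurrence must be retained through the inductive step.

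The paper does this with an operator-theoretic argument that your proposal does not contain: it renormalizes $\breve q$ once more to $\mathbf q_{n,j}=\breve q_{2n-2j}/j!\in\ell^2$, defines the compact Taylor recurrence operator $T$ on $\ell^2$, computes its spectrum $\{0\}\cup\{-4/((2m+1)^2\pi^2)\}$ with simple eigenspaces (Lemma \ref{lema56}), splits $\ell^2 = L_1\oplus L_2$ with $L_1$ the top eigenspace, and uses a power bound $\|T^N\|\leq e^{-\pi^2/4}(4e^\delta/\pi^2)^N$ to deduce $|(T^N\mathbf q_{n-N})_0|$ decays at the required rate. The errors from non-principal trees are fed in as $\|T\mathbf q_{j-1}-\mathbf q_j\|\leq n^{-4/5}\cdot(\text{RHS})$ and then absorbed by iterating $T$ a fixed $N$ times. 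Without some argument of this kind — extracting the spectral radius of the recurrence operator rather than its $\ell^1$-norm — your plan for the principal contribution yields a constant strictly larger than $F(\lambda)$ and the inductive step does not close. (Your reading of $W$ is also slightly off: $W$ is an input, chosen a priori with convexity properties so that Karamata's inequality applies in the order-summation lemmas, and to absorb the finitely many small-$n$ failures; it does not ``record'' the defect sum's convergence, which instead comes out of the $n^{-\epsilon}$ gains in the detailed low-child/large-child case analysis.)
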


	\begin{remark}
		We note that $F(\lambda)$ doesn't depend on $q_3$, as can be seen from Appendix \ref{ap1}, so $q_3 = 0$ also works.
	\end{remark}

	To prove this bound, we will use an inductive method. We prove the statement by the induction in the order of the vertex. The exact values of $\delta$ and $C_1$ will become apparent during the proof.
	
	The first condition that we need to place is that $c_j$ and $q_{2j+1}$ should satisfy the respective version of \eqref{qbound}.
	
	In the first step, we will prove the bound on $q_{2n}$, knowing that the bound applies for $q_{2m}$ with $m < n$ and for $\varphi_{j, k}$ with $j + k < 2n - 1$. To achieve the bound, we need to go over all possible trees of depth $1$ for $q_{2n}$ and to estimate their weight. It will be beneficial to split such trees into several classes and to consider each class separately. 
	
	Before delving into the proof, we have to some remarks about the variables we are using. The proof will work only when $n$ (or $j+k$) is large enough. For small orders the statement is proven by making $W(n, C_1)$ grow fast enough for these small $n$, with $C_1$ being a measure of the speed of growth, chosen after this asymptotic $n$. Since the root of every tree has higher argument of $W$, than every other element of the tree (unless the only child of the circle root is box, but it is exactly accounted for with $G_{j, k}$), we can choose $W$ and $C_1$, so that the statement will hold for small $n$. More requirements on $W$, that focus on its behavior on large $n$ is given after \eqref{eq514}.
	
	 So, $C$, $D$ and $\delta$ are given to us. Various constants in the proof (for example $C_4$ and $C_7$) are chosen after $C$, $D$ and $\delta$. The asymptotic $n$ is chosen after these constants, and $C_1$ is chosen after the asymptotic $n$. During the proof, we will change some of these constants. During each change, we won't use values that are chosen after the changed variable (so for example we can say that we assume $D>1000$ but we cannot say assume $D > C_1$). Moreover, we have to avoid the loops of those changes (we cannot say to double $D$ on every inductive step). 
	
	One can note that we lack the statement for $\varphi_{1, 0}$, $\varphi_{0, 1}$ and $q_2$ in the proof. But these $\varphi$-s cannot be tree elements, so we don't need the bound. $q_2$ can only be bigger than its fake bound by a constant, and all the bounds will feature powers of $n$, so it doesn't affect the proof. 
	
	\subsection{Low children trees}
	
	We will call the first class low children trees. Their main property is that they will not feature any children of high order. Specifically, we call a tree of depth $1$ a low children tree if it has no children of order more than $H(2n)$, where $2n$ is the order of the root. 
	
	Let's fix a low children tree of depth $1$ of order $2n$. Then, we can estimate the product of children's weight using an induction hypothesis. If we apply the bounds, the exponential of $C + F(\lambda) + \delta$ will come with scaling $2n + 2s_q + 2s_c$. Thus, to prove the recurrent formula, we can cancel this exponential and to leave $2s_q+2s_c$ scaling.
	
	Next, there are a few factors in the multiplicative factor that can be bounded by a constant $C_2$: these factors are B1, B3, B6, C5 from Definitions \ref{basemult} and \ref{combmult}, as well as factor $\cos \alpha$ from L. Next, B2 is bounded by $2n$ in L, so we can remove both of them. Since we are bounding coefficients we should only use bounds on absolute value from now on.
	
	Now, we collect $\cos(b_0/2)$ factors. We can use that cosine is analytic and improve the bounds to turn $\sin(b_0/2)$ into $\cos(b_0/2)$ in C4. A $C_q+C_c$ power comes from C4, $-2n$ power comes from L and $3s_q+3s_c$ plus the sum of $j+k - 1$ over all circle children, coming from $G_{j, k}$. Recalling the definition of the order of the vertex and that $C_q+C_c+s_q+s_c$ is the total order of polygonal children minus one, we bound the contribution by $\cos(b_0/2)^{-\omega_b + 2s_q+2s_c}$, where $\omega_b$ is the total order of semicircle children. Since the normal form is analytic, we accommodate the cosine factor into it and forget about it, increasing $D$. 
	
	Next, we bound the trigonometric expansion in B4 and B5 by $1/m!$. We also remove the power of $2$ from there. 
	
	To finish off algebraic quantities, we use a Diophantine property of $\lambda$ to bound the denominator in $G_{j, k}$ by $\frac{1}{D(|j - k| + 1)}$.
	
	Now, after we have removed all the algebraic factors, we come to the combinatorial ones. Since we have removed C5, we can sum over $x$ in C3. We have:
	
	\begin{equation}
		\sum_{x} \binom{C_q}{x}\binom{C_c}{\frac{C_c + C_q + \delta}{2} - x} = \binom{C_q+C_c}{\frac{C_c + C_q + \delta}{2} } \le 2^{C_q+C_c}.
		\label{eq55}
	\end{equation}
	
	Next, we will slowly simplify the structure of the tree. Since we have bounded many factors and hence removed them from considerations, different trees will now have very similar contributions. For example, if we move a circle vertex (together with all semicircles after it) or  one of the semicircles directly after the polygon from the pentagonal part to the square one and vice versa, the contribution will not change much. Specifically, the only terms that change when we move  are C1, C2 and C6:
	
	\begin{equation}
		\frac{(C_q+s_q)!(C_c + s_c)!}{s_q!s_c!(C_q - y_q)!(C_c - y_c)! y_q!y_c!}.
		\label{c1c2c6}
	\end{equation}
	
	We can sum over all those possible trees in the following way. We have an ordered family of $s_q + s_c$ circle children with semicircles behind them and another ordered family of $y_q+y_c$ semicircle vertices. We can draw a barrier in any position in both of those families and bring everything before the line to the square part and everything after -- to the pentagonal part (alternatively first and second polygonal children's parts), while maintaining the original order. This will split all trees in classes, where in every class we maintain the order of vertices, but we place them in different polygonal parts. Essentially in every class the relative order of circle children is the same.
	
	For each class, we sum over \eqref{c1c2c6}:
	
	\begin{equation}
		\sum_{\gamma_1, \gamma_2} \frac{(C_q + s_q)!(C_c+s_c)!}{\gamma_1!(s_q+s_c - \gamma_1)!\gamma_2!(y_q+y_c - \gamma_2)!(C_q + s_q - \gamma_1 - \gamma_2)!(C_c - s_q - y_q - y_c + \gamma_1 + \gamma_2)!}.
	\end{equation}

	Here, $\gamma_1$ and $\gamma_2$ are the positions of the barriers in the respective sets. This sum is computable due to the binomial coefficients' properties and gives
	
	\begin{equation}
		\binom{s_q+s_c+C_q+C_c}{s_q +s_c} \binom{C_q+C_c}{y_q+y_c}.
		\label{eq58}
	\end{equation} 
	
	From now on, we denote $s_0 = s_q+s_c$, $C_0 = C_q+C_c$, $y_0 = y_q+y_c$. We also denote $\omega_q$ and $\omega_c$ the orders of square and pentagonal children and $\omega_0 = \omega_q + \omega_c$.
	
	Now we can do a similar trick for the semicircle children that go after the circle children. We can shuffle these semicircles around to go after different circles, but be should maintain their relative order. The only part of the contribution, that is changing under these actions, is what is left of B5, namely the product of $|l|^m/m!$ over all circle vertices. Note that we took the absolute value since $l$ can be negative.
	
	Denote the total number of after-circle semicircles as $m_0$. Then, if for every circle vertex (with number $i$) we choose a non-negative number $m_i$, such that $\sum_i m_i = m_0$, there would be a unique shuffling of those semicircles, preserving order, such that the number of semicircles after every circle is $m_i$ (where $i$ is the circle's index). Thus we just need to compute
	
	\begin{equation}
		\sum_{m_1 + m_2 + \ldots + m_{s_0} = m_0} \prod_i \frac{|l_i|^{m_i}}{m_i!}. 
	\end{equation}

	This formula can be simplified, since it gives the formal power expansion coefficient of the product of exponentials. Hence, it is equal to 
	
	\begin{equation}
		\frac{\left(\sum_i |l_i|\right)^{m_0}}{m_0!}.
	\end{equation}

	Thus, we can change B5 into this and consider these semicircles, separated from the circles. We can also convolve the positioning of semicircles in front of the first polygon child with this, giving us $+2$ in the power, and we denote the total number of them and $m_0$ as $\mu$. We keep the semicircles, going directly after polygons separate. 
	
	We also bound the weights of all the semicircle leaves themselves. There are $3$ versions of them: $b$, $b^c$ and $b^s$, but they are all analytic and hence their product can be bounded by $D^{2\omega_b}$, where $2\omega_{b}$ is their total order. This gives us:
	\begin{equation}
		\frac{\left(\sum_i |l_i| + 2 \right)^\mu D^{2\omega_b}}{\mu !}, 
		\label{semialmostfinal}
	\end{equation}
	
	To finish off the semicircles, we only need to choose the original ordered sequence of semicircles. Since every order is even, we essentially need to multiply \eqref{semialmostfinal} by the number of ordered lists of positive integers of length $\mu+y_0$ with total sum $\omega_b$. This number is equal to $\binom{\omega_b - 1}{\mu + y_0 - 1} \le 2^{2\omega_b}$.  Hence, the total semicircle contribution is bounded by
	
	\begin{equation}
		\frac{\left(\sum_i |l_i| + 2 \right)^\mu (2D)^{2\omega_b}}{\mu !}
		\label{eq512}
	\end{equation}
	
	Next, we bound the binomial coefficient in (L) by $2^{2n}/(C_3n)$ for some fixed $C_3$. We also note that now we can sum over orders of pentagon and square, such that their total order is $\omega_0$. We just have to sum over the factorials, coming from \eqref{qbound}. We also sum over which child is first. In total, we get $\omega_0!$ with some constant, that goes into $C_3$.
	
	Now, our next goal would be to sum over all possible labels of circle children. For this, we note that $\sum_i |l_i| + 2 \le 2n + 1$, and we also use the last bound in \eqref{eq55}. 
	
	This allows us to sum over labels of each particular circle child. We have to compute the sum 
	
	\begin{equation}
		\sum_{j+k = \sigma} \binom{j+k}{j} D (|j - k| + 1) \le C_4\sqrt{\sigma} 2^{\sigma},
		\label{eq513}
	\end{equation} 

	for $C_4$ that depends only on $D$. Now for every circle vertex we get both $C_4$ and $C_7$, so we redefine $C_4$ as their product. 
	
	We note that now we have several powers of $2$ in our terms:
	
	\begin{equation}
		\frac{2^{C_0}\prod_i 2^{\sigma_i}}{2^{2n}} \le \frac{1}{2}.
		\label{eq514}
	\end{equation}

	This $1/2$ goes to $C_3$. Thus we have canceled all of powers of $2$ yet. 
	
	Our next goal in simplifying the tree structure is to sum over all possible orders of circles, while keeping the total circle order, as well as the number of circles fixed. The terms, that will change under such operations are the following. First, we have the factorial and $W$ from \eqref{phibound}. Next, there is $(\sigma+1)$ factor, coming from $G_{j, k}$ and $\sqrt{\sigma}$ from \eqref{eq513}. We bound the product of factorial, $(\sigma + 1)$ and $\sqrt{\sigma}$ by $(\sigma + 3)!$. However, to sum over all orders, we will have to demand several conditions on $W(n, C_1)$:
	
	\begin{itemize}
		\item $W(n, C_1)$ is non-decreasing in $n$.
		\item $W(n, C_1)$ is constant for large $n$.
		\item $W(0, C_1) = 0$.
		\item $W(n, C_1) + \frac{1}{100000} \log \Gamma(n + 5)$ is a convex function. 
		\item $W(n, C_1) + \frac{1}{100000} \log \Gamma(n + 1)$ is a convex function.
	\end{itemize}
	
	This allows us to prove several lemmas, that would be helpful when summing the aforementioned products of factorials.
	
	\begin{lemma}
		For any $m \ge 2$, there is the following bound:
		\begin{equation}
			\sum_{a, b \ge 1}^{a+b=m} e^{W(a, C_1)+W(b, C_1)}(a+4)!(b+4)! \le 4800 e^{W(m - 1, C_1)+W(1, C_1)}(m + 3)!
		\end{equation}
		\label{lema52}
	\end{lemma}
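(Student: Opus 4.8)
The plan is to fold the factorials and the $W$--factors into one convex sequence and reduce the estimate to summing a nearly geometric series. Set $g(n):=W(n,C_1)+\log\Gamma(n+5)$, so that $e^{W(a,C_1)}(a+4)!=e^{g(a)}$ and, since $\Gamma(6)=120$, the asserted inequality is equivalent to $\sum_{a=1}^{m-1}e^{g(a)+g(m-a)}\le 40\,e^{g(1)+g(m-1)}$. First I observe that $g$ is convex: by the fourth hypothesis on $W$ the function $W(\cdot,C_1)+\tfrac1{100000}\log\Gamma(\cdot+5)$ is convex, $\log\Gamma$ is convex, and $g$ is the sum of the former with $\tfrac{99999}{100000}\log\Gamma(\cdot+5)$. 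Consequently $\phi(a):=g(a)+g(m-a)$ is convex on $\{1,\dots,m-1\}$ and symmetric about $a=m/2$, so $\phi(a)\le\phi(1)=\phi(m-1)$ for every $a$; this already handles $m=2$ and, trivially, all $m$ with few summands.

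For the decay I estimate consecutive differences. For $1\le i\le\lfloor(m-1)/2\rfloor$ one has $m-i\ge i+1$; writing $g(n)-g(n-1)=\bigl(W(n,C_1)-W(n-1,C_1)\bigr)+\log(n+4)$ and invoking the fifth hypothesis (which makes the increments $W(n,C_1)-W(n-1,C_1)+\tfrac1{100000}\log n$ nondecreasing) gives
\[
\phi(i)-\phi(i+1)=\bigl[g(m-i)-g(m-i-1)\bigr]-\bigl[g(i+1)-g(i)\bigr]\ \ge\ \log\frac{m-i+4}{i+5}-\frac1{100000}\log\frac{m-i}{i+1}.
\]
Telescoping over $i=1,\dots,a-1$, using $\prod_{i=1}^{a-1}\frac{m-i+4}{i+5}=\frac{120\,(m+3)!}{(a+4)!\,(m-a+4)!}$ and bounding the error sum by $\tfrac{(m-1)\log 2}{100000}$, I get for $a$ in the left half
\[
\phi(1)-\phi(a)\ \ge\ \log\frac{120\,(m+3)!}{(a+4)!\,(m-a+4)!}-\frac{(m-1)\log 2}{100000},
\qquad\text{i.e.}\qquad
e^{\phi(a)}\ \le\ e^{\phi(1)}\,2^{(m-1)/100000}\,\frac{(a+4)!\,(m-a+4)!}{120\,(m+3)!},
\]
and the right half is symmetric.

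Now I sum. Using $\dfrac{(a+4)!\,(m-a+4)!}{(m+3)!}=\dfrac{(m+4)(m+5)(m+6)(m+7)(m+8)}{\binom{m+8}{a+4}}$ and the unimodality of $k\mapsto\binom{m+8}{k}$ (so $\binom{m+8}{a+4}\ge\binom{m+8}{6}$ for $2\le a\le m-2$), every interior term is at most $\tfrac{720}{m+3}$ while the two endpoint terms contribute $240$, so $\sum_{a=1}^{m-1}\frac{(a+4)!\,(m-a+4)!}{(m+3)!}\le 960$ and hence $\sum_{a=1}^{m-1}e^{\phi(a)}\le 8\cdot 2^{(m-1)/100000}\,e^{\phi(1)}$. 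This is $\le 40\,e^{\phi(1)}$ as soon as $2^{(m-1)/100000}\le 5$, i.e. for all $m$ below an explicit threshold $m_\ast$. For $m\ge m_\ast$ I argue directly from the displayed step bound: $\phi(i)-\phi(i+1)\ge\log\tfrac{20}{19}$ for all $i$ up to a fixed fraction of $m$ (the $\tfrac1{100000}$--correction being negligible there), while convexity forces $\phi$ to drop exponentially below $\phi(1)$ past that point, so $\sum_a e^{\phi(a)}$ is dominated by a two--sided geometric series of ratio $\le\tfrac{19}{20}$ plus a negligible tail, giving $\sum_a e^{\phi(a)}\le\frac{2}{1-19/20}\,e^{\phi(1)}=40\,e^{\phi(1)}$. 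Undoing the substitution, $40\,e^{\phi(1)}=40\cdot120\,e^{W(1,C_1)+W(m-1,C_1)}(m+3)!=4800\,e^{W(m-1,C_1)+W(1,C_1)}(m+3)!$, which is the claim.

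The conceptual core is just the convexity of $g$; the real work is bookkeeping. The main obstacle I anticipate is pinning down the threshold $m_\ast$ and stitching the two ranges of $m$ together under the single constant $4800$, together with checking that the $W$--dependent discrepancies (which enter both the convexity of $g$ and the step bound) never reverse the sign of $\phi(i)-\phi(i+1)$ on the left half and stay dominated by the $\log\Gamma$--part — which is precisely what the weights $\tfrac1{100000}$ in the hypotheses on $W(\cdot,C_1)$ are there to guarantee. Everything else is a routine estimate.
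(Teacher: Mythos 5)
Your proof is essentially correct and shares the same conceptual core as the paper's: both reduce the estimate to the extreme split $(1,m-1)$ via Karamata/convexity of $W + c\log\Gamma$, then sum a geometric-like tail. The difference lies in how you handle the full weight of the factorial. The paper applies Karamata only to $W(n,C_1)+\tfrac{1}{100000}\log\Gamma(n+5)$, pulling out a $1/100000$-th fractional power of the factorials; this leaves a pure combinatorial sum $\sum\bigl((a+4)!(b+4)!\bigr)^{99999/100000}$ that decays at ratio $\le (13/14)^{99999/100000}$ uniformly in $m$, giving a one-shot bound with no case split. You instead fold the entire $\log\Gamma(n+5)$ into $g$ and handle the $\tfrac{1}{100000}\log\binom{m-1}{a}$ discrepancy separately — which forces you to split into a "$m$ small" regime (where the crude $2^{(m-1)/100000}$ bound suffices) and a "$m$ large" regime (where you revert to a per-step geometric decay). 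The paper's fractional-power trick is precisely what eliminates the need for that split, and makes the argument a five-line computation rather than a two-case bookkeeping exercise.

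The second half of your argument ($m\ge m_\ast$) is the one place that needs tightening. You assert that $\phi(i)-\phi(i+1)\ge\log\tfrac{20}{19}$ "up to a fixed fraction of $m$, the $\tfrac1{100000}$-correction being negligible there," but this requires justification: for $i$ near $0.48m$ the main term $\log\tfrac{m-i+4}{i+5}$ is itself only about $\log\tfrac{20}{19}$, so one must check that the correction $\tfrac{1}{100000}\log\tfrac{m-i}{i+1}$ is small relative to the main term there (it is, because $\tfrac{m-i}{i+1}$ and $\tfrac{m-i+4}{i+5}$ are comparable, so the correction is roughly $\tfrac{1}{100000}$ of the main term plus a bounded amount — but this is exactly the kind of estimate that needs to be written out rather than waved at). Likewise, "dominated by a two-sided geometric series plus a negligible tail" hides the fact that you must verify the geometric series' unused tail absorbs the middle-$a$ terms past the geometric regime. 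All of this is fixable, and you correctly identify it as bookkeeping — but the paper's route avoids it entirely, which is worth noting if you want the shortest path to the constant $4800$.
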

	
	\begin{proof}
		First, we note that due to the properties of $W$ we can use Karamata's inequality to get:
		\begin{equation}
			e^{W(a, C_1)+W(b, C_1)}((a+4)!(b+4)!)^{1/100000} \le e^{W(m - 1, C_1)+W(1, C_1)}(5!(m+3)!)^{1/100000},
		\end{equation}
	
		when $a+b = m$. Hence, we only need to prove that
		
		\begin{equation}
			\sum_{a, b \ge 1}^{a+b=m} ((a+4)!(b+4)!)^{99999/100000} \le 40 (120)^{99999/100000}(m + 3)!^{99999/100000}.
		\end{equation}
		
		We can only consider terms, for which $a \le b$. Since we only consider half of the terms, we should add a factor of $2$ to the right hand side. Next, we bound every term of the sum in the following way:
		\begin{equation}
			\frac{(a+4)!(b+4)!}{5!(m+3)!} = \prod_{j = 1}^{a-1} \frac{5+j}{b + 4 + j} \le \prod_{j = 1}^{a - 1} \frac{5+j}{m/2 + 4 + j} \le \left( \frac{5+m/2}{m/2 + 4 + m/2}\right)^{a-1}\le \left(\frac{13}{14}\right)^{a-1}.
		\end{equation}
	
		The last assertion follows when $m \ge 3$, and when $m = 2$, the statement is trivial. Substituting this to our inequality, we are only left to prove that
		\begin{equation}
			\sum_{a \ge 0} \left(\frac{13}{14}\right)^{99999a/100000} \le 20,
		\end{equation}
		
		which is trivial.
	
	\end{proof}

	In our situation, there can be many circle vertices, so there will be more than $2$ factorials. Moreover, we are considering low children trees, so the maximal order (and hence the maximal factorial is bounded). Hence, we introduce the following lemma:
	
	\begin{lemma}
		Let $k > \frac{2m}{3}$ and let $2 \le s \le m$. Then,
		\begin{equation}
			\sum_{1 \le a_1, \ldots, a_s \le k}^{a_1+\ldots+a_s = m} \prod_{i = 1}^{s}e^{W(a_i, C_1)}(a_i+4)! \le (9600)^s e^{W(k, C_1) + W(m - k - s + 2, C_1) + (s - 2)W(1, C_1)}(k + 4)!(m - k - s+ 6)!,
 		\end{equation} 
 		if $k < m - s + 2$ and 
 		\begin{equation}
 			\sum_{1 \le a_1, \ldots, a_s \le k}^{a_1+\ldots+a_s = m} \prod_{i = 1}^{s}e^{W(a_i, C_1)}(a_i+4)! \le (9600)^s e^{W(m - s + 1, C_1)+(s-1)W(1,C_1)}(m - s +5)!,
 		\end{equation}
 		if $k \ge m - s + 2$. 
 		\label{lema53}
	\end{lemma}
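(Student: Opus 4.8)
To prove this, the plan is to induct on $s$, with Lemma~\ref{lema52} supplying the base case $s=2$. Write $g(a)=e^{W(a,C_1)}(a+4)!$; this is log-convex, because $W(n,C_1)+\tfrac1{100000}\log\Gamma(n+5)$ and $\log\Gamma$ are both convex. For $s=2$: if $k\ge m$ the constraint $a_i\le k$ is vacuous and the claim is Lemma~\ref{lema52} itself (using $4800\le 9600^2$); if $2m/3<k<m$ one runs the argument of Lemma~\ref{lema52} but anchors the elementary product estimate at the admissible vertex $a=k$, writing $\frac{g(k-t)\,g(m-k+t)}{g(k)\,g(m-k)}=e^{W(k-t)+W(m-k+t)-W(k)-W(m-k)}\prod_{i=1}^{t}\frac{m-k+4+i}{k-t+4+i}$, bounding the $W$-part by $1$ by Karamata over the truncated range, and exploiting the key point: among the $t$ factors, those with $i\le t/2$ have numerator at most $\tfrac{5m}{12}+4$ and denominator at least $\tfrac m2+5$ \emph{because} $k>\tfrac{2m}{3}$, hence each is $\le\tfrac56$, while all remaining factors are $\le1$; so the whole product is $\le(5/6)^{t/2}$ and the sum over $t$ is an absolute constant times $g(k)g(m-k)$. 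Every single-variable elimination will cost a factor $4800$, and since $4800\cdot9600^{s-1}\le9600^s$ the constants telescope; the shifts $+4,+5,+6$ in the statement are chosen precisely so that after collapsing two summation variables $a,b$ with $a+b=M$ into one the residual factorial $(M-a+3)!$ equals $(b+4)!$ and reproduces the shape of Lemma~\ref{lema52}.

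In the non-binding regime $k\ge m-s+2$ I would peel off one variable, $\sum_{a_s\ge1}g(a_s)\big(\sum_{a_1+\dots+a_{s-1}=m-a_s}\prod_i g(a_i)\big)$: the inner sum is unconstrained (each of its variables is $\le m-a_s-(s-2)\le k$) and is bounded by the induction hypothesis for $s-1$, after which the outer sum has the form of Lemma~\ref{lema52} with $m$ replaced by $m-s+2$ and collapses, giving the second displayed bound. In the binding regime $2m/3<k<m-s+2$ the hypothesis $k>2m/3$ is used again: since two variables exceeding $m/2$ would sum to more than $m$, at most one $a_i$ can exceed $m/2$, and I split on whether such a variable exists. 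If none does, the sum equals the one with every variable capped at $\lfloor m/2\rfloor$, whose maximal term — essentially $(m/2)!\,(m/2-s+2)!\,5!^{\,s-2}$ — is, by log-convexity of the factorial, smaller than the target $(k+4)!(m-k-s+6)!$ by an exponential factor $e^{-cm}$ for some $c>0$ whenever $k\ge2m/3$, which leaves ample room to absorb the sub-exponential overcount from resumming that capped sum. If a (unique, up to a symmetry factor $s$) variable $a_1\in(m/2,k]$ exists, the remaining $s-1$ variables sum to $m-a_1<m/2<k$, are non-binding, and their sub-sum is bounded by the induction hypothesis as $9600^{s-1}e^{W(m-a_1-s+2,C_1)+(s-2)W(1,C_1)}(m-a_1-s+6)!$; setting $b=m-s+2-a_1$, the remaining outer sum is $9600^{s-1}e^{(s-2)W(1,C_1)}\sum_{a_1\in(m/2,k]}g(a_1)g(b)$, which by log-convexity is maximized at $a_1=k$, matching the target vertex $(k,m-k-s+2,1,\dots,1)$, and which decays geometrically away from that vertex by exactly the same first-half-of-the-product estimate as in the base case (now with ``$m$'' $=m-s+2$), again thanks to $k>2m/3$; a final Karamata step confines the $W$-part to the same vertex.

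The step I expect to be the main obstacle is precisely this geometric-decay estimate. The naive factor-by-factor bound on $\prod_{i}\frac{m-k+4+i}{k-t+4+i}$ only yields $(1-O(1/m))^t$, useless for summing over $t$ up to order $m$; the fix is to use $k>2m/3$ to show that the first half of the factors are each bounded by a constant ($\le\tfrac56$) strictly below $1$, so the product decays like $\rho^{t/2}$ for an absolute $\rho<1$, and to combine this with the observation that when the two largest variables are nearly balanced the remaining $s-2$ variables are forced to total $O(1)$, so there are only $O(s)$ slowly-decaying terms. Carrying the factorial shifts and the $W$-terms — whose first differences are only essentially monotone and which are eventually constant — through all of the reductions, while keeping the accumulated constant within $9600^s$, is the delicate bookkeeping; the value $2/3$ is sharp in the sense that with only $k>m/2$ the ``first-half'' factors would no longer be bounded away from $1$.
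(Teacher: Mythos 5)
Your route --- induction on $s$, splitting on whether some variable exceeds $m/2$ --- is genuinely different from the paper's. The paper instead observes that since $k > \tfrac{2m}{3}$, any admissible tuple can be partitioned into two sub-tuples each summing to at most $k$ (a short combinatorial argument seeded with the two largest entries), then iterates Lemma~\ref{lema52} \emph{within} each sub-tuple, paying $2^s$ for the choice of splitting and $4800^{s-2}$ for the collapses; these combine as $2^s \cdot 4800^{s-2} = 4\cdot 9600^{s-2}$ and so fit comfortably inside $9600^s$, after which the convexity hypotheses on $W$ push the collapsed pair $(A,B)$ to the extreme point $(k,\,m-k-s+2)$. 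Your version of the geometric-decay mechanism (bounding the first block of factors in $\prod_i \tfrac{m-k+4+i}{k-t+4+i}$ by a constant strictly below $1$, which is precisely what $k>\tfrac{2m}{3}$ buys) is a correct reading of why $2/3$ appears, and the peel-one-variable induction is a legitimate alternative skeleton.

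The proposal has a genuine gap in the constant accounting, though. In the one-big-variable branch you note a symmetry factor of $s$ (for which index exceeds $m/2$) but never absorb it: the computation yields roughly $s\cdot 9600^{s-1}\cdot C\cdot g(k)\,g(m-k-s+2)$ with $C$ an absolute constant of size $\gtrsim 10$ from summing the geometric tail, and this exceeds the target $9600^s\, g(k)\,g(m-k-s+2)$ as soon as $s\gtrsim 9600/C$. In the binding regime $k<m-s+2$ combined with $k>2m/3$ only gives $s<m/3+2$, so $s$ is unbounded as $m\to\infty$, and the bound fails for large $m$. The no-big-variable branch has a related problem: ``max term times sub-exponential overcount'' is not sound, since the number of admissible tuples is $\binom{m-1}{s-1}$, which at $s\sim m/3$ is of order $2^{0.9m}$, while the claimed max-term slack is only $\sim(5/6)^{m/12}\approx 2^{-0.02m}$; the overcount is exponential with a much larger rate than the decay. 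What is actually needed (and what the paper does) is to perform the re-summation by iterating Lemma~\ref{lema52} inside balanced sub-tuples, so that each elimination costs a fixed $4800$ charged against $9600^s$ rather than against the polynomial-or-worse slack in $m$. A smaller point: the claim that for $i\le t/2$ the numerator is at most $\tfrac{5m}{12}+4$ is only valid for $t\le m/6$; for larger $t$ you should bound just the first $\approx m/12$ factors by $5/6$ and observe the product is already $\le(5/6)^{m/12}$. That one is easily repaired, but the $s$-factor and the overcount issues require the splitting device (or an equivalent) to fix.
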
  
	\begin{proof}
		In the latter case the proof just involves applying Lemma \ref{lema52} $m - 1$ times. First, one sums over $a_1$ and $a_2$, then over their sum and $a_3$ and so on. The bound will be even better than stated.
		
		In the former case, the situation is a bit more involved. We claim that since $k > \frac{2m}{3}$, it is always possible to split up every summed tuple of $a_i$-s into two, such that the sum in each sub-tuple would not exceed $k$. To show this, we first form two tuples (each of size $1$), consisting of the largest two elements. Then, go over other elements and add the to any tuple, as long at it will fit. If some element $a_i$ doesn't fit in both, it means that $a_i$ plus the sum of $a$-s over each tuple exceeds $\frac{2m}{3}$. So, $2a_i$ plus the sum of $a$-s over both tuples exceeds $\frac{4m}{3}$. Since the sum of all $a$-s is $m$, we have $a_i > m/3$. But that would mean that $a_i$ is one of two largest elements, leading to contradiction. 
		
		So, every tuple has at least one splitting. Next, we fix a splitting and sum over all possible tuples, for which the splitting is valid. There are at most $2^s$ possible splittings, so we add this factor to the right side.
		
		Inside of both sub-tuples we once again apply Lemma \ref{lema52} several times. At the end, for every splitting we bound the sum with
		
		\begin{equation}
			(4800)^{s-2} e^{W(A, C_1) + W(B, C_1)+(s - 2)W(1, C_1)}(A+4)!(B+4)!,
		\end{equation}  
		where $A$ and $B$ are the sum of $a$-s in the respective sub-tuple minus the size of sub-tuple plus $1$. ($A+B = m - s + 2$). Due to the same convexity argument we see that we can bound this by substituting $A = m - k - s + 2$ and $B = k$, thus proving the lemma.  
	\end{proof}

	In our case, we have $s_0$ circle vertices. Instead of treating their orders as $a_j$, we will treat their orders as $a_j+1$, since order $1$ corresponds to a "default" vertex. We will denote the sum of orders of circle vertices minus $s_0$ as $\omega_{\varphi}$. We can also set $k$ in Lemma \ref{lema53} to be equal to $H(2n) - 1$. Thus we can apply this lemma to sum products of $(\sigma+3)!$ and $W$ over all orders of the circle vertices with a given sum. 
	
	We get a factor of $(9600)^{s_0}$, but there is also $C_4^{s_0}$, coming from \eqref{eq513}, so we can just redefine $C_4$ and get rid of $9600$. Moreover, we can once again use the convexity property of $W$ to only consider $W(\omega_\varphi)$. Thus, our sum over all orders (with $C_4$) is bounded by
	
	\begin{equation}
		C_4^{s_0} e^{W(\omega_{\varphi}, C_1)} \omega_{\varphi}!^{1/100000} \left((H(2n) + 3)!(\omega_{\varphi} - H(2n) - s_0 + 7)!\right)^{99999/100000},
		\label{eq523}
	\end{equation}

	when $H(2n) \le \omega_{\varphi} - s_0 + 2$ and by
	
	\begin{equation}
		C_4^{s_0} e^{W(\omega_{\varphi}, C_1)} \omega_{\varphi}!^{1/100000}(\omega_{\varphi} - s_0 + 5)!^{99999/100000},
		\label{eq524}
	\end{equation}

	otherwise.
	
	Since we have summed over the orders, there is only a finite number of parameters left, namely: $\omega_0$ -- the total polygonal order, $s_0$ -- the number of circle vertices, $\omega_{\varphi}$ -- the order, coming from circles, $y_0$ -- the number of polygonal semicircles, $\mu$ -- the number of other semicircles, $2\omega_b$ -- the total semicircle order. Not including \eqref{eq523} and \eqref{eq524}, the rest of the contribution divided by the needed bound is given by
	
	\begin{equation}
		\frac{e^{-W(2n-2, C_1) + W(\omega_0 - 2, C_1)}\omega_0!}{(2n)!}e^{2s_0(C + F(\lambda) + \delta)}C_3n\frac{(\omega_0 - 1)!}{s_0!(C_0 - y_0)!y_0!}\frac{(2n+1)^\mu(2D)^{2\omega_b}}{\mu!}.
		\label{eq525}
	\end{equation} 

	There is also an equality on our parameters:
	
	\begin{equation}
		\omega_0 + \omega_{\varphi} + 2\omega_b = 2n.
		\label{eq526}
	\end{equation}

	We continue simplifying. We note that $\mu \le \omega_b < n$ (every semicircle's order is at least $2$), so increasing $\mu$ by one will divide the contribution by something, not exceeding $n$, at will multiply it by $2n+1$, so the contribution will increase at least in $2$ times. However, $\mu$ cannot be extended indefinitely, since $\mu+y_0 \le \omega_b$. Hence, we may assume that $\mu = \omega_b - y_0$ and just multiply $C_3$ by $2$. 
	
	Next, we use the bound
	
	\begin{equation}
		\frac{1}{y_0!\mu!} = \frac{1}{y_0!(2\omega_b - y_0)!} \le \frac{1}{\omega_b!^2} \le \frac{2^{2\omega_b}}{(2\omega_b)!}. 
	\end{equation}

	Then, the only places in \eqref{eq525}, where $y_0$ remains is in $(C_0 - y_0)!$ and in $(2n+1)^{-y_0}$, since we have substituted $\mu$. We know that $2n+1 > C_0$, hence decreasing $y_0$ always increases the contribution. So $y_0 = 0$ has the biggest contribution, and there are at most $n$ other possibilities for $y_0$, so we add an $n$ factor and set $y_0 = 0$. Then, \eqref{eq525} reduces to 
	
	\begin{equation}
		\frac{e^{-W(2n-2, C_1) + W(\omega_0 - 2, C_1)}\omega_0!}{(2n)!}e^{2s_0(C + F(\lambda) + \delta)}C_3n^2\frac{(\omega_0 - 1)!}{s_0!(\omega_0 - s_0 - 1)!}\frac{(2n+1)^{\omega_b}(4D)^{2\omega_b}}{\omega_b!}.
		\label{eq528}
	\end{equation} 

	Now, we remove all the $W$ factors. Once again using convexity, we get:
	
	\begin{align}
		e^{W(\omega_0 - 2, C_1) + W(\omega_\varphi)}(\omega_0 - 1)!^{1/100000}\omega_{\varphi}!^{1/100000} \le\\ \le e^{W(\omega_0+\omega_{\varphi} - 2, C_1)}(\omega_0 + \omega_{\varphi} - 1)!^{1/100000} \le e^{W(2n - 2, C_1)}(2n - 1)!^{1/100000}.
	\end{align}
	
	Thus, after using this, the total contribution becomes
	
	\begin{equation}
		\frac{(\omega_0-1)!^{99999/100000}}{(2n)!^{99999/100000}}e^{2s_0(C + F(\lambda) + \delta)}C_3C_4^{s_0}n^2\frac{\omega_0!}{s_0!(\omega_0 - s_0 - 1)!}\frac{(2n+1)^{\omega_b}(4D)^{2\omega_b}}{\omega_b!}.
		\label{eq530}
	\end{equation}

	multiplied by
	
	\begin{equation}
		\left((H(2n) + 3)!(\omega_{\varphi} - H(2n) - s_0 + 7)!\right)^{99999/100000} \; \; \text{or} \; \; (\omega_{\varphi} - s_0 + 5)!^{99999/100000}.
		\label{eq531}
	\end{equation}

	We can guarantee that $n$ is large enough and that $2n - H(2n) < \sqrt[5]{n}$. This way, we can bound the contribution, when $\omega_{\varphi} \ge H(2n) + s_0 - 2$. Then,
	
	\begin{equation}
		\frac{\omega_0!}{s_0!(\omega_0 - s_0 - 1)!} \le \omega_0 2^{\omega_0}.
		\label{eq533}
	\end{equation} 

	When we use this bound, $s_0$ only remains present in \eqref{eq531} and in $C_4^{s_0}$, so we can multiply everything by $n$ and set $s_0 = 1$ (since $\omega_{\varphi}>0$ we cannot have $s_0 = 0$), and add a $C_4^{2n - H(2n)}$ factor.  Next, we claim that it is always beneficial to increase $\omega_b$ by $1$ and decrease $\omega_0$ or $\omega_\varphi$ by $2$ (provided $\omega_0 \ge 4$ and $\omega_{\varphi}$ doesn't fall below the threshold).  
	
	So, we add another $n$ factor for the choices of $\omega_0$ and $\omega_{\varphi}$ and set them to the minimal values possible under such procedure. Then, $\omega_0$ factorial and the last factorial in \eqref{eq531} are bounded by a constant, going to $C_3$, and we forget about $\omega_b$ factorial. Then, everything is bounded by
	
	\begin{equation}
		\frac{1}{(2n)!^{99999/100000}}C_3n^3(2n+1)^{n - H(2n)/2}(4DC_4)^{2n - H(2n)} (H(2n) + 3)!^{99999/100000}.
		\label{eq534}
	\end{equation}

	We set $H(2n) \le 2n - \log n$. Then, this is smaller than $n^{-\frac{\log n}{4}}$, provided $n$ is large enough.
	
	We can proceed similarly in the latter case of \eqref{eq531}. Note that if $\omega_0 < n/10$ and $\omega_\varphi < n/5$, then the contribution can be easily bounded, because a coefficient $2$ in \eqref{eq526} makes $\omega_b$ not an efficient way of increasing the contribution. Specifically, all $\omega_0$ and $\omega_\varphi$ factorials in the numerators of \eqref{eq530} and \eqref{eq531} can be bounded by $(2n)!^{3/10}$, next $s_0!(\omega_0 - s_0-1)!$ can be bounded by $1$ and $(2n+1)^\omega_b(4D)^{4\omega_b}$  by $(2n)!^{6/10}$. So, the total contribution will be smaller than $\frac{1}{(2n)!^{1/10}}$.
	
	If $\omega_0 \ge n/10$, then for large $n$ increasing $\omega_0$ by $2$ and decreasing $\omega_b$ by $1$ increases contribution geometrically. So, we add some constant to $C_3$ and apply this transformation as many times as we can. At the end, we will have either $\omega_b = 0$ or $\omega_0 > H(2n) - 2$. Similarly, if $\omega_{\varphi} \ge n/5$ and $\omega_0 < n/10$, then $s_0 < n/10$, and hence the argument of factorial in \eqref{eq531} is greater than $n/10$. So, increasing $\omega_\varphi$ by $2$ and decreasing $\omega_b$ by $1$ increases contribution geometrically. Once again, we either get $\omega_b = 0$ or $\omega_{\varphi} > H(2n) + s_0 - 4$. The latter case is already bounded, so now we either have $\omega_b = 0$ or $H(2n) - 2 < \omega_0 \le H(2n)$.
	
	We start with the latter case. Then, since $s_0 \le \omega_{\varphi} \le 2n - H(2n) \le \sqrt[5]{n}$, we see that increasing $s_0$ by $1$ at least doubles the contribution ($(\omega_0 - s_0 - 1)!$ is the main force). So, we may assume that $s_0 = \omega_{\varphi}$. Then, $5!$ from \eqref{eq531} goes to $C_3$. Next, because of the same factorial, we can once again start decreasing $\omega_b$ by $1$ and increasing $\omega_\varphi$ by $2$, until the former becomes $0$. Then, the total contribution will be of form
	
	\begin{equation}
		\frac{(\omega_0 - 1)!^{99999/100000}}{(2n)!^{99999/100000}}C_3C_4^{\omega_\varphi}n^2\frac{\omega_0!}{\omega_\varphi! (\omega_0 - \omega_\varphi - 1)!}.
	\end{equation} 

	Now, if we substitute $\omega_\varphi = 2n - H(2n)$ or $2n - H(2n) + 1$ and $\omega_0 = H(2n)$ or $H(2n - 1)$, we can easily bound this with
	
	\begin{equation}
		C_3C_4^{2n - H(2n)}n^4 \frac{(2n)^{\frac{2n - H(2n)}{100000}}}{(2n - H(2n))!}
	\end{equation}

	If we let $H(2n) = 2n - n^{1/50000}$, this should be less than 
	
	\begin{equation}
		n^{-\frac{n^{1/50000}}{200000}}
	\end{equation}

	for large $n$.

	Now we study the case of $\omega_b = 0$. We only need to show that the contribution will be maximized when either $\omega_0$ or $\omega_\varphi$ will be near $H(2n)$. Then, if $\omega_0 > \omega_{\varphi} - s_0 +10$, increasing $\omega_0$ by $1$ and decreasing $\omega_{\varphi}$ by $1$ up until $\omega_{\varphi} = s_0$ increases the contribution. Then, we can bound everything like in the previous case.

	Next, we can try to decrease $s_0$ by $1$. If we do this, the expression will get multiplied by 
	
	\begin{equation}
		\frac{(\omega_{\varphi} - s_0 + 6)^{99999/100000}s_0}{C_4(\omega_0 - s_0)}.
		\label{eq537}
	\end{equation} 
	
	Note that if $s_0 = 1$, then the expression is trivially maximized, when $\omega_0$ or $\omega_{\varphi}$ is $H(2n)$, so we can always decrease $s_0$ (we don't worry about turning it negative). Then, if $s_0 \ge n^{1/100000}$, the numerator of \eqref{eq537} will exceed $\omega_{\varphi} - s_0 + 6 > \omega_0 - s_0$, so decreasing $s_0$ will increase the contribution. Hence, we consider $s_0 < n^{1/100000}$. Then, we know that $\omega_0 > 2n - H(2n) \ge n^{1/50000}$. Hence, if we now try to increase $\omega_{\varphi}$ by $1$ and decrease $\omega_0$ by $1$, we will get that it will increase contribution, if
	
	\begin{equation}
		\frac{(\omega_{\varphi} - s_0 + 6)^{99999/100000}}{(\omega_0 - 1)^{99999/100000}} \times \frac{\omega_0 - s_0 - 1}{\omega_0} > 1.
	\end{equation}

	The last fraction is bounded from below by $1/2$. If $\omega_0$ was much smaller than $\omega_\varphi$, then the first fraction would be bigger than $2$ and so doing this procedure would increase the contribution. Since we want to maximize the contribution, the first fraction cannot exceed $2$ and hence $\omega_0 > \omega_\varphi / 3$. So, $n/2 < \omega_0 < \omega_\varphi + 10 < 3n/2$. Under these conditions, we can bound 
	
	\begin{equation}
		\frac{\omega_0!C_4^{s_0}}{s_0!(\omega_0 -s_0 - 1)!} \le (2n)^{n^{1/100000}}.
		\label{eq539}
	\end{equation}  

	But then the rest turn into:
	
	\begin{equation}
		\frac{(\omega_0 - 1)!^{99999/100000}(\omega_\varphi - s_0 + 5)!^{99999/100000}}{(2n)!^{99999/100000}}
	\end{equation}

	which decays exponentially in $n$, when both $\omega_{\varphi}$ and $\omega_0$ are proportional to $n$, and \eqref{eq539} grows slower than exponentially, so we get an exponential bound on these middle terms. 
	
	\subsection{Trivial tree estimate}
	
	Before studying the trees with large children, we should first prove some trivial bound (much weaker than Lemma \ref{lema51}). We do this, because in our current proof of Lemma \ref{lema51} we are only studying the case of large $n \ge n_0$ for some $n_0$. However, we don't have any estimate for elements with order less than $n_0$. The only reason why Lemma \ref{lema51} works for small $n$ is because for every $n_0$ we can pick $C_1$ to satisfy the bound for small orders.
	
	An example why this is a problem can be seen from the following: If we have a principal tree, then switching $k$ circles of order $2$ to $1$ of order $1+k$ decreases the contribution for large $n$, since we multiply the contribution by a constant, depending on $k$ and divide by a power of $n$. However, we don't currently know if this large $n$ is uniform over all $k$. We cannot use a $C_1$ bound, since $C_1$ itself depends on the choice of $n$.  
	
	Hence, we prove the following bound:
	
	 \begin{lemma}
	 	Under the conditions of Lemma \ref{lema51} and assuming $D>100$, the absolute values of $q_{2n}$ and $\varphi_{j, k}$ are bounded by:
	 	\begin{equation}
	 		m^{90m}e^{70Dm},
	 		\label{eq542}
	 	\end{equation}
 		where $m$ is $2n$ or $j+k+1$ respectively. 
 		\label{lema54}
	 \end{lemma}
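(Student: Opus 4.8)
The natural approach is strong induction on the order ($m=2n$ for $q_{2n}$, $m=j+k+1$ for $\varphi_{j,k}$), using the depth‑one tree expansion
$q_{2n}=\sum_{T}w(T)\,q_\beta c_\gamma\prod_{m}\varphi_{\eta_m,\theta_m}\prod_{\ell}b_{\zeta_\ell}$
from Lemma \ref{lema31} (and the analogous one for $\varphi_{j,k}$). The decisive structural feature for the crude bound is that a \emph{depth‑one} tree has exactly one non‑leaf vertex — its root — and hence exactly one weight multiplier (in the sense of Definitions \ref{basemult}, \ref{combmult}, and the linear multiplier), so none of the delicate across‑the‑tree cancellations needed for Lemma \ref{lema51} enter here. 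In the expansion, the recursively determined pseudo‑leaves (the even‑$q$ child and the genuine circle children $\varphi_{\eta,\theta}$, all of strictly smaller order) are bounded by the inductive hypothesis \eqref{eq542}, while $q_{2\ell-1}$, $b_{2\ell}$, $b^c_{2\ell}$, $b^s_{2\ell}$ are bounded by the hypotheses of Lemma \ref{lema51} (each $\le e^{D\ell}\le e^{Dm}$, after harmlessly enlarging $D$ to absorb the constant‑factor shrinkage of the radii of $b^c,b^s$), $|c_\gamma|\le 1$, and the $O(m)$ hidden $\varphi_{1,0},\varphi_{0,1}$ vertices contribute at most a bounded constant each (normalization), hence at most $e^{O(m)}$ in total.

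\textbf{Ingredients.} I would then assemble four estimates. (i) \emph{Tree count.} By Lemmas \ref{lema36}, \ref{lema37} and Proposition \ref{prop31}, a contributing tree of order $r$ has $\le 15r$ vertices, $\le e^{75r}$ structural types, and total leaf label $\le 3r$, so every label is $\le 3r$; hence the number of labelled depth‑one trees occurring at order $m$ is at most $e^{O(m\log m)}$ with a small explicit exponent constant (coming from $(3m)^{15m}$). (ii) \emph{Order budget.} Combining the two order relations of Lemma \ref{lema31} with $m_{\max}=\beta+\gamma-1$ and with the definitions of $s_q,s_c,C_q,C_c,\delta$ and the constraints of Definition \ref{contrtree} (nonnegative capacities, $|\delta|\le C_q+C_c$), one shows that if the even‑$q$ pseudo‑leaf has order $B_q$ then the genuine circle pseudo‑leaves have total order $P$ with $P\le 2(m-B_q)+O(1)$ and $B_q+P\le\tfrac32 m+O(1)$. (iii) \emph{Single weight multiplier} $\le e^{O(Dm)}$: the modulus‑one and modulus‑$\le1$ factors (B1, B3, B6, C1–C6's sign parts, $\cos(b_0/2)^{C_q}\sin(b_0/2)^{C_c}\le1$) are trivial; $|\cos\alpha|^{-1},|\cos(b_0/2)|^{-1}\le e^D$; the trigonometric expansion coefficients in B4, B5 are $\le1/\mu!$, so those factors are $\le(l/2)^\mu/\mu!\le e^{O(m)}$; the binomials C1, C3, C6 are absorbed against the $\binom{2j_0-1}{j_0}^{-1}$ in the linear multiplier (keeping their product $2^{O(m)}$), while the only genuinely dangerous factor, $\cos(b_0/2)^{-2j_0}$ in the linear multiplier, is matched against $\cos(b_0/2)^{C_q}$ in C4 so that only a residual power $\cos(b_0/2)^{O(1)+s_q}$ survives; for a circle root the linear multiplier is $\le O(D^2m^2)$ by the Diophantine bound \eqref{eq15} on $\lambda$. (iv) \emph{Leaf product} $\le e^{O(Dm)}$, since the $b$‑type and odd‑$q$ leaves have total label $\le 3m$.

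\textbf{Conclusion of the step.} Putting (ii)–(iv) together, a single tree weighs at most $\bigl(\tfrac m2+O(1)\bigr)^{90(\frac m2+O(1))}\cdot K^{m}\cdot e^{O(Dm)}$ for an absolute $K$ — that is, at least a factor $m^{45m-o(m)}$ below the target power $m^{90m}$, up to $e^{O(Dm)}$ corrections (the worst case being an even‑$q$ child of order $\approx m/2$ together with $\approx m/2$ circle children of order $2$; the other extreme, an even‑$q$ child of order $m-2$, leaves only polynomial room but is paid for by the $2^{-2j_0}$‑sized residue of the linear multiplier there). Summing over the $e^{O(m\log m)}$ trees of (i) and absorbing the $e^{O(Dm)}$ factors, the sum is $\le m^{90m}e^{70Dm}$ once $m$ exceeds a threshold depending only on $D$ — this is exactly where the generosity of the constants is used, e.g. $90>2\times$ the tree‑count exponent constant, and $70$ chosen large enough to swallow the $e^{O(Dm)}$ losses given $D>100$. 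For the finitely many orders below that threshold the bound holds trivially: each $q_{2n}$, $\varphi_{j,k}$ is a fixed polynomial of degree $O(m)$ in parameters of modulus $\le e^{O(Dm)}$ with $\lambda$‑rational coefficients of polynomial size, hence $\le e^{O(Dm)}\le m^{90m}e^{70Dm}$; the base cases $m=2$ are the explicitly computed $q_2$ (bounded via \eqref{q2} and $|\cos(b_0/2)|>e^{-D}$) and $\varphi_{2,0},\varphi_{1,1},\varphi_{0,2}$.

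\textbf{Main obstacle.} The only non‑routine point is ingredient (ii): showing the recursive order budget is $\le\tfrac32 m+O(1)$ rather than the $\approx 2m$–$3m$ a naive count allows, which would make the per‑step growth of the exponent compound and destroy the induction. This requires carefully feeding the capacity‑ and $\delta$‑constraints of Definition \ref{contrtree} and the identity $m_{\max}=\beta+\gamma-1$ back into the order relations of Lemma \ref{lema31}; the key mechanism is that a large even‑$q$ child is forced to carry correspondingly many hidden $\varphi_{1,0}$/$\varphi_{0,1}$ vertices, which suppresses $P$. By contrast, because a depth‑one tree has a single multiplier, for this crude bound one never needs the $\cos(b_0/2)$‑power and binomial telescoping across a deep tree that Lemma \ref{lema51} demands — only the one per‑vertex cancellation in (iii), and only up to constants.
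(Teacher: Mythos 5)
You take a genuinely different route from the paper: you propose a strong induction on the order $m$, bounding each coefficient via its depth-one tree expansion and applying the inductive hypothesis \eqref{eq542} to the pseudo-leaves. The paper instead bounds the contributions of all \emph{full} (arbitrarily deep) contributing trees directly, using the counts from Lemmas \ref{lema36}--\ref{lema37} and Proposition \ref{prop31}, and crucially telescoping the $\cos(b_0/2)$ powers across the whole tree (C4 of each parent against the linear multiplier of its box child, with the root's linear multiplier bounded separately by $-m$). The two approaches are not interchangeable here, and the difference is exactly where your argument breaks.

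The gap is in ingredient (iii), the claim that the per-vertex cancellation leaves only a residual power $\cos(b_0/2)^{-(O(1)+s_q)}$. That identity needs $C_q = \omega_q - s_q - O(1) \approx 2j_0$, i.e.\ the root's square child has order comparable to the root. But the square child can be a diamond of small order. Take a box root of order $m=2n$ with exactly these children: a diamond $q_3$ (first, square type), a single circle pseudo-leaf of order $m-2$ with label $d\in\{0,2\}$, and one hidden $\varphi_{1,0}$ or $\varphi_{0,1}$; one checks that $C_q=1$, $C_c=0$, $\delta=\pm1$, all of Definition \ref{contrtree} holds, and the circle order is $m-2\ne m-1$. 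For this tree the combined $\cos(b_0/2)$ power at the root is $\approx 2-m$, so the multiplier contains a factor $\ge e^{D(m-2)}$, while your inductive hypothesis gives the circle child only $(m-1)^{90(m-1)}e^{70D(m-1)}$. The product overshoots $m^{90m}e^{70Dm}$ by a factor $\gtrsim m^{-O(1)}e^{-70D}\cdot 2^{-m}e^{D(m-2)} = m^{-O(1)}e^{(D-\log 2)m - O(D)}$, which diverges since $D>100>\log 2$; the binomial $\binom{m-1}{m/2}^{-1}\approx 2^{-m}$ cannot save you, contrary to the remark at the end of your "conclusion" paragraph. Your order budget (ii) does not exclude this tree either: $B_q=0$ and $P=m-2$ comfortably satisfy $B_q+P\le \frac32 m$.

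This is structural, not a choice-of-constants issue. The bound $m^{90m}e^{70Dm}$ is very far from tight (the true growth is factorial), so the inductive hypothesis over-estimates the circle child by roughly $e^{\Theta(Dm)}$, and the root adds another uncanceled $\cos(b_0/2)^{-m}\le e^{Dm}$; these compound under the induction, whereas in the paper's direct sum the total cosine power over a whole tree is bounded below by $-O(m)$ once and for all. If you want to keep the inductive scheme, you would need to refine the hypothesis to carry the cosine power explicitly (e.g.\ $|q_{2n}|\le (2n)^{180n}e^{cD(2n)}|\cos(b_0/2)|^{-2n}$ with a smaller $c$, and no cosine factor for circles, so that the root's L exactly matches the target and the child's C4 factor matches the child's L) — but that is precisely re-implementing the paper's telescoping in inductive form, and it is simpler to do it once over the full tree as the paper does.
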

 
 	\begin{proof}
 		We sum the contribution of every tree that contributes to $q_{2n}$ ($\varphi_{j,k}$). Denote $m$ to be the maximal possible order of the vertex of the tree ($m = 2n$ or $m = j+k+1$). By Lemma \ref{lema37}, there are at most $e^{75m}$ structural classes of trees. Within each class, we should add a label to the vertices. The labels of root vertices can go from $1$ to $m$ and the labels of circles can go from $-m$ to $m$. So, there are at most $2m+1$ ways to label each vertex. By Lemma \ref{lema36} the number of all labelings does not exceed $(2m+1)^{15m} \le (em)^{15m}$.
 		
 		Next, we estimate the total weight of all the leaves in our tree. By Proposition \ref{prop31}, the total order of leaf vertices doesn't exceed $3m$. Since every leaf vertex of order $j$ is bounded by $e^{Dj}$ (for semicircles and diamonds by the statement of Lemma \ref{lema51}, for pentagons -- since $D>0$), the product of leaves weights is bounded by $e^{3Dm}$.
 		
 		Finally, we have to estimate all the product of multipliers over every non-leaf vertex. B1, B3, B6, C5 are all bounded by $1$. B2 is bounded by $m$ for each vertex, so at most $m^{15m}$ in total. The product of all B4 and B5 can be bounded by $m^{15m}$, since labels are bounded by $m$, expansion coefficients of since or cosine -- by $1$, and the total order of semicircles in the tree cannot exceed $15m$. C3 can once again be bounded by $2^{C_0}$, and we bound C4 with $(\cos (b_0/2))^{C_q}$. 
 		
 		In L, we bound $2j_0$ by $1$ and we also add a $\cos^{-15m}\alpha$ to the result add remove $\cos \alpha$ factor. In L for the circle, we remove $\cos^2(b_0/2)$ factor and bound the cosine difference by $2Dm$. In total, they can at most give $(2Dm)^{15m}$ factor. In L for the box, we bound $\binom{2j_0 - 1}{j_0}$ with $2^{2j_0-1} / (2j_0)$. These $2j_0$ at most give $m^{15m}$ in total. 
 		
 		We bound C1, C2 and C6 with
 		
 		\begin{equation}
 			m^{s_q+s_c+y_q+y_c}.
 		\end{equation}
 		Since the sum of $s_q$, $s_c$, $y_q$ and $y_c$ over all tree vertices cannot exceed the size of the tree, we once again have $m^{15m}$ bound. 
 		
 		We can bound $2^{C_0}$, left from C3 the following way by $2^{\omega_q + \omega_c}$. The sum of $\omega$-s, corresponding to leaves is bounded by $3m$. For boxes, this power of $2$ cancels out with the power of $2$ in their L with only $2^1$ left. So, in total we bound with $2^{3m}\times2^{15m} = 2^{18m} \le e^{18m}$.
 		
 		Finally, we do a similar trick with $\cos(b_0/2)$, bounding $C_q$ with $\omega_q - s_q - 1$. These $-1$ give us at most $-15m$ power over the whole tree. If the square child is a diamond, we bound the rest with $1$. If it is a box, then $\omega_q$ cancels out with $\omega_q$ in its L, and $s_q$ gives us at most $-15m$. Lastly, we bound $\cos(b_0/2)$ in L of the root with $-m$.   
 	\end{proof}
	
	\subsection{Large child trees}
	
	Our next goal would be to show that trees with large children (trees of depth $1$ with one child of order at least $H(2n)$, where $2n$ is the order of the tree) have small bounded contribution, unless it is a principle tree -- one box child of order at least $H(2n)$ and all the other children are circles of order $2$. After that, we will separately compute the contribution of these principal trees.
	
	We note that since $H(2n) > n$, there could be only one large child in a tree. We also note that the cases of this child being a semicircle, pentagon or a diamond we discussed in the small children part, so we only have to deal with box or circle large child case (we have only considered the total polygonal order being small, but since diamonds and pentagons are analytic we may only consider the large box case). We will consider these at the same time, denoting separate bounds for box case as $\boxed{*}$ and for circle as $\circledast$.

	A lot of the bounds would be similar to the ones we did previously, but we will need more accurate bounds in some specific parts. 
	
	One important difference would be that instead of using inductive bounds from Lemma \ref{lema51} to bound small circle or box children, we will use bounds, coming from Lemma \ref{lema54}. Note that we can always increase $D$ to be greater than $100$. For the large child we will still use the inductive bounds.
	
	However, to have some similarity to the small children case, instead of using \eqref{eq542} for small children, we use
	
	\begin{equation}
		|\varphi_{j, k}| \le (j+k+1)^{90(j + k + 1) - 1}e^{70D(j+k+1)}e^{(j+k+1)(C+F(\lambda) + \delta)}\cos^{j+k+2}(b_0/2) 2^{j+k}
		\label{eq5444}
	\end{equation}

	and 
	
	\begin{equation}
		|q_{n}| \le (n)^{90n}e^{70Dn} e^{n(C+F(\lambda) + \delta)}.
		\label{eq545}
	\end{equation}

	We can always increase $D$ to make those estimates true for every order.
	
	First, identically to the low children case, we bound B1, B3, B6, C5 and a factor $\cos \alpha$ from L with $C_2$. We also cancel out the exponents with $F$, leaving only $2s_q+2s_c$ scaling. Next, $\boxed{*}$: B2 is bounded by $2n$ from L, so we cancel them. $\circledast$: $(j +k + 1)$ from the $G$ of the big circle is bounded by $2n$ form L, we cancel them, we also bound B2 by $n^{1/50000}$.
	
	We deal with $\cos(b_0/2)$ factors in the same way, as well as with B4 and B5. $\circledast$ We also bound the denominator from $G$ of the big circle child in the same way. We sum over $x$ with \eqref{eq55}. However, in $\boxed{*}$, instead of using the last inequality of \eqref{eq55} we use
	
	\begin{equation}
		\binom{C_q+C_c}{\frac{C_c + C_q + \delta}{2} } \le \frac{2^{C_q+ C_c}}{\sqrt{C_q+C_c}} \le  \frac{2^{C_q+ C_c+1}}{\sqrt{2n}}. 
		\label{eq541}
	\end{equation}

	The last inequality holds, since $s_q+s_c \le n^{1/40000}$ and hence $C_q+C_c > n$.
	
	Next, we sum over putting circles in different parts and semicircles into different circles, giving us \eqref{eq58} and \eqref{eq512}. Moreover, in our case the sum of orders of small circle children ($\omega_{s, \varphi}$) cannot exceed $2n - H(2n) + s_0$. Since $s_0$ is bounded by $\omega_{s, \varphi} / 2$, we have that 
	
	\begin{equation}
		\sum_{i} |l_i| \le \omega_{s, \varphi} \le n^{1/45000},
	\end{equation}  

	where the sum is taken only over small circle children. However, $\circledast$ the label on the large circle child is also bounded by $n^{1/45000}$, since $\delta$ is bounded by the order of polygonal children by Definition \ref{contrtree}, so $\delta \le n^{1/45000}$. So, by definition of $\delta$ and triangle inequality, we have that the label on the big circle is also bounded. 
	
	Hence, we can bound \eqref{eq512} by
	\begin{equation}
		n^{\mu /40000}(2D)^{2\omega_b}.
	\end{equation}
	
	Next, we can sum over the labels of circle vertices. For small circle vertices, all the labels have the same bounds, so we just multiply by $(j+k+1)$ for every circle. It cancels out with $(j+k+1)^{-1}$ from \eqref{eq5444}. For the big circle in $\circledast$ we need another bound. Particularly, $D(|j - k| + 1)$ in \eqref{eq513} can be bounded by $n^{1/40000}$, the number of possible labels also by $n^{1/40000}$ and the binomial by the maximal binomial. So, in total we have
	
	\begin{equation}
			n^{1/20000} \binom{j+k}{\left[\frac{j+k}{2}\right]} \le n^{1/20000} \frac{2}{\sqrt{2n}}.
			\label{eq544}
	\end{equation}
	
	We bound the binomial coefficient in L with $2^{2n - 1}/(\pi \sqrt{2n})$. Then, the powers of $2$ cancel out, like in \eqref{eq514}, and this square root of $2n$ cancels out with a square root in either \eqref{eq541} in $\boxed{*}$ or \eqref{eq544} in $\circledast$.  
	
	We have that $W$ factor for the root is no less than a $W$ factor for the large child, so we can cancel them. 
	
	At this point we want to sum over the orders of small circle children (We assume that they exist in our tree). As long as their number and their total order remains the same, the only thing that depends on their order is what's left of \eqref{eq5444}, namely:
	
	\begin{equation}
		(j+k+1)^{90(j+k+1)}e^{70D(j+k+1)}.
	\end{equation}
	 
	 The logarithm of this is a strictly convex function of $(j+k)$, and hence, by Karamata's inequality, the maximum over orders with a fixed sum will be achieved, when all orders, except one, will be equal to $2$. The maximum would be equal to
	 
	 \begin{equation}
	 	3^{180(s_{s, 0} - 1)}e^{70D(\omega_{s, \varphi} + s_{s, 0})} (\omega_{s, \varphi} - 2s_{s, 0} + 3)^{90(\omega_{s, \varphi} - 2s_{s, 0} + 3)},
	 \end{equation}
 
 	where $s_{s, 0}$ is the number of small circle children. Once again by increasing $D$, we can forget about a power of $3$ at the front. Similarly, we get rid of $2s_q+2s_c$ scaling of exponent with $F$. Now we need to multiply this by a total number of ways we can assign orders. Trivially, this number doesn't exceed $\binom{\omega_{s, \varphi}}{s_{s, 0} - 1} \le 2^{\omega_{s, \varphi}}$. We also get rid of this power of $2$. 
 	
 	Now we can sum over the orders of polygons. First, we add a factor $2$ because we may differentiate either one. Then, in $\circledast$ there are at most $n^{1/45000}$ ways of distributing orders among the two of them. The maximal contribution among those will correspond to $0$ order pentagon and $\omega_0$ order square, due to \eqref{eq545}. So, we will have 
 	
 	\begin{equation}
 		2n^{1/45000}\omega_0^{90\omega_0}e^{70D\omega_0}.
 		\label{eq552}
 	\end{equation}
 
 	In $\boxed{*}$, we have to be more careful. Once again, we have at most $n^{1/45000}$ ways of distributing, and the largest will be when the order of pentagon is zero (the only thing changing is $\omega_q!$ from the recurrent assumption). So, we bound these factorials by
 	
 	\begin{equation}
 		2n^{1/45000}\omega_0!.
 		\label{eq553}
 	\end{equation}
 
 	However, sometimes we shouldn't consider $\omega_c = 0$, particularly when the pentagon is the only thing that makes our tree different from the principal one. Particularly, when $\omega_{s, \varphi} = 2s_0$, $y_0 = 0$ and $\mu = 0$ we cannot have $\omega_c = 0$, so we should divide \eqref{eq553} by $\omega_0 > n/2$.
 	
 	Thus, in $\boxed{*}$ we have the following total bound:
 	
 	\begin{equation}
 		2n^{1/10000} \frac{\omega_0!}{(2n)!}\binom{s_0+C_0}{s_0}\binom{C_0}{y_0}e^{70D(\omega_{s, \varphi} + s_{0})} (\omega_{s, \varphi} - 2s_{0} + 3)^{90(\omega_{s, \varphi} - 2s_{0} + 3)}n^{\mu/40000}(2D)^{2\omega_b},
 		\label{eq554}
 	\end{equation}
 
 	possibly divided by $n/2$, while in $\circledast$ we have:
 	
 	\begin{equation}
 		2n^{1/10000} \omega_0^{90\omega_0} \frac{(\omega_{l, \varphi} + 1)!}{(2n)!}\binom{s_0+C_0}{s_0}\binom{C_0}{y_0}e^{70D(\omega_0 + \omega_{s, \varphi} + s_{0} - 1)} (\omega_{s, \varphi} - 2s_{0} + 5)^{90(\omega_{s, \varphi} - 2s_{0} + 5)}n^{\mu/40000}(2D)^{2\omega_b},
 		\label{eq555}
 	\end{equation}
 
 	where $\omega_{l, \varphi}$ is the order of the large child.
 	
 	Now, we deal with $y_0$, $\mu$ and $\omega_b$. In $\boxed{*}$ increasing $y_0$ by $1$ and decreasing $\mu$ by $1$ multiplies the contribution by $\frac{C_0 - y_0}{(y_0+1)n^{1/40000}} > 2$, since the numerator is of order $n$. Hence, we may assume $\mu = 0$ and just add a factor of $2$ to the front. Similarly, increasing $y_0$ by $1$ up until $y_0 = \omega_b$ adds another factor of $2$. In $\circledast$, $\binom{C_0}{y_0}n^{\mu/40000}$ is always bounded by $n^{(\mu + y_0)/40000} \le n^{\omega_b/40000}$. Hence, we remove $\mu$ and $y_0$ and change $2D$ to $4D$ to account for the choices of them. 
 	
 	We also bound $\omega_{s, \varphi - 2s_0 + 3}$, $\omega_{s, \varphi - 2s_0 + 5}$ and $\omega_0$ in the base of the powers in \eqref{eq554} and \eqref{eq555} by $n^{1/45000}$. In \eqref{eq555} we also bound $\binom{s_0+C_0}{s_0}$ by $n^{s_0/500}$. Then, they turn into the following:
 	
 	\begin{equation}
 		8n^{1/10000} \frac{\omega_0!}{(2n)!}\binom{s_0+C_0}{s_0}\binom{C_0}{\omega_b}e^{70D(\omega_{s, \varphi} + s_{0})} n^{(\omega_{s, \varphi} - 2s_{0} + 3)/500}(2D)^{2\omega_b},
 		\label{eq556}
 	\end{equation}
	
	and
	
	\begin{equation}
		2n^{1/10000}  \frac{(\omega_{l, \varphi} + 1)!}{(2n)!}e^{70D(\omega_0 + \omega_{s, \varphi} + s_{0} - 1)} n^{(\omega_0 + \omega_{s, \varphi} + 2\omega_b - s_{0} + 5)/500}(4D)^{2\omega_b}.
		\label{eq557}
	\end{equation}

	We note that these bounds also hold in the case of $s_{s, 0} = 0$.
	
	Now we focus on $\circledast$ and \eqref{eq557}. First, we bound $(4D)^{2\omega_b}$ with $e^{70D(2\omega_b)}$ and move it into the exponential. Then, we bound $\omega_{s, \varphi} + s_0 - 1 \le 3(\omega_{s, \varphi} - s_0 + 1)$, since the order of every circle is at least $2$. This factor of $3$ changes the coefficient to $210$. Finally, by changing $1/500$ to $1/400$ in the power of $n$ we can get rid of the exponential for large $n$. In the result, we get:
	
	\begin{equation}
		2n^{1/10000}  \frac{(\omega_{l, \varphi} + 1)!}{(2n)!} n^{(\omega_0 + \omega_{s, \varphi} + 2\omega_b - s_{0} + 5)/400}.
	\end{equation}

	Finally, we can bound this. Note that
	
	\begin{equation}
		\omega_0 + \omega_{s, \varphi} + 2\omega_b - s_0 = 2n - \omega_{l, \varphi},
	\end{equation}

	hence we in fact have
	
	\begin{equation}
		2n^{1/10000}  \frac{(\omega_{l, \varphi} + 1)!}{(2n)!} n^{(2n - \omega_{l, \varphi} + 5)/400}.
	\end{equation}

	When, $\omega_{l, \varphi}$ is of order $n$, this increases, when $\omega_{l, \varphi}$ is increased. However, we have the bound $\omega_{l, \varphi} \le 2n - 2$. Hence, we substitute $2n-2$ as $\omega_{l, \varphi}$ and bound the number of ways to choose parameters ($\omega_{l, \varphi}$, $s_0$, etc) by $n^{1/400}$. Then, the total contribution is bounded by 
	
	\begin{equation}
		2n^{1/10000}\frac{1}{2n}n^{7/400}n^{1/400} \le n^{-9/10}.
	\end{equation}
	
	Now we can solely focus on $\boxed{*}$ and \eqref{eq556}. We have the restriction
	
	\begin{equation}
		\omega_0 + \omega_{s, \varphi} + 2\omega_b - s_0 = 2n.
	\end{equation}
	
	We substitute $C_0 = \omega_0 - s_0 - 1$. Then, we note that increasing $\omega_0$ by $2$ and decreasing $\omega_b$ by $1$ always increases contribution. Hence we can assume $\omega_b = 0$ and add $n^{1/10000}$ factor, provided decreasing $\omega_b$ like this will not make a tree a principal one. In that case, the maximum will be achieved for $\omega_b = 1$, but we can still substitute $\omega_b = 0$ and divide by $n^{-9999/10000}$ factor in such case. Hence, we get:
	
	\begin{equation}
		8n^{1/5000} \frac{\omega_0!}{(2n)!}\binom{\omega_0-1}{s_0}e^{70D(\omega_{s, \varphi} + s_{0})} n^{(\omega_{s, \varphi} - 2s_{0} + 3)/500}.
		\label{eq563}
	\end{equation}

	Now increasing $s_0$ and $\omega_{s, \varphi}$ by $1$ up until $2s_0 = \omega_{s, \varphi}$ will increase the contribution. So, we set $s_0$ to be that and add another $n^{1/5000}$ factor. Then, the last power of $n$ can also be easily bounded. This makes the tree principal, so we cann add $n^{-99/100}$ factor. Moreover, we remember other things we did that made trees principal, so we add a factor $3$ in front. We have:
	
	\begin{equation}
		24n^{-49/50} \frac{\omega_0!}{(2n)!}\binom{\omega_0-1}{2n-\omega_0}e^{280D(2n - \omega_0)}.
	\end{equation}

	By binomial definition, this itself is bounded by
	
	\begin{equation}
		24n^{-49/50}\frac{e^{280D(2n - \omega_0)}}{(2n - \omega_0)!}.
		\label{eq565}
	\end{equation}

	If we pick $n$, such that
	
	\begin{equation}
		24\sum_{j = 0}^{+\infty} \frac{e^{280Dj}}{j!} = 24e^{e^{280D}} < n^{1/50},
		\label{eq566}
	\end{equation}

	we can sum \eqref{eq565} over $\omega_0$ and bound the sum with
	
	\begin{equation}
		n^{-24/25}.
	\end{equation}

	\subsection{Principal trees}
	
	Having bounded small children trees and large child trees, excluding principal trees, we are now left to bound principal trees themselves. Our goal would be to show that the contribution of these trees is given by some linear operator like \eqref{newrec} plus error terms. Hence, we will not bound contributions in absolute terms.
	
	Right now we are interested in the large child trees, which are principal. We will discuss small children principal trees later.
	
	First, we need to estimate coefficients of linear dependence between $q_{2n}$ and previous $q_{2n - 2\sigma}$, previously denoted $R_{\sigma, n}$ in  \eqref{eq46}. Specifically, we need to bound $R_{\sigma, n} - \hat{R}_{\sigma, n}$, with $\hat{R}$ being introduced in \eqref{eq413}.
	
	\begin{lemma}
		We can increase $D$, such that for large child principal trees, we have:
		\begin{equation}
			|R_{\sigma, n} - \hat{R}_{\sigma, n}| \le \frac{(2n)!}{(2n - 2\sigma)!} e^{2D\sigma} \frac{n^{-999/1000}}{(2\sigma)!}.
		\end{equation}
	\label{lema55}
	\end{lemma}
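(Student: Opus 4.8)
The plan is to compare the exact one-step recurrence coefficient $R_{\sigma,n}$ of \eqref{eq46} with its simplified form $\hat R_{\sigma,n}$ of \eqref{eq413} by routing through the intermediate quantity $\tilde R_{\sigma,n}$ equal to the expression \eqref{twosigma} evaluated at $j=n$, and writing
\begin{equation}
	R_{\sigma,n}-\hat R_{\sigma,n}=\bigl(R_{\sigma,n}-\tilde R_{\sigma,n}\bigr)+\bigl(\tilde R_{\sigma,n}-\hat R_{\sigma,n}\bigr).
\end{equation}
The first difference isolates the error committed in \eqref{eq49} when the off-diagonal binomial $\binom{2n-4\sigma-1}{n-2\sigma-u+w}$ is replaced by the diagonal one $\binom{2n-4\sigma-1}{n-2\sigma}$ (after which the $u,v,w$-sum collapses by the multinomial theorem), while the second difference isolates the error of replacing the factorial ratio $A:=\frac{(2n-2\sigma-1)!(n-1)!^2(n-\sigma)}{(2n-1)!(n-2\sigma)!(n-2\sigma-1)!}$ by its asymptotic value $B:=\frac{(2n)!}{(2n-2\sigma)!\,2^{4\sigma}}$. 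The key structural input is that we deal only with large child principal trees: the box child of the root then has order at least the threshold $H(2n)$ (chosen in this section so that $2n-H(2n)$ is a small power of $n$, namely $n^{1/50000}$), and since the order of the root equals that of its box child plus $2\sigma$, we get $2\sigma\le 2n-H(2n)=n^{1/50000}$. In particular $n-2\sigma>n/2$ and $\sigma^{2}/n\le \tfrac14 n^{1/25000-1}\le n^{-999/1000}$ once $n$ is large, which is the slack we exploit.

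For the first difference I would invoke \eqref{eq49}, whose error bound is uniform in $u,v,w$ (with $u+v+w=2\sigma$): $\binom{2n-4\sigma-1}{n-2\sigma-u+w}=\binom{2n-4\sigma-1}{n-2\sigma}(1+\varepsilon_{u,w})$ with $|\varepsilon_{u,w}|\le \frac{2|u-w|^{2}}{n-2\sigma}\le\frac{16\sigma^{2}}{n}$ since $|u-w|\le 2\sigma$. Substituting into \eqref{eq46}, pulling the diagonal binomial out of the sum and bounding the remaining multinomial sum of absolute values by $\Psi^{2\sigma}/(2\sigma)!$, where $\Psi:=|\varphi_{2,0}\cos b_0|+|\varphi_{1,1}|+|\varphi_{0,2}\cos b_0|$, gives
\begin{equation}
	|R_{\sigma,n}-\tilde R_{\sigma,n}|\le\frac{16\sigma^{2}}{n}\,A\,|\cos(b_0/2)|^{-4\sigma}\,\frac{\Psi^{2\sigma}}{(2\sigma)!},
\end{equation}
where $A$ is recognized as the combinatorial prefactor through the factorial identity $A=\frac{(2\sigma)!(n-\sigma)}{n}\binom{2n-2\sigma-1}{2\sigma}\binom{2n-4\sigma-1}{n-2\sigma}\big/\binom{2n-1}{n}$ (a routine cancellation).

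For the second difference, $\tilde R_{\sigma,n}$ and $\hat R_{\sigma,n}$ differ only by the prefactor $A$ versus $B$, so $|\tilde R_{\sigma,n}-\hat R_{\sigma,n}|=|A/B-1|\cdot|\hat R_{\sigma,n}|$. I would write $A/B$ as a product of $O(\sigma)$ factors of the shape $\frac{(2n-2\sigma-1)!}{(2n-1)!}$, $\frac{(n-1)!}{(n-2\sigma)!}$ times $2^{4\sigma}$, take logarithms, and expand each factor to first order; the $O(1/n)$ contributions cancel exactly (the leading powers of $n$ match, as already noted after \eqref{twosigma}), leaving $A/B=\exp\bigl(-(2\sigma^{2}+\sigma)/n+O(\sigma^{3}/n^{2})\bigr)=1+O(\sigma^{2}/n)$; this also gives $A\le 2B$ for large $n$, legitimizing the step above. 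Since from \eqref{eq413} one has $|\hat R_{\sigma,n}|=\binom{2n}{2\sigma}\bigl|\tfrac{\varphi_{2,0}\cos b_0+\varphi_{1,1}+\varphi_{0,2}\cos b_0}{4\cos^{2}(b_0/2)}\bigr|^{2\sigma}$ and $A\le 2B$ turns the first bound into $|R_{\sigma,n}-\tilde R_{\sigma,n}|\le\frac{32\sigma^{2}}{n}\binom{2n}{2\sigma}\bigl(\tfrac{\Psi}{4\cos^{2}(b_0/2)}\bigr)^{2\sigma}$, it remains to absorb the analytic prefactors into $e^{2D\sigma}$. The hypotheses $|\cos(b_0/2)|>e^{-D}$, $|\cos\alpha|>e^{-D}$, $|\lambda-e^{2i\pi p/q}|>\frac{1}{Dq^{2}}$ and $|q_{2n+1}|,|b_{2n}|<e^{Dn}$ of Lemma \ref{lema51} only become weaker when $D$ is enlarged, its conclusion does not involve $D$, and the remaining condition on $b_2$ is $D$-independent; hence we may enlarge $D$ so that both $\frac{\Psi}{4\cos^{2}(b_0/2)}\le e^{D}$ and $\bigl|\tfrac{\varphi_{2,0}\cos b_0+\varphi_{1,1}+\varphi_{0,2}\cos b_0}{4\cos^{2}(b_0/2)}\bigr|\le e^{D}$ (the second-order coefficients $\varphi_{2,0},\varphi_{1,1},\varphi_{0,2}$ are the explicit functions of $\lambda,\alpha,q_3$ of Appendix \ref{ap1}, bounded by $e^{C}$ and by the Diophantine constant). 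Combining the two differences and using $\sigma^{2}/n\le n^{-999/1000}$ yields $|R_{\sigma,n}-\hat R_{\sigma,n}|\le \binom{2n}{2\sigma}e^{2D\sigma}n^{-999/1000}=\frac{(2n)!}{(2n-2\sigma)!}e^{2D\sigma}\frac{n^{-999/1000}}{(2\sigma)!}$ for $n$ large, which is the claim.

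The main obstacle is the exact cancellation of the $O(1/n)$ terms in $A/B$: one must verify that after first-order expansion of all $O(\sigma)$ factors the surviving error is genuinely $O(\sigma^{2}/n)$ and not, say, $O(\sigma/\sqrt n)$. A secondary point requiring care is the uniformity of the error $\varepsilon_{u,w}$ in \eqref{eq49} over the multinomial sum — this uniformity is exactly what lets the diagonal binomial be factored out before re-summing — and one should track the enlargement of $D$ so as not to create a circular dependence with the constants ($C_1$, the asymptotic $n$, etc.) fixed later in the proof of Lemma \ref{lema51}.
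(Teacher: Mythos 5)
Your proposal is correct and follows essentially the same route as the paper: split $R_{\sigma,n}-\hat R_{\sigma,n}$ into the error from replacing the off-diagonal binomial in \eqref{eq49} by the diagonal one (collapsing the $u,v,w$-sum) and the error from replacing the factorial prefactor $A$ by its asymptotic form $B$, then bound both by $n^{-999/1000}$ using the constraint that large-child principal trees force $2\sigma\le 2n-H(2n)$. One small clarification on the worry you flag at the end: the $O(1/n)$ contributions in $\log(A/B)$ do \emph{not} cancel exactly — the leading term is $-(2\sigma^2+\sigma)/n$, which retains a $\sigma/n$ piece — but since $\sigma\ge1$ (the recurrence in \eqref{recrel} starts at $\sigma=1$), this piece is dominated by $\sigma^2/n$ anyway, so the final bound $A/B=1+O(\sigma^2/n)$ and hence the lemma go through exactly as you wrote.
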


	\begin{proof}
		Since we are discussing large child trees (and hence $\sigma < n^{1/40000}$), there are at most $n^{1/20000}$ terms in \eqref{eq46}, so we may consider them separately. 
		
		In every term, we bound the binomial with \eqref{eq49}. We separate the main term and the error term from there. The error is bounded $Dn^{-19999/20000}$ (we may assume the uniform constant is smaller than $D$). Then, in both error and main term we can reduce the sum. In the main term, it gives us \eqref{twosigma}, while in the error term we get:
		
		\begin{equation}
			\frac{(2n - 2\sigma - 1)!(n-1)!^2(n - \sigma)}{(2n-1)!(n - 2\sigma)!(n - 2\sigma - 1)!}\cos(b_0/2)^{- 4\sigma}(|\varphi_{2, 0} \cos b_0| + |\varphi_{1, 1}| + |\varphi_{0, 2}\cos b_0|)^{2\sigma}\frac{Dn^{-19999/20000}}{(2\sigma)!}.
			\label{eq569}
		\end{equation}
		Here, we bound the first fraction with
		\begin{equation}
			\frac{(2n - 2\sigma - 1)!(n - 1)!^2(n - \sigma)}{(2n-1)!(n - 2\sigma)!(n - 2\sigma - 1)!} \le \frac{(2n)!}{(2n - 2\sigma)!}.
		\end{equation}

	In the main term we get \eqref{twosigma}, and we need to justify the asymptotic change we did after \eqref{twosigma}. Hence, we estimate:
	
	\begin{equation}
		\frac{(2n-2\sigma)!2^{4\sigma}(2n - 2\sigma-1)!(n-1)!^2(n - \sigma)}{(2n)!(2n-1)!(n-2\sigma)!(n - 2\sigma-1)!} = \left(\frac{(2n-2\sigma)!2^{2\sigma}}{(2n)!}\times\frac{n!}{(n-2\sigma)!}\right)^2 (1 + O(n^{-999/1000})).
	\end{equation}
	The squared term is the following:
	
	\begin{equation}
		1 \ge \prod_{j = 0}^{2\sigma - 1}\frac{2(n - j)}{2n - j} = \prod_{j = 0}^{2\sigma - 1} \frac{1-j/n}{1-j/(2n)} \ge \left(1 - 2\sigma/n\right)^{2\sigma} \ge 1 - \frac{4\sigma^2}{n} \ge 1 - n^{-999/1000}. 
	\end{equation}

	Note that we have almost finished the proof of the lemma, since in both terms we got the $n^{-999/1000}$ term. We also note that we can estimate the absolute values in \eqref{eq569} by $D$ uniformly (by increasing $D$), because of Lemma \ref{lema54}.  

	\end{proof}

	We can immediately estimate the contribution of these error terms to Lemma \ref{lema51}. Here, contribution means $R_{\sigma_n} - \hat{R}_{\sigma, n}$ times $q_{2n - 2\sigma}$, summed $\sigma$ in the large child range.
	
	\begin{proposition}
		For any large enough $n$, the total contribution of error terms, associated to $R_{\sigma, n} - \hat{R}_{\sigma, n}$ for large child principal trees, divided by the RHS of \eqref{qbound} is bounded by
		
		\begin{equation}
			n^{-9/10}.	
		\end{equation}
	\end{proposition}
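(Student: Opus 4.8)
The plan is to bound the sum over $\sigma$ in the large-child range of $|R_{\sigma,n} - \hat R_{\sigma,n}|\cdot|q_{2n-2\sigma}|$ and divide by the right-hand side of \eqref{qbound}. First I would invoke Lemma \ref{lema55}, which gives
\[
|R_{\sigma,n} - \hat R_{\sigma,n}| \le \frac{(2n)!}{(2n-2\sigma)!}\, e^{2D\sigma}\,\frac{n^{-999/1000}}{(2\sigma)!},
\]
valid for $\sigma < n^{1/40000}$ (the large-child regime). For the factor $|q_{2n-2\sigma}|$ I would use the inductive bound \eqref{qbound}, so that $|q_{2n-2\sigma}| \le e^{W(2n-2\sigma-2,C_1)}(2n-2\sigma)!\,e^{(2n-2\sigma)(C+F(\lambda)+\delta)}$. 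Multiplying the two, the factorials $(2n-2\sigma)!$ cancel against $1/(2n-2\sigma)!$, leaving $(2n)!$, and the exponential $e^{(2n-2\sigma)(C+F(\lambda)+\delta)}$ combines with the $e^{2D\sigma}$ to give at most $e^{2n(C+F(\lambda)+\delta)}$ times $e^{2\sigma D}e^{-2\sigma(C+F(\lambda)+\delta)}$; after enlarging $D$ (which the hypotheses allow) the $\sigma$-dependent exponential factor is at most $e^{3D\sigma}$ or can simply be absorbed. Also $e^{W(2n-2\sigma-2,C_1)} \le e^{W(2n-2,C_1)}$ since $W$ is non-decreasing. Thus after dividing by the RHS of \eqref{qbound} each term is bounded by $n^{-999/1000}\, e^{3D\sigma}/(2\sigma)!$ (up to harmless constants).

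Next I would sum over $\sigma$ from $1$ to roughly $n^{1/40000}$. Since $\sum_{\sigma \ge 1} e^{3D\sigma}/(2\sigma)! \le \cosh(e^{3D/2}) < \infty$ is a fixed constant depending only on $D$, the whole sum is bounded by a constant times $n^{-999/1000}$. Choosing $n$ large enough that this constant is at most $n^{99/1000}$ — which is possible exactly as in \eqref{eq566} — gives the bound $n^{-999/1000}\cdot n^{99/1000} = n^{-9/10}$, as claimed. I should double-check the exponent arithmetic: the statement claims $n^{-9/10}$, and indeed $999/1000 - 99/1000 = 900/1000 = 9/10$, so the bookkeeping is consistent.

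I do not expect a serious obstacle here: the hard analytic work is already contained in Lemma \ref{lema55} (the asymptotic replacement of the binomial ratios and the $(2n)!/(2n-2\sigma)!$ normalization) and in Lemma \ref{lema54} (the crude a priori bound allowing the absolute values $|\varphi_{2,0}|$, etc., to be absorbed into $D$). The only mild care needed is (i) to make sure the enlargement of $D$ used here does not conflict with earlier choices — it does not, since $D$ is chosen before $C_1$ and the asymptotic $n$, and we are only increasing it by a bounded amount independent of those — and (ii) to confirm that the number of $\sigma$ in the large-child range, being at most $n^{1/40000}$, contributes nothing worse than what the convergent series $\sum e^{3D\sigma}/(2\sigma)!$ already controls, so the finite truncation is harmless. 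Hence the proof is essentially a one-line estimate combining Lemma \ref{lema55}, the induction hypothesis \eqref{qbound}, monotonicity of $W$, and a choice of $n$ large as in \eqref{eq566}.
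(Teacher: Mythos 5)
Your proposal is correct and follows essentially the same route as the paper: apply Lemma \ref{lema55}, use the inductive bound \eqref{qbound} for $q_{2n-2\sigma}$, cancel the $(2n)!/(2n-2\sigma)!$ factorials and the $W$ terms by monotonicity, observe that the leftover $\sigma$-dependent exponential yields a convergent series, and absorb that constant into a small power of $n$ via the same device as \eqref{eq566}. Your explicit bookkeeping of the residual exponent $e^{-2\sigma(C+F(\lambda)+\delta)}e^{2D\sigma}$ — and its absorption by enlarging $D$ — is a correct elaboration of a step the paper leaves implicit.
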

	\begin{proof}
		We use an inductive bound on $q_{2n - 2\sigma}$ and cancel $W$ due to its monotonicity. We use the previous lemma and cancel out $(2n)!$ and $(2n - 2\sigma)!$. Then, we sum over $\sigma$ and we use the trick from \eqref{eq566} to get rid of the exponential growth in $\sigma$. 
	\end{proof}

	Finally, we have to mention small children principal trees. We have already estimated all small trees. But we want to use $\hat{R}_{\sigma, n}$ for every $\sigma$, not just small ones. So, we add a sum of $\hat{R}_{\sigma, n} q_{2n - 2\sigma}$ to $q_{2n}$ and subtract it. Then, we estimate the subtracted terms and claim that they are small:
	
	\begin{proposition}
		The sum of $\hat{R}_{\sigma, n} q_{2n - 2\sigma}$ over $\sigma$ in small children tree range, divided by the RHS of \eqref{qbound}, is bounded by $n^{-9/10}$.
	\end{proposition}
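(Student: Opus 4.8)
The plan is to bound the subtracted sum term by term using the explicit formula \eqref{eq413} for $\hat R_{\sigma, n}$ together with the inductive hypothesis \eqref{qbound} applied to each factor $q_{2n-2\sigma}$; this is legitimate because every $\sigma$ in the small children range satisfies $\sigma\ge 1$ and the box child of the corresponding principal tree has order $2n-2\sigma\ge 2$, so $q_{2n-2\sigma}$ is a strictly lower-order coefficient. Write $\Lambda := \varphi_{2, 0}\cos b_0 + \varphi_{1, 1} + \varphi_{0, 2}\cos b_0$. Dividing $|\hat R_{\sigma,n}|\,|q_{2n-2\sigma}|$ by the right-hand side of \eqref{qbound}, the $(2n)!$ cancels against $(2n-2\sigma)!$ and the $\tfrac{1}{(2\sigma)!}$ carried by $\hat R_{\sigma,n}$, the $W$-factors contribute $e^{W(2n-2\sigma-2,C_1)-W(2n-2,C_1)}\le 1$ by monotonicity of $W$, and the exponentials in $C+F(\lambda)+\delta$ collapse to $e^{-2\sigma(C+F(\lambda)+\delta)}$, leaving a factor $e^{-2\sigma F(\lambda)}|\Lambda|^{2\sigma}|\cos(b_0/2)|^{-4\sigma}/2^{4\sigma}$.

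The key point is the cancellation built into the definition \eqref{eq53} of $F(\lambda)$: since $e^{F(\lambda)}=\dfrac{|\Lambda|}{|q_3|\cos^2(b_0/2)\,2\pi}$, one gets $e^{-2\sigma F(\lambda)}|\Lambda|^{2\sigma}|\cos(b_0/2)|^{-4\sigma}=(2\pi|q_3|)^{2\sigma}$, so the $\Lambda$- and $\cos(b_0/2)$-dependence of $\hat R_{\sigma,n}$ disappears entirely (this is exactly why $F(\lambda)$ was normalized this way). Using the hypothesis $|q_3|\le e^{C}$ from Lemma \ref{lema51} and $2^{4\sigma}=16^\sigma$, each term of the ratio is bounded by
\[
\frac{(2\pi|q_3|)^{2\sigma}e^{-2\sigma(C+\delta)}}{16^\sigma (2\sigma)!}\ \le\ \frac{(\pi^2/4)^{\sigma}e^{-2\sigma\delta}}{(2\sigma)!}.
\]

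It remains to sum this over the small children range. For a principal tree at the root to count as a low children tree its box child, of order $2n-2\sigma$, must have order below $H(2n)$, so the range is $\tfrac12(2n-H(2n))<\sigma\le n-1$; with the choice $H(2n)=2n-n^{1/50000}$ fixed earlier, the summation index starts at $\sigma_0\approx\tfrac12 n^{1/50000}\to\infty$. Since the ratio of consecutive terms of $\sum_\sigma (\pi^2/4)^\sigma/(2\sigma)!$ is $\tfrac{\pi^2/4}{(2\sigma+1)(2\sigma+2)}<\tfrac12$ for all $\sigma\ge1$ (as in \eqref{eq566}), the tail from $\sigma_0$ is at most $2(\pi^2/4)^{\sigma_0}/(2\sigma_0)!$, and because $(2\sigma_0)!$ grows faster than any fixed power of $n$ this is $\le n^{-9/10}$ once $n$ is large; hence the subtracted sum divided by the right-hand side of \eqref{qbound} is $\le n^{-9/10}$. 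The only substantive step is checking that the $F(\lambda)$-normalization leaves a series with factorial-size denominators whose lower endpoint of summation tends to infinity — everything else is the same bookkeeping used for the recurrence of Section \ref{sec4}.
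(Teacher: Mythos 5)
Your proposal is correct and follows essentially the same route as the paper: use the inductive bound on $q_{2n-2\sigma}$, cancel the factorial and $W$ factors, observe that the $F(\lambda)$ normalization collapses the $\Lambda$- and $\cos(b_0/2)$-factors to a constant, and note that the resulting series $\sum_\sigma C_5^\sigma/(2\sigma)!$ is being summed starting from $\sigma_0\to\infty$, which makes it smaller than any power of $n$. The paper leaves the constant as an unspecified $C_5$ depending on $C,D,\delta$ and uses $(2\sigma)!\ge\sigma!^2$ to absorb the geometric factor, whereas you carry out the $F(\lambda)$ cancellation explicitly to get $C_5=(\pi^2/4)e^{-2\delta}$ — a more transparent variant of the same bookkeeping, not a different argument.
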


	\begin{proof}
		We once again use the inductive bound for $q_{2n - 2\sigma}$, cancel out $(2n)!$ and $(2n - 2\sigma)!$ and $W$. Then, the result will be the following:
		
		\begin{equation}
			\sum_{\sigma} \frac{C_5^{\sigma}}{(2\sigma)!},
		\end{equation}
		
		where $C_5$ is something, dependent only on $C$, $D$ and $\delta$. We bound $(2\sigma)!$ with $\sigma!^2$. One of the factorials will bound $C_5$, like in \eqref{eq566}, and another one will be bounded by $n$, since $\sigma > n^{1/60000}$.  
	\end{proof}

	Finally, we can finish the proof for the boxes. We now know that
	
	\begin{equation}
		|q_{2n} - \sum_{\sigma = 1}^{n} \hat{R}_{\sigma, n}q_{2n - 2\sigma}| < n^{-4/5} e^{W(2n-2, C_1)} (2n)! e^{2n(C + F(\lambda) + \delta)}.
		\label{eq575}
	\end{equation}

	Hence, we just need to bound 
	
	\begin{equation}
		\frac{\sum_{\sigma = 1}^{n} \hat{R}_{\sigma, n}q_{2n - 2\sigma}}{ e^{W(2n-2, C_1)} (2n)! e^{2n(C + F(\lambda) + \delta)}}.
	\end{equation}

	Recalling the definitions of $\hat{R}_{\sigma, n}$, $\breve{q}_{2n}$ and $F(\lambda)$ from \eqref{eq413}, \eqref{coorch} and \eqref{eq53}, this reduces to:

	\begin{equation}
		\frac{q_3^{2n}}{e^{2nC}} \times e^{-2n\delta} (\pi/2)^{2n} \times \frac{\sum_{\sigma = 1}^{n} - \frac{1}{(2\sigma)!} \breve{q}_{2n - 2\sigma}}{ e^{W(2n-2, C_1)}}.
	\end{equation}

	 We also need to write the inductive assumption and \eqref{eq575} in terms of $\breve{q}$:
	
	\begin{equation}
		|\breve{q}_{2n}| \le e^{W(2n-2, C_1)} \times \frac{e^{2nC}}{q_3^{2n}} \times e^{2n\delta}(\pi/2)^{-2n},
		\label{eq578}
	\end{equation}

	and 
	
	\begin{equation}
		|\breve{q}_{2n} - \sum \frac{-1}{(2\sigma)!}\breve{q}_{2n  -2\sigma}| <  n^{-4/5}e^{W(2n-2, C_1)} \times \frac{e^{2nC}}{q_3^{2n}} \times e^{2n\delta}(\pi/2)^{-2n}.
	\end{equation}

	Next, we define a linear evolution operator, that will help us to prove the bounds. If we define a usual operator for a recurrent relation \eqref{newrec}, it will not be compact, so we have to renormalize $\breve{q}$ once again:
	
	\begin{equation}
		\mathbf{q}_{n, j} = \breve{q}_{2n - 2j} / j!,
	\end{equation} 

	where $\breve{q}_{2n}$ is $0$ if $n < 0$. We define an operator:
	
	\begin{equation}
		T: \ell^2 \rightarrow \ell^2, \; \; \; T\mathbf{x} = \mathbf{y} \; \; \text{if} \; \;  \mathbf{y}_0 = -\sum_j \frac{j!}{(2j + 2)!}\mathbf{x}_j, \; \; \mathbf{y}_k = \frac{\mathbf{x}_{k - 1}}{k}, \; k > 0.
	\end{equation}
	
	\begin{lemma}
		T is a bounded compact operator and its spectrum is given by
		\begin{equation}
			\left\{0\right\}\cup \left\{\frac{-4}{(2m+1)^2\pi^2}\; | \; m \in \mathbb{N}\right\}.
		\end{equation}
	Moreover, the dimension of every eigenspace for non-zero eigenvalue is $1$. 
	\label{lema56}
	\end{lemma}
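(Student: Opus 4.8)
The plan is to first establish compactness, then diagonalize the operator by relating it to the Taylor coefficients of a meromorphic generating function, and finally read off the spectrum from the zeros of $\cosh$. For compactness, I would argue that $T$ is a norm-limit of finite-rank operators: the map $\mathbf{x} \mapsto \mathbf{y}$ with $\mathbf{y}_k = \mathbf{x}_{k-1}/k$ for $k > 0$ is a weighted shift with weights $1/k \to 0$, hence compact, and the additional rank-one-type component $\mathbf{y}_0 = -\sum_j \frac{j!}{(2j+2)!}\mathbf{x}_j$ is bounded because the coefficient sequence $\left(\frac{j!}{(2j+2)!}\right)_j$ is in $\ell^2$ (indeed it decays super-exponentially), so by Cauchy--Schwarz it defines a bounded linear functional, and adding a bounded rank-one piece to a compact operator keeps it compact. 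So $T$ is compact, and its spectrum consists of $0$ together with at most countably many eigenvalues accumulating only at $0$.

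Next I would compute the nonzero eigenvalues. Suppose $T\mathbf{x} = \kappa \mathbf{x}$ with $\kappa \neq 0$. The relations $\mathbf{y}_k = \mathbf{x}_{k-1}/k = \kappa \mathbf{x}_k$ for $k \ge 1$ force $\mathbf{x}_k = \frac{1}{\kappa^k k!}\mathbf{x}_0$, so the eigenvector is determined up to scale by $\mathbf{x}_0$, which already shows each nonzero eigenspace is at most one-dimensional. Substituting into the zeroth relation $\mathbf{y}_0 = \kappa\mathbf{x}_0$ gives the eigenvalue equation
\begin{equation}
	-\sum_{j=0}^{\infty} \frac{j!}{(2j+2)!}\cdot\frac{1}{\kappa^j j!} = \kappa, \qquad \text{i.e.} \qquad -\sum_{j=0}^{\infty}\frac{1}{(2j+2)!\,\kappa^{j}} = \kappa .
\end{equation}
Setting $\kappa = -1/s^2$ (the sign anticipating the answer) and recognizing that $\sum_{j\ge 0} \frac{s^{2j+2}}{(2j+2)!} = \cosh s - 1$, the equation becomes $-s^2(\cosh s - 1) = -s^2$, that is $\cosh s = 0$. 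The solutions are $s = i\pi(m+\tfrac12)$ for $m \in \mathbb{Z}$, giving $\kappa = -1/s^2 = \frac{4}{(2m+1)^2\pi^2}\cdot$ — wait, $s^2 = -\pi^2(m+\tfrac12)^2 = -\tfrac{(2m+1)^2\pi^2}{4}$, so $\kappa = -1/s^2 = \frac{4}{(2m+1)^2\pi^2}$; the stated spectrum has a minus sign, so in fact one takes $\kappa = 1/s^2$ in the parametrization and obtains $\kappa = -\frac{4}{(2m+1)^2\pi^2}$. I would carry out this bookkeeping carefully and match signs with the convention fixed by the operator's definition; the upshot is $\kappa \in \left\{-\frac{4}{(2m+1)^2\pi^2} : m \in \mathbb{N}\right\}$, each simple. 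This is consistent with the earlier computation $\breve q(t) = \operatorname{sech} t$ in \eqref{eq418}, whose poles at $t = i\pi(m+\tfrac12)$ govern the asymptotics of $\breve q_{2n}$.

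The main subtlety — and the step I expect to require the most care — is ensuring no spurious nonzero eigenvalue has been missed and that $0$ genuinely lies in the spectrum (which is automatic for an infinite-rank compact operator but worth one line), and especially verifying that every formal eigenvector $\mathbf{x}_k = \frac{1}{\kappa^k k!}\mathbf{x}_0$ actually lies in $\ell^2$. Since $|\mathbf{x}_k| = \frac{|\mathbf{x}_0|}{|\kappa|^k k!}$ and $k!$ grows faster than any geometric factor, $\sum_k |\mathbf{x}_k|^2 < \infty$ for every nonzero $\kappa$, so there is no issue: each root of $\cosh s = 0$ yields a genuine $\ell^2$ eigenvector. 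Conversely, the argument above shows these are the only possibilities, so the spectrum is exactly $\{0\} \cup \left\{-\frac{4}{(2m+1)^2\pi^2} : m\in\mathbb{N}\right\}$ with all nonzero eigenspaces one-dimensional, completing the proof.
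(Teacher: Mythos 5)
Your proof is correct and follows essentially the same route as the paper: derive the forced recursion $\mathbf{x}_k = \mathbf{x}_0/(\kappa^k k!)$, substitute into the zeroth coordinate, arrive at $\cosh(1/\sqrt{\kappa})=0$, and read off the simple roots. Two small remarks. First, your compactness argument (weighted shift with weights $1/k\to 0$ plus a rank-one piece) is a valid alternative to the paper's Hilbert--Schmidt computation; both are standard. Second, your justification that each nonzero eigenspace is one-dimensional, namely that the relations $\mathbf{x}_k = \mathbf{x}_{k-1}/(\kappa k)$ pin the eigenvector down to its zeroth coordinate, is cleaner than the paper's appeal to simplicity of zeros of $\cosh$, which bears on algebraic rather than geometric multiplicity. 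You also explicitly verify that $(\mathbf{x}_0/(\kappa^k k!))_k\in\ell^2$, which the paper leaves tacit. The only wobble is in the substitution $\kappa=-1/s^2$: with that choice $1/\kappa^j = (-1)^j s^{2j}$ and the sum yields $1-\cos s$ rather than $\cosh s - 1$, so the intermediate identity you wrote is off by the alternating sign; you notice the discrepancy and self-correct to $\kappa = 1/s^2$, which is the paper's substitution and gives $\cosh s = 0$ and $\kappa = -4/((2m+1)^2\pi^2)$ cleanly. The final answer and structure are right.
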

	\begin{proof}
		$T$ is bounded because it is a sum of a shift and scale operator and of an operator, that places a product of $x$ with another $\ell^2$ vector tot he first position. Moreover, the Hilbert-Schmidt norm of $T$ is given by
		\begin{equation}
			\sum_{k = 1}^{+\infty} \frac{1}{k^2} + \sum_{j = 0}^{+\infty}\frac{j!^2}{(2j+2)!^2} < +\infty,
			\end{equation}
		so $T$ is compact. 
		
		If $\lambda$ is a non-zero eigenvalue of $T$ with eigenvector $\mathbf{x}$, then $\mathbf{x}_{j} = \frac{1}{\lambda^j j!}\mathbf{x}_0$, and if we write condition for $\mathbf{x}_0$, we get:
		\begin{equation}
			-\sum_{j = 0}^{+\infty} \frac{1}{\lambda^j(2j+2)!} = \lambda \; \; \Rightarrow \; \; \cosh(1/\sqrt{\lambda}) = 0.
		\end{equation}
	
		Since $\cosh(1/\sqrt{\lambda})$ has only simple roots, the eigenspace dimension is $1$.
	
	\end{proof}

	Now we can apply $T$ to our problem. We have:
	
	\begin{equation}
		|(T\mathbf{q}_{n-1})_0 - \mathbf{q}_{n, 0}| <  n^{-4/5}e^{W(2n-2, C_1)} \times \frac{e^{2nC}}{q_3^{2n}} \times e^{2n\delta}(\pi/2)^{-2n}, \; \; (T\mathbf{q}_{n - 1})_j = \mathbf{q}_{n, j},
		\label{eq585}
	\end{equation}
	
	where $j$ and $n$ are positive. We can bound $T\mathbf{q}_{n-1}$ using the norm of $T$. However, to get a $\pi/2$ factor in the inductive formula, we need to use the spectral radius of $T$. Hence, we take a number $N$, such that
	
	\begin{equation}
		\left\| T^N \right\| \le e^{-\pi^2/4}\left(\frac{4e^{\delta}}{\pi^2} \right)^N.
	\end{equation}

	Then, since $N$ is only dependent on $\delta$, we can use \eqref{eq585} for $n - N$. Namely, for $j = n - N + 1$ up to $n$, we have:
	
	\begin{equation}
		\left\| T\mathbf{q}_{j - 1} - \mathbf{q}_j\right\| < 2n^{-4/5} e^{W(2n-2, C_1)} \times \frac{e^{2nC}}{q_3^{2n}} \times e^{2j\delta}(\pi/2)^{-2j}.
		\label{eq587}
	\end{equation} 

	Note we have also bounded some terms using $n$ instead of $j$. This allows us to write a similar statement for $T^N$:
	
	\begin{align}
		\left\| T^N\mathbf{q}_{n - N} - \mathbf{q}_n\right\| \le \sum_{j = n - N + 1}^{n} 	\left\| T^{n-j} (T\mathbf{q}_{j - 1} - \mathbf{q}_j)\right\| \le \sum_{j = n - N + 1}^{n}  \left\|T\right\| ^{n-j}	\left\| (T\mathbf{q}_{j - 1} - \mathbf{q}_j)\right\|.
	\end{align}

	We use \eqref{eq587} to bound every term. We get:
	
	\begin{equation}
		|(T^N\mathbf{q}_{n-N})_0 - \mathbf{q}_{n, 0}| \le \left\| T^N\mathbf{q}_{n - N} - \mathbf{q}_n\right\| \le  n^{-3/4}e^{W(2n-2, C_1)} \times \frac{e^{2nC}}{q_3^{2n}} \times e^{2n\delta}(\pi/2)^{-2n}.
		\label{eq589}
	\end{equation}	

	We removed the sum over $j$ and exponentials of $n - j$ by paying with a power of $n$, since $n$ can be chosen to be large enough and these terms depended only on $D$, $\delta$ and the operator itself. 
	
	Finally, we bound $T^N\mathbf{q}_{n - N}$:
	
	\begin{align}
		|(T^N\mathbf{q}_{n-N})_0| \le \left\| T^N\mathbf{q}_{n - N}\right\| \le e^{-\pi^2/4}\left(\frac{4e^{\delta}}{\pi^2} \right)^N \left\|\mathbf{q}_{n - N}\right\| \le  e^{-\pi^2/4}\left(\frac{4e^{\delta}}{\pi^2} \right)^N \left\|\mathbf{q}_{n - N}\right\|_1 = \\ = e^{-\pi^2/4}\left(\frac{4e^{\delta}}{\pi^2} \right)^N \sum_{j = 0}^{+\infty} \frac{|\breve{q}_{2n-2N-2j}|}{j!}.
	\end{align}
	
	We use \eqref{eq578}, as well as monotonicity bounds to get:
	
	\begin{align}
		|(T^N\mathbf{q}_{n-N})_0| \le e^{-\pi^2/4}\left(\frac{4e^{\delta}}{\pi^2} \right)^N e^{W(2n-2, C_1)}\frac{e^{2nC}}{q_3^{2n}} \frac{e^{2(n - N)\delta}}{(\pi/2)^{2(n - N)}} \sum_{j = 0}^{+\infty} \frac{(\pi/2)^{2j}}{j!} \le \\ \le 
		e^{W(2n-2, C_1)} \times \frac{e^{2nC}}{q_3^{2n}} \times e^{2n\delta} (\pi/2)^{-2n} \times e^{-N\delta}.
		\label{eq592}
	\end{align}
	
	Since we can guarantee that $n^{-3/4} + e^{-N\delta} \le 1$, we can prove \eqref{eq578} by adding up \eqref{eq589} and \eqref{eq592}, thus proving the inductive bound for boxes. 
	
	If $q_3 = 0$, we divide by $0$ so we cannot really prove, but we immediately get the result because nothing comes from $\hat{R}$.

	\subsection{Circle vertices}
	
	We have proven the inductive statement of Lemma \ref{lema51} for the boxes. Now, we need to do the same for the circles. We note that the bounds for circles shouldn't be as strict, as those for the boxes, since the principal trees involve recurrence in only boxes.
	
	Similarly to boxes, we have to consider two types of trees of depth one: small children and large child trees. The proof is similar to the box case, but there are some differences. For example, L is different and labels in large child trees can be large. We start by considering small children trees.  To avoid confusion, we will prove the statement for $\varphi_{j_0, k_0}$. 
	
	Once again, exponentials cancel and give only $2s_0$ power; B1, B3, B6, C5 turn into a constant, as well as $\cos \alpha$ from L. The cosine difference factors from L and $G$ of the root vertex cancel out. We remove all the $\cos(b_0/2)$ from C4, $G_{j, k}$, $G_{j_0, k_0}$ and L in the same way as for the boxes. B2 is bounded by $j_0+k_0+1$ from $G_{j_0, k_0}$. We bound B4 by $|l_0|/m!$ and B5 by $l^m/m!$, where $l_0 = j_0 - k_0$.  The denominator in $G_{j, k}$ is bounded by $D(|l|+1)$. 
	
	We sum over C3 giving us $\binom{C_0}{\frac{C_0+\delta}{2}}$, and C1, C2, C6 change into \eqref{eq58}. We also do the same trick for semicircles:
	
	\begin{equation}
		(\sum_{i}|l_i| + |l_0| + 1)^{\mu}(2D)^{2\omega_b}.
		\label{eq594}
	\end{equation} 

	Note that since B4 is different from the box case, we get $|l_0|$ inside of the power. We bound the sum of labels by $4(j_0+k_0)+1$.
	
	Now we need to sum over labels of the circle children. Here, we differ from the box case, since if we use \eqref{eq513} and $j_0$ or $k_0$ would be small (and $k_0$ or $j_0$ large respectively), we wouldn't be able to cancel the resulting power of $2$ using the binomial in $G_{j_0, k_0}$. Hence, we first bound $D(|l|+1)$ by $2D(j+k)$ for every vertex. Then, we can sum over all possible labels of circle children, while keeping the sum of the labels (and hence $\delta$ fixed). This only changes $\binom{j+k}{j}$ in $G_{j, k}$. If the total sum of $j$-s ($k$-s) over all circle children is $j_{\infty}$ ($k_{\infty}$), we get:
	
	\begin{equation}
		\sum_{j_1+\ldots+j_{s_0} = j_{\infty}}\prod_{i = 1}^{s_0} \binom{\omega_i}{j_i} = \binom{\sum_i \omega_i}{j_\infty} = \binom{\omega_\varphi + s_0}{(\omega_{\varphi} + s_0 + l_0 - \delta)/2 }.
	\end{equation}

	Here, $\omega_i$ is the order of $i$-th circle, that stays fixed. $\omega_\varphi$ comes from the small child box case and we have also recalled the definition of $\delta$. We can now sum over $\delta$:
	
	\begin{align}
		\sum_{\delta} \binom{C_0}{(C_0+\delta)/2} \binom{\omega_\varphi + s_0}{(\omega_{\varphi} + s_0 + l_0 - \delta)/2 } = \binom{\omega_{\varphi} + s_0 + C_0}{(\omega_\varphi + s_0 + C_0 + l_0)/2} = \\ = \binom{\omega_\varphi + \omega_0 - 1}{(\omega_\varphi + \omega_0 - 1 + l_0)/2} = \binom{j_0 + k_0 - 2\omega_b}{j_0 - \omega_b} \le \binom{j_0+k_0}{j_0}. 
	\end{align}

	We cancel it with the same binomial from $G_{j_0, k_0}$.
	
	The rest of the proof for this case is identical to the low children box case.
	
	The final case is the large child case for the circle root. We approach it in the same way:
	
	We have two cases: $\circledast$ and $\boxed{*}$. For the large child, we use the inductive statement, while for the small ones we use \eqref{eq5444} and \eqref{eq545}. B1, B3, B6 and C5 are bounded, as well as $\cos \alpha$ from L. $(j_0+k_0+1)$ from $G_{j_0, k_0}$ is bounded $\boxed{*}$ by B2 or $\circledast$ by $(j+k+1)$ from $G_{j,k}$, giving us $(j_0+k_0)^{1/50000}$ from B2. The exponents leave $2s_0$ scaling. Cosine difference terms from $G_{j_0, k_0}$ and L cancel out, as well as $\cos(b_0/2)$ factors. B4 and B5 ar bounded the same, as in small children case. In $\circledast$, we bound the denominator of $G_{j, k}$ in the same way. We sum over $x$, giving us $\binom{C_0}{(C_0+\delta)/2}$ in $
	\circledast$ we further bound it with $2^{C_0}$.
	
	We also get \eqref{eq58} and \eqref{eq594} in the same way. Now we bound the sum of labels in \eqref{eq594} by $4(j_0+k_0)$. Then, we sum over labels of circled vertices. For small circles, the contribution doesn't depend on the label and hence we get $j+k+1$, that cancels with $j+k+1$ in \eqref{eq5444} ($-1$ in exponential). For the big circle in $\circledast$ there are at most $n^{1/40000}$ possible labels (its label should be close to the label of the root). For all of them, the binomial in $G_{j, k}$ is bounded by the binomial in $G_{j_0, k_0}$ ($j\le j_0$ and $k \le k_0$), so we cancel them. We also bound $D(|j-k|+1)$ by $D(j_0+k_0)$. We then sum over $\delta$, giving us $n^{1/40000}$ factor.
	
	In $\boxed{*}$ we should also sum over $\delta$: $\binom{C_0}{(C_0+\delta)/2}$ should always be bounded by the binomial in $G_{j_0, k_0}$, so we cancel them, and we also get at most $n^{1/40000}$ from the choice of $\delta$.
	
	We cancel $W$ of the root with $W$ of the large child. Identically to the box case, we sum over small circle orders, giving us
	
	\begin{equation}
		e^{70D(\omega_{s, \varphi} + s_{s, 0})}(\omega_{s, \varphi} - 2s_{s, 0} +3)^{90(\omega_{s, \varphi} - 2s_{s, 0}+3)},
	\end{equation}

	and we also remove exponential factors. We also get \eqref{eq552} and \eqref{eq553}, summing over polygonal orders. In $\circledast$ this allows us to get rid of $2^{C_0}$ by increasing $D$. 
	
	Hence, in $\boxed{*}$ we get:
	
	\begin{equation}
		2(j_0+k_0)^{1/10000} \frac{\omega_0!}{(j_0+k_0+1)!}\binom{s_0+C_0}{s_0}\binom{C_0}{y_0}e^{70D(\omega_{s, \varphi} + s_{0})} (\omega_{s, \varphi} - 2s_{0} + 3)^{90(\omega_{s, \varphi} - 2s_{0} + 3)}(4(j_0+k_0))^{\mu}(2D)^{2\omega_b},
		\label{eq599}
	\end{equation}
	
	while in $\circledast$, we get:
	
	\begin{align}
		2(j_0+k_0)^{1/10000}  D(j_0+k_0)\omega_0^{90\omega_0} \frac{(\omega_{l, \varphi} + 1)!}{(j_0+k_0+1)!}\binom{s_0+C_0}{s_0}\binom{C_0}{y_0}\times \\ \times e^{70D(\omega_0 + \omega_{s, \varphi} + s_{0} - 1)} (\omega_{s, \varphi} - 2s_{0} + 5)^{90(\omega_{s, \varphi} - 2s_{0} + 5)}(4(j_0+k_0))^{\mu}(2D)^{2\omega_b}.
		\label{eq5100}
	\end{align}

	In both cases increasing $\mu$ by $1$ up until $y_0+\mu = \omega_b$ and then decreasing $y_0$ by $1$ and increasing $\mu$ by $1$ increases the contribution by a factor of at least $4$. Hence, we may assume that $y_0 = 0$ and $\mu = \omega_b$. We also note that in $\circledast$ when $\omega_b - \mu < 3$ both of those changes will increase the contribution by an order of $(j_0+k_0)^{39999/40000}$. We also perform the same bounds on $\omega$ as we did before \eqref{eq556}:
	
		\begin{equation}
		8(j_0+k_0)^{1/10000} \frac{\omega_0!}{(j_0+k_0+1)!}\binom{s_0+C_0}{s_0}e^{70D(\omega_{s, \varphi} + s_{0})} (j_0+k_0)^{(\omega_{s, \varphi} - 2s_{0} + 3)/500}(4(j_0+k_0))^{\omega_b}(2D)^{2\omega_b},
		\label{eq102}
	\end{equation}
	
	and
	
	\begin{equation}
		8(j_0+k_0)^{1/10000} D(j_0+k_0) \frac{(\omega_{l, \varphi} + 1)!}{(j_0+k_0+1)!}e^{70D(\omega_0 + \omega_{s, \varphi} + s_{0} - 1)} (j_0+k_0)^{(\omega_0 + \omega_{s, \varphi} - s_{0} + 5)/500}(4(j_0+k_0))^{\omega_b}(2D)^{2\omega_b}.
		\label{eq5103}
	\end{equation}

	We deal with $\circledast$ and \eqref{eq5103}. We bound $\omega_{s, \varphi} + s_0 - 1$ by $3(\omega_{s, \varphi} - s_0 + 1)$ and then introduce $\omega_1 = \omega_0 + \omega_{s, \varphi} - s_0$ with
	
	\begin{equation}
		\omega_1 + 2\omega_b = j_0+k_0+1 - \omega_{l, \varphi}.
	\end{equation}
	
	Then, $\eqref{eq5103}$ turns into:
	
	\begin{equation}
		8(j_0+k_0)^{1/90} D(j_0+k_0) \frac{(\omega_{l, \varphi} + 1)!}{(j_0+k_0+1)!}e^{210D(\omega_1 + \omega_b)} (j_0+k_0)^{\omega_1/500}(4(j_0+k_0))^{\omega_b}.
		\label{eq5104}
	\end{equation}

	Then, we have that decreasing $\omega_1$ by $1$ and increasing $\omega_{l, \varphi}$ by $1$ increases the contribution by a factor of $n^{99/100}$. Decreasing $\omega_b$ by $1$ and increasing $\omega_{l,  \varphi}$ by $2$ also does that. Hence, since $\omega_1 \ge 1$, the maximum will happen when $\omega_1 = 1$, $\omega_b = 0$ and $\omega_{l, \varphi} = j_0+k_0$. It will be bounded by
	\begin{equation}
		(j_0+k_0)^{11/10}.
	\end{equation}
	
	Hence, if we take terms where we did an action that increased the size of the deformation by $n^{99/100}$ at least $2$ times, the total contribution of that terms would be bounded by $n^{-4/5}$. We have to do an action at least once, otherwise we have a tree that expresses $\varphi_{j_0, k_0}$ in terms of itself. Hence, either $\omega_b = 1$ and $\omega_1 = 1$ or $\omega_{b} = 0$ and $\omega_1 = 2$. This results in:
	
	\begin{itemize}
		\item $\omega_q = 2$ and $\omega_c = 0$ or $\omega_c = 2$ and $\omega_q = 0$, $s_0 = 1$, $\mu = 1$, $y_0 = 0$, $\omega_b = 1$. 
		\item $\omega_q+\omega_c = 3$, $s_0 = 1$, $\mu = y_0 = \omega_b = 0$.
	\end{itemize}

	We will deal with those after finishing $\boxed{*}$. In that case, we also see that we can decrease $\omega_b$ by $1$ and increase $\omega_0$ by $2$ and this will increase the contribution by an order of $n$. After setting $\omega_b$ to $0$, we see that the rest of the proof would be identical to $\boxed{*}$ for the box case.
	
	Hence, we see that only the following trees of depth one need to be considered: there should be one box (or more generally, square) child and the rest of the children would consist of $k$ circle children of order $2$ ($k \ge 0$). 
	
	For those trees we can compute their contribution explicitly. However, we don't need a strict bound, since circles are not that important to the main recurrence. We can estimate these trees in the following way. First, we estimate the tree that has just one child, that is a square. $G$ was chosen in a way, so that the contribution of this tree was bounded by $1/C_7$. For other trees, we can respectively bound them by $e^{C_{13}\sigma}/(C_7\sigma!)$, where $\sigma$ is the number of circle children for some $C_{13}$, dependent on $C$ and $D$. Since there are at most $3^{\sigma}$ of such trees for every $\sigma$, we can bound the total contribution by $e^{3e^{C_{13}}}/C_7 \le 1/\sqrt{C_7}$. 
	
	Next, we go to the two classes of trees we left out, while discussing $\circledast$. For those classes, we can directly compute their contributions, since we can draw all the trees. We will call the large child vertex $A$ and the root vertex $B$. If $A$'s Diophantine denominator wasn't small (of order $1/n$), then we can use a better bound for $G_{j, k}$ for $A$, and so both classes will go to the error term. If we consider the second class, then $B$ cannot have a small denominator, since $A$ and $B$'s labels will differ by $1$. Then, since this cannot be “chained” ($B$ cannot serve in principal trees, as well as in these $2$ classes), we can deal with it by increasing $C_7$.

	\begin{figure}
		\centering
		\begin{tikzpicture}[scale=0.6]
			\node [circ,label=center:$l$, label=below:$\varphi_{j, k}$] (0) at (-4, 0) {};
			\node [semi,label=center:$2$] (1) at (-2, 2) {};
			\node [pent,label=center:$2$] (2) at (-2, 0) {};
			\node [circ,label=center:$l$, label=below:$\varphi_{j-1, k-1}$] (3) at (-2, -2) {};
			\draw (0) -- (1);
			\draw (0) -- (2);
			\draw (0) -- (3);
			\node [circ,label=center:$l$, label=below:$\varphi_{j, k}$] (10) at (-8, 0) {};
			\node [semi,label=center:$2$] (11) at (-6, 2) {};
			\node [squa,label=center:$2$] (12) at (-6, 0) {};
			\node [circ,label=center:$l$, label=below:$\varphi_{j-1, k-1}$] (13) at (-6, -2) {};
			\draw (10) -- (11);
			\draw (10) -- (12);
			\draw (10) -- (13);
			\node [circ,label=center:$l$, label=below:$\varphi_{j, k}$] (20) at (4, 0) {};
			\node [semi,label=center:$2$] (21) at (6, -2) {};
			\node [pent,label=center:$2$] (22) at (6, 2) {};
			\node [circ,label=center:$l$, label=below: $\varphi_{j-1, k-1}$] (23) at (6, 0) {};
			\draw (20) to[out=-60,in=130] (21);
			\draw (20) -- (22);
			\draw (20) -- (23);
			\node [circ,label=center:$l$, label=below:$\varphi_{j, k}$] (30) at (0, 0) {};
			\node [semi,label=center:$2$] (31) at (2, -2) {};
			\node [squa,label=center:$2$] (32) at (2, 2) {};
			\node [circ,label=center:$l$, label=below:$\varphi_{j-1, k-1}$] (33) at (2, 0) {};
			\draw (30) to[out=-60,in=130] (31);
			\draw (30) -- (32);
			\draw (30) -- (33);
			\node [circ,label=center:$l$, label=below:$\varphi_{j, k}$] (40) at (8, 0) {};
			\node [pent,label=center:$1$] (41) at (10, 2) {};
			\node [squa,label=center:$2$] (42) at (10, 0) {};
			\node [circ,label=center:\scriptsize$l+1$, label=below:$\varphi_{j, k-1}$] (43) at (10, -2) {};
			\draw (40) -- (41);
			\draw (40) -- (42);
			\draw (40) -- (43);
		\end{tikzpicture}
		
		\caption{\label{fig5}All $4$ first type trees and an example of the second type.}
	\end{figure}
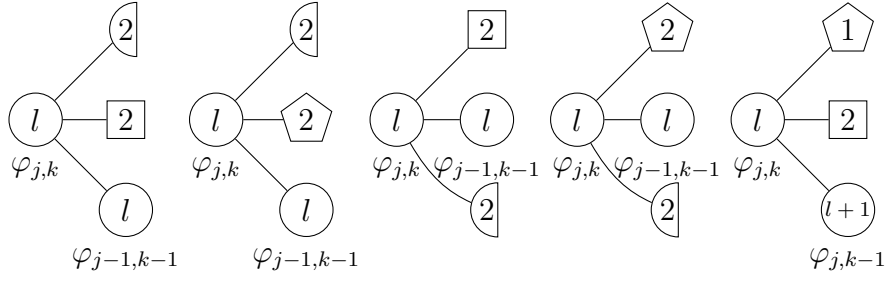

	Finally, we deal with the first class. Since $A$ and $B$ should share the same label, they both should have large denominators. In this case, there are no cancellations and $B$ can be chained, so we have to use the condition on $b_2$ from Lemma \ref{lema51} to deal with this problem. Under this assumption, it can be verified that the inductive bound holds by directly computing the tree contribution, if we increase $C_7$.
	
	\section{Dealing with initial conditions}
	\label{sec6}
	Having proven the upper bound on the $q$ and $\varphi$, we come to the lower bound. Specifically, we want to prove the following:
	
	\begin{lemma}
		For any $C$, $D$, for any large enough $n_0$, there exist positive $C_1$, $C_7$ and $C_8$, such that the following holds. Lets assume we are given $\left\{q_{2n+1}\right\}$ and $\left\{b_{2n}\right\}$ that satisfy the conditions of Lemma \ref{lema51}. Moreover, assume that
		\begin{itemize}
			\item $e^{C - 0.01} \le |q_3|$.
		\end{itemize}
		Then, if one increases $q_{2n_0+1}$ by $C_8$, the solution will satisfy:
		\begin{equation}
			\left\|\mathbf{q}_{n} - T\mathbf{q}_{n-1} + S_{2n_0+1}^{-1}\frac{C_8}{(2n-2n_0-1)!} \mathbf{e}_0 \right\| < \frac{1}{n^{4/5}} \left\| \mathbf{q}_{n} \right\|, \; \; n > n_0,
			\label{eq61}
		\end{equation}
	
	Moreover, if we define 
	
	\begin{equation}
		V(n) = \begin{cases}
			e^{W(n-2, C_1)}n!e^{n(C+F(\lambda) + 0.01)}, \; \; n < 2n_0+1\\
			2C_8, \; \; n = 2n_0+1\\
			\left\| \mathbf{q}_{n/2}\right\| S_{n}, \; \; n \ge 2n_0+2, \; \text{even}\\
			\sqrt{V(n-1)V(n+1)}, \; \; n \ge 2n_0+2, \; \text{odd}
		\end{cases},
	\end{equation} 

	$V(n)$ will be a supermultiplicative function in a sense of Definition \ref{def61}, and the following bounds will be satisfied:
	
	\begin{equation}
		|q_{n}| \le V(n), \; \; \; |\varphi_{j, k}| \le G_{j, k} V(j+k+1), \; n > 1, \; j+k>1.
		\label{eq63}  
	\end{equation}
	The bounds on $C_8$ are given by \eqref{eq624} and \eqref{eq627}.
	\label{lema61}
	
	\end{lemma}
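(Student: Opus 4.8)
The statement is the lower-bound analogue of Lemma~\ref{lema51}, so the natural strategy is to mirror the inductive machinery of Section~\ref{sec5}, but now tracking a two-sided estimate instead of a one-sided one. The plan is to run the same induction on the order of the vertex, simultaneously proving the upper bounds \eqref{eq63} and the ``approximate recurrence'' \eqref{eq61}. The upper bounds \eqref{eq63} follow almost verbatim from Lemma~\ref{lema51} once we check that the newly-inserted large coefficient $q_{2n_0+1}$, scaled by $S_{2n_0+1}^{-1}C_8/(2n-2n_0-1)!$, is itself dominated by $V(n)$; this is exactly why $V$ is defined piecewise, with the $n=2n_0+1$ slot carved out to absorb $2C_8$ and the large-$n$ slots defined through $\|\mathbf q_{n/2}\|S_n$ so that supermultiplicativity (Definition~\ref{def61}) propagates the bound through the convolution structure of the tree recurrence. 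So the first step is: \emph{verify $V$ is supermultiplicative and that the tree sum for $q_n$ and $\varphi_{j,k}$ is bounded by $V$}, re-using the large-child / small-children / principal-tree trichotomy of Section~\ref{sec5} with $e^{W(\cdot,C_1)}m!e^{m(C+F(\lambda)+\delta)}$ replaced by $V(m)$ throughout.

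**The recurrence estimate.** The heart of the proof is \eqref{eq61}: after passing to the renormalized variables $\mathbf q_{n,j}=\breve q_{2n-2j}/j!$ and the compact operator $T$ of Lemma~\ref{lema56}, I would show that the full Taylor recurrence for $q_{2n}$ equals $T\mathbf q_{n-1}$ plus (i) the error terms from $R_{\sigma,n}-\hat R_{\sigma,n}$ (bounded by Lemma~\ref{lema55} and the propositions following it), (ii) the small-children-principal-tree tail (bounded as in the proposition before \eqref{eq575}), and (iii) all non-principal trees (bounded by the large-child and small-children estimates), plus now (iv) the explicit contribution of the inserted coefficient $q_{2n_0+1}$, which traces through the principal chain to produce exactly the term $-S_{2n_0+1}^{-1}C_8/(2n-2n_0-1)!\,\mathbf e_0$ after the coordinate change \eqref{coorch}. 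Collecting (i)--(iii) gives the $n^{-4/5}\|\mathbf q_n\|$ error; the bulk of the work is bookkeeping already done in Section~\ref{sec5}, re-examined to confirm the bounds are in terms of $\|\mathbf q_n\|$ (equivalently $V(2n)/S_{2n}$) rather than the old RHS. The constant $C_8$ must be chosen large enough that the inserted term dominates the accumulated low-order ``noise'' (cancellations among $q_2,\ldots,q_{2n_0}$ and the finitely many $\varphi_{j,k}$ with $j+k\le 2n_0$), which fixes the lower bound on $C_8$ in \eqref{eq624}, while it must be small enough (relative to $n_0$) that it does not itself violate the upper bound $V$ or the analytic-input hypotheses, giving \eqref{eq627}.

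**The main obstacle.** The genuinely new difficulty, compared with Lemma~\ref{lema51}, is that an upper bound on a sum of signed tree weights does not by itself prevent catastrophic cancellation in the \emph{leading} behavior — the thing we ultimately need for the lower bound. This is precisely the role of the cone $\mathcal C_\theta$ of the introduction: once the vector $\mathbf q_n$ has been pushed into the invariant cone around the top eigenvector of $T$ (the one with eigenvalue $-4/\pi^2$, by Lemma~\ref{lema56}), the approximate-recurrence \eqref{eq61} forces it to stay there, and inside the cone the norm $\|\mathbf q_n\|$ grows by a definite factor $\approx 2/\pi$ each step, which after undoing the coordinate change \eqref{coorch} yields the $(2j)!$-type growth. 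So the plan's crux is: (a) show the error $n^{-4/5}\|\mathbf q_n\|$ in \eqref{eq61} is small enough that $T$-invariance of $\mathcal C_\theta$ survives the perturbation (a standard cone-field argument: $T\mathcal C_\theta \subset \mathcal C_{\theta'}$ with $\theta'>\theta$, and the perturbation is smaller than the gap), and (b) show that increasing $q_{2n_0+1}$ by a sufficiently large $C_8$ injects a component along $\mathbf e_0$ whose image under $T^{n-n_0}$ lands inside $\mathcal C_\theta$ — here one uses that $\mathbf e_0$ has nonzero projection onto the top eigenvector (checkable from the explicit eigenvector $\mathbf x_j = (1/\lambda^j j!)\mathbf x_0$ with $\lambda=-4/\pi^2$, since all coordinates are nonzero). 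I expect step~(b), quantifying how large $C_8$ must be so that the deliberately-injected term out-competes \emph{all} lower-order cancellations uniformly in $n$, to be the most delicate point; this is also the place where the hypothesis $q_3\neq 0$ (equivalently $\varphi_{2,0}\cos b_0+\varphi_{1,1}+\varphi_{0,2}\cos b_0\neq 0$, Appendix~\ref{ap1}) is essential, since it is what makes the coordinate change \eqref{coorch} — and hence the whole reduction to $T$ — legitimate.
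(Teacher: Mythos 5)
Your proposal mirrors the paper's proof faithfully: the same induction on order, the same small-children/large-child/principal-tree trichotomy with $V$ replacing the Lemma~\ref{lema51} bound, the same reduction of the principal recurrence to the operator $T$, the same invariant-cone $\mathcal{C}_\theta$ around the $-4/\pi^2$-eigenvector of $T$ to rule out cancellation, and the same role for $C_8$ (lower bound \eqref{eq624} to swamp low-order noise, upper bound \eqref{eq627} so that supermultiplicativity can still close). The one place you differ is in step (b), how you show the iterates actually enter $\mathcal{C}_\theta$: the paper introduces the error-free sequence $\mathbf{\hat{q}}_n$ driven by the $C_8$-deformation, computes the generating function $\mathbf{\hat{Q}}(z)=-S_{2n_0+1}^{-1}C_8\tanh(z)$, reads off the pole at $\pm i\pi/2$ to get the $\bigl(4/\pi^2\bigr)^{n-n_0}$ growth, and then invokes openness of $\mathcal{C}_\theta$ (plus the $n^{-4/5}$ error control) to transfer this to the true $\mathbf{q}_n$. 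You propose instead to argue that $\mathbf{e}_0$ has nonzero component along $L_1$ in the $L_1\oplus L_2$ splitting, so $T^{n-n_0}\mathbf{e}_0$ drifts into the cone. The conclusion is correct, but your parenthetical justification — ``all coordinates of the eigenvector $\mathbf{x}_j=\lambda^{-j}\mathbf{x}_0/j!$ are nonzero'' — does not actually establish it: since $T$ is not self-adjoint, having nonzero overlap with the \emph{right} eigenvector in the $\ell^2$ inner product is irrelevant; one needs $\mathbf{e}_0\notin L_2=\mathrm{ran}\,(T+4/\pi^2)$, which amounts to pairing against the \emph{left} eigenvector, i.e.\ a separate small computation using the same zero of $\cosh$ at $i\pi/2$ that the generating-function argument exploits directly. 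You also describe the $C_8$-perturbation as a one-shot injection along $\mathbf{e}_0$, whereas the lemma's forcing term $-S_{2n_0+1}^{-1}C_8/(2n-2n_0-1)!\,\mathbf{e}_0$ is applied at every step; this decays factorially so your picture is morally right, but the clean way to sum the contributions is exactly the $\tanh$ formula. Neither of these is a structural gap — the approach is the paper's, and the details you glossed are filled in straightforwardly.
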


	Here, supermultiplicity means the following:
	
	\begin{definition}
		A function $V(n)$ is called supermultiplicative if:
		\begin{itemize}
			\item $V(a)(a+1)(a+2)V(b)(b+1)(b+2) \le 20 C_{10}^{\min(a, b) - 3}V(a+b-3)(a+b-2)(a+b-1)V(3)$ for $a, b \ge 3$. 
			\item There exists $N_{11}$, dependent only on $C$ and $D$, such that the norm of $\mathbf{q}_{n}$, restricted to its first $N_{11}$ elements has at least half the norm of $\mathbf{q}_n$ for any $n > n_0$. 
		\end{itemize}
		\label{def61}
	\end{definition}
	
	Note that the second assertion of supermultiplicity gives that $V(n)$ is increasing in $n$: for $n \le 2n_0$ we can guarantee it by picking $C_1$ to be large enough, and for $n > 2n_0$ we can use
	
	\begin{equation}
		\left\|\mathbf{q}_{k+1}\right\| \ge 	\left\|\mathbf{q}_{k+1, [1, N_{11}]}\right\| \ge \frac{1}{N_{11}}\left\|\mathbf{q}_{k, [0, N_{11}-1]}\right\| \ge \frac{1}{2N_{11}} \left\|\mathbf{q}_{k}\right\|.
	\end{equation} 
	
	Here, we have used the second part of \eqref{eq585}: the next index of $\mathbf{q}$ shifts and scales elements of $\mathbf{q}_m$.
	
	So, the norm of $\mathbf{q}$ decreases by a constant, while the scaling (and hence $V(2k+2)$) increased by an order of $n^2$, compared to $V(2k)$. This suffices to prove an increasing by a factor of $n$ for $V(n)$, compared to $V(n-1)$, unless $n = 2n_0+1$ or $n = 2n_0+2$. For $2n_0+1$ it is trivial, based on bounds on $C_8$. For $2n_0+2$ it follows, since $q_{2n_0+1}$ has a non-trivial effect on $q_{2n_0+2}$. 
	
	We start the proof of Lemma \ref{lema61} by first applying Lemma \ref{lema51} for our values of $C$ and $D$ and $\delta = 0.01$. This gives us $C_1$ and bounds low order $q$ and $\varphi$. We are yet to choose $n_0$, but we may only use the results of Lemma \ref{lema51} for orders, lower than $2n_0+2$ ($2n_0+1$ for circles), since increasing $q_{2n_0+1}$ by $C_8$ will make it fail the requirements of the lemma. For higher orders we have to do a separate proof, although it would be similar to the on for Lemma \ref{lema51}. Also, we demand $n_0$ to be large enough, so that $W(n, C_1)$ becomes constant before it. So, $n_0$ is chosen after $C_1$. 
	
	We note that \eqref{eq63} holds trivially for squares due to Lemma \ref{lema51}, bounds on $q_{2n+1}$ and since the norm of the vector always bounds its first element. It also holds for circles with $j+k<2n_0+1$. 
	
	One important ingredient of the proof of Lemma \ref{lema51} was Lemma \ref{lema54}. After increasing $q_{2n_0+1}$ by $C_8$, its conditions are no longer met. We remark that we can still use this lemma, provided $C_8 \le (2n_0)^{4n_0}$, since the bound there was so inaccurate. Particularly, the factor $15m$ there can be turned into $12m$ since there are at most $12m$ non-leaf vertices in a tree. We also note that the total order of these bad diamond leaves of order $2n_0+1$ cannot exceed $3m$, so their total weight can be at most $(3m)^{6m}$. So, the lemma will still hold. 
	
	The plan of the proof of Lemma \ref{lema61} is once again induction in $n$. For every $n$, we first use \eqref{eq63} the same way we did the inductive bounds in Lemma \ref{lema51} and study non-principal trees. This allows us to justify the size of the error term in \eqref{eq61} and says that $T$ describes the behavior accurately. We then use $T$ to prove that $V$ continues to be supermultiplicative for our $n$, as well as to prove the bond on the circles. 
	
 	We start be considering small children trees of the box case ($q_{2n}$ for $n > n_0$). We want to prove that the total contribution of these trees to $q_{2n}$ is bounded by $n^{-5/6}V(2n)$. We apply exactly the same method we did for Lemma \ref{lema51}. We follow that proof up until Lemma \ref{lema52} (except we don't get the exponential with $2s_0$ power). In our case Lemma \ref{lema52} and Lemma \ref{lema53} take the following form:
 	
 	\begin{lemma}
 		For any $m \ge 2$, there is the following bound:
 		\begin{equation}
 			\sum_{a, b \ge 1}^{a+b = m} V(a+2)(a+3)(a+4)V(b+2)(b+3)(b+4) \le \frac{40}{1 - C_{10}} V(m+1)(m+2)(m+3) V(3).
  		\end{equation}
 	\end{lemma}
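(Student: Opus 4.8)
The approach is to run the same argument as in Lemma~\ref{lema52}, except that the role played there by the convexity of $W$ (and Karamata's inequality) is now taken over, verbatim, by the supermultiplicativity of $V$ recorded in the first bullet of Definition~\ref{def61}. The key point is that, after a trivial index shift, every summand on the left-hand side is already in the exact form to which that inequality applies, so no additional structural input is required.

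First I would substitute $a' = a+2$ and $b' = b+2$. Then $a', b' \ge 3$, we have $a' + b' = m+4$, and the left-hand side becomes
\begin{equation}
\sum_{\substack{a'+b'=m+4\\ a',b'\ge 3}} V(a')(a'+1)(a'+2)\,V(b')(b'+1)(b'+2).
\end{equation}
Applying the supermultiplicativity inequality to each term, and using that $a'+b'-3 = m+1$, gives
\begin{equation}
V(a')(a'+1)(a'+2)\,V(b')(b'+1)(b'+2) \le 20\,C_{10}^{\min(a',b')-3}\,V(m+1)(m+2)(m+3)\,V(3),
\end{equation}
so that the factor $V(m+1)(m+2)(m+3)V(3)$ is common to every term and can be pulled out of the sum.

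What remains is the elementary estimate $\sum 20\,C_{10}^{\min(a',b')-3} \le \frac{40}{1-C_{10}}$, the sum running over decompositions $a'+b'=m+4$ with $a',b'\ge 3$. This holds because $\min(a',b')$ attains each integer value $k\ge 3$ at most twice (once as $a'=k$, once as $b'=k$), whence the sum is bounded by $2\sum_{j\ge 0}20\,C_{10}^{j} = \frac{40}{1-C_{10}}$, using $0<C_{10}<1$; multiplying back the common factor yields the claimed inequality. The small cases cause no trouble: $a=1$ or $a=2$ correspond to $a'=3$ or $a'=4$, both at least $3$, so supermultiplicativity still applies, and $m=2$ reduces to the single decomposition $a'=b'=3$ (where the inequality is essentially tight but still satisfied thanks to the factor $\tfrac{40}{1-C_{10}}>1$).

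I do not expect a genuine obstacle in this lemma itself: all the difficulty has been front-loaded into establishing that $V$ is supermultiplicative, which is part of the inductive construction of $V$ in Lemma~\ref{lema61}; here we merely harvest that property. The one bookkeeping point worth flagging is that $C_{10}$ must be a fixed constant strictly less than $1$, depending only on $C$ and $D$ and chosen \emph{before} $n_0$, so that the geometric series converges — which is exactly why the order of the quantifiers in Lemma~\ref{lema61} introduces $C_{10}$ ahead of $n_0$. (The analogue of Lemma~\ref{lema53} needed later is then obtained, as before, by iterating this bound.)
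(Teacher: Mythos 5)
Your proof is correct and is precisely the argument the paper intends: the paper states this lemma as the analogue of Lemma~\ref{lema52} with the convexity/Karamata argument for $W$ replaced by the supermultiplicativity of $V$, and leaves the (short) verification implicit. Your index shift $a'=a+2$, $b'=b+2$ puts every summand in the exact form of Definition~\ref{def61}, the common factor $V(m+1)(m+2)(m+3)V(3)$ pulls out, and the geometric series over $\min(a',b')$ gives $2\sum_{j\ge0}20\,C_{10}^j=\tfrac{40}{1-C_{10}}$ as claimed (with $0<C_{10}<1$ understood, which the form of the bound already presupposes). The edge case $m=2$ ($a'=b'=3$) is handled since supermultiplicativity applies with equality in the exponent and the factor $\tfrac{40}{1-C_{10}}\ge 40>20$ absorbs it.
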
  
	
	\begin{lemma}
		Let $k > \frac{2m}{3}$ and let $2 \le s \le m$. Then,
		\begin{equation}
			\sum_{1\le a_1,\ldots, a_s \le k}^{a_1+\ldots+a_s = m}\prod_{i = 1}^{s}V(a_i+2)(a_i+3)(a_i+4) \le \left(\frac{80}{1-C_{10}}\right)^sV(k+2)V(m - k - s + 4)V(3)^{s-2}(m+4)^4
		\end{equation}
		if $k < m - s + 2$ and 
		\begin{equation}
			\sum_{1\le a_1,\ldots, a_s \le k}^{a_1+\ldots+a_s = m}\prod_{i = 1}^{s}V(a_i+2)(a_i+3)(a_i+4) \le \left(\frac{80}{1-C_{10}}\right)^s V(m-s+3)V(3)^{s-1}(m+4)^2
		\end{equation}
		otherwise.
	\end{lemma}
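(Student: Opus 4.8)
The plan is to follow the proof of Lemma~\ref{lema53} almost verbatim, substituting the supermultiplicativity of $V$ from Definition~\ref{def61} (together with the log-convexity of $n\mapsto V(n)(n+1)(n+2)$ that $V$ inherits) for the convexity conditions on $W$. Write $P(n):=V(n)(n+1)(n+2)$, so that the summand equals $\prod_i P(a_i+2)$ and the preceding pairwise lemma becomes: for $m'\ge 2$, $\sum_{a+b=m',\,a,b\ge1}P(a+2)P(b+2)\le \frac{40}{1-C_{10}}\,P(m'+1)\,V(3)$. In other words, summing over two summation variables that add up to $m'$ collapses them into a single variable of reduced value $m'-1$, at the cost of the constant $\frac{40}{1-C_{10}}V(3)$.

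\emph{The case $k\ge m-s+2$.} Here I would ignore the upper bound $a_i\le k$ entirely --- dropping it only enlarges the sum --- and iterate the pairwise lemma $s-1$ times: combine $a_1$ with $a_2$, then their sum with $a_3$, and so on. After $s-1$ steps a single variable of value $m-s+1$ remains, so the sum is at most $\left(\frac{40}{1-C_{10}}\right)^{s-1}V(3)^{s-1}P(m-s+3)$. Since $P(m-s+3)=V(m-s+3)(m-s+4)(m-s+5)$ with $(m-s+4)(m-s+5)\le (m+4)^2$ and $\left(\frac{40}{1-C_{10}}\right)^{s-1}\le\left(\frac{80}{1-C_{10}}\right)^s$, this is exactly the claimed bound.

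\emph{The case $k<m-s+2$ (so $2m/3<k<m-s+2$).} The first step is the combinatorial observation of Lemma~\ref{lema53}: since $k>2m/3$, any tuple $(a_1,\dots,a_s)$ with $a_i\ge1$, $\sum a_i=m$ admits a partition of the index set into two blocks $I_1,I_2$ whose partial sums are each $\le k$ --- seed the blocks with the two largest entries and greedily insert the rest; if some $a_i$ fits in neither block, then $2a_i+m>2k>4m/3$, hence $a_i>m/3$, forcing $a_i$ to be among the two largest, a contradiction. Consequently, summing over the at most $2^s$ index-partitions the quantity $\sum_{\sum a_i=m,\ \sum_{I_1}a_i\le k,\ \sum_{I_2}a_i\le k}\prod_iP(a_i+2)$ majorizes our sum. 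For a fixed partition, of block sizes $s_1,s_2$, the inner sum factors over the block sum $M_1:=\sum_{I_1}a_i$; applying the already-proved easy case inside each block --- legitimate because a block's reduced order $M_j-s_j+1$ never exceeds $k$ (trivially when $s_j=1$, and since $M_j-s_j+2\le M_j\le k$ when $s_j\ge2$) --- turns it into $\left(\frac{40}{1-C_{10}}\right)^{s-2}V(3)^{s-2}\sum_{M_1}P(M_1-s_1+3)\,P(m-M_1-s_2+3)$, where the two arguments always sum to $m-s+6$.

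\emph{The main obstacle} is bounding this last sum, which is the analogue of the ``same convexity argument'' in Lemma~\ref{lema53}. By log-convexity of $P$, the product $P(u)P(v)$ with $u+v=m-s+6$ and $u,v\le k+2$ (which holds for every admissible $M_1$) is maximized at the most unbalanced admissible pair, giving $P(k+2)\,P(m-k-s+4)$; moreover the super-exponential, factorial-type growth of $V$ forces the terms to decay geometrically away from this extreme, so $\sum_{M_1}P(M_1-s_1+3)P(m-M_1-s_2+3)\le C\,P(k+2)P(m-k-s+4)$ for an absolute constant $C$. Writing $P(k+2)P(m-k-s+4)=V(k+2)V(m-k-s+4)\cdot(k+3)(k+4)(m-k-s+5)(m-k-s+6)$, bounding the four polynomial factors by $(m+4)^4$, and collecting $2^sC\left(\frac{40}{1-C_{10}}\right)^{s-2}\le\left(\frac{80}{1-C_{10}}\right)^s$ yields the stated inequality. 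The delicate points are precisely the two properties of $V$ invoked here --- log-convexity and factorial-type lower growth of $V(n)(n+1)(n+2)$ --- which should be recorded alongside supermultiplicativity while proving Lemma~\ref{lema61} (for small orders from the convexity of $W(\cdot,C_1)$ and of $\log\Gamma$, for large orders from the explicit form of $S_n$ together with the lower bound on consecutive norms of $\mathbf{q}$); all constants here, as usual in Section~\ref{sec6}, are chosen after $C$ and $D$ and before $n_0$.
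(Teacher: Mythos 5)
Your overall structure is the right one and matches the route the paper implicitly takes: two cases, iterated use of the pairwise lemma, and in the hard case the splitting of the tuple into two blocks whose partial sums both fit below $k$, with the $2^s$ factor for the choice of partition. The easy case and the combinatorial part of the hard case are handled correctly, and your accounting of the constants (the $V(3)^{s-2}$, $(m+4)^4$, and $\bigl(\frac{80}{1-C_{10}}\bigr)^s$) is fine.

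The gap is in the step you yourself flag as delicate: the claim that $V$ "inherits" log-convexity of $n\mapsto V(n)(n+1)(n+2)$, which you use to say that $P(u)P(v)$ with $u+v$ fixed is maximized at the most unbalanced admissible pair. This is false for the $V$ appearing in Lemma~\ref{lema61}. By construction, $V(2n_0+1)=2C_8$, and the required lower bound \eqref{eq624} on $C_8$ forces $V$ to have an exponentially large log-\emph{concave} kink there: comparing $V(2n_0+1)^2$ with $V(2n_0)V(2n_0+2)$ using $V(2n_0)=e^{W(2n_0-2)}(2n_0)!e^{2n_0(C+F(\lambda)+0.01)}$ and the lower bound of \eqref{eq623} gives
\begin{equation}
\frac{V(2n_0+1)^2}{V(2n_0)V(2n_0+2)}\;\gtrsim\; e^{0.12\,n_0},
\end{equation}
so the midpoint inequality fails badly. (The paper acknowledges exactly this irregularity just after stating the $V$-lemmas, noting that $V(2n_0+1)/V(2n_0)$ can be "quite large," and copes with it elsewhere via the a priori upper bound $V(n)\le e^{W(n-2,C_1)}n^{90n}e^{70Dn}$ rather than by any convexity of $V$.) Consequently, the argument "the product is maximized at the unbalanced endpoint, and the interior terms decay geometrically" cannot be justified the way you propose: when the "bump" at $2n_0+1$ lands on an interior term of your sum over $M_1$ and both $u,v$ exceed $2n_0+1$ while $\ell+2<2n_0+1$, the interior term can exceed $P(k+2)P(m-k-s+4)$ by an exponentially large factor. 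Note also that supermultiplicativity from Definition~\ref{def61} by itself only bounds $P(u)P(v)$ \emph{above} in terms of $P(m-s+3)V(3)$; since it provides no matching lower bound, it cannot be upgraded to the much tighter $P(k+2)P(m-k-s+4)$ that the hard case demands. So the hard case, as argued, has a genuine hole, and "record log-convexity of $V$" is not a fix because that property is simply not true here; some other mechanism (e.g., exploiting that $k>2m/3$ keeps the relevant orders away from the $2n_0+1$ anomaly in the actual application, or reworking the statement to accommodate the kink via the a priori bound the paper uses elsewhere) is needed.
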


	This allows us to mostly follow the proof of Lemma \ref{lema51} in this case with some minor adjustments. First, instead of factorials and $W$ in the first factor of \eqref{eq528}, as well as in \eqref{eq531}, we get $V(x)(x+1)(x+2)$. Hence, we don't have to use $99999/100000$ powers, so we use $1$ powers. Next, in our case $C_4$ will also depend on $C_{10}$, and hence on $C_1$, but we only use these computations for $n \ge 2n_0$ in our case, so we can just make $n_0$ large enough. We also move $V(3)$ from the previous lemma to $C_4$.
	
	The next notable remark comes after we start decreasing $\omega_0$ or $\omega_{\varphi}$ after \eqref{eq533}: there it did increase the contribution, since we had a regular $(\omega_0-1)!$ or $(\omega_\varphi - H(2n) - s_0+7)!$. However, in the case of $V$, this may not be the case (for example $V(2n_0+1)/V(2n_0)$ can be quite large). However, we can still set them to their lowest values, since we have that
	
	\begin{equation}
		V(n) \le e^{W(n-2, C_1)}n^{90n}e^{70Dn},
		\label{eq68}
	\end{equation}

	so decreasing it will not overpower the factor, coming from $\omega_b$. Similarly, we need to change $n/10$ and $n/5$ after \eqref{eq534} into $n/1000$ and $n/500$, respectively, so that we can use \eqref{eq68} to have the bound for low $\omega_0$ and $\omega_\varphi$. 
	
	The next difference comes when we start to increase $s_0$, when $\omega_0$ is close to $H(2n)$, but we can once again use \eqref{eq68} and directly set $s_0$ to $\omega_\varphi$. 
	
	The final difference comes, when we are considering the case $\omega_b = 0$. In the context of Lemma \ref{lema61}, the total contribution has the following formula (we remove fixed powers of $n$, since the bound will be stronger then them):
	
	\begin{equation}
		\frac{V(\omega_0)}{V(2n)} C_4^{s_0}\frac{(\omega_0-1)!}{s_0!(\omega_0-s_0-1)!}V(\omega_\varphi-s_0+3)
	\end{equation}
	
	with $\omega_0+\omega_\varphi = 2n$. Then, we can increase $\omega_0$ by $1$ and decrease $s_0$ and $\omega_\varphi$ by $1$. This will increase the contribution, unless $s_0 \le 2C_4C_{12}$, where $C_{12}$ is independent of $n_0$. We can do this until either $s_0$ becomes too small or $\omega_{0}$ becomes $H(2n)$. We have already considered the latter, so we focus on the former. In that case,
	
	\begin{equation}
		C_4^{s_0}\frac{(\omega_0-1)!}{s_0!(\omega_0-s_0-1)!}
	\end{equation} 

	is bounded by $n$ to the power of constant, independent of $n_0$, and we bound the rest with $20V(3)C_{10}^{2n-H(2n)}$ using supermultiplicity of $V$, thus finishing the low child case.
	
	Next, we come to the case of large child trees. We also want to bound their contribution by $n^{-5/6}V(2n)$. For this we use the same technique, as we did for the lower bound: we bound small children with Lemma \ref{lema54} and we bound the large child, using the inductive bound. The only difference would be that in the inductive bound earlier we had $n!$, whereas now we have $V(n)$. However, the only property of factorial we have used in that proof was that it was decreased by an order of $n$, when $n$ is decreased by $1$. Since the same property is true for $V$, the same proof works as well.
	
	The only remark we have to make is that we didn't consider trees with large diamond child in that case, since the contribution of diamonds was just analytic. In this case however, we have to consider trees with a diamond $q_{2n_0+1}$. We once again claim that trees should be “principal” (only have circles of order $2$ as children, apart from the diamond), otherwise their contribution will be bounded by $n^{-5/6}V(2n)$. 
	
	\subsection{Lower bound for principal trees}
	
	So, at least when the root of the tree is a square, we have reduced the study to just principal trees and these “principal” trees with a diamond child. Our next step would is to reduce the coefficients $R_{\sigma, n}$ to $\hat{R}_{\sigma, n}$. Using the same bounds, like in Lemma \ref{lema55}, we bound the difference contribution by $n^{-5/6}V(2n)$. We note that the same Lemma works for the diamond trees, their version of $\hat{R}$ is
	
	\begin{equation}
		 - \frac{S_{2n}S_{2n_0+1}^{-1}}{(2n-2n_0-1)!}.
	\end{equation}

	Similarly, we also consider $\hat{R}$ for the small children principal trees. Thus, we have proven 
	
	\begin{proposition}
		For the current value of $n$, the following holds:
		\begin{equation}
			|\mathbf{q}_{n, 0} - (T\mathbf{q}_{n-1})_0 + S_{2n_0+1}^{-1}\frac{C_8}{(2n-2n_0-1)!}| < n^{-4/5}.
		\end{equation}
	\end{proposition}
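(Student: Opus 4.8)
The plan is to assemble the tree estimates of the preceding subsections; the proposition is the bookkeeping step that packages them, so essentially all that remains is to collect the bounds and unwind the normalizations. First I would record the identity that rewrites the claim as a bound on the error of the reduced recurrence. Since $\mathbf{q}_{n,0}=\breve{q}_{2n}$ and, expanding the definition of $T$ with $\mathbf{q}_{n-1,j}=\breve{q}_{2n-2-2j}/j!$ and setting $\sigma=j+1$,
\[
  (T\mathbf{q}_{n-1})_0=-\sum_{j\ge 0}\frac{j!}{(2j+2)!}\,\mathbf{q}_{n-1,j}=-\sum_{\sigma\ge 1}\frac{\breve{q}_{2n-2\sigma}}{(2\sigma)!},
\]
the quantity inside the absolute value in the statement equals $\breve{q}_{2n}+\sum_{\sigma\ge 1}\breve{q}_{2n-2\sigma}/(2\sigma)!+S_{2n_0+1}^{-1}C_8/(2n-2n_0-1)!$, i.e.\ the amount by which the renormalized solution fails to obey the $T$-recurrence.

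Next I would invoke the tree analysis just carried out. After discarding the non-principal trees (small-children box trees, non-principal large-child box trees, and non-principal ``diamond'' trees, each bounded by a constant times $n^{-5/6}V(2n)$), after replacing $R_{\sigma,n}$ by $\hat{R}_{\sigma,n}$ and $R_n^{\mathrm{diam}}$ by $\hat{R}_n^{\mathrm{diam}}=-S_{2n}S_{2n_0+1}^{-1}/(2n-2n_0-1)!$ (Lemma \ref{lema55} and its diamond analogue, again costing $O(n^{-5/6}V(2n))$), and after adding and subtracting the small-$\sigma$ principal tail $\sum\hat{R}_{\sigma,n}q_{2n-2\sigma}$, the actual perturbed solution obeys
\[
  q_{2n}=\sum_{\sigma=1}^{n}\hat{R}_{\sigma,n}\,q_{2n-2\sigma}+\hat{R}_n^{\mathrm{diam}}\,q_{2n_0+1}+E_n,
\]
with $|E_n|\le c\,n^{-5/6}V(2n)$ for a constant $c=c(C,D)$, since only finitely many classes are summed. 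Dividing through by $S_{2n}$ and using the identities $\hat{R}_{\sigma,n}/S_{2n}=-1/\big((2\sigma)!\,S_{2n-2\sigma}\big)$ and $q_{2m}/S_{2m}=\breve{q}_{2m}$ turns this into $\breve{q}_{2n}=-\sum_{\sigma}\breve{q}_{2n-2\sigma}/(2\sigma)!-S_{2n_0+1}^{-1}q_{2n_0+1}/(2n-2n_0-1)!+E_n/S_{2n}$.

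The one point requiring care is the diamond term, where one must split $q_{2n_0+1}=q_{2n_0+1}^{\mathrm{old}}+C_8$. By the hypothesis $|q_{2n_0+1}^{\mathrm{old}}|<e^{Dn_0}$ the contribution $\hat{R}_n^{\mathrm{diam}}q_{2n_0+1}^{\mathrm{old}}$ is of strictly smaller order than $V(2n)$, so it is absorbed into $E_n$ exactly as diamond leaves were absorbed in the proof of Lemma \ref{lema51}, leaving precisely the explicit term $-S_{2n_0+1}^{-1}C_8/(2n-2n_0-1)!$. Combining this with the first paragraph gives
\[
  \breve{q}_{2n}+\sum_{\sigma}\frac{\breve{q}_{2n-2\sigma}}{(2\sigma)!}+\frac{S_{2n_0+1}^{-1}C_8}{(2n-2n_0-1)!}=\frac{E_n}{S_{2n}},
\]
and since $V(2n)/S_{2n}=\|\mathbf{q}_n\|$ by the definition of $V$, the left side is at most $c\,n^{-5/6}\|\mathbf{q}_n\|$; this is the right-hand side of \eqref{eq61}, and for $n>n_0$ with $n_0$ chosen large enough that $c\,n^{-5/6}<n^{-4/5}$ it is in particular $<n^{-4/5}$ (using that $\|\mathbf{q}_n\|\le 1$ for large $n$, which follows from the upper bounds already proved). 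I do not expect a genuine obstacle at this stage — the analytic work lies entirely in the tree estimates already done — but the point to verify carefully is that the Lemma \ref{lema55}-type reduction $R_n^{\mathrm{diam}}\to\hat{R}_n^{\mathrm{diam}}$ really goes through with the terminal diamond leaf $q_{2n_0+1}$ playing the role of the box child in the recurrence, so that $2\sigma$ is uniformly replaced by $2n-2n_0-1$ in the binomial and asymptotic estimates, and that the ``principal'' diamond trees are indeed the only surviving diamond trees.
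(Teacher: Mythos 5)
Your proof is correct and essentially reconstructs the paper's argument, which is not given in a separate \verb|proof| environment but is the material preceding the Proposition in Section \ref{sec6} and its Subsection ``Lower bound for principal trees.'' You correctly assemble: the discard of non-principal small-children and large-child box trees, the discard of non-principal diamond trees, the reduction $R_{\sigma,n}\to\hat R_{\sigma,n}$ via the analogue of Lemma \ref{lema55} together with the diamond version $\hat R^{\mathrm{diam}}_n=-S_{2n}S_{2n_0+1}^{-1}/(2n-2n_0-1)!$, and the add-and-subtract of the small-$\sigma$ principal tail. The identity $\hat R_{\sigma,n}/S_{2n}=-1/\bigl((2\sigma)!\,S_{2n-2\sigma}\bigr)$ is correct and is exactly what converts the $q$-recurrence into the $T$-form, and your expansion $(T\mathbf q_{n-1})_0=-\sum_{\sigma\ge1}\breve q_{2n-2\sigma}/(2\sigma)!$ is right.

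Two points worth flagging, both in your favor. First, the split $q_{2n_0+1}=q_{2n_0+1}^{\mathrm{old}}+C_8$ and the absorption of the $q_{2n_0+1}^{\mathrm{old}}$ term into the error is a genuine step that the paper leaves implicit; since the diamond contribution after dividing by $S_{2n}$ is $-S_{2n_0+1}^{-1}q_{2n_0+1}/(2n-2n_0-1)!$ while the displayed term in the Proposition carries only $C_8$, one does need the hypothesis $|q_{2n_0+1}^{\mathrm{old}}|<e^{Dn_0}$ (together with $C_8\gg e^{Dn_0}$ from \eqref{eq624}) to absorb the difference, exactly as you argue. Second, your observation that the tree bounds naturally produce $c\,n^{-5/6}\|\mathbf q_n\|$ is the right one: the Proposition as printed omits the factor $\|\mathbf q_n\|$ on the right, but the bound is used immediately afterward to deduce \eqref{eq61}, which does carry $\|\mathbf q_n\|$, so the intended statement must be $<n^{-4/5}\|\mathbf q_n\|$. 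You prove this stronger version. Your final remark that the literal statement $<n^{-4/5}$ also holds because $\|\mathbf q_n\|\le1$ is fine as a side-comment but is not what is actually needed downstream; the substantive bound is the one with $\|\mathbf q_n\|$.
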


	Hence, the first assertion of Lemma \ref{lema61} for the current value of $n$ is proven (by definition of $T$ the error can only happen along $\mathbf{e}_0$). Using it, we will be able to prove that supermultiplicity of $V$ persists, if we extend it until $V(2n)$. 

	To do this, we will first require more information about an operator $T$. We want to prove that as we will continue iterating $T$ for a long time with some errors the solution will get and remain close to the eigenspace of the largest eigenvalue $\lambda = -4/\pi^2$. Specifically, we will introduce an invariant stable cone around this eigenspace, and claim that the solution will get inside of it at some point. Then, it will remain there and its norm will be scaled by a regular amount for every iteration. We construct this cone.
	
	First, denote $L_1$ to be a $-4/\pi^2$ eigenspace of $T$, $\dim L_1 = 1$ by Lemma \ref{lema56}. We also denote $L_2 = \text{ran} (T + 4/\pi^2)$. Since $T$ is compact, $L_2$ is a closed linear subspace, invariant under $T$. $T$, restricted to $L_2$, is a compact operator and its spectrum is bounded by $4/(9\pi^2)$. Since the codimension of $L_2$ is $1$ (the dimension of the kernel is $1$), we have that $\ell^2$ is a direct sum of $L_1$ and $L_2$. 
	
	Since the spectrum of $T$ on $L_2$ is bounded by $4/(9\pi^2)$, there exists $N_2$, such that:
	
	\begin{equation}
		\left\| T_{L_2}^{N_2}\right\| < \pi^{-2N_2}. 
		\label{eq613}
	\end{equation}

	We can then introduce the cone as:
	
	\begin{equation}
		\mathcal{\tilde{C}}_\theta=\left\{\mathbf{x} \in \ell^2 | \mathbf{x} = \mathbf{x}_1 + \mathbf{x}_2, \mathbf{x}_1 \in L_1, \mathbf{x}_2 \in L_2, ||\mathbf{x}_1|| > \theta ||\mathbf{x}_2|| \right\}.
	\end{equation}

	We have the invariance due to \eqref{eq613}: $T^{N_2}\mathcal{\tilde{C}}_\theta \subset \mathcal{\tilde{C}}_\theta$. Moreover, since we consider $T$ with an error, we have $T^{N_2}\mathcal{\tilde{C}}_\theta \subset \mathcal{\tilde{C}}_{2\theta}$. Since we want the cone to be invariant under $T$, we set:
	
	\begin{equation}
		\mathcal C_\theta = \left\{\mathbf{x} \in \ell^2 | \forall j = 0, \ldots, N_2 - 1: T^j\mathbf{x} \in \mathcal{\tilde{C}}_{2^{j/N_2}\theta} \right\}.
	\end{equation}

	\begin{lemma}
		For a fixed $\theta$, if $n$ is large enough, $\mathbf{x} \in \mathcal C_\theta$ and 
		\begin{equation}
			||\mathbf{y} - T\mathbf{x}|| < \frac{1}{n^{4/5}} ||\mathbf{y}||,
		\end{equation}
		
		then $\mathbf{y} \in \mathcal C_\theta$.
		\label{lema64}
	\end{lemma}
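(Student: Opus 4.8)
The plan is to reduce the statement to a single application of the cone property of $T$ together with a control of how the error $\mathbf{y}-T\mathbf{x}$ can tilt a vector out of the cone. Fix $\theta$ and write $\mathbf{x}=\mathbf{x}_1+\mathbf{x}_2$ with $\mathbf{x}_1\in L_1$, $\mathbf{x}_2\in L_2$, and similarly $T^j\mathbf{x}=(T^j\mathbf{x})_1+(T^j\mathbf{x})_2$ for $0\le j\le N_2-1$. By hypothesis $\mathbf{x}\in\mathcal C_\theta$, so $\|(T^j\mathbf{x})_1\|>2^{j/N_2}\theta\,\|(T^j\mathbf{x})_2\|$ for every such $j$. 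I must verify that $T^j\mathbf{y}\in\tilde{\mathcal C}_{2^{j/N_2}\theta}$ for all $j=0,\dots,N_2-1$. The key quantitative input is that $\mathbf{x}$, being in $\mathcal C_\theta$ with a nonzero $L_1$-component, has norm comparable to $\|\mathbf{y}\|$: indeed $\|\mathbf{y}-T\mathbf{x}\|<n^{-4/5}\|\mathbf{y}\|$ forces $\|T\mathbf{x}\|\ge(1-n^{-4/5})\|\mathbf{y}\|$, and since $T\mathbf{x}\in\mathcal C_\theta$-type cone its norm is bounded below by a constant multiple of $\|\mathbf{x}_1\|$ (the $L_1$-part is scaled by exactly $4/\pi^2$), so $\|\mathbf{x}\|\le K(\theta)\|\mathbf{y}\|$ for a constant $K(\theta)$ independent of $n$.

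First I would treat the ``outer'' indices. For $j=1,\dots,N_2-1$ write
\begin{equation}
T^j\mathbf{y}=T^j\mathbf{x}+T^{j-1}(\mathbf{y}-T\mathbf{x}),
\end{equation}
and estimate $\|T^{j-1}(\mathbf{y}-T\mathbf{x})\|\le\|T\|^{j-1}\|\mathbf{y}-T\mathbf{x}\|\le\|T\|^{N_2}n^{-4/5}\|\mathbf{y}\|$. Projecting onto $L_1$ and $L_2$ (the projections are bounded since $\ell^2=L_1\oplus L_2$ is a topological direct sum), the error changes $(T^j\mathbf{x})_1$ and $(T^j\mathbf{x})_2$ each by at most $C_{\oplus}\|T\|^{N_2}n^{-4/5}\|\mathbf{y}\|$. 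Using $\|T^j\mathbf{x}\|\ge c_j(\theta)\|\mathbf{x}_1\|\ge c_j(\theta)\tfrac{\theta}{1+\theta}\|\mathbf{x}\|$ from the cone condition together with $\|\mathbf{x}\|\ge K(\theta)^{-1}\|\mathbf{y}\|$, I get that the relative size of the perturbation is $O(n^{-4/5})$ with an implied constant depending only on $\theta$ and $T$. Since $\mathbf{x}\in\mathcal C_\theta$ gives the \emph{strict} inequality $\|(T^j\mathbf{x})_1\|>2^{j/N_2}\theta\|(T^j\mathbf{x})_2\|$, and for $j\le N_2-1$ we in fact have the stronger $\|(T^j\mathbf{x})_1\|>2^{j/N_2}\theta\|(T^j\mathbf{x})_2\|$ with room to spare when $n$ is large (because $2^{j/N_2}\le 2^{(N_2-1)/N_2}<2$), a perturbation of relative size $O(n^{-4/5})$ cannot destroy it once $n$ exceeds a threshold depending only on $\theta$. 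This handles $j=0,\dots,N_2-1$.

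The remaining point — and the one I expect to be the main obstacle — is to re-close the induction, i.e. to guarantee $T^{N_2}\mathbf{y}\in\mathcal C_\theta$ so that membership is genuinely preserved and not merely ``$\mathbf{y}$ satisfies the first $N_2$ conditions''. Here I would use the definition: $\mathbf{y}\in\mathcal C_\theta$ already requires only $T^j\mathbf{y}\in\tilde{\mathcal C}_{2^{j/N_2}\theta}$ for $j=0,\dots,N_2-1$, so in fact the $j<N_2$ analysis above is the entire statement. The subtlety is only that the threshold on $n$ must be uniform; this is why the lemma is stated with ``if $n$ is large enough''. One must check that the constants $K(\theta)$, $c_j(\theta)$, $C_\oplus$, $\|T\|^{N_2}$ and $N_2$ itself depend on $\theta$ and the fixed operator $T$ but not on $n$, which is immediate since $T$ is a fixed compact operator and $N_2$ was chosen once and for all in \eqref{eq613}. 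The only place care is needed is the lower bound $\|T\mathbf{x}\|\ge\text{const}\cdot\|\mathbf{x}\|$ for $\mathbf{x}\in\mathcal C_\theta$: this follows because on $\mathcal C_\theta\subset\tilde{\mathcal C}_\theta$ the $L_1$-component dominates, $T$ acts on it by the scalar $-4/\pi^2\ne 0$, and $T$ restricted to $L_2$ cannot fully cancel it by a norm comparison — so $\|T\mathbf{x}\|\ge\tfrac{4}{\pi^2}\|\mathbf{x}_1\|-\|T_{L_2}\|\|\mathbf{x}_2\|\ge(\tfrac{4}{\pi^2}-\tfrac{\|T_{L_2}\|}{\theta})\|\mathbf{x}_1\|$, which is positive for $\theta$ large enough, and this is exactly the regime in which the cone is used. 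With these constants pinned down, choosing $n$ past the resulting threshold completes the proof.
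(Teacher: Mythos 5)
Your argument follows the paper's: decompose $T^j\mathbf{y}$ into the image of $\mathbf{x}$ under a higher power of $T$ (controlled by the cone) plus an error of size $O(n^{-4/5})\|\mathbf{y}\|$, project the error onto $L_1$ and $L_2$, and use the geometric widening $2^{j/N_2}\theta\to 2^{(j+1)/N_2}\theta$ as a buffer that absorbs the perturbation once $n$ is large. One slip worth fixing: the displayed identity $T^j\mathbf{y}=T^j\mathbf{x}+T^{j-1}(\mathbf{y}-T\mathbf{x})$ is in fact the expansion of $T^{j-1}\mathbf{y}$; the correct decomposition is $T^j\mathbf{y}=T^{j+1}\mathbf{x}+T^j(\mathbf{y}-T\mathbf{x})$, and at $j=N_2-1$ one must invoke the already-established invariance $T^{N_2}\tilde{\mathcal C}_\theta\subset\tilde{\mathcal C}_{2\theta}$ so that $T^{N_2}\mathbf{x}$ lands in a controlled cone. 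Also, the lower bound $\|T\mathbf{x}\|\ge c(\theta)\|\mathbf{x}\|$ you derive (which forces $\theta$ large) is not actually needed: only the trivial comparison $\|\mathbf{x}\|\ge(1-n^{-4/5})\|\mathbf{y}\|/\|T\|$ coming from $\|T\mathbf{x}\|\le\|T\|\,\|\mathbf{x}\|$ enters the estimate, so no restriction on $\theta$ is required.
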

	
	\begin{proof}
		Denote $\mathbf{z} = \mathbf{y} - T\mathbf{x}$. We need to show $T^{j}\mathbf{y} \in \mathcal{\tilde{C}}_{2^{j/N_2}\theta}$ for every $j$. We have $T^{j}\mathbf{y} = T^{j+1} \mathbf{x} + T^{j}\mathbf{z}$. We decompose $\mathbf{z} = \mathbf{z}_1 + \mathbf{z}_2$. Since projectors are bounded operators, the norms of both of those vectors (and hence of their images under $T^j$) are bounded by
		
		\begin{equation}
			\frac{O_{N_2}(1)}{n^{4/5}}||\mathbf y||.
		\end{equation}
	
		However, we also know that $T^{j+1}\mathbf{x} \in \mathcal{\tilde{C}}_{2^{(j+1)/N_2}\theta}$. Hence, by applying triangle inequality, we are only left to prove:
		
		\begin{equation}
			||T^{j+1}\mathbf{x}_1|| - \frac{O_{N_2}(1)}{n^{4/5}}||\mathbf y|| > 2^{j/N_2}\theta \left(2^{-(j+1)/N_2}\theta^{-1}||T^{j+1}\mathbf{x}_1|| +   \frac{O_{N_2}(1)}{n^{4/5}}||\mathbf y||\right).
		\end{equation}
	
		This reduces to 
		
		\begin{equation}
			||T^{j+1}\mathbf{x}_1|| > \frac{O_{N_2, \theta}(1)}{n^{4/5}} ||\mathbf{y}||,
		\end{equation}
	
		which is trivial. 
	
	\end{proof}
	
	Next, we need to prove that the solution to our problem gets into the cone and remains there. First, we will prove it for the system without errors. Hence, we consider the sequence:
	
	\begin{equation}
	\mathbf{\hat{q}}_{n_0} = 0, \; \; \mathbf{\hat{q}}_{n} = T \mathbf{\hat{q}}_{n-1} - S_{2n_0+1}^{-1} \frac{C_8}{(2n-2n_0-1)!}, \; n > n_0. 
	\end{equation}

	The exact values of $C_8$ and $S_{2n_0+1}$ don't matter (as long as they are non-zero), since they just scale the solution. If we write down the generating function $\mathbf{\hat{Q}}(z) = \sum_{n > n_0} \mathbf{\hat{q}}_{n, 0} z^{2n-2n_0 - 1}$, then if we had only deformed with $-S_{2n_0+1}^{-1}C_8$ at $n = n_0+1$, we would have $\mathbf{\hat{Q}}(z) = -S_{2n_0+1}^{-1}C_8z/\cosh(z)$, as we have seen in \eqref{eq418}. But in our case, we deform on every step, and since the formula is linear, we can express the solution in the following sum:
	
	\begin{equation}
		\mathbf{\hat{Q}}(z) = -S_{2n_0+1}^{-1}C_8\sum_{j = 1}^{\infty} \frac{z^{2j+1}}{(2j+1)!}/\cosh(z) \Rightarrow \mathbf{\hat{Q}}(z) = -S_{2n_0+1}^{-1}C_8 \tanh(z).
	\end{equation}
	
	Since $\tanh(z)$ has poles at $\pm i\pi /2$, we see that $\mathbf{\hat{q}}_{n, 0}$ has a subsequence, growing faster than $(4/\pi^2 - 0.1)^{n-n_0}$. If we treat $\mathbf{\hat{q}}_n$ as being close to $T \mathbf{\hat{q}}_{n-1}$ when $n-n_0$ is large, we see that the $L_1$ component of some $\mathbf{\hat{q}}_n$ will become arbitrary larger than their $L_2$ component, hence this $\mathbf{\hat{q}}_n \in \mathcal{C}_{\theta}$. Analogously to Lemma \ref{lema64}, we deduce that all the following elements will also lie in $\mathcal{C}_{\theta}$, if $n-n_0$ is large enough. Denote this difference as $N_7$. Note that this quantity doesn't depend on the original choice of $n_0$.
	
	Since $\mathcal{C}_{\theta}$ is an open set, we see that we can choose $n_0$ to be large enough, so that even if we deform by $n^{-4/5}$ on every step, $\mathbf{q}_n$ will still get into (and hence remain) in the cone $\mathcal{C}_\theta$. 
	
	Using this information, we an now verify that $V$ is supermultiplicative. First, we can preemptively choose $\theta$, independently of other parameters, such that if $\mathbf{x}\in \mathcal{C}_\theta$, then:
	
	\begin{equation}
		\frac{3.99}{\pi^2} < \frac{|(T\mathbf{x})_0|}{|\mathbf{x}_0|} < \frac{4.01}{\pi^2}.
		\label{eq622}
	\end{equation} 

	So, once $\mathbf{q}_n$ gets into the cone, we can write the same inequality for the neighboring elements. The second assertion of supermultiplicity follows immediately from it: once we get into the cone, this property is true for the eigenvector, and hence it should be true for vectors, close to it. Before we get into the cone, there are only a constant number of iterations, dependent only on $C$ and $D$. Hence, we can just choose a constant $N_{11}$ to cover those cases. 
	
	Now we need to verify the first assertion of supermultiplicity. Note that when $a+b-3 < 2n_0+1$ it is verified by Lemma \ref{lema52}, hence we should only consider the case $a+b-3 > 2n_0$. Then, from \eqref{eq622} for $\mathbf{q}$, we can get the following bound for $n > 2n_0$:
	
	\begin{equation}
	\frac{C_8}{C_{12}} \times \frac{n!}{(2n_0+1)!} e^{(n - 2n_0 - 1) (C+F(\lambda) -0.03)} <	V(n) < C_8C_{12} \times \frac{n!}{(2n_0+1)!} e^{(n - 2n_0 - 1) (C+F(\lambda) +0.03)} ,
	\label{eq623}
	\end{equation}

	where $C_{12}$ is a constant, dependent only on $C$ and $D$, included to make the statement true before the solution gets to a cone. Hence we see that provided
	
	\begin{equation}
		C_8 > (2n_0+1)! e^{W(2n_0-1, C_1)} e^{(2n_0+1)(C+F(\lambda) - 0.03)} e^{0.2n_0},
		\label{eq624}
	\end{equation}

	we will have that
	
	\begin{equation}
		V(n) > e^{W(n-2, C_1)}n!e^{n(C+F(\lambda) +0.01)}
	\end{equation}

	for $n > 2n_0$. So, if both $a$ and $b$ are less than $2n_0+1$, the condition still holds. Now assume that $a < 2n_0+1 \le b$. When $b$ already corresponds to a vector in a cone, we can increase it by $1$ and decrease $a$ by $1$, and it will increase the product by a fixed factor. If $b$ is to small to correspond to a cone vector, then if $a < \sqrt{n_0}$ we can still use the same argument, and when $a\ge \sqrt{n_0}$ then doing this change $\sqrt{n_0}$ times will compensate for a factor of type $C_{12}$ we get before we get into the cone. 
	
	Finally, we have to consider the case $2n_0+1\le a\le b$. Since $a+b-3$ corresponds to a vector in a cone, we have:
	
	\begin{equation}
		V(a+b-3)/V(b) \ge \frac{1}{C_{12}}\frac{(a+b-3)!}{b!} e^{(a-3) (C + F(\lambda) - 0.03)} \ge \frac{1.9^a}{C_{12}} a! e^{(a-3)(C+F(\lambda) - 0.03)}. 
	\end{equation} 
	
	To bound $V(a)$ from above, we can use \eqref{eq623}, and we get that the needed inequality, provided
	
	\begin{equation}
		C_8 < (2n_0+1)! 1.7^{2n_0} \frac{1}{C_{12}^2} e^{(2n_0+1)(C+F(\lambda))},
		\label{eq627}
 	\end{equation}
 
 	that is compatible with \eqref{eq624}, thus proving supermultiplicity. We are only left to check that $\varphi_{j, k}$ satisfies the bounds as well. The proof of this in similar to the circle case for Lemma \ref{lema51}, so we will not put it here.
		
	\appendix
	
	\section{Determining $\varphi$ up to order $3$ \label{ap1}}
	
	In this section, we will look at error of \eqref{maineq} in $z^2w^0$, $z^1w^1$ and $z^0w^2$ and find $\varphi_{2, 0}$, $\varphi_{1, 1}$ and $\varphi_{0, 2}$ respectively. During this section, we can approximate $b$ with $b_0$, since higher orders of $b$ only appear in the third order of the conjugate map. First, we will consider error terms, coming from linear approximation of $\varphi$:
	
	\begin{align}
		q'\left(\frac{(\lambda^{-1}+1) z + (\lambda + 1)w}{2}\right)\cos\left(\frac{-(\lambda^{-1} - 1)z - (\lambda - 1)w}{2} - \alpha\right) + \\ + q\left(\frac{(\lambda^{-1} + 1)z + (\lambda+1)w}{2}\right)\cos'\left(\frac{-(\lambda^{-1} - 1)z - (\lambda - 1)w}{2} - \alpha\right) + \\ + q'\left(\frac{(\lambda+1)z + (\lambda^{-1}+1)w}{2}\right)\cos\left(\frac{(\lambda - 1)z + (\lambda^{-1} - 1)w}{2} - \alpha\right) - \\ -q\left(\frac{(\lambda+1)z + (\lambda^{-1} + 1)w}{2}\right)\cos'\left(\frac{(\lambda - 1)z + (\lambda^{-1} - 1)w}{2} - \alpha\right).
	\end{align} 
	
	To efficiently compute the coefficient, we will separately study terms coming with $q_0$, $q_2$ and $q_3$. Starting with $q_0$, we have
	
	\begin{align}
		\cos'\left(\frac{-(\lambda^{-1} - 1)z - (\lambda - 1)w}{2} - \alpha\right)  - \cos'\left(\frac{(\lambda - 1)z + (\lambda^{-1} - 1)w}{2} - \alpha\right).
	\end{align} 

	We must take the quadratic term of $\cos'$ at $-\alpha$ to achieve the second order term:
	
	\begin{align}
		-\frac{\sin \alpha}{8}\left((\lambda^{-1} - 1)z + (\lambda - 1)w\right)^2  + \frac{\sin \alpha}{8}\left((\lambda - 1)z + (\lambda^{-1} - 1)w\right)^2.
	\end{align} 

	So, we get $0$ contribution to $z^1w^1$ and the contribution to $z^2w^0$ is ($z^0w^2$ has minus this)
	
	\begin{align}
		\frac{\sin \alpha}{8}\left(\lambda - 2 + \lambda^{-1}\right)\left(\lambda - \lambda^{-1}\right).
	\end{align}
	 
	 Next, we study terms with $q_2$:
	 
	 \begin{align}
	 	2q_2\left(\frac{(\lambda^{-1}+1) z + (\lambda + 1)w}{2}\right)\left(\frac{-(\lambda^{-1} - 1)z - (\lambda - 1)w}{2}\right)\sin \alpha - \\ - q_2\left(\frac{(\lambda^{-1} + 1)z + (\lambda+1)w}{2}\right)^2\sin \alpha + \\ + 2q_2\left(\frac{(\lambda+1)z + (\lambda^{-1}+1)w}{2}\right)\left(\frac{(\lambda - 1)z + (\lambda^{-1} - 1)w}{2}\right)\sin \alpha + \\ +q_2\left(\frac{(\lambda+1)z + (\lambda^{-1} + 1)w}{2}\right)^2\sin \alpha.
	 \end{align} 
	
	The symmetry between $z$ and $w$ is evident, so we will get $0$ in $z^1w^1$ contribution. In $z^2w^0$ we will get ($z^0w^2$ has minus this):

	\begin{align}
		\frac{3\lambda^2 + 2\lambda - 2\lambda^{-1} - 3\lambda^{-2}}{4}q_2\sin \alpha.
	\end{align} 
	
	Finally, we study terms with $q_3$:
	
	\begin{align}
		3q_3\left(\frac{(\lambda^{-1}+1) z + (\lambda + 1)w}{2}\right)^2\cos \alpha  + 3q_3\left(\frac{(\lambda+1)z + (\lambda^{-1}+1)w}{2}\right)^2\cos \alpha.
	\end{align} 
	
	Particularly, in $z^2w^0$ and in $z^0w^2$ we get:
	
	\begin{align}
		\frac{3}{4}q_3\left((\lambda+1)^2 + (\lambda^{-1}+1)^2\right)\cos \alpha.
	\end{align}

	And in $z^1w^1$ we have: 
	\begin{align}
		3q_3(\lambda + 1)(\lambda^{-1} + 1)\cos \alpha.
	\end{align} 

	The last step is to consider the dependence of the error on the quadratic coefficients of $\varphi$. Since we require the error to be $0$, we will find these coefficients by solving a linear equation. Because of Remark \ref{rem1}, only $\varphi_{2, 0}$ can contribute to $z^2w^0$, and it is the same for other $2$ pairs. If we require the usage of quadratic term of $\varphi$, up to degree $3$ the left side of \eqref{maineq} reduces to:
	
	\begin{align}
		q'\left(\frac{t_- + t}{2}\right)\cos \alpha + \cos'\left(\frac{t - t_-}{2} - \alpha\right)  + q'\left(\frac{t_+ + t}{2}\right)\cos \alpha - \cos'\left(\frac{t_+ - t}{2} - \alpha\right).
	\end{align} 

	Then, the contribution of $\varphi_{2, 0}$ to $z^2w^0$ is:
	
	\begin{align}
		\varphi_{2, 0} \left( q_2\left(\lambda^2 + 2 + \lambda^{-2}\right) - \frac{\lambda^2 - 2 + \lambda^{-2}}{2} \right) \cos \alpha.
	\end{align} 

	To get a contribution of $\varphi_{0, 2}$ to $z^0w^2$ one has to swap it with $\varphi_{2, 0}$ in the formula and the contribution of $\varphi_{1, 1}$ to $z^1w^1$ is $4\varphi_{1, 1}q_2\cos\alpha$.
	
	Thus, we can now find the quadratic coefficients of $\varphi$:
	
	\begin{align}
		\varphi_{2, 0} = \overline{\varphi_{0, 2}} =\\=  -\frac{\frac{\tan \alpha}{8}\left(\lambda - 2 + \lambda^{-1}\right)\left(\lambda - \lambda^{-1}\right) + \frac{3\lambda^2 + 2\lambda - 2\lambda^{-1} - 3\lambda^{-2}}{4}q_2\tan \alpha + \frac{3}{4}q_3\left((\lambda+1)^2 + (\lambda^{-1}+1)^2\right) }{q_2\left(\lambda^2 + 2 + \lambda^{-2}\right) - \frac{\lambda^2 - 2 + \lambda^{-2}}{2} }.
	\end{align}  

	\begin{equation}
		\varphi_{1, 1} = -\frac{3q_3(\lambda + 1)(\lambda^{-1} + 1)}{4q_2}.
	\end{equation}
	
	The expression $\varphi_{2, 0} \cos b_0 + \varphi_{1, 1} + \varphi_{0, 2}\cos b_0$ will arise in Section \ref{sec4} and it will be important to verify that it is non-zero. It reduces to:
	
	\begin{equation}
		-\frac{3q_3\left((\lambda+1)^2 + (\lambda^{-1}+1)^2\right) (\lambda + \lambda^{-1})}{q_2\left(\lambda^2 + 2 + \lambda^{-2}\right) - \frac{\lambda^2 - 2 + \lambda^{-2}}{2} } - \frac{3q_3(\lambda + 1)(\lambda^{-1} + 1)}{4q_2}.
	\end{equation}

	Factoring out, we get:
	
	\begin{equation}
		-3q_3(\lambda+1)(\lambda^{-1}+1)\left(\frac{1}{q_2- \frac{\lambda^2 - 2 + \lambda^{-2}}{2\left(\lambda^2 + 2 + \lambda^{-2}\right) } } + \frac{1}{4q_2}\right).
	\end{equation}

	Both denominators are positive, so this value is non-zero, provided $q_3 \ne 0$.
	
	\section{Comments about the original Treschev problem}
	\label{apb}
	
	As stated earlier, Theorem \ref{th2} only works for the case $q_3\ne0$, so we don't have a lower bound for the original Treschev problem. However, one can still try to estimate or guess the rate of growth in that case. To do that one can use more accurate bounds in the estimates, simplify the problem and do some numerics. After doing this, it lead us to consider Conjecture \ref{conj1}. 
	
	Firstly, since in the main order the recurrence cancels out, as we have seen in Section \ref{sec4}, we may want to consider the next order, corresponding to half the original Gevrey order growth ($n \log n /2 $). This involves applying better bounds in formulas like \eqref{eq49} to get the next order term out of the error and considering more trees. After various rescalings and identities, the second order also seems to reduce to \eqref{eq521}, so we still get the cancellations, hence Gevrey order can be lower than $n \log n /2$. Potentially, one may be able to iterate this scheme, getting lower and lower Gevrey orders. Unfortunately, we were unable to rigorously do that, since the number of estimates one should do grows very fast.
	
	Another interesting idea is to simplify the problem, while capturing the essence of it. Since we know that principal trees can control the growth, we may only consider them. So, we set $\varphi(z, w) = z + w + z^3 + w^3$, since in $q_3 = 0$ case only $\varphi_{3, 0}$ and $\varphi_{0, 3}$ appear in principal trees. Moreover, we simplify the problem further, by removing parameters like $\lambda$, $c_i$, etc. This results in the following:
	
	\begin{question}
		Let $\varphi(z, w) = z + w + z^3 + w^3$. Assume $q(t)$ is an odd power series, such that
		\begin{equation}
			\frac{\partial^{2n-1}}{\partial z^n w^{n-1}} q(\varphi(z, w)) = \begin{cases}
				1, \; \; n = 2 \\0, \; \; n\ne 2
			\end{cases}.
		\end{equation}
	What is the behavior of $q_{2n-1}$ as $n \rightarrow \infty$?
	\label{qu2}
	\end{question}
	
	This question is similar to an inverse function problem discussed in Section \ref{sec1}, but $\varphi$ depends on $2$ variables and we are interested in the diagonal coefficients of the composition. We note that we consider $q$ to be odd, whereas in the original problem $q$ was even, since we are taking the derivative of it in \eqref{maineq}. Also, we are considering $n=2$ as opposed to $n=1$ since otherwise $q(t) = t$ would give the solution. 
	
	\begin{figure}
		\includegraphics[width=15cm]{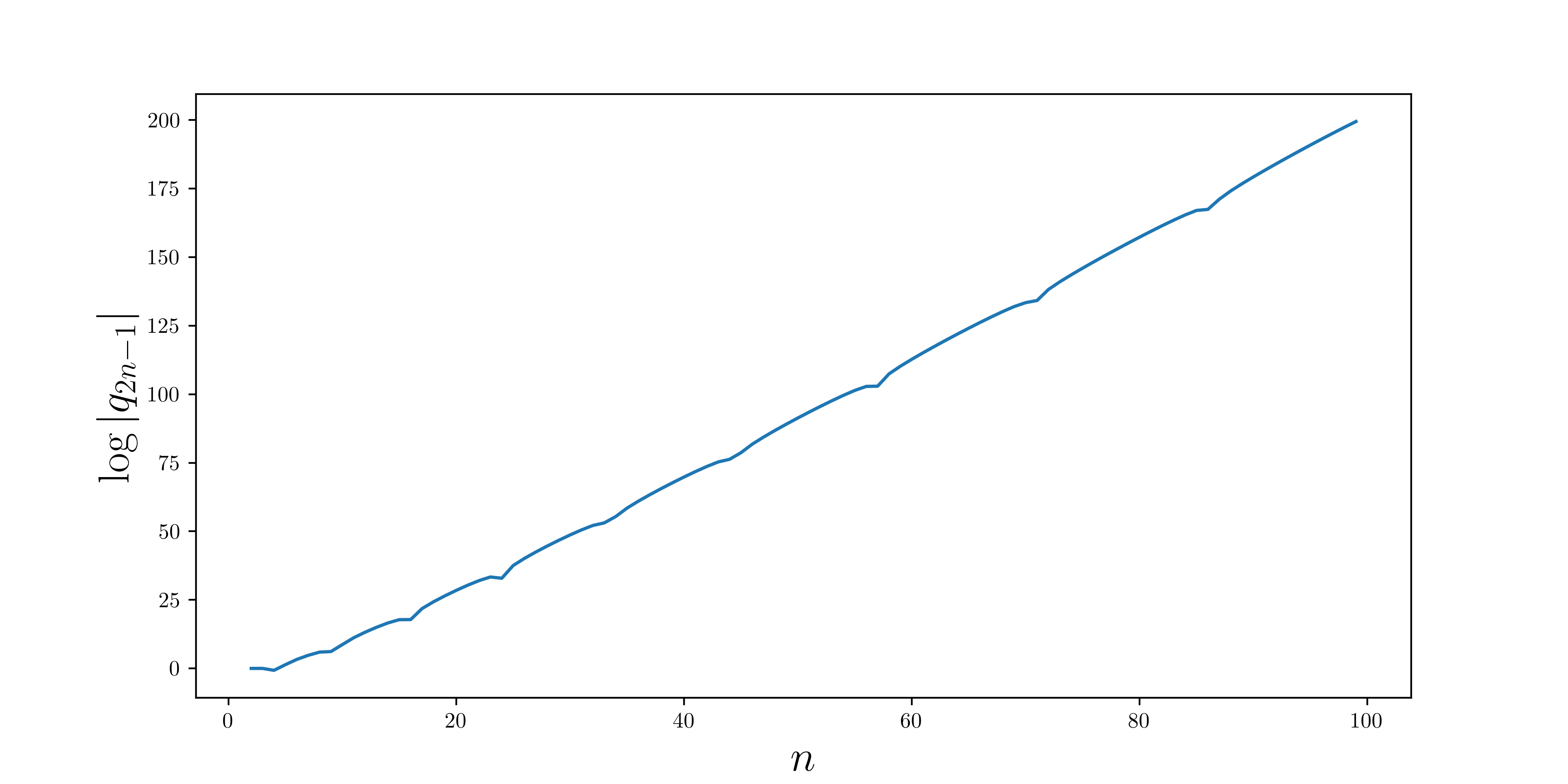}
		\centering
		\caption{Starting terms of $q$ in Question \ref{q2} in logarithmic scale. The graph is slightly curved up. The data up to $n = 1000$ shows the same picture, here $n \le 100$ to make dips visible.}
		\label{djdjd}
	\end{figure}
	
	Question \ref{qu2} has many similarities to the Treschev problem. For example, the trivial upper bound gives Gevrey $n \log n$ growth for $q_{2n-1}$, but the recurrent cancels out. As a benefit, it is much less messy, since one doesn't have to deal with many trees and Diophantine problems. Moreover, it is much easier to do numerics for it. 
	
	Still, it is unclear whether $q_{2n-1}$ grow exponentially or faster. The graph in Figure \ref{djdjd} is slightly curved to the top and this may indicate faster than exponential growth. However, if the solution was Gevrey of some significant order, one would expect the graph to be more curved. This shows that there are a lot of cancellations in thus problem. So, the answer may be somewhere in the middle between exponential and Gevrey growth. To guess this, one can apply various transforms to the graph to better visualize the growth rate.  
	
	We have also found an interesting pattern in this problem. If we compute the ratios $r_n = q_{2n+1}/q_{2n-1}$ for $n$ up to a $1000$, we get the following. Usually, $r_n$ is negative. As $n$ increases, $r_n$ increases as well. This continues up until $r_n$ becomes positive. When this happens, the next $r_n$ jumps and becomes a big negative number. After this, the new cycle of increasing begins. These sudden jumps manifest themselves as dips in the graph. If $q_{2n-1}$ corresponds to a dip, then the distance to the next dip is approximately $C\sqrt{n}$. The behavior of $q_{2n-1}$ within one cycle is pretty regular: the increasing slows down in the middle of the cycle and speeds up around dips. Several consecutive truncated values of $r_n$ near a dip are given below. Perhaps there exists some limiting regime that can explain this pattern and shed some light onto Question \ref{qu2}. 
	\begin{equation*}
		-8.8 \;\; -8.5 \;\; -8.1\;\;-7.6\;\;-6.7\;\;-5.1\;\;-0.7\;\;\;108.5\;\;-20.2\;\;-14.6\;\;-12.8\;\;-11.9\;\;-11.3
	\end{equation*}
	
	One can also play around with different versions of $\varphi(z, w)$ in Question \ref{qu2}. It turns out that if one sets $\varphi(z, w) = z + w + z^2w + zw^2$ or $\varphi(z, w) = z + w + (z+w)^3$, then the problem can be reduced to being $1$-dimensional: inverting the function $x + x^3$. As such, in those cases $q(t)$ is analytic and we get an exponential growth. Particularly, we can obtain an explicit formula for $q_{2n-1}$ and we don't have any dips. We believe this to be rather rare among $\varphi$-s. Adding even terms to $\varphi(z, w)$ is expected to have the same effect as adding $q_{odd}$ into the Treschev problem, potentially resulting in Gevrey growth.    
	
	We have tried several methods to tackle Question \ref{qu2}. For example, we have tried to guess the solution by imagining $q$ to be a hyper-geometric function and to use some complex analytic techniques. Still, we cannot get anything concrete about this problem.  
	
	\section{Diophantine cancellations}
	\label{apc}
	In Sections \ref{sec5} and \ref{sec6} we have heavily used the Diophantine property of $\lambda$ to bound the contributions of many trees. The proof wouldn't work in its current form, if that condition is relaxed. In KAM-theory one can get a much better condition, due to inherent cancellations. Here, we will show that the same cancellations also appear in this method, so $\lambda$ doesn't have to have restricted partial quotients. We will assume that $b$ is trivial, since it is vital for cancellations. This should also explain why the choice of $\varphi_{k+1, k}$ doesn't matter (for any $b$). 
	
	We can represent the possible monomials in $z$ and $w$ as a $2$-dimensional grid. The $x$-coordinate of a point will represent the power of $z$, and $y$-coordinate -- that of $w$. Every cell in the grid corresponds to the element of the series for $q$ or $\varphi$ ($z^3w$ corresponds to $\varphi_{3, 1}$). If we visualize the cells of the grid, where $\lambda$ creates the small denominator, we will get a family of diagonals, representing $\varphi$. These diagonals will come in pairs, but otherwise they will be spread out, since small denominators are not frequent. 
	
	The circle vertex can directly affect (be a direct on in a tree) only elements of the grid with greater or equal $x$ and $y$-coordinates (boxes just need the sum of coordinates to be greater or equal). Hence, it is not beneficial to move from one small denominator diagonal to another (from another pair): the order has to increase at least by the distance between the diagonals, while you only get a factor of $n^\tau$ for doing this. One can also move from one diagonal to another, stopping at the box, but that involves first going to the main diagonal and hence wastes order. 
	
	The only other (and the most intuitive way) of using these diagonals to get a fast growth is to travel within the diagonal pair using small steps. Theoretically, on every step the value will get divided by a small denominator, and that will give a fast growth. However, this won't happen due to cancellations.
	
	Specifically, if we will only consider the linear contribution of $\varphi_{j, k}$ on the small denominator diagonal to other elements of the grid, we will get the similar grid to the original one, it will be just scaled and shifted with an error. The reason for this is simple: in every tree that features $\varphi_{j, k}$ once one can do a transformation by changing the $\varphi_{j, k}$ vertex into $\varphi_{1, 0}$ or $\varphi_{0, 1}$, depending on which diagonal in a pair $\varphi_{j, k}$ was, and shifting all the circle labels accordingly.
		\begin{figure}[H]
		\centering
		\begin{tikzpicture}[scale = 0.6, box/.style={rectangle,draw=black,thin, minimum size=0.6cm}]
			\draw[step=1.0,black,thin] (0,0) grid (25.5,14.5);
			\node[box,fill=lightgray] at (1.5,0.5){};
			\node[box,fill=lightgray] at (2.5,1.5){};  
			\node[box,fill=lightgray] at (3.5,2.5){};  
			\node[box,fill=lightgray] at (4.5,3.5){};  
			\node[box,fill=lightgray] at (5.5,4.5){};  
			\node[box,fill=lightgray] at (6.5,5.5){};  
			\node[box,fill=lightgray] at (7.5,6.5){};  
			\node[box,fill=lightgray] at (8.5,7.5){};  
			\node[box,fill=lightgray] at (9.5,8.5){};  
			\node[box,fill=lightgray] at (10.5,9.5){};
			\node[box,fill=lightgray] at (11.5,10.5){};  
			\node[box,fill=lightgray] at (12.5,11.5){};  
			\node[box,fill=lightgray] at (13.5,12.5){};  
			\node[box,fill=lightgray] at (14.5,13.5){};  
			\node[box,fill=lightgray] at (0.5,1.5){};
			\node[box,fill=lightgray] at (1.5,2.5){};
			\node[box,fill=lightgray] at (2.5,3.5){};  
			\node[box,fill=lightgray] at (3.5,4.5){};  
			\node[box,fill=lightgray] at (4.5,5.5){};  
			\node[box,fill=lightgray] at (5.5,6.5){};  
			\node[box,fill=lightgray] at (6.5,7.5){};  
			\node[box,fill=lightgray] at (7.5,8.5){};  
			\node[box,fill=lightgray] at (8.5,9.5){};  
			\node[box,fill=lightgray] at (9.5,10.5){};  
			\node[box,fill=lightgray] at (10.5,11.5){};
			\node[box,fill=lightgray] at (11.5,12.5){};  
			\node[box,fill=lightgray] at (12.5,13.5){}; 
			\node[box,fill=yellow] at (8.5,0.5){};
			\node[box,fill=yellow] at (9.5,1.5){};
			\node[box,fill=yellow] at (10.5,2.5){};
			\node[box,fill=yellow] at (11.5,3.5){};
			\node[box,fill=yellow] at (12.5,4.5){};
			\node[box,fill=yellow] at (13.5,5.5){};
			\node[box,fill=yellow] at (14.5,6.5){};
			\node[box,fill=yellow] at (15.5,7.5){};
			\node[box,fill=yellow] at (16.5,8.5){};
			\node[box,fill=yellow] at (17.5,9.5){};
			\node[box,fill=yellow] at (18.5,10.5){};
			\node[box,fill=yellow] at (19.5,11.5){};
			\node[box,fill=yellow] at (20.5,12.5){};
			\node[box,fill=yellow] at (21.5,13.5){};
			\node[box,fill=yellow] at (10.5,0.5){};
			\node[box,fill=yellow] at (11.5,1.5){};
			\node[box,fill=yellow] at (12.5,2.5){};
			\node[box,fill=yellow] at (13.5,3.5){};
			\node[box,fill=yellow] at (14.5,4.5){};
			\node[box,fill=yellow] at (15.5,5.5){};
			\node[box,fill=yellow] at (16.5,6.5){};
			\node[box,fill=yellow] at (17.5,7.5){};
			\node[box,fill=yellow] at (18.5,8.5){};
			\node[box,fill=yellow] at (19.5,9.5){};
			\node[box,fill=yellow] at (20.5,10.5){};
			\node[box,fill=yellow] at (21.5,11.5){};
			\node[box,fill=yellow] at (22.5,12.5){};
			\node[box,fill=yellow] at (23.5,13.5){};
			\node[box,fill=yellow] at (0.5,8.5){};
			\node[box,fill=yellow] at (1.5,9.5){};
			\node[box,fill=yellow] at (2.5,10.5){};
			\node[box,fill=yellow] at (3.5,11.5){};
			\node[box,fill=yellow] at (4.5,12.5){};
			\node[box,fill=yellow] at (5.5,13.5){};
			\node[box,fill=yellow] at (0.5,10.5){};
			\node[box,fill=yellow] at (1.5,11.5){};
			\node[box,fill=yellow] at (2.5,12.5){};
			\node[box,fill=yellow] at (3.5,13.5){};
			\node[box,fill=yellow] at (22.5,0.5){};
			\node[box,fill=yellow] at (23.5,1.5){};
			\node[box,fill=yellow] at (24.5,2.5){};
			\node[box,fill=yellow] at (24.5,0.5){};
			\node[text width=3cm] at (2.67,-0.5){$z^0$};
			\node[text width=3cm] at (3.67,-0.5){$z^1$};
			\node[text width=3cm] at (4.67,-0.5){$z^2$};
			\node[text width=3cm] at (5.67,-0.5){$z^3$};
			\node[text width=3cm] at (6.67,-0.5){$z^4$};
			\node[text width=3cm] at (7.67,-0.5){$z^5$};
			\node[text width=3cm] at (8.67,-0.6){$\ldots$};
			\node[text width=3cm] at (1.57,0.5){$w^0$};
			\node[text width=3cm] at (1.57,1.5){$w^1$};
			\node[text width=3cm] at (1.57,2.5){$w^2$};
			\node[text width=3cm] at (1.57,3.5){$w^3$};
			\node[text width=3cm] at (1.57,4.5){$w^4$};
			\node[text width=3cm] at (1.57,5.5){$w^5$};
			\node[text width=3cm] at (1.73,6.5){$\vdots$};
			\node[text width=3cm] at (1.37,10.5){$w^{10}$};
			\node [connection] (1) at (3.5, 2.5) {};
			\node [connection] (2) at (4.5, 3.5) {};
			\node [connection] (3) at (7.5, 6.5) {};
			\node [connection] (4) at (9.5, 8.5) {};
			\node [connection] (5) at (10.5, 9.5) {};
			\node [connection] (6) at (12.5, 11.5) {};
			\node [connection] (7) at (14.5, 13.5) {};
			\node [connection] (8) at (15.5, 14.5) {};
			\node [connection] (30) at (10.5, 3.5) {};
			\node [connection] (31) at (8.5, 11.5) {};
			\node[text width=3cm] at (6.51,2.65){\scriptsize  $\mathbf{\times n^2}$};
			\node[text width=3cm] at (9.51,5.65){\scriptsize  $\mathbf{\times n^6}$};
			\node[text width=3cm] at (11.51,7.65){\scriptsize  $\mathbf{\times n^4}$};
			\node[text width=3cm] at (12.51,8.65){\scriptsize  $\mathbf{\times n^2}$};
			\node[text width=3cm] at (14.51,10.65){\scriptsize  $\mathbf{\times n^4}$};
			\node[text width=3cm] at (16.51,12.65){\scriptsize  $\mathbf{\times n^4}$};
			\node [connection] (11) at (15.5, 5.5) {};
			\node [connection] (12) at (16.5, 6.5) {};
			\node [connection] (13) at (17.5, 7.5) {};
			\node [connection] (14) at (18.5, 8.5) {};
			\node [connection] (15) at (19.5, 9.5) {};
			\node [connection] (16) at (20.5, 10.5) {};
			\node [connection] (17) at (21.5, 11.5) {};
			\node [connection] (18) at (22.5, 12.5) {};
			\node [connection] (19) at (23.5, 13.5) {};
			\node [connection] (99) at (24.5, 13.5) {};
			\node [connection] (10) at (24.5, 14.5) {};
			\node [connection] (32) at (16.5, 10.5) {};
			\node [connection] (33) at (19.5, 12.5) {};
			\draw [->, thick] (1)--(2);
			\draw [->, thick] (2)--(3);
			\draw [->, thick] (3)--(4);
			\draw [->, thick] (4)--(5);
			\draw [->, thick] (5)--(6);
			\draw [->, thick] (6)--(7);
			\draw [->,  thick, dotted] (3)--(30);
			\draw [->,  thick, dotted] (5)--(31);
			\draw [thick] (7)--(8);
			\draw [->, dashed, thick, blue] (11)--(12);
			\draw [->, dashed, thick, blue] (12)--(13);
			\draw [->, dashed, thick, blue] (13)--(14);
			\draw [->, dashed, thick, blue] (14)--(15);
			\draw [->, dashed, thick, blue] (15)--(16);
			\draw [->, dashed, thick, blue] (16)--(17);
			\draw [->, dashed, thick, blue] (17)--(18);
			\draw [->, dashed, thick, blue] (18)--(19);
			\draw [->, dotted, thick, blue] (12)--(32);
			\draw [->, dotted, thick, blue] (15)--(33);
			\draw [->, dotted, thick, blue] (19)--(99);
			\draw [dashed, thick, blue] (19)--(10);
			\node[text width=3cm, blue] at (18.51,5.65){\scriptsize  $\mathbf{\times n^\tau}$};
			\node[text width=3cm, blue] at (19.51,6.65){\scriptsize  $\mathbf{\times n^\tau}$};
			\node[text width=3cm, blue] at (20.51,7.65){\scriptsize  $\mathbf{\times n^\tau}$};
			\node[text width=3cm, blue] at (21.51,8.65){\scriptsize  $\mathbf{\times n^\tau}$};
			\node[text width=3cm, blue] at (22.51,9.65){\scriptsize  $\mathbf{\times n^\tau}$};
			\node[text width=3cm, blue] at (23.51,10.65){\scriptsize  $\mathbf{\times n^\tau}$};
			\node[text width=3cm, blue] at (24.51,11.65){\scriptsize  $\mathbf{\times n^\tau}$};
			\node[text width=3cm, blue] at (25.51,12.65){\scriptsize  $\mathbf{\times n^\tau}$};
			\node [connection] (40) at (1.0, 10.0) {};
			\node [connection] (41) at (6.0, 10.0) {};
			\node [connection] (42) at (1.0, 14.5) {};
			\node [connection] (43) at (2.5, 10.5) {};
			\node [connection] (44) at (3.5, 10.5) {};
			\node [connection] (45) at (2.5, 11.5) {};
			\node [connection] (46) at (3.1, 11.5) {};
			\node [connection] (47) at (3.5, 11.1) {};
			\node [connection] (48) at (3.1, 11.1) {};
			\draw [ultra thick] (40)--(41);
			\draw [ultra thick] (40)--(42);
			\draw [->, thick, blue] (43)--(44);
			\draw [->, thick, blue] (44)--(47);
			\draw [->, thick, blue] (43)--(45);
			\draw [->, thick, blue] (45)--(46);
			\draw [->, thick, blue] (43)--(48);
			\draw [blue] (3.5,11.5) node {\huge $\mathbf{\times}$};
		\end{tikzpicture}
		\caption{\label{fig4}A visualization of a grid. The gray cells are associated to $q$, while all the others are to $\varphi$. The yellow cells have small denominators. Black arrows show an example of contribution growth from principal trees. Blue dashed arrows show the trees with $b_2$, competing with principal trees. On the top-left the cancellation is shown: if one shifts the thick lines to the origin and associates $z^2w^{10}$ to $z^1w^0$, its contributions will form a scaled version of the original grid, resulting in cancellations.}
	\end{figure}
	This transformation will be valid for almost every tree, and the multiplier of the tree will be changed regularly (if $\lambda$ had been rational, it would've just gotten scaled). The only exception for this principle are trees, that have roots at one of the diagonals in a pair: this transformation will turn them invalid, since the root would get label $\pm 1$. This just shows that the whole grid will get scaled and shifted, but these diagonal elements will be almost zero. It follows that the growth along the diagonal pair is not possible.

	\medskip
	
	\printbibliography

@article{bialy,
author = {Misha Bialy and Andrey E. Mironov},
title = {{The Birkhoff-Poritsky conjecture for centrally-symmetric billiard tables}},
volume = {196},
journal = {Annals of Mathematics},
number = {1},
publisher = {Department of Mathematics of Princeton University},
pages = {389 -- 413},
keywords = {Birkhoff billiard, Birkhoff-Poritsky conjecture, integrable billiard},
year = {2022},
doi = {10.4007/annals.2022.196.1.2}
}

@misc{keagan,
      title={Absolutely Periodic Billiard Orbits of Arbitrarily High Order}, 
      author={Keagan G. Callis},
      year={2022},
      eprint={2209.11721},
      archivePrefix={arXiv},
      primaryClass={math.DS}
}

@article{ivrii,
	title={Second term of the spectral asymptotic expansion of the Laplace - Beltrami operator on manifolds with boundary},
	author={Ivrii, V. Ya.},
	year={1980},
	journal={Functional Analysis and Its Applications},
	volume={14},
	number = {2},
	pages = {389 -- 413}
}

@article{tresh,
author = {Treschev, Dmitry},
year = {2013},
month = {07},
pages = {31–34},
title = {Billiard map and rigid rotation},
volume = {255},
journal = {Physica D: Nonlinear Phenomena},
doi = {10.1016/j.physd.2013.04.003}
}

@article{tresh2,
author = {Treschev, Dmitry},
year = {2015},
month = {05},
pages = {291-299},
title = {On a conjugacy problem in billiard dynamics},
volume = {289},
journal = {Proceedings of the Steklov Institute of Mathematics},
doi = {10.1134/S0081543815040173}
}

@article{tresh3,
author = {Treschev, Dmitry},
title = {A locally integrable multi-dimensional billiard system},
journal = {Discrete and Continuous Dynamical Systems},
volume = {37},
number = {10},
pages = {5271-5284},
year = {2017},
issn = {1078-0947},
doi = {10.3934/dcds.2017228}
}

@misc{zhang,
      title={Gevrey regularity for the formally linearizable billiard of Treschev}, 
      author={Qun Wang and Ke Zhang},
      year={2022},
      eprint={2211.03182},
      archivePrefix={arXiv},
      primaryClass={math.DS}
}

@article{cdv,
author = {Colin de Verdière, Yves},
journal = {Séminaire de théorie spectrale et géométrie},
pages = {1-18},
publisher = {Institut Fourier},
title = {Sur les longueurs des trajectoires périodiques d'un billard},
volume = {1},
year = {1982-1983},
}

@Article{krikorian,
author={Krikorian, Rapha{\"e}l},
title={On the divergence of Birkhoff Normal Forms},
journal={Publications math{\'e}matiques de l'IH{\'E}S},
year={2022},
volume={135},
number={1},
pages={1-181},
issn={1618-1913},
doi={10.1007/s10240-022-00130-2}
}

@book{birk,
author={G. D. Birkhoff},
title={Dynamical Systems},
year={1927},
publisher={A. M. S., Providence, RI}
}

@article{perez,
 ISSN = {0003486X},
 author = {Ricardo Pérez-Marco},
 journal = {Annals of Mathematics},
 number = {2},
 pages = {557--574},
 publisher = {Annals of Mathematics},
 title = {Convergence or Generic Divergence of the Birkhoff Normal Form},
 volume = {157},
 year = {2003}
}

@article{siegel,
 ISSN = {0003486X},
 author = {Carl Ludwig Siegel},
 journal = {Annals of Mathematics},
 number = {4},
 pages = {607--612},
 publisher = {Annals of Mathematics},
 title = {Iteration of Analytic Functions},
 volume = {43},
 year = {1942}
}

@article{kac,
 ISSN = {00029890, 19300972},
 author = {Mark Kac},
 journal = {The American Mathematical Monthly},
 number = {4},
 pages = {1--23},
 publisher = {Mathematical Association of America},
 title = {Can One Hear the Shape of a Drum?},
 volume = {73},
 year = {1966}
}

@articleInfo{sorrent,
title = {Computing Mather's $\beta$-function for Birkhoff billiards},
journal = {Discrete and Continuous Dynamical Systems},
volume = {35},number = {10},pages = {5055-5082},
year = {2015},
issn = {1078-0947},
doi = {10.3934/dcds.2015.35.5055},
author = {Alfonso Sorrentino},
keywords = {Billiard maps, action-minimizing orbits, Mather's beta function, length spectrum, integrable billiards}
}

@Article{leguil,
author={De Simoi, Jacopo
and Kaloshin, Vadim
and Leguil, Martin},
title={Marked Length Spectral determination of analytic chaotic billiards with axial symmetries},
journal={Inventiones mathematicae},
year={2023},
month={Aug},
day={01},
volume={233},
number={2},
pages={829-901},
issn={1432-1297},
doi={10.1007/s00222-023-01191-8}
}

@misc{osterman,
      title={On Length Spectrum Rigidity of Dispersing Billiard Systems}, 
      author={Otto Vaughn Osterman},
      year={2022},
      eprint={2208.12244},
      archivePrefix={arXiv},
      primaryClass={math.DS}
}

@misc{vig,
      title={Compactness of Marked Length Isospectral Sets of Birkhoff Billiard Tables}, 
      author={Amir Vig},
      year={2023},
      eprint={2310.05426},
      archivePrefix={arXiv},
      primaryClass={math.DS}
}

@article{z,
 ISSN = {0003486X},
 author = {Steve Zelditch},
 journal = {Annals of Mathematics},
 number = {1},
 pages = {205--269},
 publisher = {Annals of Mathematics},
 title = {Inverse Spectral Problem for Analytic Domains, II:  $\mathbb Z_2$-Symmetric Domains},
 volume = {170},
 year = {2009}
}

@article{hezzel,
author={Hezari, Hamid and Zelditch, Steve},
title = "$C_{\infty}$ spectral rigidity of the ellipse",
year = "2012",
volume = "5",
pages = "1105--1132",
journal = "Analysis and PDE",
issn = "2157-5045",
publisher = "Mathematical Sciences Publishers",
number = "5",
}

@article{sauzin,
    author = "Sauzin, David",
    title = "{Introduction to 1-summability and resurgence}",
    eprint = "1405.0356",
    archivePrefix = "arXiv",
    primaryClass = "math.DS",
    month = "5",
    year = "2014"
}

@article{melrose,
    author = "Richard B. Melrose",
    title = "{Isospectral sets of drumheads are compact in $c^{\infty}$}",
    year = "2007"
}

@article{ramirezros,
author = {P. Martín, R. Ramírez-Ros and A. Tamarit-Sariol},
title = {Exponentially Small Asymptotic Formulas for the Length Spectrum in Some Billiard Tables},
journal = {Experimental Mathematics},
volume = {25},
number = {4},
pages = {416-440},
year = {2016},
publisher = {Taylor & Francis},
doi = {10.1080/10586458.2015.1076361}
}

@Article{corsi1,
author={Corsi, Livia
and Gentile, Guido
and Procesi, Michela},
title={KAM Theory in Configuration Space and Cancellations in the Lindstedt Series},
journal={Communications in Mathematical Physics},
year={2011},
month={Mar},
day={01},
volume={302},
number={2},
pages={359-402},
issn={1432-0916},
doi={10.1007/s00220-010-1131-7}
}

@misc{corsi2,
      title={Almost-periodic solutions to the NLS equation with smooth convolution potentials}, 
      author={Livia Corsi and Guido Gentile and Michela Procesi},
      year={2023},
      eprint={2309.14276},
      archivePrefix={arXiv},
      primaryClass={math.AP}
}

@Article{denghani,
author={Deng, Yu
and Hani, Zaher},
title={Full derivation of the wave kinetic equation},
journal={Inventiones mathematicae},
year={2023},
month={Aug},
day={01},
volume={233},
number={2},
pages={543-724},
issn={1432-1297},
doi={10.1007/s00222-023-01189-2}
}

@article{pori,
 ISSN = {0003486X},
 author = {Hillel Poritsky},
 journal = {Annals of Mathematics},
 number = {2},
 pages = {446--470},
 publisher = {Annals of Mathematics},
 title = {The Billard Ball Problem on a Table With a Convex Boundary--An Illustrative Dynamical Problem},
 volume = {51},
 year = {1950}
}

@misc{kaloshin2023birkhoff,
      title={Birkhoff Conjecture for nearly centrally symmetric domains}, 
      author={Vadim Kaloshin and Comlan Edmond Koudjinan and Ke Zhang},
      year={2023},
      eprint={2306.12301},
      archivePrefix={arXiv},
      primaryClass={math.DS}
}

@article{bialytsodi,
    author = {Bialy, Misha and Tsodikovich, Daniel},
    title = "{Billiard Tables with Rotational Symmetry}",
    journal = {International Mathematics Research Notices},
    volume = {2023},
    number = {5},
    pages = {3970-4003},
    year = {2022},
    month = {01},
    issn = {1073-7928},
    doi = {10.1093/imrn/rnab366}
}

@article{Ferreira_2024,
doi = {10.1088/1361-6544/ad0c94},
url = {https://dx.doi.org/10.1088/1361-6544/ad0c94},
year = {2023},
month = {dec},
publisher = {IOP Publishing},
volume = {37},
number = {1},
pages = {015005},
author = {Geraldo César Gonçalves Ferreira and Sylvie Oliffson Kamphorst and Sônia Pinto-de-Carvalho},
title = {Symmetric periodic orbits in symmetric billiards},
journal = {Nonlinearity}
}

@article{BIALY2017102,
title = {Angular billiard and algebraic Birkhoff conjecture},
journal = {Advances in Mathematics},
volume = {313},
pages = {102-126},
year = {2017},
issn = {0001-8708},
doi = {https://doi.org/10.1016/j.aim.2017.04.001},
author = {Misha Bialy and Andrey E. Mironov}
}

@article{kaloshinhunt,
 ISSN = {0003486X},
 author = {Vadim Yu. Kaloshin and Brian R. Hunt},
 journal = {Annals of Mathematics},
 number = {1},
 pages = {89--170},
 publisher = {Annals of Mathematics},
 title = {Stretched Exponential Estimates on Growth of the Number of Periodic Points for Prevalent Diffeomorphisms I},
 volume = {165},
 year = {2007}
}

\end{document}